\documentclass[12pt]{article}
\usepackage{fullpage,amsthm,amsmath,amssymb,xspace,verbatim,tikz,subfig,setspace,url}
\usetikzlibrary{positioning, shapes.geometric, calc}
\tikzstyle{vertex}=[circle,fill=black,inner sep=2pt]

\newcounter{thmctr}
\newtheorem{thm}[thmctr]{Theorem}
\newtheorem{lemma}[thmctr]{Lemma}
\newtheorem{prop}[thmctr]{Proposition}

\newtheorem*{definition}{Definition}

\theoremstyle{definition}

\newtheorem{problem}[thmctr]{Problem}
\theoremstyle{plain}

\newcommand{\propdiscp}{\texttt{Disc}$_p$\xspace}

\newcommand{\propexpandp}[1]{\texttt{Expand}$_p$[#1]\xspace}
\newcommand{\propcountp}[1]{\texttt{Count}$_p$[#1]\xspace}
\newcommand{\propcyclep}[2][4]{\texttt{Cycle$_{p,#1}$}[#2]\xspace}

\newcommand{\propcdp}[1]{\texttt{CliqueDisc}$_p$[#1]\xspace}
\newcommand{\proppartexpp}[1]{\texttt{PartiteExpand}$_p$[#1]\xspace}

\newcommand{\propexpandh}[1]{\texttt{Expand}$_{1/2}$[#1]\xspace}
\newcommand{\propcdh}[1]{\texttt{CliqueDisc}$_{1/2}$[#1]\xspace}

\newcommand{\propexpand}[1]{\texttt{Expand}[#1]\xspace}
\newcommand{\propcount}[1]{\texttt{Count}[#1]\xspace}
\newcommand{\propcycle}[2][4]{\texttt{Cycle$_{#1}$}[#2]\xspace}

\newcommand{\propcd}[1]{\texttt{CliqueDisc}[#1]\xspace}
\newcommand{\proppartexp}[1]{\texttt{PartiteExpand}[#1]\xspace}
\newcommand{\propdev}[1]{\texttt{Deviation}[#1]\xspace}
\DeclareMathOperator{\dev}{\textnormal{dev}}

\newcommand{\dhruvuni}{University of Illinois at Chicago \\ mubayi@uic.edu}
\newcommand{\johnuni}{University of Illinois at Chicago \\ lenz@math.uic.edu}
\newcommand{\dhruvfoot}{\footnote{Research supported in part by  NSF Grants 0969092 and 1300138.}}
\newcommand{\johnfoot}{\footnote{Research partly supported by NSA Grant H98230-13-1-0224.}}

\title{The Poset of Hypergraph Quasirandomness}
\author{John Lenz \johnfoot \\ \johnuni \and Dhruv Mubayi \dhruvfoot \\ \dhruvuni}

\begin{document}
\maketitle

\begin{abstract}
Chung and Graham began the systematic study of $k$-uniform hypergraph quasirandom properties soon
after the foundational results of Thomason and Chung-Graham-Wilson on quasirandom graphs. One
feature that became apparent in the early work on $k$-uniform hypergraph quasirandomness is that
properties that are equivalent for graphs are not equivalent for hypergraphs, and thus hypergraphs
enjoy a variety of inequivalent quasirandom properties. In the past two decades, there has been an
intensive study of these disparate notions of quasirandomness for hypergraphs, and an
open problem that has emerged is to determine the relationship between them.

Our main result is to determine the poset of implications between these quasirandom properties. This
answers a recent question of Chung and continues a project begun by Chung and Graham in their
first paper on hypergraph quasirandomness in the early 1990's. 
\end{abstract}

\section{Introduction}

An important line of research in extremal combinatorics and computer science in the last few decades
is the study of quasirandom or pseudorandom structures.  This was initiated by
Thomason~\cite{qsi-thomason87, qsi-thomason87-2} and Chung, Graham, and Wilson~\cite{qsi-chung89},
who studied explicitly constructed graphs which mimic the random graph. Applications of
quasirandom structures have appeared in many situations in extremal combinatorics and computer
science, for example in recent proofs of Szemer\'edi's Theorem~\cite{sze-szemeredi75} using the
Strong Hypergraph Regularity Lemma~\cite{rrl-gowers07,rrl-nagle06,rrl-rodl04,rrl-rodl06,rrl-tao06}
and in expander graphs~\cite{ee-survey06} in computer science. For details on quasirandomness, we
refer the reader to a survey of Krivelevich and Sudakov~\cite{qsi-survey-krivelevich06} for graph
quasirandomness and recent papers of Gowers~\cite{hqsi-gowers06,rrl-gowers07,qsi-gowers08} for
other quasirandom structures.

Soon after the papers~\cite{qsi-thomason87,qsi-thomason87-2} and~\cite{qsi-chung89}, Chung and
Graham~\cite{hqsi-chung90-2} initiated the study of quasirandomness in hypergraphs.  Since these
early papers on the subject, there have been a variety of different notions of quasirandomness
defined for hypergraphs, and the relationships between these quasirandom properties are not
completely understood. Chung~\cite{hqsi-chung12, hqsi-chung90} posed the following problem.

\begin{problem} \label{prob:relationship}
  \textbf{(Chung~\cite{hqsi-chung12, hqsi-chung90})}
  How is a given property placed in the quasirandom hierarchy and what is the lattice structure
  illustrating the relationship among quasirandom properties of hypergraphs? 
\end{problem}

Our main result is to answer this question for many $k$-uniform hypergraph quasirandom properties.

A \emph{$k$-uniform hypergraph} is a pair of finite sets $(V(G),E(G))$ such that $E(G)$ is a
collection of $k$-subsets of $V(G)$. The set $V(G)$ is the vertex set and $E(G)$ is the edge set.
For a hypergraph $G$ and $U \subseteq V(G)$, the induced subhypergraph on $U$, denoted $G[U]$, is the
hypergraph with vertex set $U$ and edge set $\{ e \in E(G) : e \subseteq U \}$.  A \emph{graph} is a
$2$-uniform hypergraph. Let $\mathcal{G} = \{G_n\}_{n\rightarrow\infty}$ be a sequence of graphs
with $|V(G_n)| = n$ and let $0 < p < 1$ be a fixed real.  The graph sequence $\mathcal{G}$ is
\emph{$p$-quasirandom} if it satisfies the following properties.
\begin{itemize}
    \item \texttt{Disc}$_p$:  (short for discrepency) for every $U \subseteq V(G_n)$, $|E(G_n[U])| = p
      \binom{|U|}{2} + o(n^2)$.

    \item \texttt{Expand}$_p$:  For every $S,T \subseteq V(G_n)$, $e(S,T) = p|S||T| + o(n^2)$, where
      $e(S,T)$ is the number of edges with one endpoint in $S$ and one endpoint in $T$, with edges
      inside $S \cap T$ counted twice.
\end{itemize}

The use of little-$o$ notation in the above definitions requires some explanation.  The precise
definition of \texttt{Disc}$_p$ is the property of graph sequences defined as follows: $\mathcal{G}
= \{G_n\}_{n\rightarrow\infty}$ with $|V(G_n)| = n$ satisfies \texttt{Disc}$_p$ if there exists a
function $f : \mathbb{N} \rightarrow \mathbb{R}$ such that $f(n) = o(n^2)$ (i.e.\
$\lim_{n\rightarrow\infty} f(n)n^{-2} = 0$) so that for all $n$ and all $U \subseteq V(G_n)$,
$p\binom{|U|}{2} - f(n) \leq |E(G_n[U])| \leq p \binom{|U|}{2} + f(n)$.  \texttt{Expand}$_p$ is
defined similarly. 

It is easy to see that \texttt{Disc}$_p$ and \texttt{Expand}$_p$ are equivalent; \texttt{Expand}$_p$
$\Rightarrow$ \propdiscp is trivial by letting $S = T = U$ and the converse is a simple
inclusion/exclusion argument.  In addition, \texttt{Disc}$_p$ and \texttt{Expand}$_p$ are both
central properties of the random graph.  Many more properties of graph sequences have been shown
equivalent to \propdiscp and \texttt{Expand}$_p$, including counting subgraphs, counting induced
subgraphs, spectral conditions, sizes of common neighborhoods, and counting even/odd subgraphs of
cycles, see~\cite{qsi-chung89, qsi-dense-jason11, qsi-dense-lovasz08, qsi-dense-myers02,
qsi-dense-myers05, qsi-dense-nikiforov06, qsi-dense-shapira08, qsi-dense-shapira10,
qsi-dense-simonovits91, qsi-dense-simonovits97, qsi-dense-simonovits03, qsi-dense-skokan04,
qsi-dense-yuster10}.  In addition, several researchers investigated the sparse case where $p$ is no
longer a constant but $p = o(1)$, see~\cite{qsi-sparse-alon10, qsi-sparse-chung02,
qsi-sparse-chung08, qsi-sparse-kohayakawa03, qsi-sparse-kohayakawa04}.  In this paper, we will be
concentrating only on the dense case when $p$ is a fixed constant.

For $k$-uniform hypergraphs, there are several obvious generalizations of the graph properties
\texttt{Disc}$_p$ and \texttt{Expand}$_p$ which we discuss next.  A \emph{proper partition} $\pi$ of
$k$ is an unordered list of at least two positive integers whose sum is $k$.  For the partition
$\pi$ of $k$ given by $k = k_1 + \dots + k_t$, we will abuse notation by saying that $\pi = k_1 +
\dots + k_t$.  Let $\mathcal{H} = \{H_n\}_{n\rightarrow\infty}$ be a sequence of $k$-uniform
hypergraphs with $|V(H_n)| = n$ and let $0 < p < 1$ be a fixed integer.  For a proper partition $\pi
= k_1 + \dots + k_t$ of $k$, define the following properties of $\mathcal{H}$.
\begin{itemize}
  \item \propdiscp:  for every $U \subseteq V(H_n)$, $|E(H_n[U])| = p \binom{|U|}{k} +
    o(n^k)$.

  \item \propexpandp{$\pi$}: For all $S_i \subseteq \binom{V(H_n)}{k_i}$ where $1 \leq i \leq t$,
    \begin{align*} 
      e(S_1,\dots,S_t) =  p \prod_{i=1}^t \left| S_i \right| + o(n^{k})
    \end{align*}
    where $e(S_1,\dots,S_t)$ is the number of tuples $(s_1,\dots,s_t)$ such that $s_1 \cup
    \dots \cup s_t$ is a hyperedge and $s_i \in S_i$.
\end{itemize}
\propexpandp{$1+\dots+1$} $\Rightarrow$ \propdiscp is easy by letting $S_i = U$ and an
inclusion/exclusion argument shows \propdiscp $\Rightarrow$ \propexpandp{$1+\dots+1$} (see
Lemma~\ref{lem:cdtoexp} for a proof of a more general statement). One of the most important graph
properties equivalent to \propdiscp is \propcountp{All}, the property that for all graphs $F$, the
number of labeled copies of $F$ in $G_n$ is $p^{|E(F)|} n^{|V(F)|} + o(n^{|V(F)|})$ and at first
glance one might suspect this equivalence also holds for hypergraphs.  However, R\"odl observed that
a three-uniform construction of Erd\H{o}s and Hajnal~\cite{ram-erdos72} satisfies
\texttt{Disc}$_{1/4}$ and fails \texttt{Count}$_{1/4}$[All].  In light of this construction, Frankl and R\"odl
suggested the following property which can be seen as an alternate generalization of \propdiscp from
graphs to hypergraphs.  Let $\mathcal{H} = \{H_n\}_{n\rightarrow\infty}$ be a sequence of
$k$-uniform hypergraphs with $|V(H_n)| = n$, let $0 < p < 1$ be a fixed integer, and let $1 \leq
\ell \leq k -1$ be an integer and define the following property.

\begin{itemize}
  \item \propcdp{$\ell$}: for every $\ell$-uniform hypergraph $G$ where $V(G) = V(H_n)$, $|E(H_n) \cap
    \mathcal{K}_k(G)| = p |\mathcal{K}_k(G)| + o(n^k)$, where $\mathcal{K}_k(G)$ is set of
    \emph{$k$-cliques of $G$}, the collection of $k$-sets $T \subseteq V(G)$ such that all
    $\ell$-subsets of $T$ are edges of $G$.
\end{itemize}

Note that for $k$-uniform hypergraphs and $\ell = 1$, \propcdp{$1$} $\Leftrightarrow$ \propdiscp by
definition so \propcdp{$\ell$} is a generalization of \propdiscp.  Many hypergraph quasirandom
properties are equivalent to \propcdp{$\ell$} and \propexpandp{$\pi$} for some $\ell$ or $\pi$.  See
\cite{hqsi-austin10, hqsi-chung12, hqsi-chung90, hqsi-chung90-2, hqsi-chung91, hqsi-chung92,
hqsi-conlon12, hqsi-gowers06, hqsi-keevash09, hqsi-kohayakawa10, hqsi-kohayakawa02,
hqsi-lenz-quasi12} for the studies of these properties, which include counting subhypergraphs,
counting induced subhypergraphs, spectral characterizations, and counting even/odd subgraphs.

There are two more hypergraph quasirandom properties that have been studied.  First, Chung and
Graham's~\cite{hqsi-chung90-2} original property on even/odd subgraphs of the octahedron called
\propdev{$\ell$} and an extension of \propcdp{$\ell$} recently proposed by Chung~\cite{hqsi-chung12}.

\begin{itemize}
  \item For $2 \leq \ell \leq k$, define \propdev{$\ell$} as follows:
    \begin{align*}
      \sum_{\substack{x_1, \dots, x_{k-\ell} \in V(H) \\
            y_{1,0}, y_{1,1}, \dots, y_{\ell,0}, y_{\ell,1} \in V(H) }}
            (-1)^{|\mathcal{O}[\vec{x},\vec{y}] \cap E(H)|} = o(n^{k+\ell}),
    \end{align*}
    where $\mathcal{O}[\vec{x},\vec{y}]$ is the collection of hyperedges of the squashed octahedron.
    That is, $\mathcal{O}[\vec{x},\vec{y}] = \{ \{
    x_1,\dots,x_{k-\ell},y_{1,i_1},\dots,y_{\ell,i_\ell}\} : 0 \leq i_j \leq 1 \}$.  Conceptually,
    \propdev{$\ell$} states that the difference between the number of even and odd squashed
    octahedrons is negligible compared to the number of squashed octahedrons.

  \item For $1 \leq \ell \leq k - 1$ and $1 \leq s \leq \binom{k}{\ell}$, define \propcdp{$\ell,s$}
    as follows: for every $\ell$-uniform hypergraph $G$ where $V(G) \subseteq
    V(H_n)$,\footnote{This is slightly different than Chung's~\cite{hqsi-chung12} definition; she
    defined \propcd{$\ell,s$} only for spanning $G$. We believe the two definitions are equivalent and
    have proved this for several small cases.}
    \begin{align*}
      \left| \{ T \in E(H_n) : |E(G[T])| \geq s \} \right|
      = p \left| \left\{ T \in \binom{V(G)}{k} : |E(G[T])| \geq s \right\} \right|
      + o(n^k).
    \end{align*}
\end{itemize}

Although it is possible to extend the definition of \propdev{$\ell$} to arbitrary $0 < p < 1$, the
deviation property has been studied primarily for $p = \frac{1}{2}$, which is how we have stated it.
Also, note that \propcdp{$\ell,\binom{k}{\ell}$} is the same property as \propcdp{$\ell$}.  

\begin{figure}[ht] % {{{
\begin{center}
\begin{tikzpicture}[yscale=1.1]
  \node (dev6) at (1,2.8) {Dev(6)};
  \node (cd5) [left=0cm of dev6.west] {CD(5) $\Leftrightarrow$};
  \node (dev5) at (1,2.0) {Dev(5)};
  \node (dev4) at (1,1.2) {Dev(4)};
  \node (dev3) at (1,0.4) {Dev(3)};
  \node (dev2) at (1,-0.4) {Dev(2)};

  %\node (cd5) at (1,0) {CD($5$)};
  %\node (all) [right=0cm of cd5.east] {$\Leftrightarrow$ Count[All] $\Leftrightarrow$
  %Theorem~\ref{thm:stronghyperquasi}};
  \node (cd4) at (-3.2,0.5) {CD($4$)};
  \node (cd3) at (-3.2,-0.5) {CD($3$)};
  \node (cd2) at (-3.2,-1.6) {CD($2$)};

  \node (p33) at (-1,-2) {$(3,3)$};
  \node (p42) at (1,-2) {$(4,2)$};
  \node (p51) at (3,-2) {$(5,1)$};

  \node (p222) at (-1,-3) {$(2,2,2)$};
  \node (p321) at (1,-3) {$(3,2,1)$};
  \node (p411) at (3,-3) {$(4,1,1)$};

  \node (p2211) at (0,-4) {$(2,2,1,1)$};
  \node (p3111) at (2,-4) {$(3,1,1,1)$};

  \node (p21111) at (1,-5) {$(2,1,1,1,1)$};

  \node (p111111) at (1,-6) {$(1,1,1,1,1,1)$};
  \node (bottom) [right=0cm of p111111.east] {$\Leftrightarrow$ Disc};
  \node (cd1) [left=0cm of p111111.west] {CD(1) $\Leftrightarrow$};

  \draw (dev2) -- (dev3) -- (dev4) -- (dev5) -- (dev6);
  \draw (cd2) -- (cd3) -- (cd4);
  \draw (p111111) -- (p21111) -- (p2211) -- (p222) -- (p42) -- (dev2);
  \draw (p21111) -- (p3111) -- (p411) -- (p51) -- (dev2);
  \draw (p2211) -- (p321) -- (p33) -- (dev2);
  \draw (p3111) -- (p321) -- (p42);
  \draw (p222) -- (p42);
  \draw (p321) -- (p51);
  \draw (p411) -- (p42);
  \draw (cd4) -- (p42);
  \draw (cd3) -- (p33);
  \draw (cd2) -- (p222);
  \draw (p2211) -- (p411);
  \draw (dev5) to (cd4);
  \draw (dev4) to (cd3);
  \draw (dev3) to (cd2);
\end{tikzpicture}
\caption{The Hasse diagram of quasirandom properties for $k = 6$.}
\label{fig:poset}
\end{center}
\end{figure}
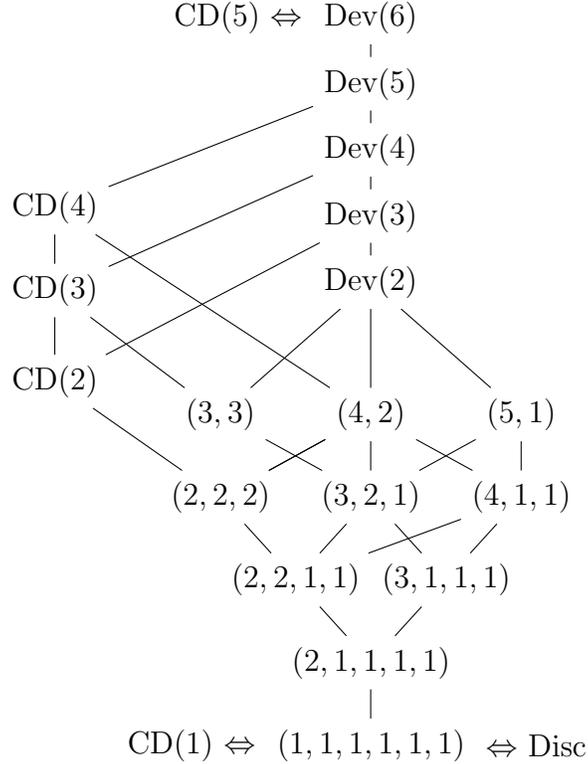 % }}}

In~\cite{hqsi-chung12, hqsi-chung90}, Chung made partial progress on
Problem~\ref{prob:relationship}; see Table~\ref{tab:main} for the exact results she proved.  Our
main result is to determine all relationships between \propexpandp{$\pi$},
\propcdp{$\ell,s$}, and \propdev{$\ell$} for all $\ell$, $s$, and $\pi$.  As a consequence, our work
also determines the relationships between other properties like counting and spectral conditions
studied in the literature, since these have been shown equivalent to one of \propexpandp{$\pi$},
\propcdp{$\ell,s$}, or \propdev{$\ell$}.  While we use some basic ideas introduced by Chung, most of
our results require new constructions for the non-implications of quasirandom properties.  The
proofs of our two main positive results (Theorems~\ref{thm:cdellsequiv} and~\ref{thm:maincdtoexp})
also use new techniques.

Our first result is that \propcdp{$\ell,s$} is a superfluous property in the sense that
\propcdp{$\ell,s$} $\Leftrightarrow$ \propcdp{$\ell,s'$} for all $\ell,s,s'$.  Since
\propcdp{$\ell$} is equivalent to \propcdp{$\ell,\binom{k}{\ell}$}, we can reduce to studying just
\propcdp{$\ell$}.

\begin{thm} \label{thm:cdellsequiv}
  Fix $k \geq 3$ and $2 \leq \ell < k$.  Then
  \propcdp{$\ell,1$} $\Leftrightarrow$ \propcdp{$\ell,2$} $\Leftrightarrow \cdots \Leftrightarrow$
  \propcdp{$\ell,\binom{k}{\ell}$}.
\end{thm}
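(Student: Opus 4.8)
Write $m=\binom{k}{\ell}$. For an $\ell$-uniform hypergraph $G$ with $V(G)\subseteq V(H_n)$ and $0\le j\le m$, let $a_j(G)$ be the number of $k$-sets $T\subseteq V(G)$ with $|E(G[T])|=j$, let $b_j(G)$ be the number of those that also lie in $E(H_n)$, and set $c_j(G)=b_j(G)-p\,a_j(G)$. Unwinding the definition --- and using that, as with \propdiscp, the error in \propcdp{$\ell,s$} is a single function $f(n)=o(n^k)$ valid for all $G$ simultaneously --- the property \propcdp{$\ell,s$} says exactly that $|\sum_{j\ge s}c_j(G)|\le f(n)$ for every $n$ and every such $G$. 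The plan has two parts. First, I will show that \propcdp{$\ell,s$} in fact forces the ``layer estimate'' $c_j(G)=o(n^k)$ uniformly over $G$ for every $j\ge s$; summing this over $j\ge s'$ then gives \propcdp{$\ell,s'$} for all $s'\ge s$, in particular \propcdp{$\ell,s+1$}. Second, a complementation identity will give \propcdp{$\ell,s$} $\Leftrightarrow$ \propcdp{$\ell,m+1-s$}. The first part yields the chain \propcdp{$\ell,1$} $\Rightarrow$ \propcdp{$\ell,2$} $\Rightarrow\cdots\Rightarrow$ \propcdp{$\ell,\binom{k}{\ell}$}, and the second (with $s=\binom{k}{\ell}$) closes the cycle, proving the theorem. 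We may assume $V(G)=V(H_n)$ throughout, the general case being identical after relativizing each construction below to $V(G)$.

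For the complementation identity, taking $G$ complete in \propcdp{$\ell,s$} shows $|E(H_n[U])|=p\binom{|U|}{k}+o(n^k)$ for every $U$; and since $|E(\overline{G}[T])|=m-|E(G[T])|$ for the complement $\overline{G}$, a $k$-set $T$ satisfies $|E(G[T])|\ge s$ if and only if it fails $|E(\overline{G}[T])|\ge m-s+1$. Subtracting the relevant counts from the previous estimate therefore turns \propcdp{$\ell,s$} for $G$ into \propcdp{$\ell,m+1-s$} for $\overline{G}$, and since $G\mapsto\overline{G}$ is a bijection the two properties are equivalent. For the layer estimate, fix $q\in(0,1)$ and let $G_q$ be obtained from $G$ by keeping each edge independently with probability $q$ --- again an $\ell$-graph on $V(H_n)$, hence a legal instance of \propcdp{$\ell,s$}. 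For every outcome of $G_q$ we have $|\sum_{j\ge s}c_j(G_q)|\le f(n)$; taking expectations converts this into $|\sum_{j=s}^{m}c_j(G)\,g_j(q)|\le f(n)$ for all $q\in(0,1)$ and all $G$, where $g_j(q):=\Pr[\,\mathrm{Bin}(j,q)\ge s\,]$ is the probability that a set of $j$ edges retains at least $s$ of them. A short computation using $\sum_{i=0}^{s-1}(-1)^i\binom{j}{i}=(-1)^{s-1}\binom{j-1}{s-1}$ shows that $g_j$ is a polynomial in $q$ of degree exactly $j$, so $g_s,\dots,g_m$ are linearly independent; choosing $q_1,\dots,q_{m-s+1}\in(0,1)$ for which the matrix $M=\bigl(g_j(q_i)\bigr)$ is invertible and inverting, we obtain $|c_j(G)|\le\|M^{-1}\|_{\infty}\,f(n)$ for each $j\in\{s,\dots,m\}$. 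As $M$ depends only on $k,\ell,s$, this is the desired uniform bound $c_j(G)=o(n^k)$.

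The main obstacle is the layer estimate. The point is that an inequality which controls only the single aggregate $\sum_{j\ge s}c_j(G)$ becomes, after averaging over random sparsifications of $G$, a whole one-parameter family of inequalities whose coefficient functions $g_j(q)$ are linearly independent, so that ordinary linear algebra pries apart the individual layers $c_j(G)$; and the $o(n^k)$ survives the matrix inversion precisely because $M$ is an absolute constant while the error in \propcdp{$\ell,s$} is uniform over $G$. The remaining ingredients --- the degree of $g_j$, the invertibility of $M$, the complementation bookkeeping, and the routine relativization needed when $V(G)\subsetneq V(H_n)$ --- present no real difficulty.
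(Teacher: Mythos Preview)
Your proof is correct and takes a genuinely different route from the paper.

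The paper establishes the cycle in the \emph{opposite} direction: it proves \propcdp{$\ell,s+1$} $\Rightarrow$ \propcdp{$\ell,s$} by a rather intricate combinatorial argument --- partition $V(G)$ into $k$ ordered parts, restrict $G$ to a prescribed set $\mathcal{R}$ of $s$ intersection patterns, add back a complete $\ell$-partite piece on one new pattern $I\notin\mathcal{R}$, and then run M\"obius inversion over subsets of parts to isolate the $k$-sets that are transverse to the partition --- and closes the loop with the same complementation step you use, giving \propcdp{$\ell,1$} $\Rightarrow$ \propcdp{$\ell,\binom{k}{\ell}$}. Your argument instead proves \propcdp{$\ell,s$} $\Rightarrow$ \propcdp{$\ell,s+1$} directly, by applying the hypothesis to the random sparsification $G_q$, averaging, and observing that the resulting one-parameter family of constraints has linearly independent coefficient polynomials $g_j(q)=\Pr[\mathrm{Bin}(j,q)\ge s]$ (distinct degrees $j$, as your leading-coefficient computation shows); inverting the evaluation matrix then isolates each layer $c_j(G)$. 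The complementation step is common to both proofs.

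Your approach is considerably shorter and more conceptual: it replaces the partition-plus-inclusion/exclusion machinery by a linear-algebra trick, and as a bonus it yields the stronger per-layer estimate $c_j(G)=o(n^k)$ for every $j\ge s$, uniformly in $G$. The paper's approach is fully deterministic and explicit, but that is not a real advantage here since your ``randomness'' is only a device for generating a separating family of test graphs; one could equally well average over deterministic sparsifications.
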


The proof of Theorem~\ref{thm:cdellsequiv} appears in Section~\ref{sec:cdells}.  Our next result is
that the expansion properties are arranged in a poset via partition refinement.  In particular,
\propexpandp{$\pi$} is a distinct property for each $\pi$.

\begin{definition}
  A partition $\pi' = m_1+\dots+m_r$ is a \emph{refinement} of a partition $\pi = k_1+\dots+k_t$ if
  there is a surjection $\phi : \{1,\dots,r\} \rightarrow \{1,\dots,t\}$ such that for every $1 \leq
  i \leq t$, $k_i = \sum_{j : \phi(j) = i} m_j$.  If $\pi'$ is a refinement of $\pi$, we write $\pi'
  \leq \pi$.  Also, for $\pi = k_1 + \dots + k_t$, let $\max \pi = \max_i k_i$.
\end{definition}

\begin{thm} \label{thm:mainexpand}
  \propexpandp{$\pi$} $\Rightarrow$ \propexpandp{$\pi'$} if and only if $\pi'$ is a refinement of
  $\pi$.
\end{thm}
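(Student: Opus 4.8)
The plan is to prove the two implications separately; the ``if'' direction is a short averaging argument, while the ``only if'' direction needs new hypergraph constructions and is where the difficulty lies.

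\emph{Sufficiency.} Suppose $\pi'$ refines $\pi$ via a surjection $\phi$. First I would reduce to the case of a single ``binary split.'' Since for each part $k_i$ of $\pi$ the parts $\{m_j : \phi(j)=i\}$ sum to $k_i$, one can pass from $\pi$ down to $\pi'$ by a chain $\pi=\sigma_0\ge\sigma_1\ge\dots\ge\sigma_m=\pi'$ in which each $\sigma_{s+1}$ is obtained from $\sigma_s$ by replacing one part $c$ with two parts $a,b$ with $a+b=c$ (peel off the summands one at a time within each class $\phi^{-1}(i)$). So it suffices to show: if $\pi=k_1+\dots+k_{t-1}+c$ and $\pi'=k_1+\dots+k_{t-1}+a+b$ with $a+b=c$, then \propexpandp{$\pi$} $\Rightarrow$ \propexpandp{$\pi'$}. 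Given families $S_i\subseteq\binom{V(H_n)}{k_i}$ for $i<t$, $A\subseteq\binom{V(H_n)}{a}$, and $B\subseteq\binom{V(H_n)}{b}$, set $N(w)=\bigl|\{(\alpha,\beta): \alpha\in A,\ \beta\in B,\ \alpha\cap\beta=\emptyset,\ \alpha\cup\beta=w\}\bigr|$ for $w\in\binom{V(H_n)}{c}$, and note $0\le N(w)\le\binom{c}{a}=:K$. Classifying each tuple counted by $e(S_1,\dots,S_{t-1},A,B)$ according to the set $w=\alpha\cup\beta$ gives the identity
\[
  e(S_1,\dots,S_{t-1},A,B)\;=\;\sum_{w\in\binom{V(H_n)}{c}}N(w)\,e(S_1,\dots,S_{t-1},\{w\}).
\]
Writing $N(w)=\sum_{j=1}^{K}\mathbf{1}[N(w)\ge j]$ and letting $W_j=\{w:N(w)\ge j\}$, the right side equals $\sum_{j=1}^{K}e(S_1,\dots,S_{t-1},W_j)$; applying \propexpandp{$\pi$} to each of these $K$ (a constant number of) terms and adding, it is $p\prod_{i<t}|S_i|\cdot\sum_j|W_j|+o(n^k)=p\prod_{i<t}|S_i|\cdot\sum_w N(w)+o(n^k)$. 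Finally $\sum_w N(w)$ counts the disjoint pairs $(\alpha,\beta)\in A\times B$, which is $|A||B|-O(n^{c-1})$, and since $\prod_{i<t}|S_i|=O(n^{k-c})$ the extra discrepancy is $O(n^{k-1})=o(n^k)$. Hence $e(S_1,\dots,S_{t-1},A,B)=p\,|A|\,|B|\prod_{i<t}|S_i|+o(n^k)$, which is \propexpandp{$\pi'$}.

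\emph{Necessity.} For the converse I must exhibit, whenever $\pi'$ is not a refinement of $\pi$, a sequence of $k$-uniform hypergraphs satisfying \propexpandp{$\pi$} but not \propexpandp{$\pi'$}. It helps to observe first that, by the sufficiency direction, the set of partitions $\rho$ for which a fixed sequence satisfies \propexpandp{$\rho$} is downward closed, and the set $\{\rho:\pi'\not\le\rho\}$ is also downward closed (if $\sigma\le\rho$ and $\pi'\le\sigma$ then $\pi'\le\rho$); so it is enough to build, for each proper partition $\pi'$ of $k$, a single sequence $\mathcal{H}^{\pi'}$ that fails \propexpandp{$\pi'$} yet satisfies \propexpandp{$\pi$} for every $\pi$ with $\pi'\not\le\pi$. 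The construction of $\mathcal{H}^{\pi'}$ is the heart of the matter: the idea is to produce a hypergraph that is ``random-like relative to $\pi$'' but hides a pattern visible only to the strictly finer partition $\pi'$ — one fixes a part $a$ of $\pi'$ (or, more precisely, a combinatorial certificate of the failure $\pi'\not\le\pi$) that cannot be assembled out of whole parts of $\pi$, takes a uniformly random $\pm1$-valued (or $[q]$-valued) function on the $a$-subsets of the vertex set, and declares a $k$-set to be an edge by a simple rule applied to the values of this function on its $a$-subsets. One then checks (i) \textbf{failure of \propexpandp{$\pi'$}}: explicit families $S'_1,\dots,S'_r$ (essentially level sets of the hidden function, bundled according to $\pi'$) for which $e(S'_1,\dots,S'_r)$ differs from $p\prod_j|S'_j|$ by $\Omega(n^k)$; and (ii) \textbf{\propexpandp{$\pi$} holds}: a second-moment (Fourier) estimate showing that for \emph{any} families $S_1,\dots,S_t$ with $S_i\subseteq\binom{V(H_n)}{k_i}$ the contribution of the hidden pattern averages out, exactly because no part of $\pi$ is ``compatible'' with the $a$-subsets carrying it.

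I expect step (ii) to be the main obstacle: one must identify the correct certificate of $\pi'\not\le\pi$ on which to key the construction, and then prove that the resulting hypergraph is $\pi$-quasirandom \emph{simultaneously} for all $\pi$ with $\pi'\not\le\pi$ (equivalently, for the possibly several incomparable maximal such $\pi$), which is a genuine concentration/cancellation statement rather than a formal manipulation. By contrast, the sufficiency direction and the reductions above are routine.
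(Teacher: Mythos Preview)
Your sufficiency argument is correct and takes a different route from the paper. The paper introduces an auxiliary ``partite'' version \proppartexpp{$\pi$} (disjoint $V(S_i)$'s), proves \proppartexpp{$\pi$} $\Rightarrow$ \propexpandp{$\pi$} by averaging over ordered vertex partitions, and then observes that \propexpandp{$\pi$} trivially implies \proppartexpp{$\pi'$} when $\pi'\le\pi$ (disjoint sets combine). Your layer-cake decomposition $N(w)=\sum_j\mathbf{1}[N(w)\ge j]$ avoids this detour entirely and applies \propexpandp{$\pi$} to the $K=\binom{c}{a}$ level sets $W_j$ directly; it is shorter and more elementary, at the cost of handling only one binary split at a time.

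For necessity your high-level reduction (a single sequence $\mathcal{H}^{\pi'}$ failing \propexpandp{$\pi'$} and satisfying \propexpandp{$\pi$} for all $\pi$ with $\pi'\not\le\pi$) is exactly what the paper does, but your sketched construction is off. Taking a random $\pm1$ function on the $a$-subsets for a single part $a$ of $\pi'$ and declaring a $k$-set an edge by a symmetric rule on its $a$-subsets is essentially the paper's $A_a(n,1/2)$, and that hypergraph satisfies \propexpandp{$\pi$} for \emph{every} $\pi$, including $\pi'$ itself (it is designed to separate \propcd{$\cdot$} levels, not expansion levels). The point is that a single part size $a$ cannot witness ``$\pi'\not\le\pi$''; the whole shape of $\pi'$ must be encoded. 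The paper's construction $B_{\vec{\pi}'}(n,p)$ does this by fixing an ordering of $\pi'$ and of $[n]$, giving each part $k'_i$ its own independent random colouring $c_i$ of $\binom{[n]}{k'_i}$, and making a $k$-set an edge according to $\sum_i c_i(W_i)\pmod b$, where $W_1,\dots,W_t$ is the ordered $\pi'$-decomposition of that $k$-set. Failure of \propexpandp{$\pi'$} is then witnessed by colour classes on consecutive blocks, while the verification that \propexpandp{$\pi$} holds for $\pi'\not\le\pi$ goes through the equivalence \propexpandp{$\pi$} $\Leftrightarrow$ \propcountp{$\pi$-linear} from the authors' earlier work: for a $\pi$-linear $F$, the last edge in the linear ordering always contains some $\pi'$-block not contained in any earlier edge (this is precisely where $\pi'\not\le\pi$ is used), giving an independent colour and hence the correct count by induction. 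Your ``second-moment/Fourier'' plan for step~(ii) would need a comparable structural lever; without the counting characterisation (or an equivalent device tying the analysis to the full partition $\pi'$) the verification does not go through.
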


Having determined the poset of implications for the properties \propexpandp{$\pi$}, we now give the
relationships between \propcdp{$\ell$} and \propexpandp{$\pi$} for all $\ell$ and $\pi$.

\begin{thm} \label{thm:maincdtoexp}
  Let $1 \leq \ell \leq k -1$ and $\pi = k_1 + \dots + k_t$.  \propcdp{$\ell$} $\Rightarrow$
  \propexpandp{$\pi$} if and only if $k_i \leq \ell$ for all $i$.  Also, \propexpandp{$\pi$}
  $\Rightarrow$ \propcdp{$1$} for all $\pi$ but \propexpandp{$\pi$} $\not\Rightarrow$
  \propcdp{$\ell$} for any $\pi$ and $\ell \geq 2$.
\end{thm}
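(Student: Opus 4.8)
The plan is to prove the theorem in three logically separate pieces: the implication \propcdp{$\ell$} $\Rightarrow$ \propexpandp{$\pi$} when $\max\pi \le \ell$; the non-implication \propcdp{$\ell$} $\not\Rightarrow$ \propexpandp{$\pi$} when some $k_i > \ell$; and the two statements involving \propcdp{$1$} and \propcdp{$\ell$} for $\ell\ge 2$ on the reverse side.

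For the positive direction, assume \propcdp{$\ell$} and fix a proper partition $\pi = k_1 + \dots + k_t$ with $k_i \le \ell$ for all $i$. Given sets $S_i \subseteq \binom{V(H_n)}{k_i}$, the first step is to encode each $S_i$ by an auxiliary $\ell$-uniform hypergraph: since $k_i \le \ell$, pad each $k_i$-set to $\ell$-sets and build a single $\ell$-uniform hypergraph $G$ on $V(H_n)$ (possibly after taking a product/blow-up of the vertex set, as in the graph case, to keep the $S_i$ "independent"). The count $e(S_1,\dots,S_t)$ should then be expressible, up to $o(n^k)$ error, as a weighted count of $k$-cliques of $G$ that lie in $E(H_n)$, and \propcdp{$\ell$} controls exactly that. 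The cleanest route is probably to cite Lemma~\ref{lem:cdtoexp}, which the excerpt advertises as "a more general statement" proving \propcdp{$\ell$} $\Rightarrow$ \propexpandp{$1+\dots+1$}; the work here is to check that its proof goes through verbatim once each $k_i$-set is thickened to an $\ell$-set, the constraint $k_i \le \ell$ being precisely what makes the thickening possible. The main obstacle in this direction is bookkeeping: ensuring that the clique structure of $G$ recovers the tuple-count $e(S_1,\dots,S_t)$ and not some overcounted or undercounted variant, and that the padding does not secretly require knowing more than $\ell$-wise information about $H_n$.

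For the non-implication when $k_j > \ell$ for some $j$, the plan is to exhibit a hypergraph sequence satisfying \propcdp{$\ell$} but violating \propexpandp{$\pi$}. Since \propcdp{$\ell'$} $\Rightarrow$ \propcdp{$\ell$} for $\ell' \ge \ell$ (the Hasse diagram shows the CD chain), the hardest case is $k_j = \ell+1$, and it suffices to handle $\pi$ with a part of size exactly $\ell+1$; a construction for that case can be lifted to larger parts. The natural candidate is a random-like hypergraph built from a genuinely $(\ell+1)$-wise structure — e.g.\ take $H_n$ whose edge indicator depends on a balanced $\pm1$ function evaluated on an $(\ell+1)$-subset of each $k$-set (an Erd\H os--Hajnal / Fox--Sudakov style construction). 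Such $H_n$ is $\ell$-clique-discrepant because any $\ell$-uniform $G$ only "sees" $\ell$-wise data, which the construction randomizes away, yet one can choose $S_j \subseteq \binom{V}{\ell+1}$ aligned with the hidden structure so that $e(S_1,\dots,S_t)$ is bounded away from $p\prod|S_i|$. The main obstacle is designing the construction so that the clique-discrepancy computation genuinely gives cancellation for every $\ell$-uniform $G$ (a second-moment or character-sum estimate) while the expansion test set $S_j$ can be explicitly correlated with the planted $(\ell+1)$-ary object.

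For the last two claims: \propexpandp{$\pi$} $\Rightarrow$ \propcdp{$1$} follows from \propexpandp{$\pi$} $\Rightarrow$ \propexpandp{$1+\dots+1$} (Theorem~\ref{thm:mainexpand}, since $1+\dots+1$ refines every $\pi$) together with the stated equivalence \propexpandp{$1+\dots+1$} $\Leftrightarrow$ \propcdp{$1$}. The non-implication \propexpandp{$\pi$} $\not\Rightarrow$ \propcdp{$\ell$} for $\ell \ge 2$ is established by a single construction: since $1+\dots+1$ is the weakest expansion property and \propcdp{$2$} is the strongest relevant CD property among $\ell\ge2$, it is enough to find a sequence satisfying \propexpandp{$1+\dots+1$} but failing \propcdp{$2$}. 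The classical Erd\H os--Hajnal construction referenced in the introduction (satisfying \propdiscp $=$ \propexpandp{$1+\dots+1$} but failing \propcountp{All}) is essentially of this type; one refines the argument to show it already fails \propcdp{$2$} by producing a $2$-uniform $G$ whose $k$-cliques are not evenly split by $E(H_n)$. The only obstacle here is verifying that the failure of counting can be localized to a failure of clique-discrepancy with $\ell=2$ rather than some larger $\ell$, which is immediate because \propcdp{$2$} sits below all \propcdp{$\ell$}, $\ell\ge 2$, in the poset.
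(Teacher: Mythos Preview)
Your outline matches the paper's decomposition, and the first three pieces are essentially right: Lemma~\ref{lem:cdtoexp} already proves the full positive implication (for every $\pi$ with $\max\pi\le\ell$, not just $\pi=1+\dots+1$, so no ``thickening'' extension is needed); the paper's counterexample for \propcdp{$\ell$} $\not\Rightarrow$ \propexpandp{$\pi$} when $\max\pi>\ell$ is $B_{(\ell+1,1,\dots,1)}(n,p)$, built from a random colouring of $(\ell+1)$-sets exactly as you anticipate (Lemmas~\ref{lem:constrBsatisfiesCD} and~\ref{lem:constrBfailsexp}); and \propexpandp{$\pi$} $\Rightarrow$ \propcdp{$1$} goes via refinement to $1+\dots+1$ as you say.

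There is, however, a genuine gap in your last paragraph. You claim that ``since $1+\dots+1$ is the weakest expansion property \dots\ it is enough to find a sequence satisfying \propexpandp{$1+\dots+1$} but failing \propcdp{$2$}''. This reduction is backwards on the expansion side. A sequence that satisfies only the \emph{weakest} expansion property says nothing about whether it satisfies the stronger ones; such a sequence would establish merely \propexpandp{$1+\dots+1$} $\not\Rightarrow$ \propcdp{$\ell$}, not the claim for arbitrary $\pi$. What is actually needed is a sequence that satisfies \emph{every} \propexpandp{$\pi$} (equivalently, every maximal two-part expansion $k_1+k_2$) yet fails \propcdp{$2$}. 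The paper does this with $A_2(n,p)$: Lemma~\ref{lem:constrAsatisfiesexp} verifies the correct $C_{k_1+k_2,4}$ count for all $k_1+k_2=k$, so by Theorem~\ref{thm:piquasiiscounting} and Lemma~\ref{lem:exptoexp} the sequence satisfies \propexpandp{$\pi$} for all $\pi$, while Lemma~\ref{lem:constrAfailsCD} shows it fails \propcdp{$2$}. The Erd\H os--Hajnal construction you invoke is essentially $A_2$ and does enjoy this full expansion, but your stated logic does not justify restricting attention to $\pi=1+\dots+1$. (As a side note, \propcdp{$2$} is the \emph{weakest} clique discrepancy among $\ell\ge 2$, not the strongest; your conclusion that failing it suffices to fail all higher $\ell$ is correct, only the label is inverted.)
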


\begin{table}[t] % {{{
  \centering
  \begin{tabular}{|l|l|l|}
    \hline
    Range & Result & Proof
    \\ \hline

    $\pi' \leq \pi$ &
    \propexpandp{$\pi$} $\Rightarrow$ \propexpandp{$\pi'$} &
    Theorem~\ref{thm:mainexpand}, Lemma~\ref{lem:exptoexp}
    \\ \hline

    $\pi' \not\leq \pi$ &
    \propexpandp{$\pi$} $\not\Rightarrow$ \propexpandp{$\pi'$} &
    Theorem~\ref{thm:mainexpand}, Lemma~\ref{lem:expnotexp}
    \\ \hline

    all $\pi$ &
    \propexpandp{$\pi$} $\Rightarrow$ \propcdp{$1$} &
    Theorem~\ref{thm:maincdtoexp}
    \\ \hline

    all $\pi$, $\ell \geq 2$ &
    \propexpandp{$\pi$} $\not\Rightarrow$ \propcdp{$\ell$} &
    Theorem~\ref{thm:maincdtoexp}, Lemma~\ref{lem:expnotcd}
    \\ \hline

    all $\pi$, $\ell$ &
    \propexpandh{$\pi$} $\not\Rightarrow$ \propdev{$\ell$} &
    Theorem~\ref{thm:mainwithdev}, Lemma~\ref{lem:expnotdev}
    \\ \hline

    & &
    \\ \hline

    $2 \leq \ell \leq k-1$ &
    \propcdp{$\ell$} $\Rightarrow$ \propcdp{$\ell-1$} &
    Chung~\cite{hqsi-chung90}
    \\ \hline

    $1 \leq \ell \leq k-2$ &
    \propcdp{$\ell$} $\not\Rightarrow$ \propcdp{$\ell+1$} &
    Chung~\cite{hqsi-chung90} for $p = \frac{1}{2}$, Lemma~\ref{lem:cdnotcd}
    \\ \hline

    $\max \pi \leq \ell$ &
    \propcdp{$\ell$} $\Rightarrow$ \propexpandp{$\pi$} &
    Theorem~\ref{thm:maincdtoexp}, Lemma~\ref{lem:cdtoexp}
    \\ \hline

    $\max \pi > \ell$ &
    \propcdp{$\ell$} $\not\Rightarrow$ \propexpandp{$\pi$} &
    Theorem~\ref{thm:maincdtoexp}, Lemma~\ref{lem:cdnotexp}
    \\ \hline

     &
    \propcdh{$k-1$} $\Rightarrow$ \propdev{$k$} &
    Chung and Graham~\cite{hqsi-chung90-2}
    \\ \hline

    all $\ell$ &
    \propcdh{$k-2$} $\not\Rightarrow$ \propdev{$\ell$} &
    Theorem~\ref{thm:mainwithdev}, Lemma~\ref{lem:cdnotdev}
    \\ \hline

    & &
    \\ \hline

    $3 \leq \ell \leq k$ &
    \propdev{$\ell$} $\Rightarrow$ \propdev{$\ell-1$} &
    Chung~\cite{hqsi-chung90}, Lemma~\ref{lem:devtodev}
    \\ \hline

    $2 \leq \ell \leq k -1$ &
    \propdev{$\ell$} $\not\Rightarrow$ \propdev{$\ell+1$} &
    Proposition~\ref{prop:devnotdev}, Lemma~\ref{lem:devnotdev}
    \\ \hline

    $2 \leq \ell \leq k$ &
    \propdev{$\ell$} $\Rightarrow$ \propcdh{$\ell-1$} &
    Chung~\cite{hqsi-chung90}, Lemma~\ref{lem:devtocd}
    \\ \hline

    $2 \leq \ell \leq k-1$ &
    \propdev{$\ell$} $\not\Rightarrow$ \propcdh{$\ell$} &
    Chung~\cite{hqsi-chung90}, Lemma~\ref{lem:devnotcd}
    \\ \hline

    all $\pi$, $\ell$ &
    \propdev{$\ell$} $\Rightarrow$ \propexpandh{$\pi$} &
    Theorem~\ref{thm:mainwithdev}, Lemma~\ref{lem:devtoexp}
    \\ \hline
  \end{tabular}
  \caption{Relationships between quasirandom properties}
  \label{tab:main}
\end{table} % }}}

Next, we determine the relationships between \propdev{$\ell$} and the other properties.  Since
\propdev{$\ell$} has been studied primarily for $p = \frac{1}{2}$, we only study the relationships
between \propdev{$\ell$} and \propcdh{$\ell$} and \propexpandh{$\pi$}.

\begin{thm} \label{thm:mainwithdev}
  For all $2 \leq \ell \leq k$ and all $\pi$, we have \propdev{$\ell$} $\Rightarrow$
  \propexpandh{$\pi$}.  Furthermore, \propcdh{$k-1$} $\Rightarrow$ \propdev{$k$} but no expansion and
  no other clique discrepency implies \propdev{$\ell$} for any $\ell$.
\end{thm}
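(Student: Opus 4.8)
The plan is to obtain Theorem~\ref{thm:mainwithdev} from four pieces, using the earlier results freely. \emph{First}, for the implication \propdev{$\ell$} $\Rightarrow$ \propexpandh{$\pi$}, I would reduce to the extreme cases: iterating Lemma~\ref{lem:devtodev} gives \propdev{$\ell$} $\Rightarrow$ \propdev{$2$}, and since every proper partition is a refinement of some two-part partition (split off one block, merge the rest), Lemma~\ref{lem:exptoexp} reduces the whole implication to proving \propdev{$2$} $\Rightarrow$ \propexpandh{$(a,k-a)$} for $1 \le a \le k-1$. Fix $S \subseteq \binom{V}{a}$ and $T \subseteq \binom{V}{k-a}$, put $f = 1 - 2\cdot 1_{E(H_n)}$ (a $\{-1,1\}$-valued function on $k$-sets), and observe that, up to an additive $O(n^{k-1})$ coming from intersecting pairs, $e(S,T) - \tfrac12|S||T| = -\tfrac12\sum_{s \in S,\, t \in T,\, s \cap t = \emptyset} f(s \cup t)$. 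Group this sum by a common core $\vec{x}$ of $k-2$ vertices, split as $\vec{x} = (\vec{x}^{(1)}, \vec{x}^{(2)})$ with $|\vec{x}^{(1)}| = a-1$ and $|\vec{x}^{(2)}| = k-a-1$, together with a distinguished vertex $u$ of $s$ and $w$ of $t$, so that $s = \vec{x}^{(1)} \cup \{u\}$ and $t = \vec{x}^{(2)} \cup \{w\}$.

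For each fixed $\vec{x}$ the inner sum is a bilinear form $\sum_{u,w} A_{\vec{x}^{(1)}}(u)\, B_{\vec{x}^{(2)}}(w)\, g_{\vec{x}}(u,w)$, where $g_{\vec{x}}(u,w) = f(\vec{x} \cup \{u,w\})$ is a $\pm 1$ kernel and $A, B$ are $0$–$1$ valued. Two applications of the Cauchy–Schwarz inequality — double $u$, then double the pair $(w,w')$ — bound its absolute value by $\bigl(\sum_u A_{\vec{x}^{(1)}}(u)\bigr)^{1/2}\bigl(\sum_w B_{\vec{x}^{(2)}}(w)\bigr)^{1/2} Q_{\vec{x}}^{1/4}$, where
\[
  Q_{\vec{x}} \;=\; \sum_{u,u',w,w'} g_{\vec{x}}(u,w)\, g_{\vec{x}}(u,w')\, g_{\vec{x}}(u',w)\, g_{\vec{x}}(u',w') \;=\; \sum_{w,w'}\Bigl(\sum_u g_{\vec{x}}(u,w)\, g_{\vec{x}}(u,w')\Bigr)^{2} \;\ge\; 0
\]
is the ``$C_4$-deviation'' of the link of $\vec{x}$. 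Summing over $\vec{x}$ and applying Hölder's inequality with exponents $\tfrac{8}{3}, \tfrac{8}{3}, 4$, then using $\sum_{\vec{x}} \bigl(\sum_u A_{\vec{x}^{(1)}}(u)\bigr) = O(n^{k-1})$ and $\sum_{\vec{x}} \bigl(\sum_w B_{\vec{x}^{(2)}}(w)\bigr) = O(n^{k-1})$ (so the two ``degree'' factors appear only to the first power) together with the identity $\sum_{\vec{x}} Q_{\vec{x}} = o(n^{k+2})$ — this sum is, up to a constant factor and negligible degenerate terms, the left-hand side of \propdev{$2$} — a short computation gives $e(S,T) = \tfrac12|S||T| + o(n^k)$, which is \propexpandh{$(a,k-a)$}. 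This is Lemma~\ref{lem:devtoexp}.

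\emph{Second}, \propcdh{$k-1$} $\Rightarrow$ \propdev{$k$} is due to Chung and Graham~\cite{hqsi-chung90-2}; with Lemma~\ref{lem:devtocd} (which supplies the reverse) and the chain \propdev{$k$} $\Rightarrow \cdots \Rightarrow$ \propdev{$2$} above, \propcdh{$k-1$} implies every \propdev{$\ell$}. \emph{Third}, by Lemma~\ref{lem:expnotdev} I would exhibit a hypergraph sequence that satisfies \propexpandh{$\pi$} for every proper $\pi$ (equivalently, every two-part $(a,k-a)$) but fails \propdev{$2$}; since \propdev{$\ell$} $\Rightarrow$ \propdev{$2$}, this gives \propexpandh{$\pi$} $\not\Rightarrow$ \propdev{$\ell$} for all $\pi$ and $\ell$. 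Conceptually such a sequence must keep all bilinear expansion counts correct while making the octahedral sum that defines \propdev{$2$} of order $n^{k+2}$; I anticipate an algebraic construction whose expansion follows from a character/eigenvalue estimate and whose octahedral sum is computed directly. \emph{Fourth}, since \propcdh{$\ell$} $\Rightarrow$ \propcdh{$\ell-1$} it suffices to rule out \propcdh{$k-2$} $\Rightarrow$ \propdev{$\ell$}: Lemma~\ref{lem:cdnotexp} (from Theorem~\ref{thm:maincdtoexp}) provides a sequence with \propcdh{$k-2$} but not \propexpandh{$(k-1,1)$}, as $\max(k-1,1) = k-1 > k-2$; if \propcdh{$k-2$} $\Rightarrow$ \propdev{$\ell$} held, the first part of the theorem would force \propcdh{$k-2$} $\Rightarrow$ \propexpandh{$(k-1,1)$}, a contradiction. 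Hence no clique discrepancy other than \propcdh{$k-1$} implies any \propdev{$\ell$} (Lemma~\ref{lem:cdnotdev}).

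\emph{Main obstacle.} The delicate step is the estimate behind Lemma~\ref{lem:devtoexp}: a naive Cauchy–Schwarz/Hölder argument that squares the degree quantities $\sum_u A_{\vec{x}^{(1)}}(u)$ only yields $o(n^{k+1})$, so one must keep those factors to the first power and tune the Hölder exponents so that their aggregate size $O(n^{k-1})$, together with the $o(n^{k+2})$ savings supplied by \propdev{$2$}, exactly produces the bound $o(n^k)$. The other substantial piece of work is the construction in Lemma~\ref{lem:expnotdev}, which is where this part of the poset genuinely departs from Chung's methods.
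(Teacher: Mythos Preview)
Your proposal is correct, and the overall architecture (reduce to \propdev{$2$} $\Rightarrow$ \propexpandh{$k_1+k_2$}, cite Chung--Graham for \propcdh{$k-1$} $\Rightarrow$ \propdev{$k$}, then use constructions for the two non-implications) matches the paper.  There are, however, two places where your route genuinely differs.

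\medskip
\textbf{The implication \propdev{$2$} $\Rightarrow$ \propexpandh{$k_1+k_2$}.}  Your argument is a direct ``expander mixing lemma'' computation: Cauchy--Schwarz twice on the bilinear form to produce the link-$C_4$ quantity $Q_{\vec x}$, then H\"older over $\vec x$ (your exponents $\tfrac83,\tfrac83,4$ work; so do others).  The paper instead avoids a bespoke computation by reusing the machinery already built for Chung's proofs: encode $S_1,S_2$ as sets $P_1,P_2\subseteq V(H)^k$ that are \emph{complete} in coordinates $k$ and $k-1$ respectively, apply the restriction lemma (Lemma~\ref{lem:subdev}) to get $\dev_{2,P_1\cap P_2}\le \dev_2=o(n^{k+2})$, and then apply Lemma~\ref{lem:devcauchy} twice to reduce to $\dev_{0,P_1\cap P_2}=o(n^k)$, which unwinds to the expansion bound.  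Both are valid; the paper's version is more modular (it is literally the same two lemmas that prove \propdev{$\ell$}$\Rightarrow$\propcdh{$\ell-1$}), while yours is self-contained and makes explicit that the octahedral count is what controls bilinear expansion.  Your ``main obstacle'' paragraph somewhat overstates the delicacy: once one has Lemmas~\ref{lem:subdev} and~\ref{lem:devcauchy}, there is no tuning of exponents at all.

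\medskip
\textbf{The non-implication \propcdh{$k-2$} $\not\Rightarrow$ \propdev{$\ell$}.}  You argue indirectly: if \propcdh{$k-2$} $\Rightarrow$ \propdev{$\ell$}, then by the first part \propcdh{$k-2$} $\Rightarrow$ \propexpandh{$(k-1)+1$}, contradicting Lemma~\ref{lem:cdnotexp}.  This is correct and economical.  The paper instead proves it directly (Lemma~\ref{lem:cdnotdev}): the random construction $B_{(k-1,1)}(n,1/2)$ is shown to satisfy \propcdh{$k-2$} and to fail \propdev{$2$} outright.  Your route is a nice observation that this separate verification is unnecessary once the rest of the poset is in place.

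\medskip
One minor inaccuracy: your speculation that Lemma~\ref{lem:expnotdev} is handled by an ``algebraic construction whose expansion follows from a character/eigenvalue estimate'' is off---the paper uses a random construction $D(n,1/2)$ (built from $G^{(k-1)}(n,\tfrac12)$ with randomly chosen $(k-2)$-heads) and verifies expansion via a four-cycle count and the second moment method.
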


Lastly, we prove that \propdev{$\ell$} $\not\Rightarrow$ \propdev{$\ell+1$}.  When combined with the
implication \propdev{$\ell$} $\Rightarrow$ \propdev{$\ell-1$} (Chung~\cite{hqsi-chung90}), this
proves that the properties \propdev{$\ell$} form a chain of distinct hypergraph quasirandom
properties.

\begin{prop} \label{prop:devnotdev}
  For all $2 \leq \ell \leq k -1$, we have \propdev{$\ell$} $\not\Rightarrow$ \propdev{$\ell+1$}.
\end{prop}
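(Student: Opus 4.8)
The plan is to produce, for each $2 \le \ell \le k-1$, a single sequence $\mathcal{H} = \{H_n\}$ of $k$-uniform hypergraphs that satisfies \propdev{$\ell$} but fails \propdev{$\ell+1$}; then Proposition~\ref{prop:devnotdev} follows. The construction is a ``parity lift'' of a quasirandom $\ell$-uniform hypergraph. Fix a sequence $\{\chi_n\}$ of $\ell$-uniform hypergraphs of density $\tfrac12$ that satisfies \propdev{$\ell$} (a uniformly random such sequence works almost surely, or one may take an explicit quasirandom construction), and regard $\chi_n$ as a sign function $\chi_n : \binom{[n]}{\ell} \to \{-1,1\}$ with $\chi_n(f) = -1$ iff $f$ is an edge. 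Put $V(H_n) = [n]$ and, for $e \in \binom{[n]}{k}$, set $g(e) = \prod_{f \in \binom{e}{\ell}} \chi_n(f)$, declaring $e \in E(H_n)$ iff $g(e) = -1$. For random $\chi_n$ each $g(e)$ is a uniform sign, so $\mathbb{E}|E(H_n)| = \tfrac12\binom{n}{k}$ and a routine second-moment bound makes $H_n$ have density $\tfrac12 + o(1)$, as one expects of a $\tfrac12$-quasirandom property. The point of recording $g$ is that $g(e) = (-1)^{[e \in E(H_n)]}$, hence $(-1)^{|\mathcal{O}[\vec x,\vec y] \cap E(H_n)|} = \prod_{\vec i} g(e_{\vec i})$, where the $e_{\vec i}$ are the hyperedges of the squashed octahedron.

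The heart of the argument is to expand $\prod_{\vec i} g(e_{\vec i})$ for a level-$r$ octahedron with stem $x_1,\dots,x_{k-r}$ and legs $(y_{1,0},y_{1,1}),\dots,(y_{r,0},y_{r,1})$, where $e_{\vec i} = \{x_1,\dots,x_{k-r}\} \cup \{y_{1,i_1},\dots,y_{r,i_r}\}$ for $\vec i \in \{0,1\}^r$. Substituting the definition of $g$ and regrouping, $\prod_{\vec i} g(e_{\vec i}) = \prod_f \chi_n(f)^{N(f)}$, where $f$ runs over the $\ell$-subsets contained in some $e_{\vec i}$ and $N(f) = \#\{\vec i : f \subseteq e_{\vec i}\}$. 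Because each $e_{\vec i}$ contains exactly one endpoint of every leg, such an $f$ is specified by the set $A \subseteq \{1,\dots,k-r\}$ of stem coordinates it uses, the set $B \subseteq \{1,\dots,r\}$ of legs it uses, and a choice of endpoint on each leg in $B$, with $|A| + |B| = \ell$; and $f \subseteq e_{\vec i}$ iff $\vec i$ matches $f$ on the coordinates in $B$, so $N(f) = 2^{r-|B|} = 2^{r-\ell+|A|}$. Therefore $\chi_n(f)^{N(f)} = 1$ unless $r - \ell + |A| = 0$, that is unless $r = \ell$ and $A = \varnothing$, in which case $f$ is one of the $2^\ell$ ``corner'' sets $\{y_{1,c_1},\dots,y_{\ell,c_\ell}\}$ and occurs with multiplicity one.

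This dichotomy yields both halves. For $r = \ell+1$ every exponent $N(f) = 2^{1+|A|}$ is even, so $\prod_{\vec i} g(e_{\vec i}) = 1$ for every octahedron on $k+\ell+1$ distinct vertices; since the tuples with a coincidence number $O(n^{k+\ell})$, the \propdev{$\ell+1$} sum equals $(1-o(1))\,n^{k+\ell+1}$, which is not $o(n^{k+(\ell+1)})$, so $\mathcal{H}$ fails \propdev{$\ell+1$} --- and this half uses nothing about $\chi_n$. For $r = \ell$ the surviving factors are precisely the corner sets and involve none of the stem vertices, so up to the $O(n^{k+\ell-1})$ contribution of degenerate tuples the \propdev{$\ell$} sum of $H_n$ equals $n^{k-\ell} \cdot \sum_{y_{1,0},\dots,y_{\ell,1}} \prod_{\vec c \in \{0,1\}^\ell} \chi_n(\{y_{1,c_1},\dots,y_{\ell,c_\ell}\})$, and the inner sum is exactly the deviation sum of the $\ell$-uniform hypergraph $\chi_n$. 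Since $\{\chi_n\}$ satisfies \propdev{$\ell$}, this is $n^{k-\ell} \cdot o(n^{2\ell}) = o(n^{k+\ell})$, so $\mathcal{H}$ satisfies \propdev{$\ell$}.

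The main obstacle is the bookkeeping in the middle step: parametrizing the $\ell$-subsets $f$ of the octahedron correctly, evaluating $N(f)$, and checking that in the one surviving case ($r = \ell$, $A = \varnothing$) the residual product is exactly the $\ell$-uniform deviation sum of $\chi_n$ and does not depend on the stem, so the $n^{k-\ell}$ factor simply pulls out. The remaining pieces --- bounding degenerate tuples and using that a random (or explicit quasirandom) $\ell$-uniform hypergraph sequence satisfies \propdev{$\ell$} --- are routine.
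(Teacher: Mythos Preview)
Your construction is exactly the paper's $A_\ell(n,1/2)$ (up to complementation: the paper makes a $k$-set an edge when the parity is even, you when it is odd), and your argument that the construction fails \propdev{$\ell+1$} is the same as the paper's Lemma~\ref{lem:constrAfailsdev}, rewritten multiplicatively in $\{\pm 1\}$ rather than additively in $\mathbb{Z}/2\mathbb{Z}$. The one genuine difference is in the verification of \propdev{$\ell$}. The paper (Lemma~\ref{lem:constrAsatdev}) argues probabilistically: for each non-degenerate level-$\ell$ octahedron it observes that the corner set $\{y_{1,0},\dots,y_{\ell,0}\}$ is tested by a single tuple, so conditioning on everything else the parity is uniform, and then applies the second moment method directly to the deviation sum. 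You instead push the parity computation one step further to obtain the exact identity $\prod_{\vec i} g(e_{\vec i}) = \prod_{\vec c}\chi_n(\{y_{1,c_1},\dots,y_{\ell,c_\ell}\})$, which shows that (up to degenerate tuples) $\dev_\ell(H_n) = n^{k-\ell}\cdot \dev_\ell(\chi_n)$, and then invoke \propdev{$\ell$} for the $\ell$-uniform sequence $\{\chi_n\}$. Your route is slightly more structural: it isolates exactly why the construction works and shows that any $\ell$-uniform $\chi_n$ satisfying \propdev{$\ell$} (random or explicit) can be used, whereas the paper's argument is tied to the randomness of the coloring. Both are correct and of comparable length.
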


The proofs of Theorems~\ref{thm:mainexpand},~\ref{thm:maincdtoexp},~\ref{thm:mainwithdev}, and
Proposition~\ref{prop:devnotdev} appear in Sections~\ref{sec:implications},~\ref{sec:constructions},
and~\ref{sec:notimplications}.  Together with results of Chung~\cite{hqsi-chung90} and Chung and
Graham~\cite{hqsi-chung90-2}, these theorems complete the characterization between
\propexpandp{$\pi$}, \propcdp{$\ell$}, and \propdev{$\ell$} for all $\ell$ and $\pi$.
Table~\ref{tab:main} summarizes these results and states where each piece is proved.
Figure~\ref{fig:poset} shows a diagram of the relationships for $k = 6$.

The remainder of this paper is organized as follows.  In Section~\ref{sec:implications}, we prove
the implications in Table~\ref{tab:main} (Lemmas~\ref{lem:exptoexp},~\ref{lem:cdtoexp},
and~\ref{lem:devtoexp}).  In Section~\ref{sec:constructions}, we define three families of
constructions which are used to show the separation of quasirandom properties, and in
Section~\ref{sec:notimplications} we use these constructions to prove all the negative implications
in Table~\ref{tab:main}.  Section~\ref{sec:cdells} contains the proof of
Theorem~\ref{thm:cdellsequiv}.  Lastly, Appendix~\ref{sec:chungdeviation} contains for completeness
some proofs of results of Chung~\cite{hqsi-chung90} that are used in this paper.   The subscript $p$
on the quasirandom properties is dropped if it is clear from context.

\section{Implications} % {{{2
\label{sec:implications}

In this section, we prove the implications in Table~\ref{tab:main}.

\subsection{Expansion} % {{{2
\label{sub:implicationsexp}

Our goal in this subsection is to prove Lemma~\ref{lem:exptoexp} below.
First, we introduce a variant of \propexpand{$\pi$} where the sets are disjoint.  Let $\pi = k_1 +
\dots + k_t$ be a proper partition of $k$.  If $H$ is a hypergraph and $S_1,\dots,S_t$ are sets such
that $S_i \subseteq \binom{V(H)}{k_i}$, denote by $V(S_i) = \cup_{s \in S_i} s$ and call
$S_1,\dots,S_t$ \emph{disjoint} if $V(S_i) \cap V(S_j) = \emptyset$ for $i \neq j$.  Let
$\mathcal{H} = \{H_n\}_{n\rightarrow\infty}$ be a sequence of $k$-uniform hypergraphs such that
$|V(H_n)| = n$.  Define the following property of the sequence $\mathcal{H}$.

\begin{itemize}
  \item \proppartexpp{$\pi$}: For all $S_i \subseteq \binom{V(H_n)}{k_i}$ where $S_1,\dots,S_t$ are
    disjoint,
    \begin{align*} 
      e(S_1,\dots,S_t) =  p \prod_{i=1}^t \left| S_i \right| + o\left(n^{k}\right)
    \end{align*}
    where $e(S_1,\dots,S_t)$ is the number of tuples $(s_1,\dots,s_t)$ such that $s_i \in S_i$ for
    all $i$ and $s_1 \cup \dots \cup s_t \in E(H_n)$.
\end{itemize}

\begin{lemma} \label{lem:partitetonormal}
  \proppartexpp{$\pi$} $\Rightarrow$ \propexpandp{$\pi$}.
\end{lemma}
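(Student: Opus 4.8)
The plan is to derive the general expansion property from its disjoint counterpart by a random partition (random coloring) argument. Fix a proper partition $\pi = k_1 + \dots + k_t$ (so $t \geq 2$) and arbitrary sets $S_i \subseteq \binom{V(H_n)}{k_i}$ for which we must estimate $e(S_1,\dots,S_t)$. I would color each vertex of $V(H_n)$ independently and uniformly at random with one of $t$ colors, obtaining a random partition $V(H_n) = V^{(1)} \cup \dots \cup V^{(t)}$, and then set $S_i' = \{ s \in S_i : s \subseteq V^{(i)} \}$. Since $V(S_i') \subseteq V^{(i)}$ and the $V^{(i)}$ are pairwise disjoint, the sets $S_1',\dots,S_t'$ are disjoint, so \proppartexpp{$\pi$} applies to them. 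Writing $f(n) = o(n^k)$ for the fixed function that witnesses \proppartexpp{$\pi$}, this gives $|e(S_1',\dots,S_t') - p\prod_i |S_i'|| \leq f(n)$ for \emph{every} outcome of the coloring; taking expectations then bounds $|\mathbb{E}[e(S_1',\dots,S_t')] - p\,\mathbb{E}[\prod_i |S_i'|]|$ by $f(n)$. So the task reduces to evaluating those two expectations.

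Both expectations are controlled by one observation: for a tuple $(s_1,\dots,s_t) \in S_1 \times \dots \times S_t$ whose parts are pairwise vertex-disjoint, $\Pr[s_i \subseteq V^{(i)} \text{ for all } i] = t^{-k}$ (the $\sum_i k_i = k$ vertices involved are distinct, each needing a prescribed color), whereas the probability is $0$ if the parts overlap. Since every tuple counted by $e(S_1,\dots,S_t)$ has $s_1 \cup \dots \cup s_t \in E(H_n)$ and hence $|s_1 \cup \dots \cup s_t| = k = \sum_i |s_i|$, its parts are automatically disjoint; expanding $e(S_1',\dots,S_t') = \sum \prod_i \mathbf{1}[s_i \subseteq V^{(i)}]$ over these tuples should give $\mathbb{E}[e(S_1',\dots,S_t')] = t^{-k}\, e(S_1,\dots,S_t)$. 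Likewise $\prod_i |S_i'| = \sum_{(s_1,\dots,s_t) \in S_1 \times \dots \times S_t} \prod_i \mathbf{1}[s_i \subseteq V^{(i)}]$, so $\mathbb{E}[\prod_i |S_i'|] = t^{-k}(\prod_i |S_i| - D_n)$, where $D_n$ counts the tuples in $S_1 \times \dots \times S_t$ whose parts are not pairwise disjoint. Plugging these in and multiplying by $t^k$ would give $|e(S_1,\dots,S_t) - p\prod_i |S_i|| \leq t^k f(n) + p\, D_n$.

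The one genuine estimate needed is $D_n = o(n^k)$, uniformly in the $S_i$. I expect this to be the main (and essentially only non-formal) point, although it is not hard: if $s_i \cap s_j \neq \emptyset$ for some $i<j$, one can specify the pair $(i,j)$, then $s_i$, then a common vertex $v \in s_i$, then the remaining $k_j-1$ vertices of $s_j$, then the other parts $s_m$, for a total of at most $\binom{t}{2}\, n^{k_i}\, k_i\, n^{k_j-1} \prod_{m\neq i,j} n^{k_m} = O(n^{k-1})$ choices, with a bound not involving the $S_i$. With $D_n = O(n^{k-1})$ in hand, the displayed inequality becomes $|e(S_1,\dots,S_t) - p\prod_i |S_i|| = o(n^k)$, which is exactly \propexpandp{$\pi$}. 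The other thing to watch — but purely bookkeeping — is to work throughout with the witnessing function $f$ of \proppartexpp{$\pi$}, so that the error estimate holds simultaneously for all outcomes of the random partition before expectations are taken.
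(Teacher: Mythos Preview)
Your proof is correct and follows essentially the same averaging-over-partitions strategy as the paper, recast in probabilistic language (taking expectations over a uniform random $t$-coloring rather than summing explicitly over ordered partitions and dividing by $t^{n-k}$). Your treatment is arguably tidier: by allowing empty color classes you avoid the Stirling-number detour, and you explicitly account for the overlapping tuples via the $D_n = O(n^{k-1})$ term, a point the paper's identity for $|S_1|\cdots|S_t|$ glosses over.
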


\begin{proof} % {{{
Let $\mathcal{H} = \{H_n\}_{n \rightarrow\infty}$ be a sequence of hypergraphs satisfying
\proppartexp{$\pi$}.  Throughout this proof, for notational simplicity we drop the subscript $n$.
Let $S_i \subseteq \binom{V(H)}{k_i}$ be given.  Let $\mathcal{P} = (P_1,\dots,P_t)$ be an ordered
partition of $V(H)$ into $t$ non-empty parts.  That is, $\mathcal{P}$ is an ordered tuple of $t$
non-empty vertex sets such that $P_i \cap P_j = \emptyset$ for $i \neq j$ and $\cup P_i = V(H)$.
For $1 \leq i \leq t$, define $S_i[P_i]$ to be the collection of $k_i$-sets in $S_i$ which are
subsets of $P_i$.  Then
\begin{align*}
  e(S_1,\dots,S_t) = \frac{1}{t^{n-k}} \sum_{\mathcal{P}} e(S_1[P_1], \dots, S_t[P_t]),
\end{align*}
since in the sum over partitions, each $(s_1,\dots,s_t) \in S_1 \times \dots \times S_t$ with $s_1
\cup \dots \cup s_t \in E(H)$ is counted $t^{n-k}$ times.  That is, if $E = s_1\cup\dots\cup s_t$ is
an edge with $s_i \in S_i$, then the partitions which count $(s_1,\dots,s_t)$ are the partitions
formed by starting with $P_1 = s_1, \dots, P_t =  s_t$ and adding the other $n-k$ vertices
arbitrarily to the $t$ parts.

A similar argument shows that
\begin{align} \label{eq:partitetonormal}
  |S_1| \cdots |S_t| = \frac{1}{t^{n-k}} \sum_{\mathcal{P}} |S_1[P_1]| \cdots |S_t[P_t]|.
\end{align}
Now apply \proppartexp{$\pi$} to $S_1[P_1], \dots, S_t[P_t]$ to obtain
\begin{align*}
  e(S_1,\dots,S_t) 
  &= \frac{1}{t^{n-k}} \sum_{\mathcal{P}} \Big( p |S_1[P_1]| \cdots |S_t[P_t]| + o(n^k)  \Big) \\
  &= \frac{p}{t^{n-k}} \sum_{\mathcal{P}} |S_1[P_1]| \cdots |S_t[P_t]| + 
  \sum_{\mathcal{P}} o\left(\frac{n^k}{t^{n-k}} \right) \\
  &= p |S_1| \cdots |S_t| + o \left( \frac{n^k t! S(n,t)}{t^{n-k}} \right).
\end{align*}
The last equality combines \eqref{eq:partitetonormal} with the fact that the number of partitions in
the sum is $t! S(n,t)$ where $S(n,t)$ is the Stirling number of the second kind.  Trivially,
$\frac{t^{n-t}}{t!} \leq S(n,t) \leq t^n$ so that $S(n,t) =
\Theta(t^n)$.  Since $t$ and $k$ are fixed, $\frac{n^k t! S(n,t)}{t^{n-k}} = \Theta(n^k)$ implying that
$e(S_1,\dots,S_t) = p |S_1| \cdots |S_t| + o(n^k)$, completing the proof.
\end{proof} % }}}

\begin{lemma} \label{lem:exptoexp}
  If $\pi'$ is a refinement of $\pi$, then \propexpandp{$\pi$} $\Rightarrow$ \propexpandp{$\pi'$}.
\end{lemma}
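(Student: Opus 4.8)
The plan is to prove Lemma~\ref{lem:exptoexp} by routing through the ``disjoint'' variant \proppartexpp{$\cdot$} and then invoking Lemma~\ref{lem:partitetonormal}: first I would show \propexpandp{$\pi$} $\Rightarrow$ \proppartexpp{$\pi'$}, and then conclude \propexpandp{$\pi'$} from Lemma~\ref{lem:partitetonormal}. Fix the surjection $\phi : \{1,\dots,r\} \to \{1,\dots,t\}$ witnessing $\pi' \leq \pi$, where $\pi = k_1 + \dots + k_t$ and $\pi' = m_1 + \dots + m_r$, and write $J_i = \phi^{-1}(i)$, so that $k_i = \sum_{j \in J_i} m_j$. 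Note $\pi'$ is automatically a proper partition since $r \geq t \geq 2$.

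For the main step, let $\mathcal{H} = \{H_n\}$ satisfy \propexpandp{$\pi$}, drop the subscript $n$, and let $S'_1,\dots,S'_r$ with $S'_j \subseteq \binom{V(H)}{m_j}$ be disjoint, i.e.\ $V(S'_j) \cap V(S'_{j'}) = \emptyset$ for $j \neq j'$. For each $i$ define
\[
  S_i = \Big\{ \textstyle\bigcup_{j \in J_i} s'_j \;:\; s'_j \in S'_j \text{ for all } j \in J_i \Big\} \subseteq \binom{V(H)}{k_i}.
\]
Because the vertex sets $V(S'_j)$ are pairwise disjoint, any $u \in S_i$ has $|u| = \sum_{j \in J_i} m_j = k_i$ and, crucially, a \emph{unique} decomposition of this form: the $j$-th piece must equal $u \cap V(S'_j)$. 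Hence the map sending $(s'_1,\dots,s'_r)$ to $\big( \bigcup_{j \in J_1} s'_j,\, \dots,\, \bigcup_{j \in J_t} s'_j \big)$ is a bijection from $S'_1 \times \dots \times S'_r$ onto $S_1 \times \dots \times S_t$, and under it the overall union $s'_1 \cup \dots \cup s'_r$ is unchanged, so ``is a hyperedge of $H$'' is preserved. Therefore $e(S'_1,\dots,S'_r) = e(S_1,\dots,S_t)$ and $\prod_j |S'_j| = \prod_i |S_i|$. Applying \propexpandp{$\pi$} to $S_1,\dots,S_t$ gives $e(S_1,\dots,S_t) = p \prod_i |S_i| + o(n^k)$, and translating back yields $e(S'_1,\dots,S'_r) = p \prod_j |S'_j| + o(n^k)$, which is exactly \proppartexpp{$\pi'$}. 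Lemma~\ref{lem:partitetonormal} then upgrades this to \propexpandp{$\pi'$}.

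The only real content is the observation that disjointness of the families $S'_j$ forces the merging map to be a bijection, and this is precisely why it pays to go through \proppartexpp{$\cdot$} rather than attack \propexpandp{$\pi'$} head-on: with arbitrary (possibly overlapping) families, a given $k_i$-set could arise from several tuples $(s'_j)_{j \in J_i}$, so the merging map would be weighted by multiplicities bounded by fixed multinomial coefficients; one would then split each $S_i$ into the constantly many classes of fixed multiplicity, apply \propexpandp{$\pi$} to each, and absorb lower-order terms of size $O(n^{k-1})$ coming from overlapping tuples. That route works too, but the disjoint reduction makes all of it vanish. I therefore expect the bijection/unique-decomposition check to be the only (mild) obstacle; everything else is bookkeeping.
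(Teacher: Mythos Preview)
Your proposal is correct and follows essentially the same argument as the paper: reduce to \proppartexpp{$\pi'$} via Lemma~\ref{lem:partitetonormal}, merge the disjoint families $S'_j$ along the fibers $\phi^{-1}(i)$ into sets $S_i$, use disjointness to get $e(S'_1,\dots,S'_r)=e(S_1,\dots,S_t)$ and $\prod_j |S'_j|=\prod_i |S_i|$, and apply \propexpandp{$\pi$}. Your explicit bijection/unique-decomposition justification is exactly the content of the paper's equation~\eqref{eq:piimpliespiprime}, just spelled out more carefully.
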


\begin{proof} % {{{
Let $\mathcal{H} = \{H_n\}_{n\rightarrow\infty}$ be a sequence of hypergraphs and let $\pi = k_1 +
\dots + k_t$ and $\pi' = m_1+\dots+m_r$.    Let $\phi : \{1,\dots,r\} \rightarrow \{1,\dots,t\}$ be
the surjection for the refinement of $\pi'$ of $\pi$.  That is, $k_i = \sum_{j : \phi(j) = i} m_j$.
By Lemma~\ref{lem:partitetonormal}, we only need to show that \proppartexp{$\pi'$} holds, so
let $S'_1,\dots,S'_t$ be disjoint sets with $S'_i \subseteq \binom{V(H_n)}{m_i}$.  For $1 \leq i
\leq t$, define
\begin{align*}
  S_i = \{ X_{j_1} \cup \dots \cup X_{j_{\ell}} : \,\, \{j_1,\dots,j_{\ell}\} = \{ j : \phi(j) = i
  \} \,\, \text{and } \forall a, X_{j_a} \in S'_{j_a} \}.
\end{align*}
In other words, $S_i$ consists of all vertex sets formed by combining via the refinement sets from
$S'_1,\dots,S'_t$.  Since $S'_1,\dots,S'_t$ are disjoint,
\begin{align} \label{eq:piimpliespiprime}
  e(S_1,\dots,S_t) = e(S'_1,\dots,S'_r)
  \quad \quad \text{and} \quad \quad
  |S_1| \cdots |S_t| = |S'_1| \cdots |S'_r|.
\end{align}
Since \propexpand{$\pi$} holds for $\mathcal{H}$,
\begin{align*}
  e(S_1, \dots, S_t) = p |S_1| \cdots |S_t| + o(n^k).
\end{align*}
Combining this with \eqref{eq:piimpliespiprime} shows that \proppartexp{$\pi'$} holds for
$\mathcal{H}$.
\end{proof} % }}}

\subsection{Clique Discrepency} % {{{2
\label{sub:implicationscd}

Our goal in this subsection is to discuss and prove all the implications in Table~\ref{tab:main}
involving \propcd{$\ell$}. In particular, Lemma~\ref{lem:cdtoexp} below states that \propcd{$\ell$}
$\Rightarrow$ \propexpand{$\pi$} if $\max \pi \leq \ell$.

The implication \propcdp{$\ell$} $\Rightarrow$ \propcdp{$\ell-1$} is easy to see directly from the
definitions: given an $(\ell-1)$-uniform hypergraph $G$, let $F$ be the $\ell$-uniform hypergraph
whose hyperedges consist of the $\ell$-cliques in $G$.  Then $\mathcal{K}_k(G) = \mathcal{K}_k(F)$,
so applying \propcdp{$\ell$} to $F$ implies that \propcdp{$\ell-1$} holds for $G$.

As part of their initial investigation of hypergraph quasirandomness, Chung and
Graham~\cite{hqsi-chung90-2} proved that \propcdh{$k-1$} $\Rightarrow$ \propdev{$k$}.  Since the
reverse implication also holds, these properties are equivalent.  Indeed, they have also both been
shown equivalent to \propcount{All}, the property that for every $k$-uniform hypergraph $F$, the
number of labeled copies of $F$ in $\mathcal{H}$ is $(1/2)^{|E(F)|} n^{|V(F)|} + o(n^{|V(F)|})$.

The final implication involving \propcdp{$\ell$} in Table~\ref{tab:main} is that if $\pi =
k_1+\dots+k_t$ is a proper partition of $k$ where $k_i \leq \ell$ for all $i$, then \propcdp{$\ell$}
$\Rightarrow$ \propexpandp{$\pi$}. 
\begin{lemma} \label{lem:cdtoexp}
  Let $k \geq 3$, let $2 \leq \ell < k$, and let $\pi$ be a proper partition of $k$
  where $\max \pi \leq \ell$.  Then \propcdp{$\ell$} $\Rightarrow$ \propexpandp{$\pi$}.
\end{lemma}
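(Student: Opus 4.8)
The plan is to prove \propcdp{$\ell$} $\Rightarrow$ \propexpandp{$\pi$} by reducing, via Lemma~\ref{lem:partitetonormal}, to establishing \proppartexpp{$\pi$}, so it suffices to handle disjoint sets $S_1,\dots,S_t$ with $S_i \subseteq \binom{V(H_n)}{k_i}$. Since each $k_i \leq \ell$, I want to encode membership in each $S_i$ using an $\ell$-uniform hypergraph built on $V(H_n)$, so that the $k$-cliques of that auxiliary hypergraph correspond precisely (or approximately) to the tuples counted by $e(S_1,\dots,S_t)$. The idea is to construct an $\ell$-uniform hypergraph $G$ on vertex set $V(H_n)$ whose edges are exactly those $\ell$-sets $L$ for which there is some $i$ and some $s_i \in S_i$ with $s_i \subseteq L$ and $L \setminus s_i$ contained in a single designated ``bucket'' of vertices — but making this precise while keeping the clique count clean is the delicate part, so I expect to first pass to an auxiliary partition of $V(H_n)$ into $t$ large vertex blocks $W_1,\dots,W_t$ and restrict each $S_i$ to $k_i$-sets inside $W_i$, losing only a lower-order fraction (this is the same averaging trick used in Lemma~\ref{lem:partitetonormal}).

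Once the $S_i$ live in disjoint blocks $W_i$, here is the concrete construction of the $\ell$-uniform $G$: put an $\ell$-set $L$ into $E(G)$ if and only if, writing $L_i = L \cap W_i$, we have $|L_i| \leq k_i$ for all $i$ and for every $i$ with $|L_i| = k_i$ the set $L_i$ belongs to $S_i$, and additionally $L$ meets every block (or some similar ``spanning'' condition). The point is that a $k$-set $T$ with $|T \cap W_i| = k_i$ for all $i$ is a $k$-clique of $G$ exactly when $T \cap W_i \in S_i$ for each $i$, because every $\ell$-subset of such a $T$ then automatically satisfies the membership constraints — here I use $k_i \leq \ell$ so that an $\ell$-subset of $T$ can contain at most... well, it can contain a full $k_i$-block only when it ``sees'' all of $T\cap W_i$, and one must check the constraints are monotone/consistent across all $\ell$-subsets. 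So $\mathcal{K}_k(G)$, intersected with the $k$-sets that split as $(k_1,\dots,k_t)$ across the blocks, is in bijection with the tuples $(s_1,\dots,s_t)$ counted by $e(S_1[W_1],\dots,S_t[W_t])$; the $k$-sets that split differently contribute a controllable error because $|S_i[W_i]| \le \binom{|W_i|}{k_i}$ and there are $O(n^{k})$ of them with the right split versus the need to ensure the ``wrong-split'' cliques are either absent or negligible (this is where the spanning condition on $L$ earns its keep).

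Having arranged $|E(H_n) \cap \mathcal{K}_k(G)|$ to equal $e(S_1[W_1],\dots,S_t[W_t])$ up to $o(n^k)$, I apply \propcdp{$\ell$} to conclude $|E(H_n)\cap\mathcal{K}_k(G)| = p|\mathcal{K}_k(G)| + o(n^k)$. Then I need $|\mathcal{K}_k(G)|$ to equal $|S_1[W_1]|\cdots|S_t[W_t]|$ up to $o(n^k)$, which follows from the same bijective analysis of the $k$-cliques of $G$ (again the non-$(k_1,\dots,k_t)$-split cliques must be shown negligible). Finally, undo the block restriction via the averaging identity of Lemma~\ref{lem:partitetonormal} to get \proppartexpp{$\pi$}, hence \propexpandp{$\pi$}. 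The main obstacle I anticipate is designing the edge set of $G$ so that (a) the correspondence between $k$-cliques with the correct block split and tuples from $S_1\times\cdots\times S_t$ is an exact bijection rather than merely approximate, and (b) $k$-cliques with an incorrect block split are either forbidden outright or provably $o(n^k)$ in number; getting both simultaneously may require a small amount of cleverness in the ``spanning'' clause (for instance, forcing $|L_i|\ge 1$ for all $i$, or padding with a fixed generic structure), and verifying the $\ell$-subset consistency condition carefully using $k_i \le \ell$.
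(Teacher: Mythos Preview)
Your opening reduction via Lemma~\ref{lem:partitetonormal} to disjoint $S_1,\dots,S_t$ is exactly right, and the $\ell$-uniform hypergraph you build---edges $L$ with $|L\cap W_i|\le k_i$ and $L\cap W_i\in S_i$ whenever equality holds---is in fact precisely the hypergraph $W_{[(k_1,\dots,k_t)]}$ that the paper constructs. Your bijection between right-split $k$-cliques and $S_1\times\dots\times S_t$ is also correct. The gap is the step you flag as ``the main obstacle'': the wrong-split $k$-cliques are \emph{not} negligible and \emph{cannot} be forbidden by any spanning-type tweak to $G$.

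Concretely, take $k=4$, $\pi=2+2$, $\ell=2$. Your $G$ is $S_1\cup S_2$ together with all $W_1$--$W_2$ cross pairs. Then every $4$-set of split $(3,1)$ whose three $W_1$-vertices form a triangle in $S_1$ is a $k$-clique of $G$; if $S_1$ is dense this gives $\Theta(n^4)$ wrong-split cliques. Adding the constraint $|L_i|\ge 1$ (your proposed ``spanning'' clause) kills all within-block pairs, but then a right-split $(2,2)$ set has its two within-block pairs excluded from $E(G)$ and is no longer a clique either---so that fix destroys the bijection you need. More generally, whenever some $k_i=\ell$ the profile constraint $|L_i|\le k_i$ is vacuous for that block, and $k$-sets lying entirely in $W_i$ that happen to be $k$-cliques of $S_i$ survive as cliques of $G$; these are $\Theta(n^k)$ in general, and no edge rule on $\ell$-sets can remove them without also removing legitimate $\ell$-subsets of right-split $k$-sets. (When $\max\pi<\ell$ strictly, your single-$G$ argument does in fact go through, because any wrong split has some $m_i>k_i$ and you can exhibit an $\ell$-subset with $|L_i|=k_i+1\le\ell$; but the lemma must cover $\max\pi=\ell$.)

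The paper's resolution is not a cleverer choice of $G$ but an induction over split types. For each vector $\vec m=(m_1,\dots,m_t)$ with $\sum m_i=k$ it defines $\mathcal T_{\vec m}$ (the $k$-sets with $m_i$ vertices in $V(S_i)$ and all $k_i$-subsets there in $S_i$), groups these into equivalence classes, and proves $|\mathcal T_{[\vec m]}\cap E(H)|=p|\mathcal T_{[\vec m]}|+o(n^k)$ by induction on a partial order on classes. The inductive step builds a tailored $\ell$-uniform $W_{[\vec m]}$ (your $G$ is the instance $\vec m=(k_1,\dots,k_t)$), shows $\mathcal K_k(W_{[\vec m]})$ is the disjoint union of $\mathcal T_{[\vec m]}$ with $\mathcal T_{[\vec m']}$'s for strictly smaller $[\vec m']$, applies \propcdp{$\ell$} to $W_{[\vec m]}$, and subtracts the inductively-known smaller pieces. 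The base cases ($\vec m$ with a single nonzero entry) use \propcdp{$k_i$}, available since \propcdp{$\ell$}$\Rightarrow$\propcdp{$k_i$}. This subtraction machinery is the real content of the proof and is what your proposal is missing.
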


\newcounter{cdtopiclaimctr}
\newtheorem{cdtopiclaim}[cdtopiclaimctr]{Claim}

\begin{proof} % {{{
First, view $\pi$ as an ordered partition $\vec{\pi} = (k_1,\dots,k_t)$ where $\sum k_i = k$.  Let
$\mathcal{H} = \{H_n\}_{n \rightarrow\infty}$ be a sequence of hypergraphs satisfying
\propcd{$\ell$}.  Throughout this proof, for notational simplicity we drop the subscript $n$.  By
Lemma~\ref{lem:partitetonormal}, we only need to show that \proppartexp{$\pi$} holds, so let $S_i
\subseteq \binom{V(H)}{k_i}$ be given such that $S_1,\dots,S_t$ are disjoint.  Define
\begin{align*}
  \mathcal{M} = \left\{ (m_1,\dots,m_t) : 0 \leq m_i \leq k, 
   \sum_{i=1}^t m_i = k \right\}.
\end{align*}
For $\vec{m} \in \mathcal{M}$, define the \emph{cliques of type $\vec{m}$} as the following set:
\begin{align*}
  \mathcal{T}_{\vec{m}} = \left\{ A \in \binom{V(S_1) \cup \dots \cup V(S_t)}{k} : 
  |A \cap V(S_i)| = m_i \,\,\text{and } \binom{A \cap V(S_i)}{k_i} \subseteq S_i \right\}.
\end{align*}
That is, the cliques of type $\vec{m}$ are the $k$-sets of vertices which have exactly $m_i$
vertices in $V(S_i)$ and if $m_i \geq k_i$ then all $k_i$ subsets of $A \cap V(S_i)$ are elements of
$S_i$  (since if $m_i < k_i$ then $\binom{A \cap V(S_i)}{k_i} = \emptyset$).  Depending on
$\vec{\pi}$, $k$, and $\ell$ some of the collections $\mathcal{T}_{\vec{m}}$ could be empty.  Now
define an equivalence relation $\sim$ on $\mathcal{M}$ as follows.  For $\vec{m}, \vec{m}' \in
\mathcal{M}$,
\begin{align*}
  \vec{m} \sim \vec{m}' \,\,\text{if and only if } \, 
  &\{ i : m_i < \ell \} = \{ i : m'_i < \ell \} \\
  &\text{and } \forall i \in \{ i : m_i < \ell \},  m_i = m'_i
  %&\text{and } \sum_{\substack{1 \leq i \leq t \\ m_i \geq \ell}} m_i =
  %             \sum_{\substack{1 \leq i \leq t \\ m'_i \geq \ell}} m'_i.
\end{align*}
In other words, $\vec{m} \sim \vec{m}'$ if they are equal in coordinates which are smaller than
$\ell$ and have the same sum of coordinates at least $\ell$.  For example, if $\vec{\pi} = (3,3,2,2,2)$ and
$\ell = 3$, then $(6,4,1,1,0) \sim (5,5,1,1,0)$.  For $\vec{m} \in \mathcal{M}$, define $[\vec{m}] = \{
\vec{m}' \in \mathcal{M} : \vec{m} \sim \vec{m}' \}$.  It is trivial to see that this is an
equivalence relation on $\mathcal{M}$.

\begin{cdtopiclaim} \label{c:pialone}
  $[(k_1,\dots,k_t)] = \{ (k_1,\dots,k_t) \}$.
\end{cdtopiclaim}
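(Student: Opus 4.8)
The plan is to unpack the equivalence relation $\sim$ using the standing hypothesis $\max\pi\le\ell$, which says that $k_i\le\ell$ for every $i$. Suppose $\vec m=(m_1,\dots,m_t)\in\mathcal{M}$ satisfies $\vec m\sim(k_1,\dots,k_t)$; the goal is to deduce $m_i=k_i$ for all $i$, so that $[(k_1,\dots,k_t)]$ contains no element other than $(k_1,\dots,k_t)$.

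First I would partition the index set $\{1,\dots,t\}$ according to whether $k_i<\ell$ or $k_i=\ell$; write $A=\{i:k_i<\ell\}$ and $B=\{i:k_i=\ell\}$, which is a genuine partition precisely because $k_i\le\ell$ always holds. By the definition of $\sim$ applied to $\vec m$ and $(k_1,\dots,k_t)$, we have $\{i:m_i<\ell\}=\{i:k_i<\ell\}=A$ and $m_i=k_i$ for every $i\in A$. In particular $m_i\ge\ell$ for every $i\in B$, since those indices do not lie in $\{i:m_i<\ell\}$.

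It remains to pin down the coordinates in $B$. Using $\sum_{i=1}^t m_i=k=\sum_{i=1}^t k_i$ together with $m_i=k_i$ on $A$, we get $\sum_{i\in B}m_i=\sum_{i\in B}k_i=|B|\,\ell$. But each summand $m_i$ with $i\in B$ is at least $\ell$, so equality of the sum with $|B|\,\ell$ forces $m_i=\ell=k_i$ for every $i\in B$. Hence $m_i=k_i$ for all $i$, i.e.\ $\vec m=(k_1,\dots,k_t)$, which proves the claim.

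I do not expect any real obstacle here: the statement is a short arithmetic consequence of $\max\pi\le\ell$, and the only point requiring care is keeping straight that $\sim$ already forces exact equality on the coordinates below $\ell$, so that the sum bookkeeping in the last step is immediate and the coordinates equal to $\ell$ have no room to be redistributed. (This is exactly where $\max\pi\le\ell$ is essential: a part with $k_i>\ell$ could be split up and reabsorbed into other large coordinates without changing the $\sim$-class, which foreshadows why $\max\pi$ governs the implication in Theorem~\ref{thm:maincdtoexp}.)
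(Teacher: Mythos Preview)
Your proof is correct and follows essentially the same argument as the paper: use $\max\pi\le\ell$ to split indices into those with $k_i<\ell$ (where $\sim$ forces $m_i=k_i$) and those with $k_i=\ell$ (where $\sim$ forces $m_i\ge\ell$), then invoke $\sum m_i=\sum k_i=k$ to squeeze out equality on the latter set. The paper's version is simply a terser rendition of the same idea.
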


\begin{proof} % {{{
Assume that $\vec{m} \sim (k_1,\dots,k_t)$.  For indices $i$
where $k_i < \ell$, $m_i = k_i$ and for indices where $k_i = \ell$, $m_i \geq \ell = k_i$.  But
since $\sum m_i = \sum k_i = k$, we must have $m_i = k_i$ for all $i$.
\end{proof} % }}}

Define $\mathcal{T}_{[\vec{m}]} = \bigcup_{\vec{m}' \sim \vec{m}} \mathcal{T}_{\vec{m}'}$.  Note
that $\mathcal{T}_{[(k_1,\dots,k_t)]} \cap E(H) = \mathcal{T}_{(k_1,\dots,k_t)} \cap E(H)$ is
exactly the set of edges we would like to count; $\mathcal{T}_{(k_1,\dots,k_t)}$ is isomorphic
to the collection of ordered tuples $(s_1,\dots,s_t)$ such that $s_i \in S_i$ since $S_1,\dots,S_t$
is disjoint.  Thus the following claim completes the proof.

\newcommand{\emod}{\mathclose{}/\mathopen{}}

\begin{cdtopiclaim} \label{c:tcapH}
  For all $[\vec{m}] \in \mathcal{M}\emod\sim$, $|\mathcal{T}_{[\vec{m}]} \cap E(H)| = p \left|
  \mathcal{T}_{[\vec{m}]} \right| + o(n^k)$.
\end{cdtopiclaim}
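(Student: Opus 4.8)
The plan is to prove Claim~\ref{c:tcapH} by realizing each $\mathcal{T}_{[\vec m]}$, up to sets that are already under control, as the clique set of an explicit $\ell$-uniform hypergraph, applying \propcd{$\ell$} to it, and then combining the resulting estimates by inclusion--exclusion and induction. Write $W = V(S_1)\cup\dots\cup V(S_t)$, and call a coordinate $i$ \emph{small for $\vec m$} if $m_i < \ell$; the remaining coordinates form the ``large block''. For a vector $\vec c$ with $0 \le c_i \le m_i$ on the small coordinates and no constraint imposed on the large block, let $G^{\vec c}$ be the $\ell$-uniform hypergraph on $V(H_n)$ whose edges are the $\ell$-subsets $B \subseteq W$ such that $|B \cap V(S_i)| \le c_i$ for every small $i$ and every $k_i$-subset of $B \cap V(S_i)$ lies in $S_i$ for every $i$.

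The first step is to compute $\mathcal{K}_k(G^{\vec c})$ exactly. If $A$ is a $k$-clique of $G^{\vec c}$ then $A \subseteq W$, $|A \cap V(S_i)| \le c_i$ for every small $i$, and $\binom{A \cap V(S_i)}{k_i} \subseteq S_i$ for every $i$; each of these facts is forced by exhibiting a single $\ell$-subset of $A$ that must, or must not, be an edge, and this is exactly where $k_i \le \ell < k$ enters -- any $k_i$-subset of $A$ extends to an $\ell$-subset of $A$ (so clique conditions ``see'' the sets $S_i$), and any $(c_i+1)$-subset of $A \cap V(S_i)$ with $c_i \le \ell-1$ extends to a forbidden $\ell$-subset of $A$. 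Conversely every such $A$ is a clique. Since the sets $\mathcal{T}_{\vec m'}$ are pairwise disjoint (being indexed by the intersection pattern of $A$ with the $V(S_i)$ together with the conditions $\binom{A \cap V(S_i)}{k_i}\subseteq S_i$), this yields the exact identity $\mathcal{K}_k(G^{\vec c}) = \bigsqcup\{ \mathcal{T}_{\vec m'} : m'_i \le c_i \text{ for all small } i \}$.

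Now induct on the number of small coordinates of $\vec m$, from largest down. If every coordinate of $\vec m$ is small, then $\sum_i m_i = k$ already forces $\mathcal{K}_k(G^{\vec m}) = \mathcal{T}_{\vec m} = \mathcal{T}_{[\vec m]}$, so \propcd{$\ell$} gives the claim. In general, taking $\vec c = \vec m$, the set $\mathcal{K}_k(G^{\vec m})$ is the disjoint union of three pieces: the target $\mathcal{T}_{[\vec m]}$; the union of those $\mathcal{T}_{\vec m'}$ agreeing with $\vec m$ on $\vec m$'s small coordinates but having strictly more small coordinates (these group into finitely many full $\sim$-classes, each with more small coordinates, hence handled by the inductive hypothesis); and the union of those $\mathcal{T}_{\vec m'}$ with $m'_i < m_i$ for some small $i$. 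M\"obius inversion over the product of chains $\prod_i\{0,\dots,m_i\}$ -- only the corner vectors $\vec c$ with $c_i \in \{m_i-1, m_i\}$ contribute -- eliminates the third piece, expressing $\mathbf{1}_{\mathcal{T}_{[\vec m]}}$ as a fixed $\pm$-combination of the indicators $\mathbf{1}_{\mathcal{K}_k(G^{\vec c})}$ and of indicators of $\sim$-classes with more small coordinates. Summing this identity both against $\mathbf{1}_{E(H_n)}$ and against the all-ones function on $\binom{V(H_n)}{k}$ and comparing, then applying \propcd{$\ell$} to each $G^{\vec c}$ and the inductive hypothesis to each of the finitely many other classes, gives $|\mathcal{T}_{[\vec m]} \cap E(H_n)| = p\,|\mathcal{T}_{[\vec m]}| + o(n^k)$; the error terms combine because $k$, $t$, $\ell$ are fixed, so only boundedly many terms occur.

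The main obstacle is the first step: making $\mathcal{K}_k(G^{\vec c})$ an \emph{exact} union of the $\mathcal{T}_{\vec m'}$ and correctly identifying which $\vec m'$ appear. A clique condition on an $\ell$-uniform hypergraph cannot control $|A \cap V(S_i)|$ once it reaches $\ell$, and in particular cannot force it to be large, so $\mathcal{K}_k(G^{\vec c})$ inevitably absorbs all types whose large part is distributed arbitrarily among the coordinates of $\vec m$'s large block -- the equivalence relation $\sim$ is precisely the quotient under which the statement becomes separable, and disposing of the absorbed types is exactly what forces the induction. The remaining work -- checking the coordinate-by-coordinate description of $\mathcal{K}_k(G^{\vec c})$ and verifying that the absorbed types really do assemble into full $\sim$-classes with strictly more small coordinates -- is routine but delicate bookkeeping.
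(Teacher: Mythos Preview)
Your argument is correct, and it shares with the paper the central idea: for each class $[\vec m]$ one builds an auxiliary $\ell$-uniform hypergraph whose $k$-cliques capture $\mathcal{T}_{[\vec m]}$ together with types already under control, and then one peels off the extra types. Your $G^{\vec m}$ is exactly the paper's $W_{[\vec m]}$, and your description of $\mathcal{K}_k(G^{\vec c})$ matches the paper's Claims~\ref{c:cliquesWsubsetT}--\ref{c:Tdisjoint}.

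Where the two proofs diverge is in the bookkeeping used to peel off the extras. The paper uses a \emph{single} hypergraph $W_{[\vec m]}$ per class and sets up a three-tiered partial order on $\mathcal{M}/\!\sim$ (comparing first number of zero coordinates, then the set of large coordinates, then the sum of the small coordinates); it then checks (Claim~\ref{c:mprimebelow}) that every extra class sitting inside $\mathcal{K}_k(W_{[\vec m]})$ is strictly below $[\vec m]$ in this order, and subtracts. You instead apply \propcd{$\ell$} to the whole family $\{G^{\vec c}\}$ indexed by the corners of the box $\prod_i \{m_i-1,m_i\}$ over the small coordinates, and use M\"obius inversion on that product of chains to kill ``piece~3'' (the types with $m'_i<m_i$ at some small $i$) in one stroke; what remains (your ``piece~2'') then groups into full $\sim$-classes with strictly more small coordinates, so a one-parameter induction on the number of small coordinates suffices. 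Your base case (all coordinates small) is also different from the paper's (exactly one nonzero coordinate, handled via \propcd{$k_i$}). The trade-off: your route invokes \propcd{$\ell$} up to $2^{t}$ times per class but needs only a trivially-ordered induction, while the paper invokes it once per class but must verify the more intricate order of Claim~\ref{c:mprimebelow}. Both are sound; yours makes the signs explicit, the paper's keeps the number of auxiliary hypergraphs minimal.
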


\begin{proof} % {{{
The proof is by induction; define a partial order on the equivalence classes in $\mathcal{M}\emod\sim$
as follows: $[\vec{m}'] < [\vec{m}]$ if one of the following holds:
\begin{itemize}
  \item $|\{ i : m'_i = 0 \}| > |\{i : m_i = 0\}|$, or

  \item $|\{ i : m'_i = 0 \}| = |\{i : m_i = 0\}|$ and $\{i : m'_i \geq \ell \} \subsetneq \{ i
    : m_i \geq \ell \}$, or

  \item $|\{ i : m'_i = 0 \}| = |\{i : m_i = 0\}|$ and $\{i : m'_i \geq \ell \} = \{ i : m_i \geq \ell \}$
    and
    \begin{align*}
      \sum_{\substack{1 \leq i \leq t \\ m'_i < \ell}} m'_i < \sum_{\substack{1 \leq i \leq t \\ m_i
      < \ell}} m_i.
    \end{align*}
\end{itemize}
Note that the definition is well defined since any vector in $[\vec{m}]$ has the same set of indices
$i$ where $m_i = 0$ and the same set of indices where $m_i \geq \ell$.  We prove Claim~\ref{c:tcapH}
by induction on this partial order.  The base case proves the statement for all minimum elements in
the partial order and the inductive argument applies the claim only for elements smaller in the
partial order.

For the base case we consider vectors with exactly one non-zero $m_i$ which equals $k$ since the
total sum of the entries of $\vec{m}$ is $k$.  Note that the equivalence class of
$(0,\dots,0,k,0,\dots,0)$ has size one, so the base case is to show that
$|\mathcal{T}_{(0,\dots,0,k,0,\dots,0)} \cap E(H)| = p|\mathcal{T}_{(0,\dots,0,k,0,\dots,0)}| +
o(n^k)$.  Assume that $m_i = k$ and for $j \neq i$, $m_j = 0$.  Since \propcd{$\ell$} $\Rightarrow$
\propcd{$k_i$}, \propcd{$k_i$} holds for $\mathcal{H}$.  Now apply \propcd{$k_i$} to the
$k_i$-uniform hypergraph $S_i$.  By definition, $\mathcal{T}_{\vec{m}} = \mathcal{K}_k(S_i)$ since
both are the $k$-sets all of whose $k_i$-subsets are elements of $S_i$.  Therefore, \propcd{$k_i$}
applied to $S_i$ implies $\left| \mathcal{K}_k(S_i) \cap E(H) \right| = p \left| \mathcal{K}_k(S_i)
\right| + o(n^k)$.

For the inductive step, define an $\ell$-uniform hypergraph $W_{[\vec{m}]}$ as
follows.  The vertex set of $W_{[\vec{m}]}$ is the same as the vertex set of $H$.  The edge set is
\begin{align*}
  E(W_{[\vec{m}]}) = \Big\{ B \in \binom{V(S_1) \cup \dots \cup V(S_t)}{\ell} :
    &\forall i, \,\, m_i < \ell \Longrightarrow |B \cap V(S_i)| \leq m_i, \\
    \text{and } &\forall i, \,\, \binom{B \cap V(S_i)}{k_i} \subseteq S_i \Big\}.
\end{align*}
Note that the definition is well defined since any vector in $[\vec{m}]$ has the same entries for
indices smaller than $\ell$.

\begin{cdtopiclaim} \label{c:cliquesWsubsetT}
  $\mathcal{K}_k(W_{[\vec{m}]}) \subseteq \cup_{\vec{m}' \in \mathcal{M}} \mathcal{T}_{\vec{m}'}$.
\end{cdtopiclaim}

\begin{proof} % {{{
Let $A \in \mathcal{K}_k(W_{[\vec{m}]})$ and define $m'_i = |A \cap V(S_i)|$.  Since $A \subseteq
V(S_1) \cup \dots \cup V(S_t)$ and $S_1,\dots,S_t$ are disjoint, $\sum m'_i = k$.  Lastly, pick any
$C \in \binom{A \cap V(S_i)}{k_i}$ and let $B$ be any $\ell$-subset of $A$ containing $C$.  Such a
$B$ exists since $\max \pi \leq \ell$.
Since $A$ is a clique of $W_{[\vec{m}]}$, $B$ is an edge of $W_{[\vec{m}]}$ which implies that all
$k_i$-subsets of $B \cap V(S_i)$ are elements of $S_i$.  But $C \subseteq B \cap V(S_i)$ so $C \in
S_i$, implying that $A \in \mathcal{T}_{\vec{m}'}$.
\end{proof} % }}}

\begin{cdtopiclaim} \label{c:TmsubsetW}
  $\mathcal{T}_{[\vec{m}]} \subseteq \mathcal{K}_k(W_{[\vec{m}]})$.
\end{cdtopiclaim}

\begin{proof} % {{{
Let $A \in \mathcal{T}_{[\vec{m}]}$ and let $B$ be any $\ell$-subset of $A$.  Then $|B \cap V(S_i)|
\leq |A \cap V(S_i)| = m_i$ for all $i$.  Also, for any $C \subseteq B \cap V(S_i)$ with $|C| =
k_i$, $C \subseteq A \cap V(S_i)$ so $C \in S_i$.
\end{proof} % }}}

\begin{cdtopiclaim} \label{c:Tdisjoint}
  For $\vec{m} \neq \vec{m}'$, $\mathcal{T}_{\vec{m}} \cap \mathcal{T}_{\vec{m}'} = \emptyset$.
\end{cdtopiclaim}

\begin{proof} % {{{
Let $A \in \mathcal{T}_{\vec{m}} \cap \mathcal{T}_{\vec{m}'}$.  Then $|A \cap V(S_i)| = m_i$ and $|A
\cap V(S_i)| = m'_i$ for all $i$ so $m_i = m'_i$ for all $i$.
\end{proof} % }}}

\begin{cdtopiclaim} \label{c:Wispartition}
  There exists a collection $\mathcal{M}' \subseteq \mathcal{M}\emod\sim$ such that
  \begin{align*}
    \mathcal{K}_k(W_{[\vec{m}]}) = \mathcal{T}_{[\vec{m}]} \, \dot\cup \, \dot\bigcup_{[\vec{m}'] \in
    \mathcal{M}'} \mathcal{T}_{[\vec{m}']}.
  \end{align*}
\end{cdtopiclaim}

\begin{proof} % {{{
By Claims~\ref{c:cliquesWsubsetT},~\ref{c:TmsubsetW}, and~\ref{c:Tdisjoint} we only need to prove
that for every $[\vec{m}'] \in \mathcal{M}\emod\sim$, either $\mathcal{K}_k(W_{[\vec{m}]})$ contains
$\mathcal{T}_{[\vec{m}']}$ or is disjoint from $\mathcal{T}_{[\vec{m}']}$.

Let $A_1, A_2 \in \mathcal{T}_{[\vec{m}']}$ such that $A_1 \in \mathcal{K}_k(W_{[\vec{m}]})$.  Let
$B_2$ be any $\ell$-subset of $A_2$.  We would like to show that $B_2$ is in $E(W_{[\vec{m}]})$ to
imply that $A_2 \in \mathcal{K}_k(W_{[\vec{m}]})$.  For $i$ with $m_i < \ell$, let $f_i = |B_2 \cap
V(S_i)|$ and let $B_1$ be an $\ell$-subset of $A_1$ which takes any $f_i$ elements of $A_1 \cap
V(S_i)$ for each $i$ with $m_i < \ell$ and takes vertices arbitrarily from $V(S_i)$ for $i$ where $m_i \geq
\ell$.  There exists such a set $B_1$ since for coordinates $i$ where $m_i < \ell$, $f_i \leq |A_2
\cap V(S_i)| = m_i = |A_1 \cap V(S_i)|$ so there exists a subset of $A_1 \cap V(S_i)$ of size $f_i$
and this subset can be used for $B_1 \cap V(S_i)$.  Also, once these vertices are picked, $B_1$ can
be extended to an $\ell$-set by taking vertices only from the other coordinates since $\sum \{ |A_1
\cap V(S_i)| : m_i \geq \ell \} = \sum \{ |A_2 \cap V(S_i)| : m_i \geq \ell \}$.  In addition, this
argument showing the existence of $B_1$ does not depend on the representatives $\vec{m}$ and
$\vec{m}'$ chosen for the equivalence classes $[\vec{m}]$ and $[\vec{m}']$.

Now that we have defined $B_1$, since $A_1$ is a clique of $W_{[\vec{m}]}$ and $B_1$ is an
$\ell$-subset of $A_1$, $B_1$ must be an element of $E(W_{[\vec{m}]})$.  This implies for $i$ with
$m_i < \ell$ that $f_i = |B_1 \cap V(S_i)| \leq m_i$ so that $|B_2 \cap V(S_i)| \leq m_i$.  Lastly,
if $C$ is a $k_i$-subset of $B_2 \cap V(S_i)$, then $C$ is a $k_i$-subset of $A_2 \cap V(S_i)$ so $C
\in S_i$.
\end{proof} % }}}

The actual description of which equivalence classes are in $\mathcal{M}'$ is complicated and depends
on the relationships between $k_i$, $m_i$ and $\ell$.  Fortunately, we don't need the exact
description; we just require that every $[\vec{m}'] \in \mathcal{M}'$ appears below $[\vec{m}]$ in
the partial ordering.  Assume that $\mathcal{M}'$ is defined so that for each $[\vec{m}'] \in
\mathcal{M}'$, $\mathcal{T}_{[\vec{m}']} \neq \emptyset$.  Recall that it is possible for
$\mathcal{T}_{\vec{m}'}$ to be empty for certain $\vec{m}'$ depending on the interaction between the
hypergraph $H$, $\pi$, $k$, and $\ell$.  Therefore, in the remainder of this proof we just ignore
the collections $\mathcal{T}_{\vec{m}'}$ which are empty.

\begin{cdtopiclaim} \label{c:mprimebelow}
  For every $[\vec{m}'] \in \mathcal{M}'$, $[\vec{m}'] < [\vec{m}]$.
\end{cdtopiclaim}

\begin{proof} % {{{
Let $A \in \mathcal{T}_{[\vec{m}']}$.  First, we prove that $\{i : m'_i = 0\} \supseteq \{ i : m_i =
0 \}$.  Assume for contradiction there exists some $i$ with $m_i' \neq 0$ and $m_i = 0$. Since $m'_i
\neq 0$, $A$ contains a vertex $x$ inside $V(S_i)$.  But now let $B$ be any $\ell$-subset of $A$
containing $x$.  Since $m_i = 0$ and $x \in V(S_i)$, this $\ell$-subset $B$ is not in
$E(W_{[\vec{m}]})$ so $A$ is not a $k$-clique of $W_{[\vec{m}]}$ contradicting $[\vec{m}'] \in
\mathcal{M}'$. If $\{i : m'_i = 0\} \supsetneq \{ i : m_i = 0 \}$, then $[\vec{m}'] < [\vec{m}]$.
Therefore, assume that $\{i : m'_i = 0\} = \{ i : m_i = 0 \}$.

Next, we prove that for $i$ with $m_i < \ell$, $m'_i \leq m_i$. Assume for contradiction that there
exists an $i$ such that $m'_i > m_i$ and $m_i < \ell$.  Let $A \in \mathcal{T}_{[\vec{m}']}$.  Then
$|A \cap V(S_i)| = m'_i \geq m_i + 1$ so let $B$ be an $\ell$-subsets of $A$ which has at least $m_i
+ 1$ elements of $A \cap V(S_i)$.  There exists such a $B$ since $m_i + 1 \leq \ell$ and $m_i + 1
\leq |A \cap V(S_i)|$.  But now $B$ is not in $E(W_{[\vec{m}]})$ since $|B \cap V(S_i)| > m_i$ and
$m_i < \ell$ and this contradicts that $[\vec{m}'] \in \mathcal{M}'$.

Since for every $i$ with $m_i < \ell$, $m'_i \leq m_i$ we must have $\{ i : m'_i \geq \ell \}
\subseteq \{ i : m_i \geq \ell \}$.  If $\{ i : m'_i \geq \ell \} \subsetneq \{ i : m_i \geq \ell
\}$, then $[\vec{m}'] < [\vec{m}]$.  Therefore, assume that $\{ i : m'_i \geq \ell \} = \{ i : m_i
\geq \ell \}$.  This implies that
\begin{align} \label{eq:mprimebelow}
  \sum_{\substack{1 \leq i \leq t \\ m'_i < \ell}} m'_i \leq \sum_{\substack{1 \leq i \leq t \\ m_i <
  \ell}} m_i
\end{align}
since $m_i < \ell$ if and only if $m'_i < \ell$ and for these indices, $m'_i \leq m_i$.  If
\eqref{eq:mprimebelow} is a strict inequality, then $[\vec{m}'] < [\vec{m}]$ so assume that
\eqref{eq:mprimebelow} is an equality which implies $m_i = m'_i$ for all $i$ with $m_i < \ell$.
Combining this with $\{ i : m'_i \geq \ell \} = \{ i : m_i \geq \ell \}$ implies that $\vec{m} \sim
\vec{m}'$ which is a contradiction, since the union in Claim~\ref{c:Wispartition} is a disjoint
union.
\end{proof} % }}}

Claims~\ref{c:Wispartition} and~\ref{c:mprimebelow} combine to finish the proof of
Claim~\ref{c:tcapH}.  By induction, we know the size of $\mathcal{T}_{[\vec{m}']} \cap E(H)$ for all
$[\vec{m}'] \in \mathcal{M}'$ and \propcd{$\ell$} implies that $|\mathcal{K}_k(W_{[\vec{m}]}) \cap
E(H)| = p |\mathcal{K}_k(W_{[\vec{m}]})| + o(n^k)$. A simple subtraction counts the size of
$\mathcal{T}_{[\vec{m}]} \cap V(H)$ as follows:
\begin{align*}
  \left| \mathcal{T}_{[\vec{m}]} \cap E(H) \right| 
  &= \left| \mathcal{K}_{k}(W_{[\vec{m}]}) \cap E(H) \right| - \sum_{[\vec{m}'] \in \mathcal{M}'} \left|
     \mathcal{T}_{[\vec{m}']} \cap E(H) \right|  \\
  &= p\left| \mathcal{K}_{k}(W_{[\vec{m}]}) \right| - \sum_{[\vec{m'}]\in \mathcal{M}'} p\left|
     \mathcal{T}_{[\vec{m}']} \right| + o(n^k) \\
  &= p\left( \left| \mathcal{K}_{k}(W_{[\vec{m}]}) \right| - \sum_{[\vec{m'}] \in \mathcal{M}'} \left|
     \mathcal{T}_{[\vec{m}']} \right|\right) + o(n^k) \\
  &= p \left| \mathcal{T}_{[\vec{m}]} \right| + o(n^k).
\end{align*}
\end{proof} % }}}
By Claims~\ref{c:pialone} and~\ref{c:tcapH}, the proof of the lemma is now complete.
\end{proof} % }}}

\subsection{Deviation} % {{{2
\label{sub:implicationsdev}

In this section, we discuss the implications involving \propdev{$\ell$} in Table~\ref{tab:main}.
The implications \propdev{$\ell$} $\Rightarrow$ \propdev{$\ell-1$} and \propdev{$\ell$}
$\Rightarrow$ \propcd{$\ell-1$} were both proved by Chung~\cite{hqsi-chung90}.  The remaining
implication is \propdev{$\ell$} $\Rightarrow$ \propexpand{$\pi$} for all $\ell$ and $\pi$. The proof
uses several similar techniques to the other deviation implications proved by
Chung~\cite{hqsi-chung90}.

\newcommand{\oct}[1]{\mathcal{O}[#1]}
\newcommand{\octs}[1]{\tilde{\mathcal{O}}[#1]}

\begin{definition}
  Let $A_1,\dots,A_k \subseteq V(H)$ be subsets of vertices such that $|A_i| \in \{1, 2\}$.  Define
  \begin{align*}
    \oct{A_1;\dots;A_k} = \left\{ (x_1,\dots,x_k) \in V(H)^k : x_i \in A_i \right\}.
  \end{align*}
  That is, $\oct{A_1;\dots;A_k}$ is the collection of tuples of the squashed octahedron using the
  vertices from $A_1, \dots, A_k$.  Next, define
  \begin{align*}
    \octs{A_1;\dots;A_k} = \Big\{ \{x_1,\dots,x_k\} : (x_1,\dots,x_k) \in \oct{A_1;\dots;A_k} \,\,
    \text{and } |\{x_1,\dots,x_k\}| = k \Big\}
  \end{align*}
  so that $\octs{A_1;\dots;A_k}$ are the $k$-sets which come from tuples of distinct vertices of the
  squashed octahedron.  Lastly, define
  \begin{align*}
    \eta_H(A_1;\dots;A_k) = \begin{cases}
      1, \,&\text{if } |\octs{A_1;\dots;A_k} \cap E(H)| \,\, \text{is even},\\
      -1,   &\text{otherwise}
    \end{cases}
  \end{align*}
  For notational convenience, the braces defining $A_i$ are usually dropped.  For example, we will
  write $\oct{x;y_0,y_1;z_0,z_1}$ for $\oct{\{x\};\{y_0,y_1\};\{z_0,z_1\}}$.
\end{definition}

\begin{definition}
  Let $P \subseteq V(H)^k$, let $0 \leq \ell \leq k$, and let $H$ be a $k$-uniform hypergraph.
  Define
  \begin{align*}
    \dev_{\ell,P}(H) := 
    \sum_{\substack{x_1,\dots,x_{k-\ell},y_{1,0},y_{1,1},\dots,y_{\ell,0},y_{\ell,1} \in V(H) \\
      \oct{x_1;\dots;x_{k-\ell};y_{1,0},y_{1,1};\dots;y_{\ell,0},y_{\ell,1}} \subseteq P}}
      \eta_H(x_1;\dots;x_{k-\ell};y_{1,0},y_{1,1};\dots;y_{\ell,0},y_{\ell,1}).
  \end{align*}
  and let $\dev_\ell(H) := \dev_{\ell,V(H)^k}(H)$.
\end{definition}

Note that by definition, \propdev{$\ell$} is the property that $\dev_{\ell}(H_n) = o(n^{k+\ell})$.
Also, \propcdh{$\ell$} is the property that for all $\ell$-uniform hypergraphs $G$, $\dev_{0,P}(H_n)
= o(n^k)$, where $P$ is the collection of tuples which form $k$-cliques of $G$.  Finally,
\propexpandh{$k_1 + \dots + k_t$} is the property that for all $S_i \subseteq \binom{V(H)}{k_i}$,
$\dev_{0,P}(H_n) = o(n^k)$ where $P$ is now the collection of $k$-tuples which are formed by taking
one element of $S_i$ for each $i$.

\begin{definition}
  A set $P \subseteq V(H)^k$ is called \emph{complete in coordinate $i$} if there exists a $P'
  \subseteq V(H)^{k-1}$ such that $P = \{ (x_1,\dots,x_k) : (x_1,\dots,x_{i-1},x_{i+1},\dots,x_k)
  \in P', x_i \in V(H)\}$.
\end{definition}

The following two lemmas are the heart of Chung's~\cite{hqsi-chung90} proof that \propdev{$\ell$}
$\Rightarrow$ \propcdh{$\ell-1$}, although the lemmas aren't stated separately; they appear
implicitly in the proof.

\begin{lemma} \label{lem:subdev} (Chung~\cite{hqsi-chung90})
  Let $H$ be a $k$-uniform hypergraph, let $P,Q \subseteq V(H)^k$, and let $2 \leq \ell \leq k$.  If
  $Q$ is complete in coordinate $i$ where $k-\ell+1 \leq i \leq k$, then
  \begin{align*}
    \dev_{\ell,P\cap Q}(H) \leq \dev_{\ell,P}(H).
  \end{align*}
\end{lemma}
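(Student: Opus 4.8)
The plan is to prove the lemma by a Cauchy--Schwarz (``box norm'') argument carried out in coordinate $i$. The key structural observation is that the hypothesis $k-\ell+1\le i\le k$ forces coordinate $i$ to be one of the \emph{doubled} coordinates of the octahedron: writing $i=k-\ell+j$ with $1\le j\le\ell$, the $i$-th entry of every tuple of $\oct{x_1;\dots;x_{k-\ell};y_{1,0},y_{1,1};\dots;y_{\ell,0},y_{\ell,1}}$ is either $y_{j,0}$ or $y_{j,1}$. This is precisely what allows the sum over coordinate $i$ to be rewritten as a perfect square; if $i$ were a singleton coordinate ($i\le k-\ell$) the sum over the single variable $x_i$ would not be a square and the argument would not apply.

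First I would fix a value $\tau$ of all the summation variables other than $y_{j,0}$ and $y_{j,1}$ (that is, of $x_1,\dots,x_{k-\ell}$ and of the pairs $(y_{j',0},y_{j',1})$ for $j'\ne j$). For $v\in V(H)$, let $\rho(v)=1$ if all $2^{\ell-1}$ $k$-tuples obtained from the octahedron by setting its $i$-th entry equal to $v$ belong to $P$, and $\rho(v)=0$ otherwise; and let $\sigma(v)=\prod(-1)^{\mathbf{1}[e\in E(H)]}$, the product being over those same $2^{\ell-1}$ $k$-tuples $e$. I would then record three elementary factorizations. \textbf{(i)} Since every tuple of the octahedron has $i$-th entry $y_{j,0}$ or $y_{j,1}$, the condition $\oct{\dots}\subseteq P$ holds if and only if $\rho(y_{j,0})=\rho(y_{j,1})=1$, so the indicator of $\oct{\dots}\subseteq P$ equals $\rho(y_{j,0})\rho(y_{j,1})$. \textbf{(ii)} Since $Q$ is complete in coordinate $i$, membership of a $k$-tuple in $Q$ is unaffected by its $i$-th entry, so $\oct{\dots}\subseteq Q$ is a condition on $\tau$ alone; writing $\chi(\tau)\in\{0,1\}$ for its indicator, the indicator of $\oct{\dots}\subseteq P\cap Q$ equals $\chi(\tau)\,\rho(y_{j,0})\rho(y_{j,1})$. \textbf{(iii)} Grouping the tuples of the octahedron according to whether their $i$-th entry is $y_{j,0}$ or $y_{j,1}$ and using multiplicativity of the sign over this grouping, $\eta_H(\dots)=\sigma(y_{j,0})\sigma(y_{j,1})$.

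Combining (i) and (iii) and summing over $y_{j,0},y_{j,1}\in V(H)$, the contribution of the fixed $\tau$ to $\dev_{\ell,P}(H)$ is
\begin{align*}
\sum_{y_{j,0},\,y_{j,1}\in V(H)}\rho(y_{j,0})\sigma(y_{j,0})\cdot\rho(y_{j,1})\sigma(y_{j,1})=\Big(\sum_{v\in V(H)}\rho(v)\sigma(v)\Big)^{2}\ \ge\ 0,
\end{align*}
while by (ii) the contribution of $\tau$ to $\dev_{\ell,P\cap Q}(H)$ is this same quantity multiplied by $\chi(\tau)$. Summing over all choices of $\tau$ therefore gives
\begin{align*}
\dev_{\ell,P}(H)-\dev_{\ell,P\cap Q}(H)=\sum_{\tau}\bigl(1-\chi(\tau)\bigr)\Big(\sum_{v\in V(H)}\rho(v)\sigma(v)\Big)^{2}\ \ge\ 0,
\end{align*}
since $1-\chi(\tau)\ge 0$ for every $\tau$; this is the claimed inequality.

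The step requiring care is factorization (iii). Because $\octs{\cdots}$ is defined as a \emph{set} of $k$-sets, a $k$-set that arises from two distinct octahedron tuples is counted only once, which happens on the ``degenerate'' configurations where two of the chosen vertices coincide (for instance $y_{j,0}=y_{j,1}$, or $\{y_{j_1,0},y_{j_1,1}\}=\{y_{j_2,0},y_{j_2,1}\}$). On those configurations the product $\sigma(y_{j,0})\sigma(y_{j,1})$ is the value of $\eta_H$ computed \emph{with multiplicity} rather than as a set. I would carry out the whole argument with this with-multiplicity sign throughout --- equivalently, with the box-norm form of $\dev_\ell$ --- which agrees with the $\octs$ version except on $O(n^{k+\ell-1})$ degenerate terms and defines the same property \propdev{$\ell$}, and in that formulation all the identities above are exact. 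The remaining steps, (i), (ii), and the two displays, are purely formal once the doubled-coordinate structure has been isolated.
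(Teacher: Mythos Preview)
Your proof is correct and follows essentially the same approach as the paper: both fix all variables except $y_{j,0},y_{j,1}$ (where $j=i-k+\ell$), rewrite the inner sum as a perfect square---your $\bigl(\sum_v \rho(v)\sigma(v)\bigr)^2$ is exactly the paper's $(\Gamma_0-\Gamma_1)^2$ with $\Gamma_0,\Gamma_1$ counting even/odd vertices in $N$---and then observe that multiplying each nonnegative square by the indicator $\chi(\tau)$ of the $Q'$-condition can only decrease the total. Your explicit flag about factorization~(iii) failing on degenerate configurations (and the fix via the with-multiplicity sign) is a point the paper glosses over; since the lemma is only ever applied asymptotically, the $O(n^{k+\ell-1})$ discrepancy is harmless either way.
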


\begin{lemma} \label{lem:devcauchy} (Chung~\cite{hqsi-chung90})
  Let $1 \leq \ell \leq k$, let $P \subseteq V(H)^k$, and let $\mathcal{H} =
  \{H_n\}_{n\rightarrow\infty}$ be a sequence of hypergraphs with $|V(H_n)| = n$.  If
  $\dev_{\ell,P}(H_n) = o(n^{k+\ell})$, then $\dev_{\ell-1,P}(H_n) = o(n^{k+\ell-1})$.
\end{lemma}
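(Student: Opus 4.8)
The plan is to run the standard octahedral Cauchy--Schwarz (``Gowers-type'') step, doubling one singleton coordinate. Write $m = k-\ell$, so that $\dev_{\ell-1,P}(H)$ is a sum over $m+1$ singleton vertices $x_1,\dots,x_{m+1}$ together with $\ell-1$ pairs $(y_{j,0},y_{j,1})$, while $\dev_{\ell,P}(H)$ is a sum over $m$ singletons $x_1,\dots,x_m$, the same $\ell-1$ pairs, and one extra pair. The first step is to record the multiplicativity of the sign: setting $z := x_{m+1}$, for distinct vertices the quantity $\eta_H(x_1;\dots;x_m;z;y_{1,0},y_{1,1};\dots;y_{\ell-1,0},y_{\ell-1,1})$ equals the product over $\vec{i}\in\{0,1\}^{\ell-1}$ of $(-1)^{\mathbf 1[\{x_1,\dots,x_m,z,y_{1,i_1},\dots,y_{\ell-1,i_{\ell-1}}\}\in E(H)]}$, and hence, whenever $z\neq z'$ and there are no further coincidences among the chosen vertices,
\[
  \eta_H(x_1;\dots;x_m;z;\vec y)\,\eta_H(x_1;\dots;x_m;z';\vec y)
  = \eta_H(x_1;\dots;x_m;z,z';\vec y),
\]
because $\octs{x_1;\dots;x_m;z;\vec y}$ and $\octs{x_1;\dots;x_m;z';\vec y}$ partition $\octs{x_1;\dots;x_m;z,z';\vec y}$. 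Likewise $\oct{x_1;\dots;x_m;z;\vec y}\cup\oct{x_1;\dots;x_m;z';\vec y}=\oct{x_1;\dots;x_m;z,z';\vec y}$, so the constraint ``$\oct{\cdots}\subseteq P$'' splits multiplicatively: $\mathbf 1[\oct{\cdots z\cdots}\subseteq P]\cdot\mathbf 1[\oct{\cdots z'\cdots}\subseteq P]=\mathbf 1[\oct{\cdots z,z'\cdots}\subseteq P]$.

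With these identities in hand, I would write $\dev_{\ell-1,P}(H) = \sum_{x_1,\dots,x_m,\vec y} G(x_1,\dots,x_m,\vec y)$ with
\[
  G(x_1,\dots,x_m,\vec y) = \sum_{z\in V(H)} \mathbf 1\big[\oct{x_1;\dots;x_m;z;\vec y}\subseteq P\big]\;\eta_H(x_1;\dots;x_m;z;\vec y),
\]
and apply Cauchy--Schwarz to the outer sum, which has $n^{m+2(\ell-1)} = n^{k+\ell-2}$ terms:
\[
  \dev_{\ell-1,P}(H)^2 \le n^{k+\ell-2}\sum_{x_1,\dots,x_m,\vec y} G^2
  = n^{k+\ell-2}\sum_{x_1,\dots,x_m,\vec y}\ \sum_{z,z'} (\text{term}_z)(\text{term}_{z'}),
\]
where $\text{term}_z$ is the $z$-summand of $G$. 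By the two identities above, the contribution of the off-diagonal products (over $z\neq z'$ with all $k+\ell$ vertices distinct), after relabelling $(z,z')$ as a new pair, is exactly $\dev_{\ell,P}(H)$. The leftover terms --- the diagonal $z=z'$, and every configuration in which two of $\{z,z',x_1,\dots,x_m,\vec y\}$ coincide or in which the $\octs{\cdots}$ convention discards some octahedron $k$-set --- involve at most $k+\ell-1$ free vertices while each summand has absolute value at most $1$, so they contribute $O(n^{k+\ell-1})$. Hence $\dev_{\ell-1,P}(H)^2 \le n^{k+\ell-2}\big(|\dev_{\ell,P}(H)| + O(n^{k+\ell-1})\big)$, and the hypothesis $\dev_{\ell,P}(H_n) = o(n^{k+\ell})$ forces the right-hand side to be $o(n^{2k+2\ell-2})$, so $\dev_{\ell-1,P}(H_n) = o(n^{k+\ell-1})$.

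The part I expect to be fiddly is not the Cauchy--Schwarz itself but the degenerate-tuple accounting: one must check that collapsing any coincidence (including the diagonal $z=z'$) genuinely costs a factor of $n$ and so stays of order $n^{k+\ell-1}$, strictly below the main term, and that the off-diagonal piece reproduces $\dev_{\ell,P}(H)$ with coefficient exactly $1$. Everything else is the routine squashed-octahedron identity. It is worth noting that, in contrast to Lemma~\ref{lem:subdev}, the set $P$ needs no special structure here (it need not be complete in any coordinate), precisely because the defining constraint $\oct{\cdots}\subseteq P$ factors under the doubling exactly as the sign does.
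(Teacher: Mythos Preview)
Your proposal is correct and is essentially the same Cauchy--Schwarz/doubling argument as the paper's proof. The paper runs it in the opposite direction: it starts from $\dev_{\ell,P}(H)$, rewrites the inner sum over the doubled pair $(y_{1,0},y_{1,1})$ as $(\Gamma_0-\Gamma_1)^2 = \big(\sum_{z\in N}\eta(\vec x;z;\dots)\big)^2$, and then applies Cauchy--Schwarz with $\alpha_i=1$ to obtain $\dev_{\ell,P}(H)\ge n^{-(k+\ell-2)}\dev_{\ell-1,P}(H)^2$ directly, without ever introducing an additive $O(n^{k+\ell-1})$ error. Your version applies Cauchy--Schwarz first and then expands $\sum G^2$; the extra bookkeeping you do on the diagonal and degenerate tuples is exactly what bridges the two presentations (and one minor point: the non-degenerate off-diagonal piece equals $\dev_{\ell,P}(H)$ \emph{restricted to tuples of distinct vertices}, not $\dev_{\ell,P}(H)$ itself, but the difference is again $O(n^{k+\ell-1})$ and is absorbed into your error term).
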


The fact that \propdev{$\ell$} $\Rightarrow$ \propdev{$\ell-1$} follows from
Lemma~\ref{lem:devcauchy} for $P = V(H)^k$.  The fact that \propdev{$\ell$} $\Rightarrow$
\propcdh{$\ell-1$} follows from a combination of Lemmas~\ref{lem:subdev} and~\ref{lem:devcauchy}:
given an $(\ell-1)$-uniform hypergraph $G$, define $P$ to be the tuples which are $k$-cliques in $G$
and write $P$ as an intersection of sets complete in a coordinate (for details, see
Lemma~\ref{lem:devtocd}).  Combining Lemmas~\ref{lem:subdev} and~\ref{lem:devcauchy} in a slightly
different way proves that \propdev{$\ell$} $\Rightarrow$ \propexpandh{$\pi$}.

\begin{lemma} \label{lem:devtoexp}
  For all $2 \leq \ell \leq k$ and all proper partitions $\pi$, \propdev{$\ell$} $\Rightarrow$ \propexpandh{$\pi$}.
\end{lemma}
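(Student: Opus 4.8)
The plan is to follow the strategy already sketched in the text: reduce \propexpandh{$\pi$} to a statement about $\dev_{0,P}$ for a suitable $P$, then obtain that statement by combining Lemma~\ref{lem:subdev} and Lemma~\ref{lem:devcauchy}. Concretely, recall that \propexpandh{$k_1+\dots+k_t$} asks, for all $S_i \subseteq \binom{V(H_n)}{k_i}$, that $\dev_{0,P}(H_n) = o(n^k)$ where $P \subseteq V(H_n)^k$ is the set of $k$-tuples $(x_1,\dots,x_k)$ such that the first $k_1$ coordinates form an element of $S_1$, the next $k_2$ coordinates form an element of $S_2$, and so on. (By Lemma~\ref{lem:partitetonormal} and the remarks preceding Lemma~\ref{lem:devtoexp} it suffices to handle this ``linearized'' form; alternatively one argues directly and ignores the partite reduction since the deviation bound is for arbitrary $S_i$.) So the whole lemma comes down to: if $\dev_\ell(H_n) = o(n^{k+\ell})$, then $\dev_{0,P}(H_n) = o(n^k)$ for every such product set $P$.

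The first key step is to write $P$ as an intersection of sets that are complete in a coordinate. For each pair of coordinates $i \neq j$ lying in the same block of $\pi$, and more generally for the constraint defining membership in $S_r$ on a block of coordinates, the set $P$ is cut out by conditions each of which only involves $k_r < k$ of the $k$ coordinates; hence each such condition, viewed as a subset of $V(H_n)^k$, is complete in every coordinate outside its block — in particular it is complete in at least $k - \max_r k_r \geq k - (k-1) = 1$ coordinates, and by choosing the ordering of $\vec\pi$ appropriately we may assume these ``free'' coordinates include the last $\ell$ coordinates, which is what Lemma~\ref{lem:subdev} requires. More carefully: since $\pi$ is a proper partition, $\max_r k_r \le k-1$, so at least one coordinate — say, after reordering, coordinate $k$ — is not the sole coordinate of its block; iterating, we can arrange that each defining condition of $P$ leaves free a block of the last coordinates, so that $P = \bigcap Q$ with each $Q$ complete in a coordinate $i$ with $k-\ell+1 \le i \le k$ (here we use $\ell \ge 2$, or if $\ell$ is small we shrink the number of conditions handled this way). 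Then Lemma~\ref{lem:subdev}, applied repeatedly, gives $\dev_{\ell, P}(H_n) \le \dev_{\ell, V(H_n)^k}(H_n) = \dev_\ell(H_n) = o(n^{k+\ell})$.

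The second key step is to descend from $\dev_{\ell,P}$ to $\dev_{0,P}$ by applying Lemma~\ref{lem:devcauchy} exactly $\ell$ times, each time losing one power of $n$: from $\dev_{\ell,P}(H_n) = o(n^{k+\ell})$ we get $\dev_{\ell-1,P}(H_n) = o(n^{k+\ell-1})$, then $\dev_{\ell-2,P}(H_n) = o(n^{k+\ell-2})$, and so on down to $\dev_{0,P}(H_n) = o(n^{k})$. By the reformulation of \propexpandh{$\pi$} above, this is precisely the statement that \propexpandh{$\pi$} holds, so the proof is complete.

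The main obstacle I expect is the bookkeeping in the first step — arranging the decomposition $P = \bigcap Q_j$ so that \emph{every} $Q_j$ is complete in one of the last $\ell$ coordinates simultaneously with the ordering of blocks of $\pi$. The subtlety is that a single block $S_r$ of size $k_r$ imposes a constraint on $k_r$ coordinates at once, and ``completeness in coordinate $i$'' for Lemma~\ref{lem:subdev} needs $i$ to be \emph{outside} that block; so one must check that one can linearly order the coordinates (equivalently, choose which block occupies the last $\ell$ slots) so that for each block there is a free coordinate in the required range. Since $\max\pi \le k-1$ this is always possible — intuitively, place the largest block first and let the remaining blocks, whose coordinates total at least $1$, occupy the tail — but making this precise and confirming that the case $\ell \ge 2$ (as hypothesized) suffices, rather than needing $\ell$ larger, is the one spot that needs genuine care rather than routine manipulation.
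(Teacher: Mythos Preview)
Your approach is the same as the paper's: write the product set $P$ as an intersection of sets each complete in one of the last $\ell$ coordinates, apply Lemma~\ref{lem:subdev} repeatedly to get $\dev_{\ell,P}(H_n)=o(n^{k+\ell})$, then apply Lemma~\ref{lem:devcauchy} $\ell$ times to descend to $\dev_{0,P}(H_n)=o(n^k)$. The spot you flag as needing care --- ordering the coordinates so that every block-constraint $Q_r$ is complete in some position in $\{k-\ell+1,\dots,k\}$ --- is precisely where the paper streamlines things. Rather than treat arbitrary $\pi$ and $\ell$ directly, it first uses \propdev{$\ell$} $\Rightarrow$ \propdev{$2$} together with Lemma~\ref{lem:exptoexp} (every $\pi$ refines some $k_1+k_2$) to reduce to $\ell=2$ and a two-part partition. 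Then there are only two constraints $P_1,P_2$, and the paper places one coordinate of the $S_1$-block in position $k-1$ and one coordinate of the $S_2$-block in position $k$, so $P_1$ is complete in coordinate $k$ and $P_2$ in coordinate $k-1$.

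Your ``place the largest block first'' heuristic does not actually work: if $\pi=k_1+k_2$ with $k_1\ge k_2\ge 2$ and block~1 sits in positions $1,\dots,k_1$, then block~2 occupies all of $k_1+1,\dots,k$, including both of $k-1$ and $k$, and so for $\ell=2$ the $S_2$-constraint is not complete in any permitted coordinate. The correct arrangement is to put one coordinate from each of two \emph{distinct} blocks into the last two positions; then every block misses at least one of those two positions (blocks~1 and~2 each miss one, any further block misses both). With that fix your direct argument for general $\pi$ goes through, but the paper's two-part reduction makes the bookkeeping trivial.
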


\begin{proof} % {{{
Since \propdev{$\ell$} $\Rightarrow$ \propdev{$\ell-1$}, by Lemma~\ref{lem:exptoexp} we just need to
prove that \propdev{$2$} $\Rightarrow$ \propexpand{$k_1 + k_2$} for every $k_1, k_2 \geq 1$ with $k_1 + k_2
= k$.  Indeed, every $\pi$ is a refinement of $k_1 + k_2$ for some choice of $k_1$ and $k_2$.  Given
$S_1 \subseteq \binom{V(H)}{k_1}$ and $S_2 \subseteq \binom{V(H)}{k_2}$, define
\begin{align*}
  P_1 &= \left\{ (x_1,\dots,x_{k-2},y,z) \in V(H)^k : \{x_1,\dots,x_{k_1-1},y\} \in S_1 \right\}, \\
  P_2 &= \left\{ (x_1,\dots,x_{k-2},y,z) \in V(H)^k : \{x_{k_1},\dots,x_{k-2},z\} \in S_2 \right\}.
\end{align*}
$P_1$ is complete in coordinate $k$ as we can let $P_1' = \{ (x_1,\dots,x_{k-2},y) :
\{x_1,\dots,x_{k_1-1},y\} \in S_1\}$ and similarly $P_2$ is complete in coordinate $k-1$.  Thus by
Lemma~\ref{lem:subdev} and the fact that \propdev{$2$} holds,
\begin{align*}
  \dev_{2,P_1 \cap P_2}(H_n) \leq \dev_{2,V(H)^k}(H_n) = o(n^{k+2}).
\end{align*}
Now apply Lemma~\ref{lem:devcauchy} to show that
\begin{align} \label{eq:devtoexplittleo}
  \dev_{0,P_1 \cap P_2}(H_n) = o(n^k).
\end{align}
But by definition,
\begin{align}
  \dev_{0,P_1 \cap P_2}(H_n) 
  &= \sum_{\substack{x_1,\dots,x_k \in V(H) \\ \oct{x_1;\dots;x_k} \subseteq P_1 \cap P_2}}
  \eta(x_1;\dots;x_k) \nonumber \\
  &= \sum_{(x_1,\dots,x_k) \in P_1 \cap P_2} \eta(x_1;\dots;x_k) \label{eq:devtoexp}
  %&= \sum_{\substack{(y_1,\dots,y_{k_1}) \in V(H)^{k_1} \\
  %  \{y_1,\dots,y_{k_1}\} \in S \\
  %  (z_1,\dots,z_{k_2}) \in V(H)^{k_2} \\
  %  \{z_1,\dots,z_{k_2}\} \in T}}
  %  \begin{cases}
  %   -1,\, &\text{if } \{ y_1,\dots,y_{k_1},z_1,\dots,z_{k_2}\} \in E(H) \\
  %    1, &\text{otherwise}.
  %  \end{cases}
\end{align}
Let $(x_1,\dots,x_k) \in P_1 \cap P_2$, let $s_1 = \{x_1,\dots,x_{k_1-1},x_{k-1}\}$ and let $s_2 =
\{x_{k_1},\dots,x_{k-2},x_{k}\}$.  Since $(x_1,\dots,x_k) \in P_1 \cap P_2$, we have $s_1 \in S_1$
and $s_2 \in S_2$.  Also, $\eta(x_1;\dots;x_k) = -1$ if $s_1 \cup s_2 \in E(H)$ and is $1$
otherwise.  Each $(s_1,s_2) \in S_1 \times S_2$ is counted $k_1!k_2! $ times in \eqref{eq:devtoexp},
since the number of $(x_1,\dots,x_k) \in P_1 \cap P_2$ with $\{x_1, \dots, x_{k_1-1}, x_{k-1}\} =
s_1$ and $\{x_{k_1},\dots,x_{k-2},x_k\} = s_2$ is $k_1!k_2! $.  Thus equations
\eqref{eq:devtoexplittleo} and \eqref{eq:devtoexp} combine to show that
\begin{align*}
  k_1! k_2! \Big( \left|\{ (s_1,s_2) \in S_1 \times S_2 : s_1 \cup s_2 \not\in E(H)\}\right| 
  -\left|\{ (s_1,s_2) \in S_1 \times S_2 : s_1 \cup s_2 \in E(H)\}\right|
  \Big)
\end{align*}
is $o(n^k)$.  Since $k_1$ and $k_2$ are constants, this implies that the number of tuples $(s_1,s_2)
\in S_1 \times S_2$ which are edges of $H$ is $\frac{1}{2}$ of all such tuples (up to $o(n^k)$), so
\propexpand{$k_1 + k_2$} holds and the proof is complete.
\end{proof} % }}}

\section{Constructions} % {{{1
\label{sec:constructions}

% Intro {{{2
To show a property $P$ does not imply a property $Q$, we construct a hypergraph sequence that
satisfies $P$ but fails $Q$.  While Table~\ref{tab:main} states several results of this form, we
only use three constructions.  This section defines these constructions and proves several facts
about them. The constructions are built from random graphs and random hypergraphs, so we are
actually defining three probability distributions over $n$-vertex, $k$-uniform hypergraphs which we
call $A_{\ell}(n,p)$, $B_{\vec{\pi}}(n,p)$ and $D(n,1/2)$.  As is typical in random graph theory, we
will abuse notation by also writing $A_{\ell}(n,p)$, $B_{\vec{\pi}}(n,p)$ and $D(n,1/2)$ for a
particular hypergraph drawn from these distributions. Most likely, these constructions can be
made explicit by replacing the use of the random hypergraph with a quasirandom hypergraph.

Before stating the constructions, we briefly state two well known concentration bounds on sums of
indicator random variables.  For more details, see~\cite{prob-alonspencer}.

\begin{lemma} \label{lem:chebyshev} \textbf{(Chebyshev's Inequality)}
  Let $X_1, \dots, X_n$ be indicator random variables, let $X = \sum X_i$, and let $\mu =
  \mathbb{E}[X]$ and $\sigma^2 = Var(X) = \mathbb{E}[(X-\mu)^2]$.  For every $\epsilon > 0$,
  \begin{align*}
    \mathbb{P}\left[ |X - \mu| > \epsilon \mu \right] \leq \frac{\sigma^2}{\epsilon^2 \mu^2}.
  \end{align*}
\end{lemma}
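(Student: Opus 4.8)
The plan is to obtain Chebyshev's inequality as an immediate consequence of Markov's inequality applied to the nonnegative random variable $(X-\mu)^2$. First I would record Markov's inequality: for any nonnegative random variable $Y$ and any real $a>0$, $\mathbb{P}[Y\geq a]\leq \mathbb{E}[Y]/a$. This follows from the one-line estimate $\mathbb{E}[Y]\geq \mathbb{E}\big[Y\cdot\mathbf{1}_{\{Y\geq a\}}\big]\geq a\,\mathbb{P}[Y\geq a]$, using only $Y\geq 0$. Since the $X_i$ are indicator random variables, $X=\sum X_i$ takes values in $\{0,1,\dots,n\}$, so $\mu$ and $\sigma^2$ are finite and every expectation below is well defined; in fact the hypothesis that the $X_i$ are indicators plays no further role, as the same argument works for any $X$ with finite second moment.

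Next I would set $Y=(X-\mu)^2$, so that $Y\geq 0$ and $\mathbb{E}[Y]=Var(X)=\sigma^2$ by the definition of variance. For $\epsilon>0$ and (assuming) $\mu>0$, the events $\{|X-\mu|>\epsilon\mu\}$ and $\{Y>\epsilon^2\mu^2\}$ coincide, since squaring is monotone on the nonnegatives. Applying Markov's inequality to $Y$ with $a=\epsilon^2\mu^2$ then gives
\begin{align*}
  \mathbb{P}\left[|X-\mu|>\epsilon\mu\right]
  &= \mathbb{P}\left[Y>\epsilon^2\mu^2\right]
  \leq \mathbb{P}\left[Y\geq \epsilon^2\mu^2\right] \\
  &\leq \frac{\mathbb{E}[Y]}{\epsilon^2\mu^2}
  = \frac{\sigma^2}{\epsilon^2\mu^2},
\end{align*}
which is exactly the claimed bound.

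I do not expect any genuine obstacle here. The only minor points are the passage from the strict to the non-strict inequality inside Markov's bound (handled by $\mathbb{P}[Y>a]\leq\mathbb{P}[Y\geq a]$) and the degenerate case $\mu=0$, which forces each $X_i$, and hence $X$, to equal $0$ almost surely so that the left-hand side vanishes (and the statement is understood to assume $\mu>0$, otherwise the right-hand side is undefined). Since this is a standard textbook fact, an equally acceptable route would be simply to cite~\cite{prob-alonspencer} rather than reproduce the derivation.
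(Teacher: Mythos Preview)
Your proposal is correct and is the standard derivation of Chebyshev's inequality from Markov's inequality. The paper itself does not prove this lemma at all; it simply states it as a well-known concentration bound and refers the reader to~\cite{prob-alonspencer} for details, so your suggested alternative of just citing the reference is exactly what the paper does.
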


\begin{lemma} \label{lem:chernoff} \textbf{(Chernoff Bound)}
  Let $0 < p < 1$, let $X_1,\dots,X_n$ be mutually independent indicator random variables with
  $\mathbb{P}[X_i = 1] = p$ for all $i$, let $X = \sum X_i$, and let $\mu = \mathbb{E}[X] = pn$.
  Then for all $a > 0$,
  \begin{align*}
    \mathbb{P}[ \left| X - \mu \right| > a] \leq 2 e^{-a^2/2n}.
  \end{align*}
\end{lemma}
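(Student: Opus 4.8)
The plan is to use the exponential--moment (Chernoff/Bernstein) method: bound the upper deviation $\mathbb{P}[X-\mu \geq a]$ and the lower deviation $\mathbb{P}[\mu - X \geq a]$ separately, each by $e^{-a^2/2n}$, and then combine them with a union bound. For the upper tail, fix any $t>0$ and apply Markov's inequality to the nonnegative random variable $e^{t(X-\mu)}$:
\[
  \mathbb{P}[X-\mu \geq a] = \mathbb{P}\!\left[e^{t(X-\mu)} \geq e^{ta}\right] \leq e^{-ta}\,\mathbb{E}\!\left[e^{t(X-\mu)}\right].
\]
By mutual independence this factors as $\mathbb{E}[e^{t(X-\mu)}] = \prod_{i=1}^{n}\mathbb{E}[e^{t(X_i-p)}]$, and each factor is the explicit two-point expectation $p e^{t(1-p)} + (1-p)e^{-tp}$.

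The crux --- and the step I expect to be the main obstacle --- is the subgaussian estimate $\mathbb{E}[e^{t(X_i-p)}] \leq e^{t^2/8}$, i.e.\ Hoeffding's lemma for a mean-zero variable supported on an interval of length $1$. I would prove it by writing $g(t) = \log\!\big(p e^{t(1-p)} + (1-p)e^{-tp}\big)$ and checking that $g(0)=0$, $g'(0)=\mathbb{E}[X_i-p]=0$, and $g''(t) \leq \tfrac14$ for every $t$: indeed $g$ is the cumulant generating function of $X_i-p$, so $g''(t)$ is the variance of $X_i-p$ under the exponentially tilted law, which is again a two-point distribution on $\{-p,1-p\}$ and hence has variance $\tilde q(1-\tilde q) \leq \tfrac14$ for some $\tilde q \in [0,1]$. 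Taylor's theorem with Lagrange remainder then gives $g(t) \leq t^2/8$, hence $\mathbb{E}[e^{t(X-\mu)}] \leq e^{nt^2/8}$ and $\mathbb{P}[X-\mu\geq a] \leq e^{-ta + nt^2/8}$ for all $t>0$. Choosing $t = 4a/n$ to minimize the exponent yields $\mathbb{P}[X-\mu\geq a] \leq e^{-2a^2/n} \leq e^{-a^2/2n}$.

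The lower tail is handled identically after replacing each $X_i$ by $1-X_i$ (equivalently, running the argument with $-t$ in place of $t$), giving $\mathbb{P}[\mu - X \geq a] \leq e^{-a^2/2n}$; a union bound over these two events then gives $\mathbb{P}[|X-\mu| > a] \leq 2e^{-a^2/2n}$, as claimed. An alternative route is the classical multiplicative bound via $1-p+pe^t \leq \exp(p(e^t-1))$, but converting its relative-error conclusion into the additive Gaussian form $e^{-a^2/2n}$ requires extra massaging, so the Hoeffding argument is cleaner for this exact target. Since this is a standard inequality, one could also simply cite~\cite{prob-alonspencer} (as is done for Chebyshev above); the self-contained proof is worth including only so that the normalization matches precisely what is used in the constructions.
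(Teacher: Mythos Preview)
Your proof is correct; it is the standard Hoeffding argument and the optimization over $t$ is carried out cleanly (indeed you obtain the stronger exponent $-2a^2/n$ before weakening to $-a^2/2n$). However, the paper does not actually prove Lemma~\ref{lem:chernoff}: it is stated without proof as a well-known concentration bound, with a pointer to~\cite{prob-alonspencer} for details. You anticipated this in your final paragraph, and that is exactly what the paper does---so your self-contained argument is strictly more than is required.
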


\newtheorem*{defofA}{Construction of $A_{\ell}(n,p)$}
\begin{defofA}
  For $n \in \mathbb{N}$, $2 \leq \ell \leq k-1$, and $0 < p < 1$ with $p \in \mathbb{Q}$ so $p =
  \frac{a}{b}$ with $a,b \in \mathbb{Z}^+$, define a probability distribution $A_{\ell}(n,p)$ on
  $k$-uniform, $n$-vertex hypergraphs as follows.  Let $c : E(K^{(\ell)}_n) \rightarrow
  \{0,\dots,b-1\}$ be a random $b$-coloring of the edges of the complete $\ell$-uniform hypergraph
  $K^{(\ell)}_n$ where each edge receives each color with equal probability independently of all
  other edges.  Let the vertex set of $A_\ell(n,p)$ be $V(K^{(\ell)}_n)$ and make $W \subseteq
  V(A_\ell(n,p))$ a hyperedge of $A_\ell(n,p)$ if $|W| = k$ and
  \begin{align*}
    \sum_{\{x_1,\dots,x_\ell\} \subseteq W} c(\{x_1,\dots,x_\ell\}) < a \pmod b.
  \end{align*}
\end{defofA}

\begin{lemma} \label{lem:Aispdense}
  For every $\epsilon > 0$, with probability going to one as $n$ goes to infinity,
  \begin{align*}
    \left||E(A_\ell(n,p))| - p \binom{n}{k}\right| \leq \epsilon n^k.
  \end{align*}
\end{lemma}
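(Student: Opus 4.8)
The plan is to write $X := |E(A_\ell(n,p))| = \sum_{W} \mathbf{1}_W$, where $W$ ranges over all $k$-subsets of the vertex set and $\mathbf{1}_W$ indicates the event that $W$ is a hyperedge, compute $\mathbb{E}[X]$ exactly, bound the variance, and conclude with Chebyshev's inequality (Lemma~\ref{lem:chebyshev}).

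First I would compute the expectation. For a fixed $k$-set $W$, the quantity $\sum_{\{x_1,\dots,x_\ell\}\subseteq W} c(\{x_1,\dots,x_\ell\}) \pmod{b}$ is a sum of the $\binom{k}{\ell}\geq 1$ independent colors assigned to the $\ell$-subsets of $W$, each uniform on $\{0,\dots,b-1\}$; adding a single independent uniform element of $\mathbb{Z}/b\mathbb{Z}$ already makes a sum uniform on $\mathbb{Z}/b\mathbb{Z}$, so this sum is uniform. Hence $\mathbb{P}[W\in E(A_\ell(n,p))] = a/b = p$, and therefore $\mathbb{E}[X] = p\binom{n}{k}$.

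Next I would bound $Var(X) = \sum_{W,W'}\mathrm{Cov}(\mathbf{1}_W,\mathbf{1}_{W'})$. The crucial observation is that if $|W\cap W'| < \ell$ then no $\ell$-set is contained in both $W$ and $W'$, so the two congruence-sums are functions of disjoint families of the independent colors $c(\cdot)$; thus $\mathbf{1}_W$ and $\mathbf{1}_{W'}$ are independent and $\mathrm{Cov}(\mathbf{1}_W,\mathbf{1}_{W'}) = 0$. Only pairs with $|W\cap W'|\geq \ell$ contribute, and for each fixed $W$ there are $\sum_{j=\ell}^{k}\binom{k}{j}\binom{n-k}{k-j} = O(n^{k-\ell})$ choices of such a $W'$; bounding each covariance trivially by $1$ gives $Var(X) = O(n^{2k-\ell})$.

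Finally, since $\mathbb{E}[X] = \Theta(n^k)$, Chebyshev's inequality --- applied with a fixed constant $\delta = \Theta(\epsilon)$ chosen so that $\delta\,\mathbb{E}[X] \leq \epsilon n^k$ for all large $n$ --- gives
\begin{align*}
  \mathbb{P}\left[\, \left| X - p\binom{n}{k} \right| > \epsilon n^k \,\right]
  \;\leq\; \frac{Var(X)}{\delta^2\, \mathbb{E}[X]^2}
  \;=\; O\!\left( \frac{1}{\delta^2\, n^{\ell}} \right),
\end{align*}
which tends to $0$ as $n\to\infty$ because $\ell\geq 2$. I do not expect any serious obstacle: the only points needing mild care are the claim that the color-sum over the $\ell$-subsets of a $k$-set is uniform modulo $b$, and the (routine) counting of pairs of $k$-sets with intersection at least $\ell$ used in the variance bound.
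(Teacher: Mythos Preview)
Your proposal is correct and follows essentially the same approach as the paper: compute $\mathbb{P}[W\in E(A_\ell(n,p))]=p$ by exploiting a single uniform color in the sum, then apply the second moment method via Chebyshev, using that $\mathbf{1}_W$ and $\mathbf{1}_{W'}$ are independent when $|W\cap W'|<\ell$. Your variance bound $O(n^{2k-\ell})$ is in fact sharper than the paper's $O(n^{2k-1})$, but both suffice.
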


\begin{proof} % {{{
Let $W \subseteq V(A_\ell(n,p))$ with $|W| = k$ and let $\{x_1,\dots,x_\ell\} \subseteq W$. Define
\begin{align*}
  \Delta = \sum_{\substack{\{y_1,\dots,y_\ell\} \subseteq W \\ 
  \{x_1,\dots,x_\ell\} \neq \{y_1,\dots,y_\ell\}} } c(\{y_1,\dots,y_\ell\}).
\end{align*}
Now thinking of $\Delta$ as fixed, there are exactly $a$ choices for $c(\{x_1,\dots,x_\ell\})$ such
that $\Delta + c(\{x_1,\dots,x_\ell\}) < a \pmod b$.  Since the edge $\{x_1,\dots,x_\ell\}$ receives
each color with equal probability, the probability that $W$ is an edge of $A_\ell(n,p)$ is
$\frac{a}{b} = p$.  Therefore, the expected number of edges of $A_\ell(n,p)$ is $p \binom{n}{k}$.

By the second moment method, with probability going to one as $n$ goes to infinity, $\left|
|E(A_\ell(n,p))| - p\binom{n}{k}\right| \leq \epsilon n^k$.  Indeed, for each $k$-set $W$ in
$V(A_\ell(n,p))$, define an indicator random variable $X_W$ where $X_W = 1$ if $W$ is an edge of
$A_\ell(n,p)$. Let $X = \sum X_W$ so that $X = |E(A_\ell(n,p))|$ and $\mu = \mathbb{E}[X] = p
\binom{n}{k}$. Let $\hat\epsilon = \frac{\epsilon}{p}$ so that $|X - \mu| \leq \hat\epsilon \mu$
implies that $|E(A_\ell(n,p)) - p\binom{n}{k}| \leq \epsilon n^k$. Since $X$ is the sum of indicator
random variables, the variance $Var(X) = \sum_W Var(X_W) + 2\sum_{W,W'} Cov(X_W,X_{W'})$.  The event
``$X_W = 1$'' will depend on ``$X_{W'} = 1$'' if and only if $W$ and $W'$ intersect in at least
$\ell$ vertices, so there are at most $n^{2k-1}$ dependent pairs $(X_W, X_{W'})$.  This implies that
there are at most $n^{2k-1}$ pairs $(X_W,X_{W'})$ with $Cov(X_W,X_{W'}) \neq 0$ so that $Var(X) =
o(n^{2k})$.  Since $\mu^2 = \Omega(n^{2k})$, Chebyshev's Inequality (Lemma~\ref{lem:chebyshev})
implies that $\mathbb{P}\left[ |X - \mu| > \hat\epsilon \mu \right] \rightarrow 0$ as $n \rightarrow
\infty$, completing the proof.  For more details on the second moment method,
see~\cite{prob-alonspencer}.
\end{proof} % }}}

\newtheorem*{defofB}{Construction of $B_{\vec{\pi}}(n,p)$}
\begin{defofB}
  Let $\vec{\pi} = (k_1,\dots,k_t)$ be a proper ordered partition of $k$, let $n \in \mathbb{N}$,
  and let $0 < p < 1$ with $p \in \mathbb{Q}$ so $p = \frac{a}{b}$ with $a,b \in \mathbb{Z}^+$.
  Define a probability distribution $B_{\vec{\pi}}(n,p)$ on $k$-uniform hypergraphs with vertex set
  $[n]$ as follows.  For $1 \leq i \leq t$, let $c_i : \binom{[n]}{k_i} \rightarrow \{0,\dots,b-1\}$
  be a random $b$-coloring of the hyperedges of the complete $n$-vertex, $k_i$-uniform hypergraph
  where each hyperedge receives each color with equal probability independently.  Form a $k$-uniform
  hypergraph $B_{\vec{\pi}}(n,p)$ on vertex set $[n]$ as follows.  Let $W \subseteq [n]$ with $|W| =
  k$ and partition $W$ into $W_1,\dots,W_t$ such that $|W_j| = k_j$ and for all $j < \ell$, every
  element of $W_j$ is smaller than every element of $W_{\ell}$.  In other words, $W_1$ is the set of first
  $k_1$ vertices of $W$ in the ordering, $W_2$ is the set of next $k_2$ vertices, and so on.  Make $W$ a
  hyperedge of $B_{\vec{\pi}}(n,p)$ if
  \begin{align*}
    \sum_{i=1}^t c_i(W_i) < a \pmod b.
  \end{align*}
\end{defofB}

\begin{lemma} \label{lem:Bispdense}
  For every $\epsilon > 0$, with probability going to one as $n$ goes to infinity,
  \begin{align*}
    \left||E(B_{\vec{\pi}}(n,p))| - p \binom{n}{k}\right| \leq \epsilon n^k.
  \end{align*}
\end{lemma}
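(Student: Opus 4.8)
The plan is to mimic the second-moment argument proving Lemma~\ref{lem:Aispdense}. For each $k$-set $W \subseteq [n]$ let $X_W$ be the indicator that $W \in E(B_{\vec{\pi}}(n,p))$, and put $X = \sum_W X_W = |E(B_{\vec{\pi}}(n,p))|$. First I would show $\mathbb{E}[X] = p\binom{n}{k}$. Fix $W$ and let $W_1,\dots,W_t$ be its blocks as in the construction; condition on $c_j(W_j)$ for $2 \leq j \leq t$ and write $\Delta = \sum_{j=2}^t c_j(W_j)$. Then $W$ is an edge if and only if $c_1(W_1) + \Delta < a \pmod b$, and since $c_1(W_1)$ is uniform on $\{0,\dots,b-1\}$ and independent of $\Delta$, exactly $a$ of the $b$ possible values of $c_1(W_1)$ satisfy this, so $\mathbb{P}[X_W = 1] = a/b = p$. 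Summing over all $\binom{n}{k}$ sets $W$ gives $\mu := \mathbb{E}[X] = p\binom{n}{k}$. With $\hat\epsilon = \epsilon/p$, the event $|X - \mu| \leq \hat\epsilon\mu$ implies $\left||E(B_{\vec{\pi}}(n,p))| - p\binom{n}{k}\right| \leq \epsilon n^k$, so by Chebyshev's Inequality (Lemma~\ref{lem:chebyshev}) it suffices to prove $Var(X) = o(n^{2k})$, since $\mu^2 = \Omega(n^{2k})$.

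Next I would bound $Var(X) = \sum_W Var(X_W) + \sum_{W \neq W'} Cov(X_W, X_{W'})$. The diagonal sum is $O(n^k)$. For the cross terms, observe that $X_W$ is a deterministic function of the colors $c_1(W_1),\dots,c_t(W_t)$, and since all of the color random variables $\{ c_i(S) \}$ are mutually independent, $X_W$ and $X_{W'}$ are independent --- hence $Cov(X_W, X_{W'}) = 0$ --- unless $W$ and $W'$ share a color variable, i.e.\ unless $W_i = W'_i$ for some $i$ (note that $W_i = W'_j$ with $i \neq j$ does not create dependence, since $c_i$ and $c_j$ are distinct colorings). So I must count, for a fixed $W$, the number of $W'$ with $W'_i = W_i$ for some $i$.

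The main obstacle is this counting step. The crucial observation is that, because the blocks of $W'$ occupy consecutive positions in the sorted order of $W'$, the equality $W'_i = W_i$ forces $W'_1 \cup \dots \cup W'_{i-1} \subseteq \{1,\dots,\min W_i - 1\}$ and $W'_{i+1} \cup \dots \cup W'_t \subseteq \{\max W_i + 1,\dots,n\}$, and conversely any choice of these two sets (of the prescribed sizes $k_1 + \dots + k_{i-1}$ and $k_{i+1} + \dots + k_t$) determines $W'$ uniquely. Hence for fixed $W$ and fixed $i$ there are at most $\binom{n}{k_1+\dots+k_{i-1}}\binom{n}{k_{i+1}+\dots+k_t} \leq n^{k-k_i} \leq n^{k-1}$ such $W'$, using $k_i \geq 1$. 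Summing over the $\leq n^k$ choices of $W$ and the $t = O(1)$ choices of $i$, there are $O(n^{2k-1})$ ordered pairs $(W, W')$ with nonzero covariance, and each covariance is at most $1$ in absolute value. Therefore $Var(X) = O(n^{2k-1}) = o(n^{2k})$, and Chebyshev's Inequality finishes the proof exactly as in Lemma~\ref{lem:Aispdense}.
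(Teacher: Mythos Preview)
Your proof is correct and follows essentially the same approach as the paper: compute $\mathbb{P}[X_W=1]=p$ by conditioning on all but one block color, then apply the second moment method. The paper's argument is terser---it simply notes that $X_W$ and $X_{W'}$ are independent whenever $W$ and $W'$ share no vertex (since then all blocks are distinct), which already gives $O(n^{2k-1})$ dependent pairs---whereas you carry out a more refined count of pairs with $W_i=W'_i$; both bounds suffice.
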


\begin{proof} % {{{
Let $W \in \binom{[n]}{k}$ and let $W_1,\dots,W_t$ be the partition of $W$ as in the construction.
Let $\Delta = \sum_{i=1}^{t-1} c(W_i)$.  There are exactly $a$ choices for $c(W_t)$ such that
$\Delta + c(W_t) < a \pmod b$ so the probability that $W$ is a hyperedge is $\frac{a}{b} = p$.
Since two $k$-sets will depend on each other only if they share at least one vertex, the second
moment method implies that with high probability, $|E(B_{\vec{\pi}}(n,p))| = p \binom{n}{k} \pm
\epsilon n^k$.
\end{proof} % }}}

\newtheorem*{defofD}{Construction of $D(n,\frac{1}{2})$}
\begin{defofD}
  For $k \geq 3$ and $n \in \mathbb{N}$, define a probability distribution $D(n,\frac{1}{2})$ on
  $k$-uniform, $n$-vertex hypergraphs as follows.  Let $G = G^{(k-1)}(n,\frac{1}{2})$ be the random
  $(k-1)$-uniform hypergraph with edge probability $\frac{1}{2}$.  For each $T \in
  \binom{V(D(n,\frac{1}{2}))}{k}$, select a $(k-2)$-subset of $T$ uniformly at random from among all
  $(k-2)$-subsets of $T$.  Call the chosen $(k-2)$-subset of $T$ the \emph{head} of $T$ and note
  that the choice of the head of $T$ is selected independently of all other choices for heads for
  other $k$-sets.  If $T = \{x_1,\dots,x_{k-2},y,z\}$ where $\{x_1,\dots,x_{k-2}\}$ is the head,
  make $T$ a hyperedge of $D(n,\frac{1}{2})$ if either both or neither of
  $\{x_1,\dots,x_{k-2},y\},\{x_1,\dots,x_{k-2},z\}$ are edges of $G$.
\end{defofD}

\begin{lemma} \label{lem:Dispdense}
  For every $\epsilon > 0$, with probability going to one as $n$ goes to infinity,
  \begin{align*}
    \left||E(D(n,1/2))| - \frac{1}{2} \binom{n}{k}\right| \leq \epsilon n^k.
  \end{align*}
\end{lemma}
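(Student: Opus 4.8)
The plan is to pin down $\mathbb{E}[|E(D(n,1/2))|]$ exactly and then show $Var(|E(D(n,1/2))|) = O(n^k)$, after which Chebyshev's Inequality (Lemma~\ref{lem:chebyshev}) finishes the argument exactly as in the proofs of Lemmas~\ref{lem:Aispdense} and~\ref{lem:Bispdense}.

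For the expectation, fix a $k$-set $T$ and condition on the choice of its head. Writing $T = \{x_1,\dots,x_{k-2},y,z\}$ with head $\{x_1,\dots,x_{k-2}\}$, set $e_1 = \{x_1,\dots,x_{k-2},y\}$ and $e_2 = \{x_1,\dots,x_{k-2},z\}$; these are two \emph{distinct} $(k-1)$-sets. In $G = G^{(k-1)}(n,1/2)$ the events $e_1 \in E(G)$ and $e_2 \in E(G)$ are independent, each of probability $1/2$, so the probability that both hold or neither holds is $\tfrac14 + \tfrac14 = \tfrac12$, regardless of which head was chosen. Hence $\mathbb{P}[T \in E(D(n,1/2))] = \tfrac12$ and $\mathbb{E}[|E(D(n,1/2))|] = \tfrac12\binom{n}{k}$.

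For the variance, let $X = |E(D(n,1/2))| = \sum_T X_T$ with $X_T$ the indicator that $T \in E(D(n,1/2))$, and condition on the whole collection of head choices. Given the heads, for each $T$ the two relevant $(k-1)$-sets $e_1^T,e_2^T$ are determined; set $\sigma_e = 1$ if $e \in E(G)$ and $\sigma_e = -1$ otherwise, so that (conditionally) the $\sigma_e$ are independent Rademacher variables and $X_T = \tfrac12\bigl(1 + \sigma_{e_1^T}\sigma_{e_2^T}\bigr)$. Since $e_1^T \neq e_2^T$ we get $\mathbb{E}[X_T \mid \text{heads}] = \tfrac12$ deterministically, so $\mathbb{E}[X \mid \text{heads}] = \tfrac12\binom{n}{k}$ is constant. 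The key observation is that $e_1^T \cup e_2^T = T$, so the unordered pair $\{e_1^T,e_2^T\}$ determines $T$; thus for $T \neq T'$ the product $\sigma_{e_1^T}\sigma_{e_2^T}\sigma_{e_1^{T'}}\sigma_{e_2^{T'}}$ contains some $(k-1)$-set with odd multiplicity, and a product of independent Rademacher variables with a variable of odd multiplicity has mean $0$. Hence $Cov(X_T, X_{T'} \mid \text{heads}) = 0$ for $T \neq T'$, giving $Var(X \mid \text{heads}) = \sum_T Var(X_T \mid \text{heads}) = \tfrac14\binom{n}{k}$. By the law of total variance, $Var(X) = \mathbb{E}[Var(X\mid\text{heads})] + Var(\mathbb{E}[X\mid\text{heads}]) = \tfrac14\binom{n}{k} = O(n^k)$. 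Choosing $\hat\epsilon$ so that $\hat\epsilon \cdot \tfrac12\binom{n}{k} \geq \epsilon n^k$, Chebyshev's Inequality yields $\mathbb{P}\bigl[\,|X - \tfrac12\binom{n}{k}| > \epsilon n^k\,\bigr] \leq Var(X)/(\epsilon n^k)^2 = O(n^{-k}) \to 0$.

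The main obstacle is the dependence structure: naively $X_T$ and $X_{T'}$ look dependent whenever they share one of their relevant $(k-1)$-edges, which would suggest a contribution of size $\Theta(n^{2k})$ to the variance. The resolution is to condition on the heads and observe that then each $X_T$ is an affine function of a product of two independent Rademacher variables indexed by $(k-1)$-sets whose union recovers $T$, so all the cross-covariances vanish and the variance is only $O(n^k)$. (If one prefers to avoid the Rademacher bookkeeping, it suffices to note that, given the heads, $X_T$ and $X_{T'}$ are independent unless $T'$ contains $e_1^T$ or $e_2^T$, which holds for only $O(n)$ sets $T'$, so at most $O(n^{k+1}) = o(n^{2k})$ pairs have nonzero covariance — enough for Chebyshev.)
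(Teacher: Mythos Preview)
Your proof is correct and follows the same second-moment outline as the paper: compute $\mathbb{P}[T\in E(D)]=\tfrac12$ by conditioning on the head, then bound the variance and apply Chebyshev. The paper's variance argument is exactly your parenthetical alternative --- it simply notes that $X_T$ and $X_{T'}$ are independent unless $|T\cap T'|\geq k-1$, giving $O(n^{k+1})=o(n^{2k})$ dependent pairs --- whereas your main argument (condition on heads, encode $X_T=\tfrac12(1+\sigma_{e_1^T}\sigma_{e_2^T})$, observe $\{e_1^T,e_2^T\}$ recovers $T$) is a sharper refinement that pins the conditional variance down to $\tfrac14\binom{n}{k}$ exactly. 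Both reach the same conclusion; your Rademacher bookkeeping is a nice touch but more than the lemma needs.
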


\begin{proof} % {{{
Let $T \in \binom{V(D(n,1/2))}{k}$.  Conditioning on the choice of head $\{x_1,\dots,x_{k-2}\}$ and
the behavior of $\{x_1,\dots,x_{k-2},y\}$ in $G$, the set $\{x_1,\dots,x_{k-2},z\}$ is a hyperedge
of $G$ with probability $\frac{1}{2}$, so the probability that $T$ is a hyperedge of $D(n,1/2)$ is
$\frac{1}{2}$.  Since two $k$-sets will depend on each other only if they share at least $k-1$
vertices, the second moment method implies that $|E(D(n,1/2))| = \frac{1}{2} \binom{n}{k} \pm
\epsilon n^k$ with high probability.
\end{proof} % }}}

The next few sections prove that with high probability, $A(n,p)$, $B_{\vec{\pi}}(n,p)$, and
$D(n,\frac{1}{2})$ satisfy and fail the following properties.

\begin{itemize}
  \item $A_{\ell}(n,p)$
    \begin{itemize}
      \item Satisfies: \propexpandp{$\pi$} for all $\pi$, \propcdp{$\ell-1$}, and \propdev{$\ell$}.
      \item Fails: \propcdp{$\ell$} and \propdev{$\ell+1$}.
    \end{itemize}
  \item $B_{\vec{\pi}}(n,p)$
    \begin{itemize}
      \item Satisfies: \propexpandp{$\pi'$} for $\pi \not\leq \pi'$ and 
        \propcdp{$\ell$} for $\ell < \max \pi$.
      \item Fails: \propexpandp{$\pi$} and \propdev{$2$}
    \end{itemize}
  \item $D(n,\frac{1}{2})$
    \begin{itemize}
      \item Satisfies: \propexpandp{$\pi$} for all $\pi$,
      \item Fails: \propdev{$2$}
    \end{itemize}
\end{itemize}

\subsection{Failure of quasirandom properties} % {{{2
\label{sub:constrfails}

\begin{lemma} \label{lem:constrAfailsCD}
  ($A_{\ell}(n,p)$ fails \propcd{$\ell$}) For $2 \leq \ell \leq k -1$, with probability going to one
  as $n$ goes to infinity, there exists an $\ell$-uniform hypergraph $G$ on vertex set
  $V(A_\ell(n,p))$ such that
  \begin{align*}
    \Big| |\mathcal{K}_k(G) \cap A_\ell(n,p)| - p |\mathcal{K}_k(G)| \Big| >
    \frac{1-p}{2 b^{k^\ell}} \binom{n}{k}.
  \end{align*}
\end{lemma}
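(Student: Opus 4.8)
The plan is to choose the auxiliary $\ell$-uniform hypergraph $G$ to be one colour class of the random colouring $c$ used to build $A_\ell(n,p)$, so that every $k$-clique of $G$ is \emph{forced} to be a hyperedge of $A_\ell(n,p)$; the failure of \propcd{$\ell$} then follows as soon as one knows that $G$ has many $k$-cliques, which is a routine second moment computation essentially identical to the proof of Lemma~\ref{lem:Aispdense}.

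Concretely, I would let $G$ be the $\ell$-uniform hypergraph with $V(G) = V(A_\ell(n,p))$ and $E(G) = \{ e \in \binom{V(G)}{\ell} : c(e) = 0 \}$. If $T \in \mathcal{K}_k(G)$ then $c(e) = 0$ for every $\ell$-subset $e$ of $T$, so $\sum_{e \in \binom{T}{\ell}} c(e) = 0$; since $p = a/b$ with $1 \le a \le b-1$, this satisfies $0 < a \pmod b$, hence $T \in E(A_\ell(n,p))$. Thus $\mathcal{K}_k(G) \subseteq E(A_\ell(n,p))$, and therefore
\[
  \Bigl| \,|\mathcal{K}_k(G) \cap A_\ell(n,p)| - p\,|\mathcal{K}_k(G)|\, \Bigr| \;=\; (1-p)\,|\mathcal{K}_k(G)|.
\]
So it remains to show that with probability tending to one, $|\mathcal{K}_k(G)| > \tfrac{1}{2} b^{-k^\ell} \binom{n}{k}$, which combined with the above (using $1-p>0$) gives exactly the claimed bound.

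For the lower bound on $|\mathcal{K}_k(G)|$ I would run the same second moment argument as in Lemma~\ref{lem:Aispdense}. For each $k$-set $T$, let $Y_T$ indicate $T \in \mathcal{K}_k(G)$; since the $\binom{k}{\ell}$ many $\ell$-subsets of $T$ are independently assigned colours, $\mathbb{P}[Y_T = 1] = b^{-\binom{k}{\ell}}$, so $\mu := \mathbb{E}\,|\mathcal{K}_k(G)| = \binom{n}{k} b^{-\binom{k}{\ell}}$. Because $\binom{k}{\ell} = \frac{k(k-1)\cdots(k-\ell+1)}{\ell!} < k^\ell$, there is a fixed constant $\delta > 0$ depending only on $k,\ell,b$ with $\mu \ge (1+\delta) b^{-k^\ell}\binom{n}{k}$. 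The events $Y_T, Y_{T'}$ are independent unless $|T \cap T'| \ge \ell$ (the only way they can share an $\ell$-subset), and there are only $O(n^{2k-\ell}) = o(n^{2k})$ such ordered pairs, so $\mathrm{Var}(|\mathcal{K}_k(G)|) = o(n^{2k})$ while $\mu^2 = \Omega(n^{2k})$; Chebyshev's Inequality (Lemma~\ref{lem:chebyshev}) then yields $|\mathcal{K}_k(G)| \ge (1-\epsilon)\mu$ with probability tending to one for any fixed $\epsilon > 0$. Choosing $\epsilon$ with $(1-\epsilon)(1+\delta) \ge 1$ gives $|\mathcal{K}_k(G)| \ge b^{-k^\ell}\binom{n}{k} > \tfrac12 b^{-k^\ell}\binom{n}{k}$, as needed.

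I do not expect a serious obstacle: the lemma reduces to the single observation that a monochromatic colour class $G$ has all of its $k$-cliques inside $E(A_\ell(n,p))$, after which everything is a standard Chebyshev estimate. The only mild care points are (i) checking that the zero ``clique sum'' really triggers membership, i.e.\ $a \ge 1$, and (ii) ensuring the concentration loss is absorbed by the strict gap $\binom{k}{\ell} < k^\ell$ so that the explicit constant $\frac{1-p}{2 b^{k^\ell}}$ comes out; both are immediate.
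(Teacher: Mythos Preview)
Your proposal is correct and is essentially identical to the paper's proof: the paper also takes $G$ to be the colour-zero class of $c$, observes that every $k$-clique of $G$ lies in $E(A_\ell(n,p))$, and uses the second moment method to show $|\mathcal{K}_k(G)| = b^{-\binom{k}{\ell}}\binom{n}{k} + o(n^k)$ with high probability. You are a bit more explicit than the paper about why the stated constant $\tfrac{1-p}{2b^{k^\ell}}$ follows (via $\binom{k}{\ell} < k^\ell$), but the argument is the same.
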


\begin{proof} % {{{
Let $G$ be the $\ell$-uniform hypergraph with vertex set $V(A_{\ell}(n,p))$ and edge set the set of
edges of $K^{(\ell)}_n$ colored zero in the definition of $A_\ell(n,p)$.  With high probability, the
second moment method implies that the number of $k$-cliques in $G$ is $b^{-\binom{k}{\ell}}
\binom{n}{k} + o(n^k)$.  By definition, $A_\ell(n,p)$ will intersect all of the $k$-cliques of $G$
so
\begin{align*}
    \Big| |\mathcal{K}_k(G) \cap A_\ell(n,p)| - p |\mathcal{K}_k(G)| \Big| = (1-p)
    |\mathcal{K}_k(G)| = (1-p) b^{-\binom{k}{\ell}} \binom{n}{k} + o(n^k)
\end{align*}
with high probability.
\end{proof} % }}}

\begin{lemma} \label{lem:constrAfailsdev}
  ($A_{\ell}(n,\frac{1}{2})$ fails \propdev{$\ell+1$}) For $2 \leq \ell \leq k - 1$, there exists a
  constant $C > 0$ such that $\dev_{\ell+1}\left(A_\ell(n,1/2)\right) > C n^{k+\ell+1}$.
\end{lemma}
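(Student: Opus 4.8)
The plan is to evaluate $\dev_{\ell+1}(A_\ell(n,1/2))$ almost exactly and show that the dominant contribution consists entirely of $+1$'s, independently of the random colouring. First I would specialise the construction to $p=\tfrac12$: here $a=1$ and $b=2$, so $A_\ell(n,1/2)$ is built from a uniformly random $c:\binom{[n]}{\ell}\to\{0,1\}$, and, writing $H=A_\ell(n,1/2)$ and $\epsilon(S)=(-1)^{c(S)}\in\{\pm1\}$, a $k$-set $W$ lies in $E(H)$ precisely when $\sum_{S\in\binom W\ell}c(S)$ is even, i.e. precisely when $\prod_{S\in\binom W\ell}\epsilon(S)=1$. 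Equivalently, $(-1)^{\mathbf 1[W\in E(H)]}=-\prod_{S\in\binom W\ell}\epsilon(S)$; this product formula is the algebraic heart of the construction.

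Next I would split the sum defining $\dev_{\ell+1}(H)$ according to whether the $k+\ell+1$ chosen vertices $x_1,\dots,x_{k-\ell-1},y_{1,0},y_{1,1},\dots,y_{\ell+1,0},y_{\ell+1,1}$ are pairwise distinct (the \emph{generic} tuples) or not. For a generic tuple, $\octs{x_1;\dots;x_{k-\ell-1};y_{1,0},y_{1,1};\dots;y_{\ell+1,0},y_{\ell+1,1}}$ is exactly the family of $2^{\ell+1}$ distinct $k$-sets $W_{\vec{i}}=\{x_1,\dots,x_{k-\ell-1},y_{1,i_1},\dots,y_{\ell+1,i_{\ell+1}}\}$, $\vec{i}\in\{0,1\}^{\ell+1}$. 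Since $|\octs{\cdots}|=2^{\ell+1}$ is even, the accumulated sign $(-1)^{2^{\ell+1}}$ equals $1$ and one obtains
\begin{align*}
  \eta_H\bigl(x_1;\dots;x_{k-\ell-1};y_{1,0},y_{1,1};\dots;y_{\ell+1,0},y_{\ell+1,1}\bigr)
  \;=\;\prod_{\vec{i}\in\{0,1\}^{\ell+1}}\;\prod_{S\in\binom{W_{\vec{i}}}{\ell}}\epsilon(S)
  \;=\;\prod_{S}\epsilon(S)^{m(S)},
\end{align*}
where $S$ runs over the $\ell$-subsets of the $k+\ell+1$ chosen vertices and $m(S)=\#\{\vec{i}:S\subseteq W_{\vec{i}}\}$.

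The key step, and the one I expect to need the most care, is the claim that $m(S)$ is always even, so that every generic term equals $+1$ \emph{deterministically}. If $S$ contains both endpoints $y_{j,0},y_{j,1}$ of some pair, then $S$ lies in no $W_{\vec{i}}$ and $m(S)=0$. Otherwise $S$ consists of $r$ of the $x_i$'s together with one vertex out of each of exactly $\ell-r$ of the $\ell+1$ pairs (so that $r+(\ell-r)=\ell=|S|$); requiring $W_{\vec{i}}\supseteq S$ then pins down $\ell-r$ of the coordinates of $\vec{i}$ and leaves $\ell+1-(\ell-r)=r+1\ge1$ of them free, whence $m(S)=2^{r+1}$. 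Either way $m(S)$ is even and $\epsilon(S)^{m(S)}=1$, so the displayed product is $+1$. The conceptual point is that $\dev_{\ell+1}$ has $\ell+1$ pair-coordinates while an $\ell$-set can meet at most $\ell$ of them, so a free coordinate always survives; this is exactly the feature that separates \propdev{$\ell+1$} from \propdev{$\ell$} (an $\ell$-set can meet all $\ell$ pairs, which is why $A_\ell(n,1/2)$ instead \emph{satisfies} \propdev{$\ell$}).

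It then remains only to account for the non-generic tuples: there are $O(n^{k+\ell})$ of them and each contributes a term in $\{-1,+1\}$, while the number of generic tuples is $n(n-1)\cdots(n-k-\ell)=(1-o(1))\,n^{k+\ell+1}$. Hence
\begin{align*}
  \dev_{\ell+1}\bigl(A_\ell(n,1/2)\bigr)\;\ge\;(1-o(1))\,n^{k+\ell+1}-O(n^{k+\ell})\;\ge\;\tfrac12\,n^{k+\ell+1}
\end{align*}
for all sufficiently large $n$, which yields the statement with $C=\tfrac12$ (and in particular shows $\dev_{\ell+1}(A_\ell(n,1/2))\neq o(n^{k+\ell+1})$). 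Note that, unlike the other lemmas in this subsection, this estimate requires no concentration inequality at all: the whole point is that the dominant terms are forced to be $+1$ regardless of the random colouring, so the conclusion even holds for every realisation of $A_\ell(n,1/2)$.
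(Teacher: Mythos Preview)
Your proof is correct and takes essentially the same approach as the paper: both show deterministically that every non-degenerate $(\ell+1)$-octahedron contributes $+1$ by observing that each $\ell$-set $S$ is contained in an even number of the $W_{\vec i}$ (since $S$ can meet at most $\ell$ of the $\ell+1$ pair-coordinates, leaving at least one free). The only cosmetic difference is that you phrase the parity computation multiplicatively via $\epsilon(S)=(-1)^{c(S)}$ and the exponent $m(S)$, whereas the paper works additively over $\mathbb{F}_2$ with the sum $\sum_{T\in\mathcal T} c(T)$; your observation that no concentration is needed here is also exactly what the paper uses.
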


\begin{proof} % {{{
We will prove that every non-degenerate squashed octahedron induces an even number of hyperedges of
$A_\ell(n,1/2)$.  Let $x_1, \dots, x_{k-\ell-1}, y_{1,0}, y_{1,1}, \dots, \linebreak[1] y_{\ell+1,0},
y_{\ell+1,1} \in V(A_\ell(n,1/2))$ be distinct vertices.  We claim that $|\mathcal{O}[x_1; \dots;
x_{k-\ell-1};y_{1,0},y_{1,1};\dots;\linebreak[1]y_{\ell+1,0},y_{\ell+1,1}] \cap E(H)|$ is always even.
Define $P_1 = \{x_1\}, \dots, P_{k-\ell-1} = \{x_{k-\ell-1}\}, P_{k-\ell} = \{y_{1,0}, y_{1,1}\},
\dots, P_k = \{y_{\ell+1,0},y_{\ell+1,1}\}$ so that $P_1,\dots,P_k$ are the parts of the squashed
octahedron.  Let $c : \binom{V(A_\ell(n,1/2))}{\ell} \rightarrow \{0,1\}$ be the random coloring
used in the definition of $A_\ell(n,1/2)$.  For a $k$-set $T$, define
\begin{align*}
  c(T) = \sum_{\substack{Z \subseteq T \\ |Z| = \ell}} c(Z) \pmod 2.
\end{align*}
Lastly, define $\mathcal{T}$ to be the collection of $k$-sets which take exactly one vertex from
each $P_i$.

\medskip
\noindent\textbf{Claim}: $\sum_{T \in \mathcal{T}} c(T) = 0 \pmod 2$.
\medskip

\begin{proof} % {{{
Expand the definition of $c(T)$ to obtain
\begin{align} \label{eq:Adevweightsum}
  \sum_{T \in \mathcal{T}} c(T) = \sum_{T \in \mathcal{T}} \sum_{\substack{Z \subseteq T \\ |Z| =
  \ell}} c(Z) \pmod{2}.
\end{align}
Let $\Gamma_Z = \{k-\ell \leq i \leq k : Z \cap P_i = \emptyset\}$ and notice that $c(Z)$ appears
$2^{|\Gamma_Z|}$ times in \eqref{eq:Adevweightsum}.  Indeed, to form a $k$-set $T$ containing $Z$, there
is a choice between $y_{i,0}$ and $y_{i,1}$ for each $i \in \Gamma_Z$.  Since there are $\ell+1$
parts with two vertices and $|Z| = \ell$, $|\Gamma_Z| \geq 1$.  This implies that each $c(Z)$
appears an even number of times in \eqref{eq:Adevweightsum}, finishing the proof of the claim.
\end{proof} % }}}

By definition, $T$ is a hyperedge of $A_\ell(n,1/2)$ if and only if $c(T) = 0 \pmod{2}$.  Thus the claim
implies that the number of $T$s which are not hyperedges is even, but since the squashed octahedron
has an even number of edges total, the number of $T$s which are hyperedges is then also even.  Thus
for every squashed octahedron using distinct vertices, the number of hyperedges appearing is even.
There are $(k+\ell+1)! \binom{n}{k+\ell+1}$ squashed octahedrons using distinct vertices and the
number of degenerate squashed octahedrons is $o(n^{k+\ell+1})$, completing the proof of the lemma.
\end{proof} % }}}

\begin{lemma} \label{lem:constrBfailsexp}
  ($B_{\vec{\pi}}(n,p)$ fails \propexpand{$\pi$})  For all ordered partitions $\vec{\pi}$ of $k$,
  with probability going to one as $n$ goes to infinity, there exists $S_1 \subseteq
  \binom{[n]}{k_1}, \dots, S_t \subseteq \binom{[n]}{k_t}$ such that
  \begin{align*}
    \Big| e(S_1,\dots,S_t) - p |S_1| \cdots |S_t| \Big| >
    \frac{1}{2} \binom{k}{k_1,\dots,k_t} \frac{p}{b^{t} t^{k}} \binom{n}{k}.
  \end{align*}
\end{lemma}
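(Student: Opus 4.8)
The plan is to exploit the linear order on $[n]$ that is built into $B_{\vec\pi}(n,p)$: a $k$-set $W$ is a hyperedge precisely according to the colours of the blocks $W_1,\dots,W_t$, where $W_1$ consists of the $k_1$ smallest vertices of $W$, $W_2$ of the next $k_2$, and so on. Write $p=a/b$ with $1\le a\le b-1$, partition $[n]$ into $t$ consecutive intervals $V_1,\dots,V_t$ (so $|V_i|=\lfloor n/t\rfloor$ or $\lceil n/t\rceil$ and every vertex of $V_i$ is smaller than every vertex of $V_{i+1}$), fix colours $v_1=a$ and $v_2=\dots=v_t=0$, and define
\begin{align*}
  S_i=\left\{\,s\in\binom{V_i}{k_i}:c_i(s)=v_i\,\right\}\qquad(1\le i\le t),
\end{align*}
where $c_1,\dots,c_t$ are the random colourings from the construction. (Note that \propexpand{$\pi$} allows arbitrary $S_i$, so we are free to confine $S_i$ to a single interval.)

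The first step is an observation that holds for \emph{every} outcome of the colourings: $e(S_1,\dots,S_t)=0$. Indeed, if $s_i\in S_i$ for each $i$ then $s_1,\dots,s_t$ lie in pairwise disjoint intervals, so $W:=s_1\cup\dots\cup s_t$ has exactly $k$ vertices; moreover, since the intervals are consecutive and increasing, the canonical blocks of $W$ are exactly $W_i=s_i$. Hence $\sum_{i=1}^t c_i(W_i)=\sum_{i=1}^t v_i=a$, and because $0\le a<b$ the defining inequality ``$\sum_i c_i(W_i)<a\pmod b$'' fails, so $W\notin E(B_{\vec\pi}(n,p))$. Thus no tuple from $S_1\times\dots\times S_t$ has its union in the edge set; that is, $e(S_1,\dots,S_t)=0$.

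The second step is to show $|S_1|\cdots|S_t|$ has order $n^k$ with high probability, which is the same routine second-moment argument used in the proofs of Lemmas~\ref{lem:Aispdense} and~\ref{lem:Bispdense}: each $|S_i|$ is a sum of $\binom{|V_i|}{k_i}$ mutually independent indicator variables, each equal to $1$ with probability $1/b$, so Chebyshev's Inequality (Lemma~\ref{lem:chebyshev}), or the Chernoff bound (Lemma~\ref{lem:chernoff}), together with a union bound over the $t$ indices, gives $|S_i|=(1+o(1))b^{-1}\binom{|V_i|}{k_i}$ for all $i$ simultaneously with probability tending to $1$. Since $t$ and $k$ are fixed and $|V_i|=n/t+O(1)$, this gives
\begin{align*}
  |S_1|\cdots|S_t|
  =(1+o(1))\,b^{-t}\prod_{i=1}^t\binom{|V_i|}{k_i}
  =(1+o(1))\,\frac{1}{b^t t^k}\cdot\frac{n^k}{k_1!\cdots k_t!}
  =(1+o(1))\,\frac{1}{b^t t^k}\binom{k}{k_1,\dots,k_t}\binom{n}{k},
\end{align*}
using $\binom{|V_i|}{k_i}=(1+o(1))(n/t)^{k_i}/k_i!$ and $\binom{k}{k_1,\dots,k_t}\binom{n}{k}=(1+o(1))\,n^k/(k_1!\cdots k_t!)$. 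Combining the two steps,
\begin{align*}
  \Big|\,e(S_1,\dots,S_t)-p\,|S_1|\cdots|S_t|\,\Big|
  =p\,|S_1|\cdots|S_t|
  =(1+o(1))\,\frac{p}{b^t t^k}\binom{k}{k_1,\dots,k_t}\binom{n}{k},
\end{align*}
which exceeds $\tfrac12\binom{k}{k_1,\dots,k_t}\frac{p}{b^t t^k}\binom{n}{k}$ for $n$ large.

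I expect the only genuinely delicate point to be the first step: recognising that placing each $S_i$ inside a single order-interval $V_i$ forces every tuple $(s_1,\dots,s_t)$ to \emph{be} the canonical partition of its union, so that membership in $E(B_{\vec\pi}(n,p))$ is governed by the single residue $\sum_i v_i \pmod b$ and can be switched off entirely. Once this is set up, the probabilistic content is exactly the concentration already established in the density lemmas, and no finer estimate on $B_{\vec\pi}(n,p)$ is needed.
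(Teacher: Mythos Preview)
Your proof is correct and follows essentially the same approach as the paper: partition $[n]$ into $t$ consecutive intervals, take $S_i$ to be the $k_i$-subsets of the $i$th interval receiving a fixed colour under $c_i$ so that the colour sum is exactly $a$ (forcing $e(S_1,\dots,S_t)=0$), and then use a second-moment estimate on each $|S_i|$. The only cosmetic difference is that the paper places the nonzero colour $a$ on $S_t$ rather than $S_1$.
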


\begin{proof} % {{{
Divide $V(B_{\vec{\pi}}(n,p)) = [n]$ into $t$ almost equal parts $X_1 = \{1,\dots,\left\lfloor
\frac{n}{t}\right\rfloor\}$, $X_2 = \{\left\lfloor \frac{n}{t} \right\rfloor + 1, \dots,
\left\lfloor\frac{2n}{t}\right\rfloor\}$, and so on.  For $1 \leq i \leq t- 1$, let $S_i \subseteq
\binom{X_i}{k_i}$ be the set of hyperedges on $X_i$ colored zero under $c_i$ in the definition of
$B_{\vec{\pi}}(n,p)$.  Let $S_t \subseteq \binom{X_t}{k_t}$ be the set of hyperedges on $X_t$
colored $a$ under $c_t$ in the definition of $B_{\vec{\pi}}(n,p)$.

A $k$-set formed by taking a $k_i$-set from $S_i$ for each $i$ has color sum $a$, so is not a
hyperedge of $B_{\vec{\pi}}(n,p)$.  Thus $e(S_1,\dots,S_t) = 0$.  The second moment method implies
that with high probability $|S_i| = \frac{1}{b}\binom{n/t}{k_i} + o(n^{k_i})$. Therefore,
\begin{align*}
  \left| e(S_1,\dots,S_t) - p \prod_{i=1}^k |S_i| \right|=
  p \prod_{i=1}^k |S_i| &= \frac{p}{b^t} \prod_{i=1}^k \binom{n/t}{k_i} + o(n^k) \\
   &= \binom{k}{k_1,\dots,k_t} \frac{p}{b^{t} t^{k}} \binom{n}{k} + o(n^k)
\end{align*}
with high probability, completing the proof.
\end{proof} % }}}

\begin{lemma} \label{lem:constrBfailsdev}
  ($B_{(k-1,1)}(n,1/2)$ fails \propdev{$2$}) Fix $k \geq 3$ and let $\vec{\pi} = (k-1,1)$. There
  exists a constant $C > 0$ such that with probability going to one as $n$ goes to infinity,
  \begin{align*}
    \dev_2 \left( B_{\vec{\pi}}(n,1/2) \right) > C n^{k+2}.
  \end{align*}
\end{lemma}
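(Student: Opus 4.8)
The plan is to exploit the product structure of the edge indicator of $B_{(k-1,1)}(n,1/2)$. Reading the construction with $b=2$ and $a=1$, a $k$-set $W=\{w_1<\dots<w_k\}$ is a hyperedge precisely when $c_1(\{w_1,\dots,w_{k-1}\})=c_2(w_k)$, so whether $W\in E(B_{(k-1,1)}(n,1/2))$ depends only on the color of its first $k-1$ vertices and the color of its last vertex. Writing $\vec{x}=(x_1,\dots,x_{k-2})$ and $g(\vec{x};u;v)=(-1)^{[\{\vec{x},u,v\}\in E(H)]}$, I would first expand $\dev_2(H)$ into a sum of squares: collapsing the two ``doubled'' coordinates of the squashed octahedron gives
\begin{align*}
  \dev_2(H)=\sum_{\vec{x},u,u'\in V(H)}\Bigl(\,\sum_{v\in V(H)}g(\vec{x};u;v)\,g(\vec{x};u';v)\Bigr)^2+O(n^{k+1}),
\end{align*}
where the error term absorbs the $O(n^{k+1})$ degenerate octahedra (on non-degenerate ones $\eta_H$ equals the product of the four relevant values of $g$). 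Since each summand is nonnegative, it suffices to exhibit $\Omega(n^k)$ triples $(\vec{x},u,u')$ with $|F_{\vec{x},u,u'}|=\Omega(n)$, where $F_{\vec{x},u,u'}:=\sum_v g(\vec{x};u;v)g(\vec{x};u';v)$; squaring and summing these then forces $\dev_2(H)=\Omega(n^{k+2})$.

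For the second step I would take $x_1,\dots,x_{k-2},u,u'$ to be distinct vertices of the first quarter $L_1=\{1,\dots,\lfloor n/4\rfloor\}$. For any $v\notin L_1$, the vertex $v$ is the largest element of both $\{\vec{x},u,v\}$ and $\{\vec{x},u',v\}$, so the edge rule gives $[\{\vec{x},u,v\}\in E(H)]=[c_1(\{\vec{x},u\})=c_2(v)]$ and likewise for $u'$; a one-line parity computation then yields $g(\vec{x};u;v)\,g(\vec{x};u';v)=(-1)^{[c_1(\{\vec{x},u\})\ne c_1(\{\vec{x},u'\})]}$, a sign independent of $v$. Hence the part of $F_{\vec{x},u,u'}$ coming from $v\notin L_1$ equals $\pm(n-\lfloor n/4\rfloor)$, while the part from $v\in L_1$ has absolute value at most $\lfloor n/4\rfloor$, so $|F_{\vec{x},u,u'}|\ge n/2$. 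There are $(1-o(1))(n/4)^k$ such triples, so the displayed identity gives $\dev_2(B_{(k-1,1)}(n,1/2))\ge(1-o(1))(n/4)^k(n/2)^2-O(n^{k+1})\ge Cn^{k+2}$ for a suitable $C>0$ and all large $n$. This bound holds for every choice of $c_1$ and $c_2$, so the conclusion in fact holds with probability $1$; I would just note this rather than invoke a concentration bound.

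The only delicate point is the bookkeeping for degeneracies — the degenerate squashed octahedra where the product formula for $\eta_H$ fails, and the tuples $(\vec{x},u,u')$ with a repeated coordinate — but each such family has size $O(n^{k+1})$ and so is absorbed into the error term. A minor caveat: the parity argument uses the representation $p=\tfrac12$ with $a=1$, $b=2$; if one insisted on a larger $b$ the computation would have to track the condition ``$<b/2\pmod b$'', so it is cleanest to read the construction of $B_{\vec{\pi}}(n,p)$ with $b=2$ here.
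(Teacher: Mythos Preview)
Your proof is correct and takes a genuinely different route from the paper's. The paper argues by case analysis on the relative order of the vertices $x_1,\dots,x_{k-2},y_0,y_1,z_0,z_1$ in $[n]$: when the two $z$-vertices (or the two $y$-vertices) are the two largest, the squashed octahedron is always even; in the remaining cases it is even with probability $\tfrac12$, and the second moment method is invoked to show that the net contribution of these cases is $o(n^{k+2})$ with high probability. You instead rewrite $\dev_2(H)$ as a sum of nonnegative squares $\sum_{\vec{x},u,u'}F_{\vec{x},u,u'}^2+O(n^{k+1})$ and then exhibit $\Omega(n^k)$ summands each of size $\Omega(n^2)$. Because every term in the sum of squares is nonnegative, you never need to control the ``bad'' octahedra at all, and the bound holds deterministically for every choice of $c_1,c_2$. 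The underlying parity observation---that $g(\vec{x};u;v)\,g(\vec{x};u';v)$ is independent of $v$ once $v$ is the largest vertex in both $k$-sets---is exactly the phenomenon driving the paper's Cases~1--2, but your sum-of-squares packaging sidesteps the probabilistic concentration step entirely and yields a slightly stronger conclusion (probability $1$ rather than merely with high probability).
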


\begin{proof} % {{{
Let $x_1, \dots, x_{k-2}, y_0, y_1, z_0, z_1$ be distinct vertices and recall that the vertex set of
$B_{\vec{\pi}}(n,1/2)$ is $[n]$.  There are several cases depending on how the vertices
$x_1,\dots,x_{k-2}, y_0, y_1, \linebreak[1] z_0, z_1$ are ordered in $[n]$.  Let $\mathcal{O} =
\mathcal{O}[x_1;\dots;x_{k-2};y_0,y_1;z_0,z_1]$ and let $c_1 : \binom{[n]}{k-1} \rightarrow \{0,1\}$
and $c_2 : [n] \rightarrow \{0,1\}$ be the two random colorings used in the definition of
$B_{\vec{\pi}}(n,1/2)$.

\begin{itemize}
  \item Case 1: $z_0$ and $z_1$ appear last.  In this case, $|\mathcal{O} \cap E(H)|$ is always even
    as follows.  If $c_2(z_0) = c_2(z_1)$, then either $x_1\dots x_{k-2}y_0z_0$ and $x_1\dots
    x_{k-2}y_0z_1$ are both hyperedges of $B_{\vec{\pi}}(n,1/2)$ or neither are hyperedges depending
    on the value of $c_1(x_1\dots x_{k-2}y_0)$.  Similarly, either $x_1\dots x_{k-2}y_1z_0$ and
    $x_1\dots x_{k-2}y_1z_1$ are both hyperedges or neither are hyperedges so the total number of
    hyperedges induced by $\mathcal{O}$ is even.  If $c_2(z_0) \neq c_2(z_1)$, then exactly one of
    $x_1\dots x_{k-2}y_0z_0$ and $x_1\dots x_{k-2}y_0z_1$ is a hyperedge and exactly one of $x_1\dots
    x_{k-2}y_1z_0$ and $x_1\dots x_{k-2}y_1z_1$ is a hyperedge.  Thus the total number of hyperedges
    induced by $\mathcal{O}$ is even.

  \item Case 2: $y_0$ and $y_1$ appear last.  This case is symmetric to Case 1: the total number of
    hyperedges induced by $\mathcal{O}$ is even.

  \item Case 3: Some $x_i$ appears after $y_0$ and $z_0$.  In this case, the probability that
    $|\mathcal{O} \cap E(H)|$ is even is $\frac{1}{2}$.  Assume that $x_i$ is the largest vertex
    among $x_1,\dots,x_{k-2}$.  The set $x_1\dots x_{k-2}y_0z_0$ is a hyperedge of
    $B_{\vec{\pi}}(n,1/2)$ if $c_1(x_1,\dots,x_{i-1},x_{i+1},\dots,x_{k-2},y_0,z_0) + c_2(x_i) = 0
    \pmod 2$.  Also, $x_1\dots x_{k-2}y_0z_0$ is the only hyperedge of $\mathcal{O}$ which tests the
    value of $c_1(x_1,\dots,x_{i-1},x_{i+1},\dots,x_{k-2},y_0,z_0)$, since this is the only
    hyperedge of $\mathcal{O}$ which includes both $y_0$ and $z_0$.  Therefore, conditioning on the
    other hyperedges of $\mathcal{O}$ and also conditioning on $c_2(x_i)$, with probability
    $\frac{1}{2}$, $c_1(x_1,\dots,x_{i-1},x_{i+1},\dots,x_{k-2},\linebreak[1]y_0,z_0) = 0$ so with
    probability $\frac{1}{2}$, $x_1\dots x_{k-2}y_0z_0$ is a hyperedge, so with probability
    $\frac{1}{2}$, $|\mathcal{O} \cap E(H)|$ is even.

  \item Cases 4-6: Some $x_i$ appears after $y_0, z_1$, some $x_i$ appears after $y_1, z_0$, and
    some $x_i$ appears after $y_1,z_1$.  These three cases are symmetric to Case 3: the probability
    that $|\mathcal{O} \cap E(H)|$ is even is $\frac{1}{2}$.
\end{itemize}

Now consider the sum $\dev_{2}(H_n)$:
\begin{align*}
  \dev_2(H_n) =
  &\sum_{x_1,\dots,x_{k-2},y_0,y_1,z_0,z_1\in \text{Cases 1,2}}
    \eta(x_1;\dots;x_{k-2};y_0,y_1;z_0,z_1) \\
  +
  &\sum_{x_1,\dots,x_{k-2},y_0,y_1,z_0,z_1\in \text{Cases 3-6}}
    \eta(x_1;\dots;x_{k-2};y_0,y_1;z_0,z_1)
\end{align*}
In the sum over Cases 1 and 2, $\eta$ is always $+1$ so the sum is at least $c n^{k+2}$, where $c$
is the fraction of octahedrons in Cases 1 and 2.  Dividing the vertices in half, choosing $z_0,z_1$
from the second half and all other vertices from the first half is a lower bound on $c$, so $c >
2^{-k-3}$.  The expected value of the sum over Cases 3-6 is zero by
linearity of expectation.  Since two octahedrons will depend on each other only if they share a
vertex, the second moment method implies that with high probability the sum over Cases 3-6 is at
most $\frac{c}{2} n^{k+2}$ in absolute value.  Thus with high probability, $\dev_2(H_n) >
\frac{c}{2} n^{k+2}$.
\end{proof} % }}}

\begin{lemma} \label{lem:constrDfailsdev}
  ($D(n,1/2)$ fails \propdev{$2$}). Fix $k \geq 3$.  There exists a constant $C > 0$ such that, with probability
  going to one as $n$ goes to infinity,
  \begin{align*}
    \dev_2 \left( D(n,1/2) \right) > C n^{k+2}.
  \end{align*}
\end{lemma}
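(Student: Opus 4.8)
The plan is to estimate $\mathbb{E}[\dev_2(D(n,1/2))]$ from below and then apply the second moment method, exactly as in Lemmas~\ref{lem:constrAfailsdev} and~\ref{lem:constrBfailsdev}. Write $D=D(n,1/2)$, let $G=G^{(k-1)}(n,1/2)$ be the underlying random $(k-1)$-graph, and for a $(k-1)$-set $w$ set $\chi(w)=(-1)^{\mathbf{1}[w\in G]}$, so the values $\chi(w)$ are independent and uniform in $\{-1,1\}$. First I would condition on the (independent) random choices of heads. For a tuple $\omega=(x_1,\dots,x_{k-2},y_0,y_1,z_0,z_1)$ of distinct vertices, write $X=\{x_1,\dots,x_{k-2}\}$ and $T_{ij}=X\cup\{y_i,z_j\}$ for $i,j\in\{0,1\}$; these are the four $k$-sets of $\octs{x_1;\dots;x_{k-2};y_0,y_1;z_0,z_1}$. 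If $T_{ij}$ has head $h_{ij}$ and $T_{ij}\setminus h_{ij}=\{a_{ij},b_{ij}\}$, then by definition of $D$ we have $\mathbf{1}[T_{ij}\in D]=1\oplus\mathbf{1}[h_{ij}\cup\{a_{ij}\}\in G]\oplus\mathbf{1}[h_{ij}\cup\{b_{ij}\}\in G]$ over $\mathbb{F}_2$; summing these four identities gives $\eta_D(x_1;\dots;y_0,y_1;z_0,z_1)=\prod_{w\in M'(\omega)}\chi(w)$, where $M'(\omega)$ is the set of $(k-1)$-subsets of $V(\omega)$ that occur an odd number of times in the multiset $\{\,h_{ij}\cup\{a_{ij}\},\,h_{ij}\cup\{b_{ij}\}:i,j\in\{0,1\}\,\}$. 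Since the $\chi(w)$ are independent and unbiased, this yields
\[
  \mathbb{E}\bigl[\eta_D(x_1;\dots;y_0,y_1;z_0,z_1)\mid\text{heads}\bigr]=\mathbf{1}[M'(\omega)=\emptyset].
\]

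The crucial observation is that the event ``all four of $T_{00},T_{01},T_{10},T_{11}$ receive head exactly $X$'' forces $M'(\omega)=\emptyset$: in that case the sets $h_{ij}\cup\{a_{ij}\}$, $h_{ij}\cup\{b_{ij}\}$ range over $\{X\cup\{y_0\},X\cup\{y_1\},X\cup\{z_0\},X\cup\{z_1\}\}$ with each of these four $(k-1)$-sets occurring exactly twice. Because the four $k$-sets $T_{ij}$ are distinct ($y_0\ne y_1$, $z_0\ne z_1$), their heads are chosen independently and uniformly from the $\binom{k}{2}$ possible $(k-2)$-subsets, so this event has probability $\binom{k}{2}^{-4}$. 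Hence the conditional expectation above is always $\ge 0$ and is $\ge\binom{k}{2}^{-4}$ for every non-degenerate $\omega$. Summing over all $n^{k+2}$ tuples — the $\Theta(n^{k+2})$ non-degenerate ones each contribute at least $\binom{k}{2}^{-4}$ in expectation, and the $O(n^{k+1})$ degenerate ones contribute at least $-O(n^{k+1})$ — gives $\mathbb{E}[\dev_2(D)]=\Omega(n^{k+2})$.

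It remains to control the variance. If $|V(\omega)\cap V(\omega')|\le k-2$, then no $k$-subset of $V(\omega)$ is a $k$-subset of $V(\omega')$ and no $(k-1)$-subset is common, so $\eta_D(\omega)$ and $\eta_D(\omega')$ are functions of disjoint independent families (the heads of $k$-subsets of $V(\omega)$ and the values $\chi(w)$ for $(k-1)$-subsets $w$ of $V(\omega)$, respectively of $V(\omega')$) and are therefore independent. The number of ordered pairs $(\omega,\omega')$ with $|V(\omega)\cap V(\omega')|\ge k-1$ is $O(n^{k+5})$, so $\mathrm{Var}(\dev_2(D))=O(n^{k+5})=o(n^{2k+4})=o\bigl(\mathbb{E}[\dev_2(D)]^2\bigr)$ since $k\ge 3$. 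Chebyshev's inequality (Lemma~\ref{lem:chebyshev}, applied via the usual second moment method after rewriting $\dev_2(D)=n^{k+2}-2\sum_\omega\mathbf{1}[|\octs{\omega}\cap E(D)|\text{ odd}]$) then gives $\dev_2(D)\ge\tfrac12\mathbb{E}[\dev_2(D)]>Cn^{k+2}$ with probability tending to $1$, for a suitable constant $C>0$.

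The main obstacle, and the only step that uses the specific design of $D(n,1/2)$, is the parity computation of the first paragraph: showing that, after conditioning on the heads, the conditional expectation of $\eta_D(\omega)$ over $G$ equals the \emph{indicator} $\mathbf{1}[M'(\omega)=\emptyset]$ — in particular it is never negative, so no octahedron can cancel the positive contribution of the ``all heads $X$'' ones — together with the check that this good event has constant probability. Everything afterwards (the expectation lower bound, the limited-dependence variance estimate, and Chebyshev) is routine and parallels the treatment of $A_\ell(n,p)$ and $B_{\vec\pi}(n,p)$ above.
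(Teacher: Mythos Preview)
Your argument is correct and follows essentially the same two-case structure as the paper: the ``all four heads equal $X$'' configuration forces the squashed octahedron to be even, and this happens with constant probability; the remaining configurations contribute zero in expectation; then second moment concentration finishes.  The paper makes the two cases explicit --- a short cyclic argument shows $\eta=+1$ deterministically when every head is $X$, and a ``free $(k-1)$-set containing both $y_0$ and $z_0$'' argument shows $\mathbb{E}[\eta\mid\text{heads}]=0$ otherwise.  Your parity identity $\eta_D(\omega)=\prod_{w\in M'(\omega)}\chi(w)$ packages both cases at once: the conditional expectation is the indicator $\mathbf{1}[M'(\omega)=\emptyset]$, automatically non-negative, and equal to $1$ on the all-heads-$X$ event.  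This is a slightly slicker bookkeeping (you never need to locate the free $(k-1)$-set), but the underlying mechanism and the variance estimate via shared $(k-1)$-subsets are the same as in the paper.
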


\begin{proof} % {{{
This proof is very similar to the proof of Lemma~\ref{lem:constrBfailsdev}.  
Let $x_1, \dots, x_{k-2}, y_0, y_1, z_0, z_1$ be distinct vertices and let $G = G^{(k-1)}(n,1/2)$ be the
random hypergraph used in the definition of $D(n,1/2)$.

\begin{itemize}
  \item Case 1: $\{x_1,\dots,x_{k-2}\}$ is the head of every $k$-tuple in $\mathcal{O}$.  In this
    case, $|\mathcal{O} \cap E(H)|$ is always even.  Indeed, let $\vec{x} = x_1\dots x_{k-2}$ and
    consider the tuples of $\mathcal{O}$ ordered as $\vec{x} y_0z_0$, $\vec{x}y_0z_1$, $\vec{x} y_1
    z_1$, and $\vec{x} y_1 z_0$. (In a drawing of the octahedron with $\vec{x}$ at the center and
    $y_0, z_0, y_1, z_1$ as the corners of a box around the center, these are the edges ordered
    cyclically.) These tuples will be hyperedges depending on if $\vec{x}y_0$, $\vec{x}z_1$,
    $\vec{x}y_1$, and $\vec{x}z_0$ are edges of $G$ or not.  Considering these tuples in this order,
    each transition between edge and non-edge of $G$ implies a missing hyperedge of $\mathcal{O}$
    and each transition between edge and edge or between non-edge and non-edge of $G$ implies a
    hyperedge of $\mathcal{O}$.  Since there are an even number of transitions, $|\mathcal{O} \cap
    E(H)|$ is always even.

  \item Case 2: $\{x_1,\dots,x_{k-2}\}$ is not the head of some tuple in $\mathcal{O}$.  In this
    case, $|\mathcal{O} \cap E(H)|$ is even with probability $\frac{1}{2}$.  Assume by symmetry that
    $y_0$ is included in the head of $x_1\dots x_{k-2}y_0z_0$.  Then $x_1\dots x_{k-2}y_0z_0$ is a
    hyperedge depending on if two $(k-1)$-sets are in $G$, and at least one of these $(k-1)$-sets
    include both $y_0$ and $z_0$.  This $(k-1)$-set including both $y_0$ and $z_0$ is only tested as
    part of deciding if $x_1\dots x_{k-2}y_0z_0$ is a hyperedge, since this is the only tuple of
    $\mathcal{O}$ which includes both $y_0$ and $z_0$.  Thus conditioning on all other tuples of
    $\mathcal{O}$, $x_1\dots x_{k-2}y_0z_0$ is a hyperedge with probability $\frac{1}{2}$ so the
    number of tuples of $\mathcal{O}$ which are hyperedges is even with probability $\frac{1}{2}$.
\end{itemize}

Similar to the proof of Lemma~\ref{lem:constrBfailsdev}, divide the sum $\dev_2(H_n)$ into two sums
by case.  The sum over Case 1 is at least $c n^{k+2}$ for some $c > 0$ and the expected value of the
sum over Case 2 is zero.  Thus using the second moment method, with high probability, $\dev_2(H_n) >
\frac{c}{2} n^{k+2}$.
\end{proof} % }}}

\subsection{Expansion} % {{{2

In this section, we show that with high probability $A_2(n,p)$, $B_{\vec{\pi}'}(n,p)$, and
$D(n,\frac{1}{2})$ satisfy \propexpand{$\pi$} if $\pi'$ is not a refinement of $\pi$.  The proof
generalizes to show that $A_\ell(n,p)$ satisfies \propexpand{$\pi$} for all $\ell$, but this is not
required so the proof is omitted. To show these constructions satisfy \propexpand{$\pi$}, we take
advantage of a theorem of the current authors~\cite{hqsi-lenz-quasi12} which shows that two properties
on counting subgraphs are equivalent to \propexpand{$\pi$}.  Counting subgraphs is easier than
showing \propexpand{$\pi$} holds, so using~\cite{hqsi-lenz-quasi12} simplifies the proof.

\begin{definition}
  Let $k \geq 2$ and let $\pi = k_1+\dots+k_t$ be a proper partition of $k$.  A $k$-uniform
  hypergraph $F$ is \emph{$\pi$-linear} if there exists an ordering $E_1,\dots,E_m$ of the edges of
  $F$ such that for every $i$, there exists a partition of the vertices of $E_i$ into
  $A_{i,1},\dots,A_{i,t}$ such that for $1 \leq s \leq t$, $|A_{i,s}|=k_s$ and for every $j < i$, there exists
  an $s$ such that $E_j \cap E_i \subseteq A_{i,s}$. 
\end{definition}

\begin{definition}
  Let $k \geq 2$ and let $\pi = k_1+k_2$ be a partition of $k$ into two parts.  The cycle
  $C_{\pi,4}$ of type $\pi$ and length four is the following hypergraph.  Let $X_1, X_2, Y_1, Y_2$ be
  disjoint sets with $|X_1| = |X_2| = k_1$ and $|Y_1| = |Y_2| = k_2$.  The vertex set of $C_{\pi,4}$
  is $X_1 \cup X_2 \cup Y_1 \cup Y_2$ and the edge set is $\{ X_i \cup Y_j : 1 \leq i,j
  \leq 2\}$.
\end{definition}

Among other things, the current authors~\cite{hqsi-lenz-quasi12, hqsi-lenz-quasi12-nonregular}
proved that the properties \propcount{$\pi$-linear} and \propcycle{$\pi$} (defined below) are
equivalent to \propexpand{$\pi$}.  If $F$ and $H$ are hypergraphs, a \emph{labeled copy of $F$ in
$H$} is an edge-preserving injection $V(F) \rightarrow V(H)$, i.e.\ an injection $\alpha : V(F)
\rightarrow V(H)$ such that if $E$ is an edge of $F$, then $\{ \alpha(x) : x \in E \}$ is an edge of
$H$.

\begin{thm} \label{thm:piquasiiscounting} (\cite{hqsi-lenz-quasi12, hqsi-lenz-quasi12-nonregular})
  Let $\mathcal{H} = \{H_n\}_{n\rightarrow\infty}$ be a sequence of $k$-uniform hypergraphs where $|V(H_n)| = n$
  and $|E(H_n)| \geq p \binom{n}{k} + o(n^k)$.  Let $\pi$ be any proper partition of $k$.  Then
  $\mathcal{H}$ satisfies \propexpandp{$\pi$} if and only if $\mathcal{H}$ satisfies
  \begin{itemize}
     \item \propcountp{$\pi$-linear}: For all $f$-vertex, $m$-edge, $k$-uniform, $\pi$-linear
       hypergraphs $F$, the number of labeled copies of $F$ in $H_n$ is $p^m n^f + o(n^f)$.
  \end{itemize}
  In addition, if $\pi = k_1+k_2$ is a partition into two parts, $\mathcal{H}$ satisfies
  \propexpandp{$\pi$} if and only if $\mathcal{H}$ satisfies
  \begin{itemize}
    \item  \propcyclep{$\pi$}: The number of labeled copies of $C_{\pi,4}$ in $H_n$ is at most
      $p^{4} n^{2k} + o(n^{2k})$.
  \end{itemize}
\end{thm}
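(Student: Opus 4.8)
The plan is to split the claim into three parts: (a) \propexpandp{$\pi$} $\Rightarrow$ \propcountp{$\pi$-linear} for every proper partition $\pi$; (b) \propcountp{$\pi$-linear} $\Rightarrow$ \propexpandp{$\pi$} for every $\pi$; and (c), for a two-part partition $\pi = k_1 + k_2$, \propcyclep{$\pi$} $\Rightarrow$ \propexpandp{$\pi$}. Parts (a) and (b) give the first equivalence. For the second, observe that $C_{\pi,4}$ is itself $\pi$-linear — order its edges $X_1 \cup Y_1, X_1 \cup Y_2, X_2 \cup Y_1, X_2 \cup Y_2$ and take the two blocks of each edge to be its $X$-side and $Y$-side — so the conclusion of \propcountp{$\pi$-linear} includes that the number of labeled copies of $C_{\pi,4}$ is exactly $p^4 n^{2k} + o(n^{2k})$; in particular \propcountp{$\pi$-linear} $\Rightarrow$ \propcyclep{$\pi$}, and combined with (a) and (c) this closes the loop \propexpandp{$\pi$} $\Rightarrow$ \propcountp{$\pi$-linear} $\Rightarrow$ \propcyclep{$\pi$} $\Rightarrow$ \propexpandp{$\pi$}. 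Since (c) already yields (b) when $t = 2$, a separate argument for (b) is only needed when $t \ge 3$. Throughout, by Lemma~\ref{lem:partitetonormal} it is enough to produce the disjoint variant \proppartexpp{$\pi$}.

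Part (a) is a counting lemma of the usual type. The point of $\pi$-linearity is that, building up a $\pi$-linear $F$ edge by edge along its defining order, the number of labeled copies of $F$ factors stage by stage into conditional densities: when edge $E_i$ is added, its vertices split into blocks $A_{i,1},\dots,A_{i,t}$ of sizes $k_1,\dots,k_t$ meeting every earlier edge inside a single block, so the admissible completions of a partial embedding are governed by a quantity of the form $e(S_1,\dots,S_t)$ with $S_s \subseteq \binom{V(H_n)}{k_s}$. Applying \propexpandp{$\pi$} to the sets of partial embeddings with large positive (respectively negative) deviation controls each factor up to $o(\cdot)$, and multiplying through gives the count $p^{|E(F)|} n^{|V(F)|} + o(n^{|V(F)|})$.

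Part (c) is the classical ``small four-cycle count forces quasirandomness'' argument. Assume \propcyclep{$\pi$} together with the hypothesis $|E(H_n)| \ge p\binom{n}{k} + o(n^k)$, and write $d(X,Y) = \mathbf{1}[X \cup Y \in E(H_n)]$ for disjoint $X \in \binom{V(H_n)}{k_1}$, $Y \in \binom{V(H_n)}{k_2}$; since degenerate configurations contribute only $o(n^{2k})$, the number of labeled copies of $C_{\pi,4}$ is $(k_1! k_2!)^2 \sum_{X_1,X_2,Y_1,Y_2} d(X_1,Y_1) d(X_1,Y_2) d(X_2,Y_1) d(X_2,Y_2) + o(n^{2k})$. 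Two nested applications of the Cauchy--Schwarz inequality (sum over the $X$'s first, then over the $Y$'s) bound this sum below by $\frac{(\sum_{X,Y} d(X,Y))^4}{\binom{n}{k_1}^2 \binom{n}{k_2}^2}$, which the edge-count hypothesis turns into at least $p^4 n^{2k} - o(n^{2k})$ for the labeled count. Since \propcyclep{$\pi$} forces the same count to be at most $p^4 n^{2k} + o(n^{2k})$, both Cauchy--Schwarz steps are equalities up to $o(\cdot)$; the equality cases give $L^2$-concentration of the degrees $X \mapsto \sum_Y d(X,Y)$ around $p\binom{n}{k_2}$ and of the codegrees $(X_1,X_2) \mapsto \sum_Y d(X_1,Y) d(X_2,Y)$ around $p^2 \binom{n}{k_2}$. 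A final Cauchy--Schwarz then propagates these to $e(S_1,S_2) = p|S_1||S_2| + o(n^k)$ for all $S_1 \subseteq \binom{V(H_n)}{k_1}$ and $S_2 \subseteq \binom{V(H_n)}{k_2}$, which is \propexpandp{$\pi$}.

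Part (b) with $t \ge 3$ is the heart of the matter, and where I expect the real difficulty. I would argue by a multivariable version of (c). The class of $\pi$-linear hypergraphs does not contain $C_{\pi',4}$ for two-part coarsenings $\pi'$ of $\pi$, nor (once $t \ge 3$) the full octahedron on $t$ coordinates; but it does contain a ``generalized four-cycle'' $C_\pi$, whose four edges $E_1,E_2,E_3,E_4$ satisfy that $E_1 \cap E_2$ and $E_3 \cap E_4$ are $k_1$-sets, $E_2 \cap E_3$ and $E_4 \cap E_1$ are $k_2$-sets, all other pairwise intersections are empty, and the remaining $k_3 + \dots + k_t$ vertices of each edge are private to that edge — making the ``extra'' vertices private is precisely what keeps $C_\pi$ $\pi$-linear. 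Counting $C_\pi$ controls the weighted four-cycle in coordinates $1$ and $2$ obtained by integrating coordinates $3,\dots,t$ out, and the equality-case analysis of (c), applied to this weighted four-cycle, pins down the corresponding pair quantity; carrying this out for every pair of coordinates, together with counts of a few larger $\pi$-linear gadgets linking the pairs, should suffice to bound a cut/box-type norm of $d - p$ in all $t$ coordinates at once, which is \propexpandp{$\pi$}. The main obstacle is to choose exactly the right family of $\pi$-linear test hypergraphs and, above all, to nest the Cauchy--Schwarz estimates so that the $o(\cdot)$ errors introduced by the integrated coordinates and by the iteration over pairs do not accumulate beyond $o(n^k)$; parts (a), (c), and the $t = 2$ case of (b) are comparatively routine.
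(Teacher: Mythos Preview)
This theorem is not proved in the present paper at all: it is quoted from the authors' earlier work \cite{hqsi-lenz-quasi12, hqsi-lenz-quasi12-nonregular}, so there is no proof here to compare your proposal against. The paper even remarks, just after the statement, that those references define a cycle $C_{\pi,2\ell}$ for \emph{every} proper partition $\pi$ (not only two-part ones) and prove the equivalence via that object, but that ``the full definition of $C_{\pi,2\ell}$ is complicated and not required in this paper.''

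That said, your outline has a genuine gap precisely where you flag one. Parts (a) and (c) are standard and your sketches are fine. But part (b) for $t\ge 3$ is not a proof: you introduce a ``generalized four-cycle'' $C_\pi$ with private vertices in coordinates $3,\dots,t$, observe that counting it controls a weighted bipartite four-cycle in coordinates $1$ and $2$, and then assert that repeating this over all pairs of coordinates together with unspecified ``larger $\pi$-linear gadgets linking the pairs'' \emph{should} yield the full $t$-coordinate expansion. Controlling all two-coordinate marginals does not by itself give \propexpandp{$\pi$} when $t\ge 3$; one needs a genuinely $t$-variate object whose count dominates (via iterated Cauchy--Schwarz) the quantity $\bigl|e(S_1,\dots,S_t)-p|S_1|\cdots|S_t|\bigr|$. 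That is exactly the role of the general $C_{\pi,2\ell}$ in the cited papers, and it is the missing ingredient in your plan. Until you write down a specific $\pi$-linear hypergraph whose count bounds a box-norm of $d-p$ in all $t$ coordinates simultaneously, and verify the Cauchy--Schwarz chain, part (b) remains open.
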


Note that \cite{hqsi-lenz-quasi12,hqsi-lenz-quasi12-nonregular} actually defines a cycle
$C_{\pi,2\ell}$ for any proper partition $\pi$ and any $\ell \geq 2$ and equates counting cycles
with \propexpand{$\pi$}, but the full definition of $C_{\pi,2\ell}$ is complicated and not required
in this paper.  Therefore, we only state the definition of cycles and the equivalence between
counting cycles and expansion for partitions into two parts.

\begin{lemma} \label{lem:constrAsatisfiesexp}
  ($A_2(n,p)$ satisfies \propcycle{$k_1,k_2$}) For $k = k_1 + k_2$ with $k_i \geq 1$, $\epsilon >
  0$, and $0 < p < 1$, with probability going to one as $n$ goes to infinity, the number of labeled
  copies of $C_{k_1+k_2,4}$ in $A_2(n,p)$ satisfies
  \begin{align} \label{eq:constrAsatCD}
    \left| \#\{C_{k_1+k_2,4} \,\, \text{in } A_2(n,p) \} - p^4 n^{2k} \right| < \epsilon
    n^{2k}.
  \end{align}
\end{lemma}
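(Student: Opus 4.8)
The plan is to run the standard second moment method, in the spirit of the proofs of Lemmas~\ref{lem:Aispdense}--\ref{lem:Dispdense}. For each injection $\alpha$ of $V(C_{k_1+k_2,4}) = X_1 \cup X_2 \cup Y_1 \cup Y_2$ into $V(A_2(n,p))$, let $X_\alpha$ be the indicator that $\alpha$ is a labeled copy, i.e.\ that all four of $\alpha(X_i \cup Y_j)$ for $i,j \in \{1,2\}$ are hyperedges, and let $X = \sum_\alpha X_\alpha$. The crux is to show $\mathbb{E}[X_\alpha] = p^4$ for \emph{every} injection $\alpha$; granting this, $\mathbb{E}[X] = \frac{n!}{(n-2k)!}\,p^4 = p^4 n^{2k} + O(n^{2k-1})$, and it remains to prove $\mathrm{Var}(X) = o(n^{4k})$ and invoke Chebyshev's inequality (Lemma~\ref{lem:chebyshev}), using that $|\mathbb{E}[X] - p^4 n^{2k}| < \tfrac{\epsilon}{2} n^{2k}$ for large $n$.

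To compute $\mathbb{E}[X_\alpha]$, fix $\alpha$ and set $V_1 = \alpha(X_1)$, $V_2 = \alpha(X_2)$, $U_1 = \alpha(Y_1)$, $U_2 = \alpha(Y_2)$, which are pairwise disjoint since $\alpha$ is injective. With $c$ the random $b$-coloring of $\binom{[n]}{2}$ from the construction of $A_2(n,p)$, put $S_i = \sum_{e \subseteq V_i} c(e)$, $T_j = \sum_{e \subseteq U_j} c(e)$, and $M_{ij} = \sum_{x \in V_i,\, y \in U_j} c(\{x,y\})$, all mod $b$. Then $\alpha(X_i \cup Y_j)$ is a hyperedge iff $S_i + T_j + M_{ij} \in \{0,\dots,a-1\} \pmod b$. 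The key point is that the eight variables $S_1, S_2, T_1, T_2, M_{11}, M_{12}, M_{21}, M_{22}$ depend on pairwise disjoint sets of edges of $K_n$ (a short case check from the disjointness of $V_1, V_2, U_1, U_2$), hence are mutually independent, and each $M_{ij}$ is a sum of $k_1 k_2 \geq 1$ independent uniform elements of $\mathbb{Z}_b$, so is itself uniform on $\mathbb{Z}_b$. Therefore, after conditioning on $S_1, S_2, T_1, T_2$, the four events ``$\alpha(X_i \cup Y_j)$ is a hyperedge'' are functions of the independent $M_{ij}$, so they are independent, each of probability $a/b = p$, giving $\mathbb{E}[X_\alpha] = p^4$. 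This is the step I expect to be the main obstacle, or at least the only non-routine point: a priori the four hyperedges share all the ``internal'' pairs inside each $X_i$ and each $Y_j$, yet the quantities $S_i, T_j, M_{ij}$ still factor through disjoint blocks of random colors, and the uniformity of the $M_{ij}$ (which uses $k_1, k_2 \geq 1$) restores conditional independence.

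For the variance, observe that $X_\alpha$ depends only on the colors of edges with both endpoints in $\mathrm{Im}(\alpha)$; hence if $|\mathrm{Im}(\alpha) \cap \mathrm{Im}(\alpha')| \leq 1$ then $X_\alpha$ and $X_{\alpha'}$ are functions of disjoint sets of edge-colors and $\mathrm{Cov}(X_\alpha, X_{\alpha'}) = 0$. The number of ordered pairs $(\alpha,\alpha')$ with $|\mathrm{Im}(\alpha) \cap \mathrm{Im}(\alpha')| \geq 2$ is $O(n^{4k-2})$ (choose $\alpha$; choose which two slots of $\alpha'$ map into $\mathrm{Im}(\alpha)$ and their images; choose the remaining $2k-2$ vertices of $\alpha'$), and each such covariance has absolute value at most $1$ since the $X_\alpha$ are indicators. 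Thus $\mathrm{Var}(X) = O(n^{4k-2}) = o(n^{4k})$, so by Chebyshev's inequality $\mathbb{P}[|X - \mathbb{E}[X]| > \tfrac{\epsilon}{2} n^{2k}] \to 0$, and \eqref{eq:constrAsatCD} follows. (Combined with Lemma~\ref{lem:Aispdense} and Theorem~\ref{thm:piquasiiscounting}, this shows $A_2(n,p)$ satisfies \propexpand{$k_1+k_2$}.)
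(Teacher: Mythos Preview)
Your proof is correct and follows essentially the same approach as the paper: show each labeled copy occurs with probability $p^4$ by exploiting that the ``crossing'' colors between $X_i$ and $Y_j$ are independent of everything else and uniform, then conclude via the second moment method. Your decomposition into the independent blocks $S_i,T_j,M_{ij}$ is a more explicit version of the paper's argument (which simply isolates one crossing pair $x_iy_j$ unique to each hyperedge), and your variance bound---requiring two shared vertices rather than the paper's looser ``at least one vertex''---is sharper but immaterial to the conclusion.
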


\begin{proof} % {{{
Let $c : E(K_n) \rightarrow \{0,\dots,b-1\}$ be the random coloring used in the construction of
$A_2(n,p)$.  The cycle $C_{\pi,4}$ has four edges with four vertex groups $X_1,X_2,Y_1,Y_2$ where
$|X_i| = k_1$ and $|Y_i|=k_2$ for all $i$ and $X_i \cup Y_j$ are hyperedges for all $i,j$.  Let us
pick disjoint sets $X_1,X_2,Y_1,Y_2$ of vertices of $A_2(n,p)$ and compute the probability that each
$X_i \cup Y_j$ is a hyperedge of $A_2(n,p)$.  We claim that the probability that $X_i \cup Y_j$ is a
hyperedge of $A_2(n,p)$ is $p$ independently of if the other pairs are hyperedges or not.  The only
possible dependence between the events ``$X_i \cup Y_j$ is a hyperedge of $A_2(n,p)$'' and the event
``$X_{i'} \cup Y_{j'}$ is a hyperedge of $A_2(n,p)$'' would come from the edges of $K_n$ appearing
in the intersection of the two hyperedges.  But even conditioned on exactly the behavior of the
colors of $\binom{X_i}{2}$ and the colors of $\binom{Y_i}{2}$, $X_i \cup Y_j$ is an edge with
probability $p$.  This is because if $x_iy_j$ is a pair of vertices with one endpoint in $X_i$ and
one endpoint in $Y_j$, even conditioning on the color of every pair in $\binom{X_i \cup Y_j}{2}$
besides $\{x_i,y_j\}$, the set $X_i \cup Y_j$ is a hyperedge of $A_2(n,p)$ or not depending only on
the color of $x_iy_j$ (similar to the proof of Lemma~\ref{lem:Aispdense}).

Thus since each $X_i \cup Y_j$ is a hyperedge of $A_2(n,p)$ with probability
$p$, the expected number of labeled cycles $C_{\pi,4}$ in $A_2(n,p)$ is $p^{4} (2k)!
\binom{n}{2k}$.  Since two cycles will depend on each other only if they
share at least one vertex, the second moment method implies that with high probability, the number
of labeled cycles in $A_2(n,p)$ is $p^4 n^{2k} \pm \epsilon n^{2k}$.
\end{proof} % }}}

The above proof generalizes in a straightforward manner to show that $A_\ell(n,p)$ satisfies
\propcycle{$k_1 + k_2$}, although we do not require this fact in this paper.  Also, since every
partition $\pi$ is a refinement of $k_1 + k_2$ for some $k_1$ and $k_2$, $A_\ell(n,p)$ satisfies
\propexpand{$\pi$} for all $\pi$.

\begin{lemma} \label{lem:constrBsatisfiesexp}
  ($B_{\vec{\pi}'}(n,p)$ satisfies \propcount{$\pi$-linear})  Let $\pi$ and $\pi'$ be proper
  partitions of $k$ such that $\pi'$ is not a refinement of $\pi$.  Let $\vec{\pi}'$ be any ordering
  of the entries of $\pi'$.  For any $\pi$-linear, $v$-vertex, $m$-edge hypergraph $F$, any
  $\epsilon > 0$, and any $0 < p < 1$, with probability going to one as $n$ goes to infinity,
  \begin{align*}
    \left| \#\{F \, \text{in } B_{\vec{\pi}'}(n,p)\} - p^{m} n^v \right| < \epsilon n^v.
  \end{align*}
\end{lemma}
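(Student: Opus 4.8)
The plan is to reveal the random colorings one query at a time while processing the edges of $F$ in the $\pi$-linear order $E_1,\dots,E_m$, and to show that for \emph{every} injection $\alpha\colon V(F)\to[n]$ the probability that $\alpha$ is a labeled copy of $F$ in $B_{\vec{\pi}'}(n,p)$ is exactly $p^m$. Granting this, the expected number of labeled copies of $F$ is $n(n-1)\cdots(n-v+1)\,p^m=p^mn^v+o(n^v)$, and concentration follows from the second moment method exactly as in the proof of Lemma~\ref{lem:Aispdense}: two injections $\alpha,\alpha'$ whose images are disjoint involve disjoint sets of color values, so the events that they are labeled copies are independent; hence only $O(n^{2v-1})$ pairs contribute to the variance, the variance is $o(n^{2v})$, the squared mean is $\Theta(n^{2v})$, and Chebyshev's inequality (Lemma~\ref{lem:chebyshev}) gives the estimate with probability tending to one.

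It remains to show $\Pr[\alpha\text{ is a copy}]=p^m$, for which it suffices to prove that for each $i$, conditioned on all color values queried while processing $E_1,\dots,E_{i-1}$, the set $\alpha(E_i)$ is a hyperedge of $B_{\vec{\pi}'}(n,p)$ with probability $p$. Write $\vec{\pi}'=(k_1',\dots,k_s')$, let $W_1^{(i)},\dots,W_s^{(i)}$ be the partition of the $k$-set $\alpha(E_i)$ into consecutive blocks of sizes $k_1',\dots,k_s'$ used by the construction, and let $A_{i,1},\dots,A_{i,t}$ be the partition of $V(E_i)$ (with $|A_{i,s'}|=k_{s'}$, where $\pi=k_1+\dots+k_t$) witnessing that $F$ is $\pi$-linear. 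The crucial point is purely combinatorial: since $\pi'$ is \emph{not} a refinement of $\pi$, at least one block $W_{r_0}^{(i)}$ fails to be contained in $\alpha(A_{i,s'})$ for every $s'$. Indeed, were every block $W_r^{(i)}$ contained in some $\alpha(A_{i,\sigma(r)})$, then comparing the two partitions $\{W_r^{(i)}\}_r$ and $\{\alpha(A_{i,s'})\}_{s'}$ of $\alpha(E_i)$ would force $\alpha(A_{i,s'})=\bigcup_{r:\sigma(r)=s'}W_r^{(i)}$ for each $s'$; thus $\sigma$ would be a surjection onto $\{1,\dots,t\}$ with $k_{s'}=\sum_{r:\sigma(r)=s'}k_r'$, exhibiting $\pi'$ as a refinement of $\pi$, a contradiction.

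Fix such an $r_0$. I claim the value $c_{r_0}(W_{r_0}^{(i)})$ has not yet been queried: processing an edge $E_j$ with $j<i$ queries only $c_1(W_1^{(j)}),\dots,c_s(W_s^{(j)})$, so, as the colorings $c_1,\dots,c_s$ are independent, $c_{r_0}(W_{r_0}^{(i)})$ is among them only if $W_{r_0}^{(i)}=W_{r_0}^{(j)}$, which would give $W_{r_0}^{(i)}\subseteq\alpha(E_j)\cap\alpha(E_i)=\alpha(E_j\cap E_i)$; but $\pi$-linearity gives $E_j\cap E_i\subseteq A_{i,s'}$ for some $s'$, so $W_{r_0}^{(i)}\subseteq\alpha(A_{i,s'})$, contradicting the choice of $r_0$. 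Therefore, conditioned on everything revealed so far (indeed also on the blocks $W_r^{(i)}$ with $r\neq r_0$), the value $c_{r_0}(W_{r_0}^{(i)})$ is uniform on $\{0,\dots,b-1\}$, hence $\sum_{r=1}^s c_r(W_r^{(i)})$ is uniform modulo $b$, and $\alpha(E_i)$ is a hyperedge with probability $a/b=p$. Multiplying over $i=1,\dots,m$ yields $\Pr[\alpha\text{ is a copy}]=p^m$, and together with the variance estimate above the lemma follows.

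The only nontrivial step is the combinatorial observation that some block $W_{r_0}^{(i)}$ escapes the $\pi$-partition of $E_i$, which is precisely where the hypothesis that $\pi'$ is not a refinement of $\pi$ enters; the $\pi$-linear ordering then guarantees that the associated color query is fresh, and the remainder is the standard second moment computation. Note that this argument does not use the linear order on $[n]$ at all beyond the fact that it induces \emph{some} splitting of $\alpha(E_i)$ into blocks of sizes $k_1',\dots,k_s'$, which is why the probability $p^m$ does not depend on $\alpha$.
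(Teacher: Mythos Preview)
Your proof is correct and follows essentially the same approach as the paper: both process the edges of $F$ in the $\pi$-linear order, use the hypothesis that $\pi'$ is not a refinement of $\pi$ together with $\pi$-linearity to locate a ``fresh'' $\pi'$-block of $\alpha(E_i)$ whose color has not yet been queried, conclude that each edge is present with conditional probability $p$, and finish with the second moment method. Your combinatorial step is organized slightly differently---you first use non-refinement to find a block escaping the $\pi$-partition of $E_i$ and then invoke $\pi$-linearity to see that it avoids all earlier $E_j\cap E_i$, whereas the paper argues the contrapositive (every block lying in some earlier edge would force it into a $\pi$-part and hence force a refinement)---but this is the same argument in a different order.
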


\begin{proof} % {{{
Let $F$ be a $\pi$-linear hypergraph with $v$ vertices, labeled $f_1,\dots,f_v$,
and let $x_1,\dots,x_v \in V(B_{\vec{\pi}'}(n,p))$ be a list of $v$
distinct vertices of $B_{\vec{\pi}'}(n,p)$. Define an indicator random variable
\begin{align*}
  X(F;x_1,\dots,x_v) = \begin{cases}
    1, &\text{if } B_{\vec{\pi}'}[\{x_1,\dots,x_v\}] \, \text{is a labeled copy of $F$ with $x_i$ mapped to $f_i$}, \\
    0  &\text{otherwise}.
  \end{cases}
\end{align*}

\medskip

\noindent\textit{Claim: } For any $\pi$-linear hypergraph $F$, and any distinct vertices $x_1,\dots,x_v \in
V(B_{\vec{\pi}}(n,p))$, we have $\mathbb{P}[X(F;x_1,\dots,x_{v}) = 1] = p^{|E(F)|}$.  

\medskip

\begin{proof} % {{{
The claim is proved by induction on the number of edges of $F$.  If $F$ has no edges, then
$X(F;x_1,\dots,x_v)$ is always one.  For the inductive step, let $E$ be the last edge of $F$ in the
ordering provided by the $\pi$-linearity of $F$, and let $m = |E(F)|$.  We may assume that the
vertices of $F$ are labeled so that $E = \{f_1,\dots,f_k\}$.  Let $F'$ be the hypergraph with $V(F')
= V(F)$ and $E(F') = E(F) - E$.  Then
\begin{align} \label{eq:constrBcountinglinear}
  \mathbb{P}[X(F;x_1,\dots,x_v) = 1] 
  = &\mathbb{P}[\{x_1,\dots,x_k\} \in E(B_{\vec{\pi}'}(n,p)) \Big| X(F';x_1,\dots,x_v) = 1] \nonumber \\
       \cdot &\mathbb{P}[X(F';x_1,\dots,x_v) = 1].
\end{align}
By induction, $\mathbb{P}[X(F';x_1,\dots,x_v) = 1]$ is $p^{m-1}$ so let us investigate the
probability that $\{x_1,\dots,x_k\}$ forms an edge of $B_{\vec{\pi}'}(n,p)$ conditioned on
$x_1,\dots,x_v$ forming a copy of $F'$.  The way we test if $\{x_1,\dots,x_k\}$ forms an edge of
$B_{\vec{\pi}'}(n,p)$ is to sort the $x_i$s according to the underlying ordering of
$V(B_{\vec{\pi}'}(n,p))$ and test the color sum of the $\pi'$-groups.  More precisely, let $\eta$ be
the permutation of $[k]$ such that $x_{\eta(1)} < x_{\eta(2)} < \dots < x_{\eta(k)}$ and let
$\vec{\pi}' = (k'_1,\dots,k'_t)$.  Divide the $x_{\eta(i)}$s up into blocks $D_1,\dots,D_t$ so that
$D_1$ consists of $x_{\eta(1)}, \dots, x_{\eta(k'_1)}$, $D_2$ consists of the next $k'_2$ vertices,
and so on.  The set $\{x_1,\dots,x_k\}$ will be an edge of $B_{\vec{\pi}'}(n)$ if $\sum c_i(D_i) < a
\pmod b$.

Since $\pi'$ is not a refinement of $\pi$ and $F$ is $\pi$-linear, there is some block $D_i$ such
that no edge of the copy of $F'$ on $x_1,\dots,x_v$ completely contains the block $D_i$.  To see
this, assume for contradiction that every block is completely contained in some edge of $F'$.
Since $F$ is $\pi$-linear, there exists a partition of the vertices of $E$ into groups according to
the partition $\pi$ such that every edge of $F'$ intersects at most one of these parts.  If every
block $D_i$ (which came from the $\pi'$ partition) was completely contained inside some edge, it
would be completely contained inside the corresponding part of the $\pi$-partition of $E$.  This
assignment of blocks to parts of the $\pi$-partition of $E$ shows that $\pi'$ is a refinement of
$\pi$, which is a contradiction.

Thus there exists some block $D_i$ which is not contained inside any edge of the copy of $F'$ on
$x_1,\dots,x_v$, so the event ``$D_i \in E(G_i)$'' is independent of the event
``$X(F',x_1,\dots,x_v) = 1$''.  Moreover, the event ``$\{x_1,\dots,x_k\}$ forms an edge of
$B_{\vec{\pi}'}(n,p)$'' can be written in terms of the event ``$D_i \in E(G_i)$,'' since no matter what
happens to $D_j$ for $j \neq i$, $D_i$ has probability $p = \frac{a}{b}$ of making the total color
sum in $\{0,\dots,a-1\}$ (similar to the proof of Lemma~\ref{lem:Bispdense}).  Combining this with
\eqref{eq:constrBcountinglinear} and induction on the number of edges finishes proves the claim.
\end{proof} % }}}

By linearity of expectation, the expected number of labeled copies of $F$ in $B_{\vec{\pi}'}(n,p)$
is $p^m v! \binom{n}{v}$.  Since two events will depend on each other only if the copies
of $F$ share at least two vertices, the second moment method implies that with high probability, the
number of labeled copies of $F$ is $p^m n^v \pm \epsilon n^v$.
\end{proof} % }}}

\begin{lemma} \label{lem:constrDsatsfiesexp}
  ($D(n,\frac{1}{2})$ satisfies \propcycle{$k_1 + k_2$}).  For $k = k_1 + k_2$ with $k_i \geq 1$,
  $\epsilon > 0$, and $0 < p < 1$, with probability going to one as $n$ goes to infinity, the number
  of labeled copies of $C_{k_1+k_2,4}$ in $D(n,\frac{1}{2})$ satisfies
  \begin{align} \label{eq:constrCsatCycle}
    \left| \#\left\{C_{k_1+k_2,4} \,\, \text{in } D\left(n,1/2\right) \right\} -
    \left(1/2\right)^4 n^{2k} \right| < \epsilon n^{2k}.
  \end{align}
\end{lemma}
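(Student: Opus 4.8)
The plan is to mimic the proof of Lemma~\ref{lem:constrAsatisfiesexp}: fix four disjoint vertex groups, show the four candidate hyperedges of $C_{k_1+k_2,4}$ appear with the right joint probability, and finish with a second moment computation. Concretely, fix disjoint sets $X_1,X_2\in\binom{[n]}{k_1}$ and $Y_1,Y_2\in\binom{[n]}{k_2}$ and put $T_{ij}=X_i\cup Y_j$ for $i,j\in\{1,2\}$; these are four distinct $k$-sets. I will show that over the randomness of $D(n,1/2)$ (the independent random head choices and the random $(k-1)$-uniform hypergraph $G$), the events ``$T_{ij}\in E(D(n,1/2))$'' are mutually independent, each of probability $\tfrac12$. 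Granting this, linearity of expectation gives that the expected number of labeled copies of $C_{k_1+k_2,4}$ is $(n)_{2k}\,(1/2)^4=(1/2)^4n^{2k}+o(n^{2k})$, since each edge-preserving injection of the $2k$ vertices of the cycle sends the four vertex groups to automatically disjoint sets $X_i,Y_j$, and being a copy just means all four $T_{ij}$ are edges. The concentration in \eqref{eq:constrCsatCycle} then follows by the standard second moment argument used at the end of Lemma~\ref{lem:constrAsatisfiesexp}: two labeled copies are independent unless their vertex images intersect, there are only $O(n^{4k-1})$ intersecting pairs, and the variance is thus $o(n^{4k})$, so Chebyshev's Inequality (Lemma~\ref{lem:chebyshev}) finishes the job.

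The crux is the mutual independence of the four edge events, which I would get by a revelation argument. Recall that, writing the chosen head of $T_{ij}$ as $h_{ij}$ and $(X_i\cup Y_j)\setminus h_{ij}=\{u_{ij},w_{ij}\}$, the set $T_{ij}$ is an edge of $D(n,1/2)$ exactly when the two $(k-1)$-sets $h_{ij}\cup\{u_{ij}\}$ and $h_{ij}\cup\{w_{ij}\}$ agree on membership in $G$. The key combinatorial fact is: \emph{for every $(i,j)$ and every choice of head, at least one of these two $(k-1)$-sets is not a subset of any of the other three $k$-sets $T_{i'j'}$.} Indeed, a $(k-1)$-subset $S\subseteq T_{ij}$ can lie in some $T_{i'j'}$ with $(i',j')\neq(i,j)$ only if $i'=i$ or $j'=j$ (else $T_{ij}$ and $T_{i'j'}$ are vertex-disjoint), and then $S$ must be contained in $X_i$ (when $j'\neq j$) or in $Y_j$ (when $i'\neq i$); since $|S|=k-1\geq k_1$ with equality only if $k_2=1$, and $|S|=k-1\geq k_2$ with equality only if $k_1=1$, and $k_1=k_2=1$ is impossible for $k\geq 3$, the only ``shared'' $(k-1)$-subset of $T_{ij}$ is $X_i$ itself (if $k_2=1$) or $Y_j$ itself (if $k_1=1$), and never both. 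Hence among the two distinct tested $(k-1)$-sets of $T_{ij}$ at most one is shared, leaving a \emph{private} one $e_{ij}$; since $e_{ij}\subseteq T_{ij}$ but $e_{i'j'}\not\subseteq T_{ij}$ for $(i',j')\neq(i,j)$, the four sets $e_{11},e_{12},e_{21},e_{22}$ are distinct.

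Given this, I condition on the four head choices $h_{ij}$ (the other heads are irrelevant); the private sets $e_{ij}$ are then determined and distinct, so I reveal every edge of $G$ except $e_{11},e_{12},e_{21},e_{22}$. For each $(i,j)$ the \emph{other} tested $(k-1)$-set of $T_{ij}$ is a subset of $T_{ij}$, hence distinct from every $e_{i'j'}$, hence revealed; so ``$T_{ij}\in E(D(n,1/2))$'' becomes the event that the fair, independent coin $\mathbb{1}[e_{ij}\in G]$ equals a fixed revealed bit. Thus the four events are conditionally independent with probability $\tfrac12$ each, and since the heads are independent of $G$, averaging over the conditioning shows that for every subset $S$ of the four indices the joint probability is $2^{-|S|}$, i.e.\ the events are mutually independent. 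The main obstacle is precisely the private-$(k-1)$-set claim, whose only real content is handling the degenerate cases $k_1=1$ and $k_2=1$; everything afterward is the routine expectation-and-second-moment computation exactly as in Lemma~\ref{lem:constrAsatisfiesexp}. Finally, combined with Lemma~\ref{lem:Dispdense} and Theorem~\ref{thm:piquasiiscounting} this yields \propexpandh{$k_1+k_2$}, and since every proper partition $\pi$ refines $k_1+k_2$ for a suitable split, Lemma~\ref{lem:exptoexp} gives \propexpandh{$\pi$} for all $\pi$.
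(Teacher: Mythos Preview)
Your proposal is correct and follows essentially the same approach as the paper's proof. The paper phrases the key combinatorial fact slightly differently---it observes directly that since $|R|=k-2\geq 1$, at least one of $R\cup\{z_1\}$, $R\cup\{z_2\}$ must contain vertices from both $X_i$ and $Y_j$, and any such $(k-1)$-set cannot lie inside another $T_{i'j'}$ because $X_1,X_2,Y_1,Y_2$ are pairwise disjoint---whereas you arrive at the same ``private $(k-1)$-set'' conclusion via a case analysis on $k_1,k_2$; your explicit revelation/conditioning argument for mutual independence is more detailed than the paper's terse ``independently of the other pairs,'' but the substance is identical.
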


\begin{proof} % {{{
This proof is very similar to the proof of Lemma~\ref{lem:constrAsatisfiesexp}; we will show that
the probability that some $2k$ vertices form a labeled copy of $C_{\pi,4}$ in $D(n,1/2)$ is $2^{-4}$
and use the second moment method for concentration.  Let $G = G^{(k-1)}(n,1/2)$ be the random
hypergraph used in the definition of $D(n,1/2)$.  Let $X_1,X_2,Y_1,Y_2$ be disjoint sets of vertices
of $D(n,1/2)$ with $|X_i| = k_1$ and $|Y_i| = k_2$.  We claim that the probability that $X_i \cup
Y_j$ is a hyperedge of $D(n,1/2)$ is $\frac{1}{2}$ independently of if the other pairs are
hyperedges or not.  Indeed, let $R$ be the head of $X_i \cup Y_j$ and $z_1$ and $z_2$ the other
two vertices of $X_i \cup Y_j$, and notice that since $|R| = k-2$ either $R \cup \{z_1\}$ or $R \cup
\{z_2\}$ (or both) intersect both $X_i$ and $Y_j$.  Say $R \cup \{z_1\}$ intersects both $X_i$ and
$Y_j$.  Since the sets $X_1, X_2, Y_1, Y_2$ are disjoint, $X_i \cup Y_j$ is the only hyperedge
of the cycle to test if $R \cup \{z_1\}$ is in $G$ or not.  Since $R \cup \{z_1\}$ is an edge of $G$
with probability $\frac{1}{2}$, $X_i \cup Y_j$ is a hyperedge of $D(n,1/2)$ with probability
$\frac{1}{2}$ independently of the other hyperedges of the cycle.

Thus the expected number of labeled four-cycles in $D(n,1/2)$ is $2^{-4} (2k)!  \binom{n}{2k}$ and by
the second moment method, with probability going to one as $n$ goes to infinity, the number of
labeled copies of $C_{\pi,4}$ in $D(n,1/2)$ is $2^{-4} n^{2k} \pm \epsilon n^{2k}$.
\end{proof} % }}}

\subsection{Clique Discrepency} % {{{2

In this section, we show that $A_{\ell+1}(n,p)$ and $B_{\vec{\pi}}(n,p)$ satisfy \propcd{$\ell$} for
$\vec{\pi} = (\ell+1,1,\dots,1)$.  The proof generalizes to show that $B_{\vec{\pi}}(n,p)$ satisfies
\propcd{$\ell$} for all $\pi$ with $\max \pi > \ell$, but this generalization is not required for
Table~\ref{tab:main} so the proof is omitted.

\begin{lemma} \label{lem:constrAsatCD}
  ($A_{\ell+1}(n,p)$ satisfies \propcd{$\ell$})  Let $1 \leq \ell < k-1$ and $0 < p < 1$.  For every
  $\epsilon > 0$, with probability going to one as $n$ goes to infinity, for every $\ell$-uniform
  hypergraph $F$ on vertex set $[n]$,
  \begin{align} \label{eq:constrAsatCDmain}
    \Big| |\mathcal{K}_k(F) \cap E(A_{\ell+1}(n,p))| - p|\mathcal{K}_k(F)| \Big| \leq \epsilon n^k.
  \end{align}
\end{lemma}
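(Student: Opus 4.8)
The plan is to fix $\epsilon>0$ and, for a \emph{fixed} $\ell$-uniform hypergraph $F$ on $[n]$, regard $g_F := \bigl|\mathcal{K}_k(F)\cap E(A_{\ell+1}(n,p))\bigr|$ as a function of the only randomness in the construction, namely the uniform $b$-colouring $c:\binom{[n]}{\ell+1}\to\{0,\dots,b-1\}$. The set $\mathcal{K}_k(F)$ depends on $F$ alone and not on $c$, so $|\mathcal{K}_k(F)|$ is a deterministic quantity. Exactly as in the proof of Lemma~\ref{lem:Aispdense}, for each fixed $k$-set $W$ the colour of any single $(\ell+1)$-subset of $W$ determines, once the remaining colours are exposed, whether $W\in E(A_{\ell+1}(n,p))$, and that colour is uniform; hence $\mathbb{P}[W\in E(A_{\ell+1}(n,p))]=a/b=p$, and by linearity $\mathbb{E}[g_F]=p\,|\mathcal{K}_k(F)|$. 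So it remains to show that $g_F$ concentrates around its mean, with a failure probability small enough to survive a union bound over all $F$.

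For the concentration I would apply a bounded-differences (McDiarmid/Azuma-type) inequality to $g_F$ as a function of the $\binom{n}{\ell+1}$ independent colours. Altering a single colour $c(Z_0)$ can flip membership in $E(A_{\ell+1}(n,p))$ only for $k$-sets $W$ with $Z_0\subseteq W$, and there are $\binom{n-\ell-1}{k-\ell-1}=O(n^{k-\ell-1})$ of those (with $k-\ell-1\ge 1$ since $\ell<k-1$), so changing one colour changes $g_F$ by at most that much. McDiarmid's inequality with deviation $\epsilon n^k$ then gives
\[
  \mathbb{P}\bigl[\,|g_F-\mathbb{E}[g_F]|>\epsilon n^k\,\bigr]
  \;\le\; 2\exp\!\left(-\frac{2\epsilon^2 n^{2k}}{\binom{n}{\ell+1}\binom{n-\ell-1}{k-\ell-1}^2}\right)
  \;=\; 2\exp\bigl(-\Omega_\epsilon(n^{\ell+1})\bigr),
\]
since the denominator is $\Theta(n^{\ell+1})\cdot\Theta(n^{2(k-\ell-1)})=\Theta(n^{2k-\ell-1})$.

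Finally I would union bound over all $\ell$-uniform hypergraphs $F$ on $[n]$; there are $2^{\binom{n}{\ell}}\le 2^{n^{\ell}}$ of them, so the probability that \eqref{eq:constrAsatCDmain} fails for some $F$ is at most $2^{n^{\ell}}\cdot 2\exp\bigl(-\Omega_\epsilon(n^{\ell+1})\bigr)\to 0$, because $n^{\ell+1}$ dominates $n^{\ell}$. On the complement, \eqref{eq:constrAsatCDmain} holds for every $F$ simultaneously, which is precisely \propcd{$\ell$} for $A_{\ell+1}(n,p)$.

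The crux — and the reason the statement holds at all — is obtaining a per-$F$ tail bound of order $\exp(-\Omega(n^{\ell+1}))$ strong enough to beat the count $2^{\Theta(n^{\ell})}$ of adversarial hypergraphs; the extra power of $n$ in the exponent comes precisely from the randomness of $A_{\ell+1}(n,p)$ living on $(\ell+1)$-sets while $F$ is only $\ell$-uniform, so a routine second-moment (Chebyshev) estimate is far too weak and something with exponential tails is essential. If one prefers to invoke only the Chernoff bound (Lemma~\ref{lem:chernoff}), an equivalent route is to equitably colour the \emph{conflict graph} on $\binom{[n]}{k}$ — in which $W\sim W'$ iff $|W\cap W'|\ge\ell+1$, a graph of maximum degree $O(n^{k-\ell-1})$ — into $O(n^{k-\ell-1})$ classes within which the edge-indicators $X_W$ are mutually independent, apply Chernoff inside each class, and union bound over the classes and over $F$; this also goes through comfortably.
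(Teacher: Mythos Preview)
Your argument is correct and reaches the same decisive $\exp(-\Omega_\epsilon(n^{\ell+1}))$ tail per $F$, but the route differs from the paper's. Rather than applying a bounded-differences inequality to $g_F$ as a whole, the paper fixes $k-\ell-1$ vertices $x_1,\dots,x_{k-\ell-1}$ and, for each $(\ell+1)$-set $w$ with $w\cup\{x_1,\dots,x_{k-\ell-1}\}\in\mathcal{K}_k(F)$, observes that the indicator of that $k$-set being an edge is the \emph{only} indicator in this family to read the colour $c(w)$; hence these indicators are genuinely mutually independent and the plain Chernoff bound of Lemma~\ref{lem:chernoff} applies directly, giving a $e^{-cn^{\ell+1}}$ tail for each fixed $(k-\ell-1)$-tuple, followed by a union bound over the $\binom{n}{k-\ell-1}$ tuples and then over $F$. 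Your McDiarmid approach is arguably cleaner --- it needs neither the decomposition by tuples nor the independence verification --- while the paper's approach has the modest advantage of staying within the concentration tools already stated in the text and making visible exactly where the crucial $n^{\ell+1}$ in the exponent comes from (the number of free $(\ell+1)$-sets once $k-\ell-1$ vertices are pinned). Both proofs ultimately rest on the same structural fact you identify: the adversary $F$ is $\ell$-uniform but the randomness is $(\ell+1)$-uniform, which buys one extra power of $n$ in the exponent over what the union bound costs.
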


\begin{proof} % {{{
Let $A = A_{\ell+1}(n,p)$ and let $c : \binom{V(A)}{\ell+1} \rightarrow \{0,\dots,b-1\}$ be the random coloring used
in the definition of $A$.  Fix some $\ell$-uniform hypergraph $F$ on vertex set
$V(A)$ and let us compute the probability that $F$ is bad, where \emph{bad} means that
\eqref{eq:constrAsatCDmain} fails.  Let $x_1,\dots,x_{k-\ell-1}$ be distinct vertices and define
\begin{align*}
  W_{x_1,\dots,x_{k-\ell-1}} &= \left\{ w \in \binom{V(A)}{\ell+1} : w \cup
  \{x_1,\dots,x_{k-\ell-1}\} \in \mathcal{K}_k(F) \right\}, \\
  Y_{x_1,\dots,x_{k-\ell-1}} &= \left\{ w \cup \{x_1,\dots,x_{k-\ell-1}\} : w \in W_{x_1,\dots,x_{k-\ell-1}}
  \right\}.
\end{align*}
Call $x_1,\dots,x_{k-\ell-1}$ \emph{bad} if
\begin{align} \label{eq:constrAxbad}
  \Bigg| |Y_{x_1,\dots,x_{k-\ell-1}} \cap E(A)| - p |Y_{x_1,\dots,x_{k-\ell-1}}|
  \Bigg| > \epsilon n^{\ell+1}.
\end{align}
If $F$ is bad, then some $x_1,\dots,x_{k-\ell-1}$ is bad.  Indeed, if every $x_1,\dots,x_{k-\ell-1}$
was good, then using that $e < 2.8$ we have that
\begin{align*}
  \Bigg| |\mathcal{K}_k(F) &\cap E(A)| - p|\mathcal{K}_k(F)| \Bigg| \\
  &= \binom{k}{k-\ell-1}^{-1}
  \Bigg| \sum_{\{x_1,\dots,x_{k-\ell-1}\} \in \binom{V(A)}{k-\ell-1}} 
      \Big( |Y_{x_1,\dots,x_{k-\ell-1}} \cap E(A)| - p |Y_{x_1,\dots,x_{k-\ell-1}}| \Big) \Bigg| \\
  &\leq \binom{k}{k-\ell-1}^{-1}
    \sum_{\{x_1,\dots,x_{k-\ell-1}\} \in \binom{V(A)}{k-\ell-1}}
      \Big| |Y_{x_1,\dots,x_{k-\ell-1}} \cap E(A)| - p |Y_{x_1,\dots,x_{k-\ell-1}}| \Big| \\
  &\leq \binom{k}{k-\ell-1}^{-1}
    \sum_{\{x_1,\dots,x_{k-\ell-1}\} \in \binom{V(A)}{k-\ell-1}} \epsilon n^{\ell+1} \\
  &= \frac{\binom{n}{k-\ell-1}}{\binom{k}{k-\ell-1}} \, \epsilon n^{\ell+1} \\
  &\leq \frac{\left(\frac{2.8 n}{k-\ell-1}\right)^{k-\ell-1}}{\left( \frac{k}{k-\ell-1}
    \right)^{k-\ell-1}} \, \epsilon n^{\ell+1} \\
  &= \left( \frac{2.8}{k} \right)^{k-\ell-1} \, \epsilon n^k.
\end{align*}
Since $1 \leq \ell < k - 1$, we have $k \geq 3$ so that $\left(\frac{2.8}{k}\right)^{k+\ell-1} \leq
1$.  Thus if every $x_1,\dots,x_{k-\ell-1}$ was good, $F$ would be good.  Therefore, using the union
bound, the probability that $F$ is bad is $\binom{n}{k-\ell-1}$ times the probability that
$x_1,\dots,x_{k-\ell-1}$ is bad.

Let us compute the probability that $x_1,\dots,x_{k-\ell-1}$ is bad.  For each $w \in
W_{x_1,\dots,x_{k-\ell-1}}$, define $X_w$ as the following indicator random variable:
\begin{align*}
  X_w = \begin{cases}
    1 \, &\text{if } \displaystyle\sum_{r \in \binom{w \cup \{x_1,\dots,x_{k-\ell-1}\}}{\ell+1}}
    c(r) < a \pmod b \\
    0 &\text{otherwise}.
  \end{cases}
\end{align*}
Notice that the event ``$X_w = 1$'' is mutually independent of the events ``$X_{w'} = 1$'' for $w'
\neq w$, since $X_w$ is the only event to test the color assigned to $w$.  Indeed, conditioning on
all other $r \in \binom{w\cup\{x_1,\dots,x_{k-\ell-1}\}}{\ell+1}$ with $r \neq w$, the probability
that $X_w = 1$ is $p$.  Let $X = \sum X_w$ and $\mu = \mathbb{E}[X]$ and notice that $X =
|Y_{x_1,\dots,x_{k-\ell-1}} \cap E(A)|$ and $\mu = p|W_{x_1,\dots,x_{k-\ell-1}}| =
p|Y_{x_1,\dots,x_{k-\ell-1}}|$.  Thus \eqref{eq:constrAxbad} becomes $|X - \mu| > \epsilon
n^{\ell+1}$.

Next, by Chernoff's Bound (Lemma~\ref{lem:chernoff}), there exists a constant $c$ depending only on
$\epsilon$ such that
\begin{align} \label{eq:AsatCDchernofftarget}
  \mathbb{P}\left[| X - \mu| > \epsilon n^{\ell+1} \right] \leq e^{-c n^{\ell+1}}.
\end{align}
Indeed, let $a = \epsilon n^{\ell+1}$ and let $n' = |W_{x_1,\dots,x_{k-\ell-1}}| \leq n^{\ell+1}$ be
the number of indicator random variables.  Then by Chernoff's Bound, $\mathbb{P}[|X - \mu| > a] \leq
2e^{-a^2/2n'} \leq 2e^{-\epsilon^2 n^{\ell+1}/2} \leq e^{-cn^{\ell+1}}$ if $c = \epsilon^2/4$.

If $F$ is bad, then some $x_1,\dots,x_{k-\ell-1}$ is bad, so the union bound implies that
the probability that $F$ is bad is at most
\begin{align*}
  \binom{n}{k-\ell-1} e^{-c n^{\ell+1}} \leq e^{-c n^{\ell+1}/2}.
\end{align*}
Apply the union bound again to compute the probability that some $F$ is bad.  There are at most
$2^{n^{\ell}}$ choices for $F$ so the probability that some $F$ is bad is at most
\begin{align*}
  2^{n^{\ell}} e^{-cn^{\ell+1}/2} \leq e^{-c n^{\ell+1}/4}
\end{align*}
which goes to zero as $n$ goes to infinity, completing the proof.
\end{proof} % }}}

\begin{lemma} \label{lem:constrBsatisfiesCD}
  ($B_{\ell+1,1,\dots,1}(n,p)$ satisfies \propcd{$\ell$}) Let $1 \leq \ell < k-1$, $0 < p < 1$, and
  let $\vec{\pi} = (\ell+1,1,\dots,1)$. For every $\epsilon > 0$, with probability going to one as $n$
  goes to infinity, for every $\ell$-uniform hypergraph $F$ on vertex set $[n]$, 
  \begin{align} \label{eq:constrBcorrectcliques}
    \Big| |\mathcal{K}_k(F) \cap B_{\vec{\pi}}(n,p)| - p|\mathcal{K}_k(F)| \Big| \leq \epsilon n^k.
  \end{align}
\end{lemma}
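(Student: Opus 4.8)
The plan is to adapt the proof of Lemma~\ref{lem:constrAsatCD} to the ordered structure of $B_{\vec{\pi}}(n,p)$. Write $\vec{\pi}=(\ell+1,1,\dots,1)$, which has $k-\ell$ parts, and let $c_1:\binom{[n]}{\ell+1}\to\{0,\dots,b-1\}$ and $c_2,\dots,c_{k-\ell}:[n]\to\{0,\dots,b-1\}$ be the random colorings from the construction. First I would record the structural fact driving everything: if $T$ is a $k$-set, $R$ is the set of its $\ell+1$ smallest vertices, and $q_{(1)}<\dots<q_{(k-\ell-1)}$ are its remaining vertices, then $T\in E(B_{\vec{\pi}}(n,p))$ if and only if $c_1(R)+\sum_{i=1}^{k-\ell-1}c_{i+1}(q_{(i)})<a\pmod b$. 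Hence, once the set $Q=\{q_{(1)},\dots,q_{(k-\ell-1)}\}$ of the $k-\ell-1$ largest vertices of $T$ is fixed, $T=R\cup Q$ is a hyperedge precisely when $c_1(R)$ falls into a set of exactly $a$ residues determined by $Q$ and $c_2,\dots,c_{k-\ell}$; in particular $\mathbb{P}[R\cup Q\in E(B_{\vec{\pi}}(n,p))]=a/b=p$, and, since $c_1(R)$ is tested by exactly one such event, the events $\{R\cup Q\in E(B_{\vec{\pi}}(n,p))\}$ over distinct $(\ell+1)$-sets $R$ are mutually independent (as in the proof of Lemma~\ref{lem:Bispdense}).

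Next, fix an $\ell$-uniform hypergraph $F$ on $[n]$; for each $(k-\ell-1)$-set $Q\subseteq[n]$ I would set $W_Q=\{R\in\binom{[n]}{\ell+1}:\max R<\min Q,\ R\cup Q\in\mathcal{K}_k(F)\}$ and $Y_Q=\{R\cup Q:R\in W_Q\}$. Each $k$-clique of $F$ is recovered exactly once, from the set $Q$ of its $k-\ell-1$ largest vertices, so the $Y_Q$ partition $\mathcal{K}_k(F)$. Call $Q$ \emph{bad} if $\big||Y_Q\cap E(B_{\vec{\pi}}(n,p))|-p|Y_Q|\big|>\epsilon n^{\ell+1}$. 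If no $Q$ is bad then, using $\binom{n}{k-\ell-1}\leq n^{k-\ell-1}$,
\begin{align*}
  \Big||\mathcal{K}_k(F)\cap E(B_{\vec{\pi}}(n,p))|-p|\mathcal{K}_k(F)|\Big|
  &\leq\sum_Q\Big||Y_Q\cap E(B_{\vec{\pi}}(n,p))|-p|Y_Q|\Big| \\
  &\leq\binom{n}{k-\ell-1}\,\epsilon n^{\ell+1}\leq\epsilon n^k,
\end{align*}
so it suffices to show that with probability tending to $1$, no $Q$ is bad for any $F$.

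For the probabilistic estimate, I would fix $Q$ and write $|Y_Q\cap E(B_{\vec{\pi}}(n,p))|=\sum_{R\in W_Q}X_R$, where $X_R=1$ if $R\cup Q\in E(B_{\vec{\pi}}(n,p))$ and $X_R=0$ otherwise. By the first paragraph the $X_R$ are mutually independent with $\mathbb{P}[X_R=1]=p$, so the mean is $p|W_Q|=p|Y_Q|$; since $|W_Q|\leq n^{\ell+1}$, Chernoff's bound (Lemma~\ref{lem:chernoff}) yields $\mathbb{P}[Q\text{ is bad}]\leq 2e^{-\epsilon^2 n^{2\ell+2}/(2|W_Q|)}\leq 2e^{-\epsilon^2 n^{\ell+1}/2}\leq e^{-cn^{\ell+1}}$ with $c=\epsilon^2/4$ for $n$ large. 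A union bound over the at most $n^{k-\ell-1}$ choices of $Q$ bounds the probability that some $Q$ is bad for a fixed $F$ by $n^{k-\ell-1}e^{-cn^{\ell+1}}\leq e^{-cn^{\ell+1}/2}$, and a final union bound over the at most $2^{n^\ell}$ $\ell$-uniform hypergraphs $F$ on $[n]$ bounds the probability that \eqref{eq:constrBcorrectcliques} fails for some $F$ by $2^{n^\ell}e^{-cn^{\ell+1}/2}\leq e^{-cn^{\ell+1}/4}\to 0$.

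The step that needs the most care is the first one: identifying that in $B_{\vec{\pi}}(n,p)$ the ``free'' color is $c_1$ of the \emph{bottom} $(\ell+1)$-block, which forces the decomposition to be indexed by the $k-\ell-1$ largest vertices of each clique (rather than by an arbitrary $(k-\ell-1)$-tuple, as in Lemma~\ref{lem:constrAsatCD}), and checking carefully that $T\mapsto(R,Q)$ is a bijection on $\mathcal{K}_k(F)$ and that the claimed independence really holds. Everything after that — the Chernoff estimate and the two union bounds — should go through essentially verbatim as in the proof of Lemma~\ref{lem:constrAsatCD}.
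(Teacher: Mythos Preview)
Your proposal is correct and follows essentially the same approach as the paper: decompose $\mathcal{K}_k(F)$ according to the $k-\ell-1$ largest vertices $Q$ of each clique, observe that for fixed $Q$ the edge-indicators over the bottom $(\ell+1)$-blocks $R$ are independent Bernoulli($p$) (the paper phrases this by fixing $\Delta=\sum_{j\ge 2} c_j(x_j)$ and noting each $X_w$ then depends only on $c_1(w)$), apply Chernoff, and take a union bound over $Q$ and then over $F$. The only cosmetic difference is that the paper conditions on $c_2,\dots,c_{k-\ell}$ to get independence while you assert unconditional independence directly; both are valid, and your reference to Lemma~\ref{lem:Bispdense} is slightly off (that lemma uses the second moment method rather than independence), but the independence itself is genuine since the conditional law of each $X_R$ given $\Delta$ is Bernoulli($p$) regardless of $\Delta$.
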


\begin{proof} % {{{
This proof is similar to the proof of the previous lemma.
Fix some $\ell$-uniform hypergraph $F$ on vertex set $V(B_{\vec{\pi}}(n,p)) = [n]$ and let us compute the
probability that $F$ is bad, where \emph{bad} means that \eqref{eq:constrBcorrectcliques} fails.
Recall that since $\vec{\pi} = (\ell+1,1,\dots,1)$, $B_{\vec{\pi}}(n,p)$ is built from a random coloring $c_1$ of
the complete $(\ell+1)$-uniform hypergraph and $k-\ell-1$ random colorings $c_2,\dots,c_{k-\ell}$ of
the complete one-uniform hypergraph.

Fix $k-\ell-1$ distinct vertices $x_2,\dots,x_{k-\ell}$ with $x_2 < \dots < x_{k-\ell}$ and let $W$
be the collection of $(\ell+1)$-sets which contain elements earlier than $x_2$ in the ordering and
also form a clique of size $k$ in $F$ when added to $x_2,\dots,x_{k-\ell}$.  More precisely,
\begin{align*}
  W_{x_2,\dots,x_{k-\ell}} = \left\{ w : w \in \binom{[x_2-1]}{\ell+1},  w \cup \{x_2,\dots,x_{k-\ell}\} \in 
    \mathcal{K}_k(F)\right\}.
\end{align*}
Notice that we define $W_{x_2,\dots,x_{k-\ell}}$ as $(\ell+1)$-sets of elements smaller than $x_2$,
so that asking if $w \cup \{x_2,\dots,x_{k-\ell}\}$ is an edge of $B_{\vec{\pi}}(n,p)$ consists of
asking about the color of $w$ in $c_1$ and the colors of $x_2,\dots,x_{k-\ell}$ in
$c_2,\dots,c_{k-\ell}$.  Since $x_2,\dots,x_{k-\ell}$ are fixed, define $\Delta =
\sum_{j=2}^{k-\ell} c_j(x_j)$.  For each $w \in W_{x_2,\dots,x_{k-\ell}}$, define a random variable
$X_w$ as follows.
\begin{align*}
  X_w = \begin{cases}
    1 &\text{if } c_1(w) + \Delta < a \pmod b, \\
    0 &\text{otherwise}.
  \end{cases}
\end{align*}
Since $\Delta$ is fixed, the expectation $\mathbb{E}[X_w] = \frac{a}{b} = p$.  Also, all these
indicator random variables are mutually independent.  Define $\hat{G}_\Delta$ to be the
$(\ell+1)$-uniform hypergraph on vertex set $V(B_{\vec{\pi}}(n,p))$ whose hyperedges are the
$(\ell+1)$-sets receiving colors $\{-\Delta,-\Delta+1,\dots,-\Delta+a-1\} \pmod b$.  Define $X =
\sum X_w$ and $\mu = \mathbb{E}[X] = p|W_{x_2,\dots,x_{k-\ell}}|$.  Consider an $(\ell+1)$-set $w$
in $E(\hat{G}_\Delta) \cap W_{x_2,\dots,x_{k-\ell}}$.  Then the color sum of $w \cup
\{x_2,\dots,x_{k-\ell}\}$ is between $0$ and $a-1 \pmod b$ so that $w \cup \{x_2,\dots,x_{k-\ell}\}$
is a hyperedge of $B_{\vec{\pi}}(n,p)$ and $X_w = 1$.  In the other direction, if $X_w = 1$ then the
color sum of $w \cup \{x_2,\dots,x_{k-\ell}\}$ is between $0$ and $a-1 \pmod b$ which implies that
$w \in E(\hat{G}_\Delta)$.  Therefore, $X = |E(\hat{G}_\Delta) \cap W_{x_2,\dots,x_{k-\ell}}|$.

By a similar argument as in the proof of Lemma~\ref{lem:constrAsatCD}, the Chernoff Bound
(Lemma~\ref{lem:chernoff}) implies that there exists a constant $c > 0$ such that
\begin{align} \label{eq:constrBsatCDchernoff}
  \mathbb{P}\left[| X - \mu| > \epsilon n^{\ell+1} \right] < e^{-c n^{\ell+1}}.
\end{align}
Call $x_2,\dots,x_{k-\ell}$ \emph{bad} if
\begin{align*}
  \left| X - \mu \right| = \Bigg| \left| E(\hat{G}_\Delta) \cap W_{x_2,\dots,x_{k-\ell}} \right| - 
  p \left| W_{x_2,\dots,x_{k-\ell}} \right| \Bigg|
  > \epsilon n^{\ell+1}.
\end{align*}

Next, we claim that if $F$ is bad then there exists some choice of $x_2,\dots,x_{k-\ell}$ which is
bad.  To see this, notice that the $k$-cliques of $F$ can be partitioned based on their $k-\ell-1$
largest vertices.  Let $Y_{x_2,\dots,x_{k-\ell}} = \{w \cup \{x_2,\dots,x_{k-\ell}\} : w \in
W_{x_2,\dots,x_{k-\ell}}\}$ so that $\mathcal{K}_k(F) = \dot\cup Y_{x_2,\dots,x_{k-\ell}}$.
There are at most $\binom{n}{k-\ell-1} \leq n^{k-\ell-1}$ sets $Y_{x_2,\dots,x_{k-\ell}}$ and they are all
disjoint, so if $F$ is bad then there is some $x_2,\dots,x_{k-\ell}$ such that
\begin{align} \label{eq:constrBsatCDYbad}
  \Big| |E(B_{\vec{\pi}}(n,p)) \cap Y_{x_2,\dots,x_{k-\ell}}| - p|Y_{x_2,\dots,x_{k-\ell}}| \Big|
  > \epsilon n^{\ell+1}.
\end{align}
But $|E(B_{\vec{\pi}}(n,p)) \cap Y_{x_2,\dots,x_{k-\ell}}| = | E(\hat{G}_\Delta) \cap
W_{x_2,\dots,x_{k-\ell}} |$ and $|Y_{x_2,\dots,x_{k-\ell}}| = |W_{x_2,\dots,x_{k-\ell}}|$, so that
\eqref{eq:constrBsatCDYbad} implies that $x_2,\dots,x_{k-\ell}$ is bad.

By the union bound, the probability that $F$ is bad is the number of choices for
$x_2,\dots,x_{k-\ell}$ times the probability that $x_2,\dots,x_{k-\ell}$ is bad, which is bounded by
\eqref{eq:constrBsatCDchernoff}.  Thus the probability that $F$ is bad is at most
\begin{align*}
  \binom{n}{k-\ell-1} e^{-c n^{\ell+1}} \leq e^{-c n^{\ell+1}/2}.
\end{align*}
We apply the union bound again to compute the probability that some $F$ is bad.  There are at most
$2^{n^\ell}$ choices for $F$ so the probability that some $F$ is bad is at most
\begin{align*}
  2^{n^\ell} e^{-c n^{\ell+1}/2} \leq e^{-cn^{\ell+1}/4}
\end{align*}
which goes to zero as $n$ goes to infinity, completing the proof.
\end{proof} % }}}

While not required in this paper, the above proof generalizes to prove that $B_{\vec{\pi}}(n,p)$
satisfies \propcd{$\ell$} for all $\pi$ with $\max \pi > \ell$.  If $\max \pi = k_i$, then the Chernoff
Bound will imply a bound of $e^{-c n^{k_i}}$ which is enough to dominate the term $2^{n^{\ell}}$ from the
number of $\ell$-uniform hypergraphs $F$.

\subsection{Deviation} % {{{2
\label{sub:constrsatdev}

\begin{lemma} \label{lem:constrAsatdev}
  ($A_\ell(n,1/2)$ satisfies \propdev{$\ell$}) For every $\epsilon > 0$, with probability going to
  one as $n$ goes to infinity,
  \begin{align*}
    \dev_{\ell} \left( A_\ell(n,1/2) \right) \leq \epsilon n^{k+\ell}.
  \end{align*}
\end{lemma}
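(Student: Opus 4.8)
The plan is a second-moment argument resting on a structural description of $\eta_H$ for $H=A_\ell(n,1/2)$. Write $c\colon\binom{V(H)}{\ell}\to\{0,1\}$ for the random coloring defining $A_\ell(n,1/2)$, so that a $k$-set $W$ lies in $E(H)$ exactly when $c(W):=\sum_{Z\in\binom{W}{\ell}}c(Z)\equiv 0\pmod 2$. Fix a tuple $x_1,\dots,x_{k-\ell},y_{1,0},y_{1,1},\dots,y_{\ell,0},y_{\ell,1}$ of $k+\ell$ pairwise distinct vertices, which I call \emph{non-degenerate}, and for $\vec{i}\in\{0,1\}^\ell$ set $T_{\vec{i}}=\{x_1,\dots,x_{k-\ell},y_{1,i_1},\dots,y_{\ell,i_\ell}\}$; the $2^\ell$ sets $T_{\vec{i}}$ are distinct and are exactly the elements of $\octs{x_1;\dots;x_{k-\ell};y_{1,0},y_{1,1};\dots;y_{\ell,0},y_{\ell,1}}$. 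The first (and essentially only) substantive step is to show, by the mod-$2$ bookkeeping used in the proof of Lemma~\ref{lem:constrAfailsdev}, that
\begin{align*}
  \eta_H(x_1;\dots;x_{k-\ell};y_{1,0},y_{1,1};\dots;y_{\ell,0},y_{\ell,1})
  \;=\; (-1)^{\sum_{\vec{i}\in\{0,1\}^\ell} c(T_{\vec{i}})}
  \;=\; (-1)^{\sum_{\vec{i}\in\{0,1\}^\ell} c(\{y_{1,i_1},\dots,y_{\ell,i_\ell}\})}.
\end{align*}
The first equality holds because $|\octs{\cdot}\cap E(H)|=2^\ell-\sum_{\vec{i}}c(T_{\vec{i}})$ and $2^\ell$ is even. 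For the second, expand $\sum_{\vec{i}}c(T_{\vec{i}})=\sum_{\vec{i}}\sum_{Z\in\binom{T_{\vec{i}}}{\ell}}c(Z)\equiv\sum_Z c(Z)\,m_Z\pmod 2$, where $m_Z$ counts the $\vec{i}$ with $Z\subseteq T_{\vec{i}}$; one checks that $m_Z=2^{|\{m\,:\,Z\cap\{y_{m,0},y_{m,1}\}=\emptyset\}|}$ if $Z$ contains at most one vertex of each pair $\{y_{m,0},y_{m,1}\}$ and $m_Z=0$ otherwise, so $m_Z$ is odd precisely when $Z$ consists of exactly one vertex from each of the $\ell$ pairs and no $x_j$, i.e.\ when $Z=\{y_{1,i_1},\dots,y_{\ell,i_\ell}\}$ for some $\vec{i}$. (This is the computation of Lemma~\ref{lem:constrAfailsdev} with ``$\ell+1$'' replaced by ``$\ell$'', so that now the all-$y$ $\ell$-subsets survive modulo $2$ rather than every $\ell$-subset vanishing.)

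Given this identity, put $Z_{\vec{i}}:=\{y_{1,i_1},\dots,y_{\ell,i_\ell}\}$. For a non-degenerate tuple the $2^\ell$ sets $Z_{\vec{i}}$ are pairwise distinct, the colors $c(Z_{\vec{i}})$ are independent and uniform on $\{0,1\}$, and $\mathbb{E}[(-1)^{c(Z_{\vec{i}})}]=0$, whence $\mathbb{E}[\eta_H(\cdot)]=\prod_{\vec{i}}\mathbb{E}[(-1)^{c(Z_{\vec{i}})}]=0$. Since $|\eta_H|\le 1$ always and there are only $O(n^{k+\ell-1})$ degenerate tuples, $|\mathbb{E}[\dev_\ell(H)]|=O(n^{k+\ell-1})=o(n^{k+\ell})$. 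For the variance I would expand $\mathbb{E}[\dev_\ell(H)^2]$ as a sum over ordered pairs of tuples $\mathcal{O},\mathcal{O}'$ of $\mathbb{E}[\eta_H(\mathcal{O})\eta_H(\mathcal{O}')]$. When both tuples are non-degenerate this equals $\prod_Z\mathbb{E}[(-1)^{N_Z c(Z)}]$, the product over $\ell$-subsets $Z$ of $V(H)$, where $N_Z$ counts the occurrences of $Z$ among the sets $Z_{\vec{i}}$ of $\mathcal{O}$ together with those of $\mathcal{O}'$; it is $0$ unless every $N_Z$ is even, and since the $Z_{\vec{i}}$ within a single tuple are pairwise distinct, all $N_Z$ even forces the two families to coincide and in particular forces $\mathcal{O}$ and $\mathcal{O}'$ to have the same set of $2\ell$ ``$y$''-vertices. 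Given $\mathcal{O}$, such a partner $\mathcal{O}'$ is determined by choosing its $k-\ell$ ``$x$''-entries freely (at most $n^{k-\ell}$ ways) and arranging the prescribed $2\ell$ ``$y$''-vertices (at most $(2\ell)!$ ways), so there are at most $(2\ell)!\,n^{2k}=o(n^{2(k+\ell)})$ such pairs since $\ell\ge 1$. Pairs in which at least one tuple is degenerate number at most $2\binom{k+\ell}{2}\,n^{2(k+\ell)-1}=o(n^{2(k+\ell)})$ and contribute at most $1$ apiece. Hence $\operatorname{Var}(\dev_\ell(H))\le\mathbb{E}[\dev_\ell(H)^2]=o(n^{2(k+\ell)})$.

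Finally, since $|\mathbb{E}[\dev_\ell(H)]|\le\tfrac{\epsilon}{2}n^{k+\ell}$ for all large $n$, Chebyshev's Inequality in the form $\mathbb{P}[\,|X-\mathbb{E}X|>t\,]\le\operatorname{Var}(X)/t^2$ (cf.\ Lemma~\ref{lem:chebyshev}) gives
\begin{align*}
  \mathbb{P}\bigl[\,|\dev_\ell(H)|>\epsilon n^{k+\ell}\,\bigr]
  \;\le\; \mathbb{P}\Bigl[\,|\dev_\ell(H)-\mathbb{E}[\dev_\ell(H)]|>\tfrac{\epsilon}{2}n^{k+\ell}\,\Bigr]
  \;\le\; \frac{\operatorname{Var}(\dev_\ell(H))}{(\epsilon/2)^2\,n^{2(k+\ell)}}\;\longrightarrow\;0,
\end{align*}
so $\dev_\ell(A_\ell(n,1/2))\le\epsilon n^{k+\ell}$ (indeed $|\dev_\ell(A_\ell(n,1/2))|\le\epsilon n^{k+\ell}$) with probability tending to one. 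The only place any real work happens is the first step; once $\eta_H$ of a non-degenerate squashed octahedron is recognized as a product of $2^\ell$ independent uniform signs indexed by the all-$y$ $\ell$-subsets, both the mean and the variance estimates are routine, entirely parallel to the other second-moment arguments in this section.
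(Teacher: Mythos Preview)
Your proof is correct and follows the same overall strategy as the paper: show that $\mathbb{E}[\eta_H(\mathcal{O})]=0$ for non-degenerate squashed octahedra and then apply the second moment method. The difference is only in how much you compute. The paper obtains $\mathbb{E}[\eta_H(\mathcal{O})]=0$ by a one-line conditioning argument: the single $\ell$-set $Y=\{y_{1,0},\dots,y_{\ell,0}\}$ is contained in exactly one $k$-set $T_{\vec{i}}$ of $\mathcal{O}$, so after conditioning on all other $c$-values the parity of $|\octs{\cdot}\cap E(H)|$ is still uniform. You instead derive the full identity $\eta_H(\mathcal{O})=\prod_{\vec{i}}(-1)^{c(Z_{\vec{i}})}$, which is more work but yields the same conclusion and also makes your covariance computation transparent. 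For the variance the paper simply invokes ``the second moment method'' in the style of the earlier lemmas (two octahedra are independent once they share fewer than $\ell$ vertices), whereas you exploit your product formula to pin down the dependent pairs more precisely; either bound suffices. In short: same approach, with your version trading brevity for an explicit structural description of $\eta_H$.
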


\begin{proof} % {{{
This proof is similar to the proofs of Lemmas~\ref{lem:constrBfailsdev}
and~\ref{lem:constrDfailsdev} except that in all cases the probability that a squashed octahedron is
even is $\frac{1}{2}$.  Let $x_1, \dots, x_{k-\ell}, y_{1,0}, y_{1,1}, \dots, y_{\ell,0},
y_{\ell,1}$ be distinct vertices and let $c : \binom{V(A_\ell(n,1/2))}{\ell} \rightarrow \{0,1\}$ be
the random coloring used in the definition of $A_\ell(n,1/2)$.  Let $G$ be the $\ell$-uniform
hypergraph whose hyperedges are those $\ell$-sets colored one.  Note that by definition, a set $T$
of $k$ vertices is a hyperedge of $A_\ell(n,1/2)$ if $|E(G[T])|$ is even.

Let $\mathcal{O} = \mathcal{O}[x_1;\dots;x_{k-\ell};y_{1,0},y_{1,1};\dots;y_{\ell,0},y_{\ell,1}]$.
We will show that with probability $\frac{1}{2}$, $|\mathcal{O} \cap E(A_\ell(n,1/2))|$ is even.
Consider the tuple $ (x_1,\dots,x_{k-\ell},y_{1,0},\dots,y_{\ell,0}) \in \mathcal{O}$ and let $Y = \{
y_{1,0}, \dots, y_{\ell,0}\}$. Note that $\{x_1,\dots, x_{k-\ell}, y_{1,0},\dots, y_{\ell,0}\}$ is a
hyperedge of $A_\ell(n,1/2)$ if the number of edges of $G$ induced by $\{x_1, \dots, x_{k-\ell},
y_{1,0}, \dots, y_{\ell,0}\}$ is even.  But this tuple is the only tuple of $\mathcal{O}$ to test if
$Y$ is an edge of $G$ or not, since no other tuple in $\mathcal{O}$ contains $Y$.  Thus conditioning
on all other tuples in $\mathcal{O}$ and all other $\ell$-subsets of $\{x_1, \dots, x_{k-\ell},
y_{1,0}, \dots, y_{\ell,0}\}$, $Y \in E(G)$ with probability $\frac{1}{2}$ so $x_1\dots x_{k-\ell}
y_{1,0}\dots y_{\ell,0}$ is a hyperedge of $A_\ell(n,1/2)$ with probability $\frac{1}{2}$ so
$|\mathcal{O} \cap E(A_\ell(n,1/2))|$ is even with probability $\frac{1}{2}$.  The expected value of
the sum $\dev_\ell(A_\ell(n,1/2))$ is zero, and the second moment method shows that, with high
probability, $\dev_\ell(A_\ell(n,1/2)) \leq \epsilon n^{k+\ell}$.
\end{proof} % }}}

\section{Non-implications in Table~\ref{tab:main}} % {{{2
\label{sec:notimplications}

This section completes the proof of Table~\ref{tab:main} using the hypergraphs constructed in the
previous section. The proof technique to construct hypergraph sequences satisfying some property and
failing some other property is a diagonalization argument and is similar for all the results in
Table~\ref{tab:main}.  First we use the probabilistic method to prove that for every $\epsilon > 0$
and every $n \geq n_0$, with high probability there exists a hypergraph satisfying some property and
failing another.  We then construct a hypergraph sequence by creating a hypergraph for each
$\epsilon = 1/n$ via diagonalization.  Since all the proofs are very similar, we only give the full
proof of \propexpand{$\pi$} $\not\Rightarrow$ \propexpand{$\pi'$} when $\pi'$ is not a refinement of
$\pi$.

\begin{lemma} \label{lem:satisfypifailpi'}
  Let $0 < p < 1$ with $p \in \mathbb{Q}$ and let $\pi$ and $\pi'$ be proper partitions of $k$ such
  that $\pi'$ is not a refinement of $\pi$.  For every $\epsilon > 0$, there exists a $N_0$ such
  that for $n \geq N_0$ there exists a hypergraph $B$ on $n$ vertices such that
  \begin{itemize}
    \item $\Big| |E(B)| - p\binom{n}{k} \Big| \leq \epsilon n^k$,

    \item If $\pi' = k'_1 + \dots + k'_t$, then there exists $S'_1,\dots,S'_t$ with $S'_i \subseteq
      \binom{V(B)}{k'_i}$ and a constant $C > 0$ depending only on $p$, $k$, $\pi'$, and $t$ such that
      \begin{align*}
        \Big|e(S'_1,\dots,S'_t) - p|S'_1|\dots|S'_t| \Big| > C \binom{n}{k},
      \end{align*}

    \item For every $\pi$-linear hypergraph $F$ with $v$ vertices and $e$ edges where $v \leq
      \epsilon^{-1}$,
      \begin{align*}
        |\#\{F \, \text{in } B\} - p^{e} n^v| < \epsilon n^v.
      \end{align*} 
  \end{itemize}
\end{lemma}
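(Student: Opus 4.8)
The plan is to take $B$ to be a single hypergraph drawn from the distribution $B_{\vec{\pi}'}(n,p)$, where $\vec{\pi}'$ is any fixed ordering of the parts of $\pi'$, and to show that with probability tending to one as $n\to\infty$ it satisfies all three bullet points. Write $p=a/b$ with $a,b\in\mathbb{Z}^+$ so that $B_{\vec{\pi}'}(n,p)$ is defined, and set $C=\tfrac12\binom{k}{k'_1,\dots,k'_t}\frac{p}{b^{t}t^{k}}$, a quantity depending only on $p$, $k$, $\pi'$, and $t$ as required.

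First I would dispatch the density estimate: Lemma~\ref{lem:Bispdense} gives that with probability going to one, $\bigl||E(B)|-p\binom{n}{k}\bigr|\le\epsilon n^{k}$. For the failure of expansion, I would apply Lemma~\ref{lem:constrBfailsexp} to the ordered partition $\vec{\pi}'$: with probability going to one there exist (disjoint) sets $S'_1,\dots,S'_t$ with $S'_i\subseteq\binom{V(B)}{k'_i}$ and $\bigl|e(S'_1,\dots,S'_t)-p|S'_1|\cdots|S'_t|\bigr|>C\binom{n}{k}$; disjointness here is a bonus, not needed for the statement. For the third bullet, the hypothesis that $\pi'$ is not a refinement of $\pi$ is exactly what licenses Lemma~\ref{lem:constrBsatisfiesexp}, which says that for each fixed $\pi$-linear hypergraph $F$ with $v$ vertices and $e$ edges, with probability going to one $\bigl|\#\{F\text{ in }B\}-p^{e}n^{v}\bigr|<\epsilon n^{v}$.

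The only point needing a moment of care is that the third bullet asks for this estimate for \emph{all} $\pi$-linear $F$ with $v\le\epsilon^{-1}$ simultaneously. This is handled by a union bound: the number of labeled copies of $F$ in $B$ depends only on the isomorphism type of $F$, and there are only finitely many isomorphism types of $k$-uniform hypergraphs on at most $\epsilon^{-1}$ vertices, so intersecting finitely many events, each of which has probability tending to one, yields an event still of limiting probability one. Intersecting this with the two events above, we conclude that for all $n$ at least some $N_0$ the probability that a random $B_{\vec{\pi}'}(n,p)$ has all three properties is positive, and hence such a hypergraph $B$ exists. I do not foresee a genuine obstacle; the argument is a routine assembly of the construction lemmas from Section~\ref{sec:constructions}.
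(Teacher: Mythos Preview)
Your proposal is correct and follows essentially the same approach as the paper: both take $B$ from the distribution $B_{\vec{\pi}'}(n,p)$, invoke Lemmas~\ref{lem:Bispdense}, \ref{lem:constrBfailsexp}, and \ref{lem:constrBsatisfiesexp} for the three bullet points respectively, and handle the third bullet by a union bound over the finitely many $\pi$-linear $F$ with at most $\epsilon^{-1}$ vertices. The paper is just slightly more explicit, setting $\eta=\tfrac12\,2^{-2^{v_0}}$ with $v_0=\epsilon^{-1}$ to make the union bound quantitative, but your phrasing in terms of intersecting finitely many probability-one events is equivalent.
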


\begin{proof} % {{{
Let $v_0 = \epsilon^{-1}$ and let $\eta = \frac{1}{2} 2^{-2^{v_0}}$.  By Lemma~\ref{lem:Bispdense},
$N_0$ can be chosen large enough so that for $n \geq N_0$, with probability at most $\eta$, $B =
B_{\vec{\pi}'}(n,p)$ has $|E(B) - p\binom{n}{k}| \geq \epsilon n^k$.  There are $2^{2^{v_0}}$
hypergraphs with $v_0$ vertices. Thus, for each of the at most $2^{2^{v_0}}$ $\pi$-linear
hypergraphs $F$ with at most $v_0$ vertices, Lemma~\ref{lem:constrBsatisfiesexp} shows that we can
choose $N_0$ large enough so that for $n \geq N_0$, with probability at most $\eta$, $|\#\{F \,
\text{in } B\} - p^{e} n^v| \geq \epsilon n^v$.  Lastly, by Lemma~\ref{lem:constrBfailsexp}, we can
choose $N_0$ large enough so that for $n \geq N_0$, with probability at least $1-\eta$, there exists
a constant $C > 0$ and sets $S'_1,\dots,S'_t$ so that $|e(S'_1,\dots,S'_t) - p|S'_1|\dots|S'_t| | >
C \binom{n}{k}$.

Now fix $N_0$ to be the maximum of the $N_0$ from Lemma~\ref{lem:Bispdense}, the $N_0$ from
Lemma~\ref{lem:constrBfailsexp}, and the at most $2^{v_0}$ constants $N_0$ from
Lemma~\ref{lem:constrBsatisfiesexp}.  Note that the definition of $N_0$ depends only on $\epsilon$.
Now consider $n \geq N_0$ and let $B = B_{\vec{\pi}'}(n,p)$.  With probability at most $\eta$, $|
|E(B)| - p\binom{n}{k}| \geq \epsilon n^k$.  Also, with probability at most $2^{2^{v_0}} \eta =
\frac{1}{2}$, there exists some $F$ with at most $v_0$ vertices with $|\#\{F \, \text{in } B\} -
p^{e} n^v| \geq \epsilon n^v$.  By Lemma~\ref{lem:constrBfailsexp}, with probability at least
$1-\eta$, there exists a constant $C$ and sets $S'_1,\dots,S'_t$ such that $|e(S'_1,\dots,S'_t) -
p|S'_1|\dots|S'_t| | > C \binom{n}{k}$.  Therefore, for all $n \geq N_0$, with positive probability
a hypergraph drawn from the distribution $B_{\vec{\pi}'}(n,p)$ satisfies all three conditions in the
lemma.
\end{proof} % }}}

\begin{lemma} \label{lem:expnotexp}
  If $0 < p < 1$ and if $\pi'$ is not a refinement of $\pi$ then \propexpandp{$\pi$}
  $\not \Rightarrow$ \propexpandp{$\pi'$}.
\end{lemma}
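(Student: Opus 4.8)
The plan is to feed Lemma~\ref{lem:satisfypifailpi'} into a diagonalization and then invoke Theorem~\ref{thm:piquasiiscounting} to upgrade ``correct counts of $\pi$-linear hypergraphs'' into the full property \propexpandp{$\pi$}. Assume first that $p\in\mathbb{Q}$. For each integer $j\geq 1$, Lemma~\ref{lem:satisfypifailpi'} applied with $\epsilon=1/j$ produces a threshold $N_0(j)$, and we may assume $N_0(1)<N_0(2)<\cdots$. For $n$ with $N_0(j)\leq n<N_0(j+1)$ let $B_n$ be the $n$-vertex hypergraph guaranteed by Lemma~\ref{lem:satisfypifailpi'} with parameters $\epsilon=1/j$ and this $n$ (valid since $n\geq N_0(j)$), and for $n<N_0(1)$ take $B_n$ arbitrary; write $\epsilon_n=1/j$, so $\epsilon_n\to 0$ as $n\to\infty$. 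Set $\mathcal{B}=\{B_n\}_{n\to\infty}$.

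Next I would verify the three consequences of Lemma~\ref{lem:satisfypifailpi'} for $\mathcal{B}$. From the first bullet, $\bigl|\,|E(B_n)|-p\binom nk\,\bigr|\leq \epsilon_n n^k=o(n^k)$; in particular $|E(B_n)|\geq p\binom nk+o(n^k)$, so the hypothesis of Theorem~\ref{thm:piquasiiscounting} holds. From the third bullet, for every fixed $\pi$-linear hypergraph $F$ on $v$ vertices with $e$ edges and every $n$ large enough that $\epsilon_n^{-1}\geq v$, we have $|\#\{F\text{ in }B_n\}-p^e n^v|<\epsilon_n n^v=o(n^v)$; hence $\mathcal{B}$ satisfies \propcountp{$\pi$-linear}, and Theorem~\ref{thm:piquasiiscounting} gives that $\mathcal{B}$ satisfies \propexpandp{$\pi$}. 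Finally, the second bullet supplies, for each such $n$, sets $S'_1,\dots,S'_t$ with $\bigl|e(S'_1,\dots,S'_t)-p|S'_1|\cdots|S'_t|\bigr|>C\binom nk$ for an absolute constant $C>0$ (depending only on $p,k,\pi',t$); since $C\binom nk=\Omega(n^k)$ is not $o(n^k)$, no single error function $f(n)=o(n^k)$ can witness \propexpandp{$\pi'$} for $\mathcal{B}$. Thus $\mathcal{B}$ satisfies \propexpandp{$\pi$} but fails \propexpandp{$\pi'$}, proving the lemma for rational $p$.

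For irrational $p$, I would fix a rational $q$ with $0<q<p$, build $\mathcal{B}=\{B_n\}$ as above for density $q$, and then perturb: form $H_n$ from $B_n$ by adding each non-edge of $B_n$ independently with probability $\delta=(p-q)/(1-q)$. Since $B_n$ satisfies \propexpandp{$\pi$} with density $q$, for any $S_i\subseteq\binom{[n]}{k_i}$ (we may assume each $|S_i|=\Theta(n^{k_i})$, as otherwise the desired estimate is trivial) the number of tuples with a non-edge union is $(1-q)\prod|S_i|+o(n^k)$, and a Hoeffding bound over the $2^{O(n^{k-1})}$ choices of $(S_1,\dots,S_t)$ (each $k_i\leq k-1$), exactly as in Section~\ref{sec:constructions}, shows that with high probability $|E(H_n)|=p\binom nk+o(n^k)$ and $e_{H_n}(S_1,\dots,S_t)=q\prod|S_i|+\delta(1-q)\prod|S_i|+o(n^k)=p\prod|S_i|+o(n^k)$ simultaneously for all $S_i$; hence $\{H_n\}$ satisfies \propexpandp{$\pi$}. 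On the other hand, the sets $S'_1,\dots,S'_t$ from Lemma~\ref{lem:constrBfailsexp} have $e_{B_n}(S'_1,\dots,S'_t)=0$ and $\prod|S'_i|=\Omega(n^k)$, so after the perturbation $\bigl|e_{H_n}(S'_1,\dots,S'_t)-p\prod|S'_i|\bigr|=|p-\delta|\prod|S'_i|+o(n^k)=\tfrac{q(1-p)}{1-q}\,\Omega(n^k)$, and \propexpandp{$\pi'$} still fails.

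I expect no real conceptual obstacle here — the content is carried by Lemma~\ref{lem:satisfypifailpi'} and Theorem~\ref{thm:piquasiiscounting}. The only points requiring care are administrative: lining up the quantifiers in the diagonalization so that each fixed $F$ is eventually counted correctly as $n\to\infty$ even though $\epsilon_n$ may decay slowly, and, in the irrational case, checking that the union bound over all $(S_1,\dots,S_t)$ survives because every part of $\pi$ has size at most $k-1$, so the number of choices is only $2^{O(n^{k-1})}$ while each Hoeffding failure probability is $e^{-\Omega(n^k)}$.
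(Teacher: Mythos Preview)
Your argument is correct. For rational $p$ your diagonalization is essentially the paper's own proof: build $B_n$ from Lemma~\ref{lem:satisfypifailpi'} with $\epsilon=\epsilon_n\to 0$, observe that \propcountp{$\pi$-linear} holds, invoke Theorem~\ref{thm:piquasiiscounting}, and read off the failure of \propexpandp{$\pi'$} from the second bullet.

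The genuine difference is in how irrational $p$ is handled. The paper stays inside a single diagonalization: at stage $q$ it picks a rational $p_q$ with $|p_q-p|<1/q$, applies Lemma~\ref{lem:satisfypifailpi'} at density $p_q$, and then uses the elementary bound $|p_q^{\,e}-p^{\,e}|\leq e\,|p_q-p|$ to conclude that the resulting sequence satisfies \propcountp{$\pi$-linear} at density $p$ (and still fails \propexpandp{$\pi'$} because the constant $C$ is uniform). Your route instead fixes a single rational $q<p$, builds the sequence at density $q$, and then sprinkles random edges with probability $\delta=(p-q)/(1-q)$; the Hoeffding-plus-union-bound over the $2^{O(n^{k-1})}$ choices of $(S_1,\dots,S_t)$ (valid because each $k_i\leq k-1$) upgrades \propexpandp{$\pi$} from density $q$ to density $p$, while the explicit witnesses $S'_i$ from Lemma~\ref{lem:constrBfailsexp} (where $e_{B_n}(S'_1,\dots,S'_t)=0$) survive with a gap of $\frac{q(1-p)}{1-q}\prod|S'_i|$. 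Both approaches are sound. The paper's is shorter and avoids a second layer of randomness; yours is more direct in that it never leaves the \propexpandp{$\pi$} world (no passage through \propcountp{$\pi$-linear} is needed in the perturbation step), at the price of reaching into the proof of Lemma~\ref{lem:constrBfailsexp} for the exact value $e_{B_n}(S'_1,\dots,S'_t)=0$ rather than using only the stated deviation bound.
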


\begin{proof} % {{{
Form a sequence of hypergraphs $\{B_{n_q}\}_{q\rightarrow\infty}$ as follows.  Let $\epsilon = 1/q$
and let $p_q$ be a rational with $|p_q - p| < \epsilon$.   Apply Lemma~\ref{lem:satisfypifailpi'} to
$p_q$ and $\epsilon$ to produce an $N_0(1/q)$.  Let $n_q$ be the maximum of $N_0$ and
$|V(B_{n_{q-1}})| + 1$.  Now since $n_q \geq N_0(1/q)$, Lemma~\ref{lem:satisfypifailpi'} guarantees
a hypergraph $B_{n_q}$ on $n_q$ vertices.  The sequence $\{B_{n_q}\}_{q\rightarrow\infty}$ will
satisfy \propcount{$\pi$-linear}; indeed, given any $\delta > 0$ and given any $\pi$-linear
hypergraph $F$ with $v$ vertices and $e$ edges, let $q_0 = \max\{\frac{2e}{\delta}, v\}$.  For all
$q \geq q_0$, we have that $|p_q - p| \leq \frac{1}{q} \leq \frac{\delta}{2e}$.  In addition, since
$v \leq q_0 \leq q$, we have that the number of labeled copies of $F$ in $B_{n_q}$ differs from
$p_q^e n_q^v$ by at most  $\frac{\delta}{2e} n_q^v$.  Using that $|p_q^e - p^e| \leq e|p_q -
p|$,\footnote{For $0 \leq a < b \leq 1$, $b^e - a^e = (b-a)(b^{e-1} + ab^{e-2} + a^2b^{e-3} + \dots
+ a^{e-2}b + a^{e-1}) \leq (b-a)(1+\dots+1)=(b-a) e$.} we have $|p_q^e - p^e| \leq
\frac{\delta}{2}$.  Therefore, the number of labeled copies of $F$ differs from $p^e n_q^v$ by at
most $(\frac{\delta}{2} + \frac{\delta}{2e}) n_q^v$.  Thus the number of labeled copies of $F$
differs from $p^e n_q^v$ by at most $\delta n_q^v$, which implies that \propcount{$\pi$-linear}
holds for the sequence $\{B_{n_q}\}_{q \rightarrow\infty}$.  By Lemma~\ref{lem:satisfypifailpi'},
there exists a constant $C > 0$ depending only on $p$, $k$, $\pi'$, and $t$ such that the hypergraph
$B$ from Lemma~\ref{lem:satisfypifailpi'} has $|e(S'_1,\dots,S'_t) - p|S'_1|\dots|S'_t| | > C
\binom{n}{k}$.  This implies that the sequence $\{B_{n_q}\}_{q\rightarrow\infty}$ fails
\propexpand{$\pi'$}.

Note that the sequence can be extended to have a hypergraph on $n$ vertices for every $n$.  If there
is a gap between $N_0(1/q)$ and $n_{q-1}+1$, Lemma~\ref{lem:satisfypifailpi'} can be applied many
times for $\epsilon = 1/(q-1)$ to fill in the gap.  Since $n_{q-1}$ was chosen bigger than
$N_0(1/(q-1))$, Lemma~\ref{lem:satisfypifailpi'} guarantees a hypergraph for every $n$ bigger than
$n_{q-1}$, in particular the integers between $n_{q-1}$ and $N_0(1/q)$.
\end{proof} % }}}

\begin{lemma} \label{lem:expnotcd}
  For all $0 < p < 1$ and all $\pi$, \propexpand{$\pi$} $\not\Rightarrow$ \propcd{$2$}.
\end{lemma}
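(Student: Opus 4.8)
The plan is to produce, for every proper partition $\pi$, one hypergraph sequence that satisfies $\propexpand{$\pi'$}$ for \emph{every} $\pi'$ while failing $\propcd{$2$}$; since $\propcd{$\ell$}\Rightarrow\propcd{$2$}$ for $2\le\ell\le k-1$ (iterate the implication $\propcd{$\ell$}\Rightarrow\propcd{$\ell-1$}$), this simultaneously gives $\propexpand{$\pi$}\not\Rightarrow\propcd{$\ell$}$ for all $\ell\ge2$. The construction will be built from $A_2(n,\cdot)$. First I would handle rational $p$ directly: Lemma~\ref{lem:Aispdense} says $A_2(n,p)$ is $p$-dense up to $\epsilon n^k$ with probability tending to one, Lemma~\ref{lem:constrAsatisfiesexp} says it has $\#\{C_{k_1+k_2,4}\}=p^4n^{2k}+o(n^{2k})$ for every split $k=k_1+k_2$ with probability tending to one, and Lemma~\ref{lem:constrAfailsCD} (with $\ell=2$) supplies an $\ell=2$ clique-discrepancy witness $G$ of defect at least $\frac{1-p}{2b^{k^2}}\binom{n}{k}$, where $b$ is the fixed denominator of $p$. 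Running the probabilistic diagonalisation of Lemma~\ref{lem:expnotexp} over $\epsilon=1/q$ then yields a deterministic sequence $\mathcal H$ with $|E(H_n)|=p\binom{n}{k}+o(n^k)$ and the correct count of $C_{k_1+k_2,4}$ for every split; Theorem~\ref{thm:piquasiiscounting} promotes these counts to $\propexpand{$(k_1,k_2)$}$ for every two-part partition, and since every proper $\pi=k_1+\dots+k_t$ refines the two-part partition $(k_1,\,k-k_1)$, Lemma~\ref{lem:exptoexp} upgrades this to $\propexpand{$\pi$}$ for all $\pi$. Because each $H_n$ keeps a witness of defect $\Omega(n^k)$ with a constant independent of the stage $q$ (as $b$ is fixed), $\mathcal H$ fails $\propcd{$2$}$.

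For irrational $p$ I expect the naive route to break: approximating $p$ by rationals $p_q\to p$ and using $A_2(n,p_q)$ fails because the clique-discrepancy defect $\frac{1-p_q}{2b_q^{k^2}}\binom{n}{k}$ of $A_2(n,p_q)$ shrinks faster (as $b_q\to\infty$) than the $|p-p_q|\binom{n}{k}$ error incurred by comparing against the true $p$, so the witnesses cancel. Instead I would fix once and for all a rational $p_0\in(0,p)$, set $q^{\ast}=\frac{p-p_0}{1-p_0}\in(0,1)$, and define $H_n$ by drawing $A\sim A_2(n,p_0)$ and then, independently for each $k$-set $e\notin E(A)$, adding $e$ to $H_n$ with probability $q^{\ast}$ while keeping every $e\in E(A)$. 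Each $k$-set then lies in $H_n$ with probability $p_0+(1-p_0)q^{\ast}=p$, so a second-moment estimate gives $|E(H_n)|=p\binom{n}{k}+o(n^k)$ with probability tending to one; the colour-$0$ graph $G$ of $A_2(n,p_0)$ still satisfies $\mathcal{K}_k(G)\subseteq E(A)\subseteq E(H_n)$, so $|E(H_n)\cap\mathcal{K}_k(G)|=|\mathcal{K}_k(G)|=b_0^{-\binom{k}{2}}\binom{n}{k}+o(n^k)$ and the defect of $G$ in $H_n$ is $(1-p)|\mathcal{K}_k(G)|=\Omega(n^k)$ with a constant depending only on $p_0$ and $k$, hence independent of the stage. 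For the cycle counts I would use that for disjoint parts $X_1,X_2$ of size $k_1$ and $Y_1,Y_2$ of size $k_2$ the four cross-pair sets $X_i\times Y_j$ are pairwise disjoint, so --- conditioning on the colours of $A_2$ inside the parts and using the independence of the added coins, exactly as in Lemma~\ref{lem:constrAsatisfiesexp} --- the four events ``$X_i\cup Y_j\in E(H_n)$'' are mutually independent, each of probability $p$; hence $\#\{C_{k_1+k_2,4}\}=p^4n^{2k}+o(n^{2k})$ for every split, and the diagonalisation and refinement argument of the first paragraph again gives $\propexpand{$\pi$}$ for all $\pi$ together with failure of $\propcd{$2$}$. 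Taking $p_0=p$, $q^{\ast}=0$ recovers $H_n=A_2(n,p)$, so the rational and irrational cases can be presented uniformly.

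The step I expect to be the main obstacle is exactly this irrational case: I need a construction whose edge density equals the prescribed $p$ up to $o(n^k)$ while its clique-discrepancy defect stays $\Omega(n^k)$ with a constant that does not decay along the sequence, and the standard rational-approximation device is insufficient here. The remedy --- a fixed rational base $A_2(n,p_0)$ thickened by an independent coin on the non-edges --- is simple, but it does require checking that thickening preserves the cross-pair independence that drives the $C_{k_1+k_2,4}$ count in Lemma~\ref{lem:constrAsatisfiesexp}; that is the one genuinely new verification, and everything else reduces to the routine probabilistic diagonalisation already performed for Lemma~\ref{lem:expnotexp}.
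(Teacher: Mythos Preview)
For rational $p$ your argument is exactly the paper's: diagonalise over $A_2(n,p)$ using Lemmas~\ref{lem:Aispdense}, \ref{lem:constrAsatisfiesexp}, and~\ref{lem:constrAfailsCD}, then promote the $C_{k_1+k_2,4}$ counts to \propexpand{$\pi$} for every $\pi$ via Theorem~\ref{thm:piquasiiscounting} and Lemma~\ref{lem:exptoexp}.

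For irrational $p$ you do more than the paper. The paper's one-line proof just says ``use a diagonalization argument similar to Lemmas~\ref{lem:satisfypifailpi'} and~\ref{lem:expnotexp} based on $A_2(n,p)$'', and that template handles irrational $p$ by rational approximation $p_q\to p$. You correctly flag that this is delicate here: the witness from Lemma~\ref{lem:constrAfailsCD} has defect of order $b_q^{-\binom{k}{2}}n^k$, and any rational sequence converging to an irrational has $b_q\to\infty$, so the defect is $o(n^k)$ and cannot certify failure of \propcd{$2$} against the target $p$. Your remedy---fix a rational $p_0<p$, draw $A_2(n,p_0)$, then independently add each non-edge with probability $q^\ast=(p-p_0)/(1-p_0)$---is correct and elegant. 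The colour-$0$ clique witness survives with defect $(1-p)b_0^{-\binom{k}{2}}\binom{n}{k}+o(n^k)$, uniform in $n$; and since the event ``$X_i\cup Y_j\in E(H_n)$'' is a function of the pair $(\mathbf{1}[X_i\cup Y_j\in A_2],\,C_{ij})$ and these four pairs are mutually independent across $(i,j)$ (the first coordinates by the cross-edge argument of Lemma~\ref{lem:constrAsatisfiesexp}, the second by construction), the $C_{k_1+k_2,4}$ count goes through with $p$ in place of $p_0$, and the second-moment concentration is unchanged. So your proof is complete and in fact supplies a detail the paper's terse proof leaves unaddressed.
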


\begin{proof} % {{{
Use a diagonalization argument similar to Lemmas~\ref{lem:satisfypifailpi'} and~\ref{lem:expnotexp}
based on $A_2(n,p)$.  By Lemma~\ref{lem:constrAsatisfiesexp}, with high probability $A_2(n,p)$
satisfies \propexpand{$\pi$} and by Lemma~\ref{lem:constrAfailsCD} fails \propcd{$2$}.
\end{proof} % }}}

\begin{lemma} \label{lem:expnotdev}
  For $p = \frac{1}{2}$ and all $\pi$, \propexpand{$\pi$} $\not\Rightarrow$ \propdev{$2$}.
\end{lemma}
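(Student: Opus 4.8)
The plan is to reuse the diagonalization machinery of Lemmas~\ref{lem:satisfypifailpi'} and~\ref{lem:expnotexp}, but with the construction $D(n,1/2)$ in place of $B_{\vec{\pi}'}(n,p)$. Since here $p = \frac{1}{2}$ is already rational and fixed, I would not need to approximate $p$ by rationals $p_q$, which simplifies the bookkeeping compared to Lemma~\ref{lem:expnotexp}.

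First I would collect the probabilistic input. By Lemma~\ref{lem:Dispdense}, with probability tending to one $\big||E(D(n,1/2))| - \tfrac12\binom{n}{k}\big| \leq \epsilon n^k$; by Lemma~\ref{lem:constrDsatsfiesexp}, for each partition $k = k_1 + k_2$ into two parts and each $\epsilon > 0$, with probability tending to one the number of labeled copies of $C_{k_1+k_2,4}$ in $D(n,1/2)$ is $(1/2)^4 n^{2k} \pm \epsilon n^{2k}$; and by Lemma~\ref{lem:constrDfailsdev}, with probability tending to one $\dev_2(D(n,1/2)) > C n^{k+2}$ for an absolute constant $C > 0$. As there are only finitely many two-part partitions of $k$, a union bound produces, for every $\epsilon > 0$, a threshold $N_0(\epsilon)$ so that for $n \geq N_0(\epsilon)$ there is with positive probability an $n$-vertex $k$-uniform hypergraph $D_n$ that simultaneously satisfies all three items.

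Next I would run the diagonalization exactly as in Lemma~\ref{lem:expnotexp}: set $\epsilon = 1/q$, let $n_q$ be the larger of $N_0(1/q)$ and $|V(D_{n_{q-1}})| + 1$, take the guaranteed hypergraph $D_{n_q}$, and fill any gaps in the vertex counts below $N_0(1/q)$ using $N_0(1/(q-1))$ as in that proof. The resulting sequence $\{D_{n_q}\}$ then satisfies \propcycle{$k_1+k_2$} for every two-part partition (the cycle counts converge to $(1/2)^4 n^{2k}$) and has $|E(D_{n_q})| = \tfrac12\binom{n_q}{k} + o(n_q^k)$, so Theorem~\ref{thm:piquasiiscounting} yields \propexpandh{$k_1+k_2$} for every two-part partition; Lemma~\ref{lem:exptoexp} then upgrades this to \propexpandh{$\pi$} for every proper partition $\pi$, since every $\pi$ refines some partition of $k$ into two parts. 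On the other hand $\dev_2(D_{n_q}) > C n_q^{k+2}$ for all $q$, so the sequence fails \propdev{$2$}.

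I do not expect any real obstacle here: all the substantive work is already carried out in Lemmas~\ref{lem:constrDsatsfiesexp} and~\ref{lem:constrDfailsdev}, and the diagonalization is identical to the one written out in full for Lemma~\ref{lem:expnotexp}. The only point that needs a little care is that \propexpandh{$\pi$} must hold for \emph{all} $\pi$ at once, which is handled by union-bounding over the finitely many two-part partitions and then invoking refinement via Lemma~\ref{lem:exptoexp}. As with Lemma~\ref{lem:expnotcd}, the written proof can therefore be quite short, essentially a pointer to these lemmas and to the diagonalization template.
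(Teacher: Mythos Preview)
Your proposal is correct and follows exactly the paper's approach: the paper's proof is a two-sentence pointer to the diagonalization template of Lemmas~\ref{lem:satisfypifailpi'} and~\ref{lem:expnotexp} applied to $D(n,1/2)$, invoking Lemma~\ref{lem:constrDsatsfiesexp} for \propexpandh{$\pi$} and Lemma~\ref{lem:constrDfailsdev} for the failure of \propdev{$2$}. You have simply unpacked the details (density via Lemma~\ref{lem:Dispdense}, Theorem~\ref{thm:piquasiiscounting} to pass from cycle counts to expansion, and Lemma~\ref{lem:exptoexp} to reach all $\pi$ from the two-part partitions), all of which are implicit in the paper's terse sketch.
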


\begin{proof} % {{{
Use a diagonalization argument similar to Lemmas~\ref{lem:satisfypifailpi'} and~\ref{lem:expnotexp}
based on $D(n,1/2)$.  By Lemma~\ref{lem:constrDsatsfiesexp}, with high probability $D(n,1/2)$
satisfies \propexpand{$\pi$} and by Lemma~\ref{lem:constrDfailsdev} fails \propdev{$2$}.
\end{proof} % }}}

\begin{lemma} \label{lem:cdnotcd}
  For $0 < p < 1$ and $1 \leq \ell \leq k -2$, \propcd{$\ell$} $\not\Rightarrow$ \propcd{$\ell+1$}.
\end{lemma}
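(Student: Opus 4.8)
The plan is to build, by diagonalization, a hypergraph sequence satisfying \propcdp{$\ell$} but failing \propcdp{$\ell+1$}, using the construction $A_{\ell+1}(n,p)$; this is legitimately indexed since $2 \le \ell+1 \le k-1$. Three already-proved facts about this construction supply the ingredients. By Lemma~\ref{lem:constrAsatCD}, for every $\epsilon > 0$ and all large $n$, with probability tending to one $A_{\ell+1}(n,p)$ satisfies \eqref{eq:constrAsatCDmain} \emph{uniformly} over all $\ell$-uniform hypergraphs $F$ on $[n]$. By Lemma~\ref{lem:constrAfailsCD} applied with $\ell+1$ in place of $\ell$, with probability tending to one there is an $(\ell+1)$-uniform hypergraph $G$ whose $k$-cliques witness clique-discrepancy failure at level $\ell+1$ of size at least a positive constant times $\binom{n}{k}$. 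By Lemma~\ref{lem:Aispdense}, with probability tending to one $A_{\ell+1}(n,p)$ has $p\binom{n}{k}\pm\epsilon n^k$ edges. A union bound over these three events shows that for each large $n$ there is, with positive probability, a single hypergraph enjoying all three conclusions.

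Next I would run the diagonalization exactly as in the proof of Lemma~\ref{lem:expnotexp}. For $q = 1, 2, \dots$ set $\epsilon_q = 1/q$, pick a rational $p_q$ with $|p_q - p| < 1/q$, and choose $n_q$ larger than the thresholds coming from the three lemmas applied with parameters $(p_q,\epsilon_q)$ and than $1+|V(H_{n_{q-1}})|$; the previous paragraph then furnishes a hypergraph $H_{n_q}$ on $n_q$ vertices that is $p_q$-dense up to $\epsilon_q n_q^k$, is $\epsilon_q$-good for \propcd{$\ell$} with parameter $p_q$ uniformly in $F$, and has an $(\ell+1)$-uniform witness $G_{n_q}$ of \propcd{$\ell+1$}-failure of size at least $c_q n_q^k$ for some $c_q>0$. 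Filling the gaps between consecutive $n_q$ in the usual way (re-applying the lemmas with $\epsilon_{q-1}$) yields a sequence $\mathcal{H} = \{H_n\}$. That $\mathcal{H}$ satisfies \propcdp{$\ell$} is immediate: for any $\ell$-uniform $G$ on $V(H_n)$ one has $\bigl|\,|E(H_n)\cap\mathcal{K}_k(G)| - p|\mathcal{K}_k(G)|\,\bigr| \le \epsilon_n n^k + |p_{q(n)} - p|\binom{n}{k}$, and since $|\mathcal{K}_k(G)| = O(n^k)$ while $\epsilon_n\to 0$ and $|p_{q(n)} - p|\to 0$, this bound is $o(n^k)$ and independent of $G$. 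That $\mathcal{H}$ fails \propcdp{$\ell+1$} will follow once the witnesses $G_{n_q}$ keep discrepancy $\Omega(n_q^k)$ after replacing $p_q$ by $p$.

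The main obstacle is precisely this last point when $p$ is irrational. The failure constant delivered by Lemma~\ref{lem:constrAfailsCD} is $\tfrac{1-p_q}{2\,b_q^{k^{\ell+1}}}$, where $b_q$ is the denominator of $p_q$; if $b_q\to\infty$ this constant tends to $0$, and the discrepancy it witnesses could be drowned by the $|p_q - p|\binom{n_q}{k}$ incurred in passing from $p_q$ to $p$, leaving only an $o(n^k)$ failure. To repair this I would, when $p$ is irrational, prove a sharper version of Lemma~\ref{lem:constrAfailsCD}: writing $p_q = a_q/b_q$, use as the witness the $(\ell+1)$-uniform hypergraph whose edges are the $(\ell+1)$-sets receiving a color in $\{0,1,\dots,t_q\}$ with $t_q = \lfloor a_q/\binom{k}{\ell+1}\rfloor - 1$. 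Any $k$-set all of whose $(\ell+1)$-subsets lie in this color range has color-sum at most $t_q\binom{k}{\ell+1} < a_q$, hence is forced to be an edge of $A_{\ell+1}(n,p_q)$, while the second moment method gives that the number of such $k$-sets is $\bigl(p_q/\binom{k}{\ell+1}\bigr)^{\binom{k}{\ell+1}}\binom{n}{k}(1+o(1))$, a proportion bounded away from $0$ uniformly in $q$. Thus the discrepancy is $\Omega_{p,k}(n^k)$ with a constant independent of $b_q$, the subtraction of $|p_q-p|\binom{n_q}{k}$ is harmless for large $q$, and the diagonalization goes through; when $p$ is rational one takes $p_q = p$ throughout and Lemma~\ref{lem:constrAfailsCD} is used verbatim.
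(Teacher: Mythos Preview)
Your approach is the same as the paper's: build the sequence by diagonalization on $A_{\ell+1}(n,p)$, citing Lemma~\ref{lem:constrAsatCD} for \propcdp{$\ell$} and Lemma~\ref{lem:constrAfailsCD} for the failure of \propcdp{$\ell+1$}, exactly as the paper does in its (very terse) proof.

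Where you go further than the paper is in your treatment of irrational $p$. You correctly observe that the failure margin supplied by Lemma~\ref{lem:constrAfailsCD} is $\frac{1-p_q}{2\,b_q^{k^{\ell+1}}}\binom{n}{k}$, and that if one follows the template of Lemma~\ref{lem:expnotexp} and lets $p_q\to p$ through rationals, the denominators $b_q$ are forced to infinity and this margin collapses to $o(n^k)$, so the diagonalized sequence would no longer witness failure of \propcdp{$\ell+1$}. The paper's proof simply does not address this point; it refers back to Lemma~\ref{lem:expnotexp}, which has the same unacknowledged issue. Your repair---replacing the single-color witness of Lemma~\ref{lem:constrAfailsCD} by the $(\ell+1)$-sets colored in $\{0,\dots,\lfloor a_q/\binom{k}{\ell+1}\rfloor-1\}$, so that every $k$-clique of the witness has color-sum strictly below $a_q$ and is therefore an edge of $A_{\ell+1}(n,p_q)$, while the witness retains edge density roughly $p_q/\binom{k}{\ell+1}$ and hence $\Omega_{p,k}(n^k)$ many $k$-cliques uniformly in $b_q$---is correct. (One should note that this requires $a_q\ge\binom{k}{\ell+1}$, which is easily arranged by choosing the representation $a_q/b_q$ with large numerator.) For rational $p$ you rightly observe that one can take $p_q\equiv p$ and use Lemma~\ref{lem:constrAfailsCD} verbatim. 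So your argument is not only aligned with the paper's but patches a gap the paper leaves open.
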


\begin{proof} % {{{
This was proved by Chung~\cite{hqsi-chung90} for $p = \frac{1}{2}$ using a construction
similar to $A_{\ell+1}(n,1/2)$ except the random hypergraph is replaced by the Paley hypergraph.  We
expand the proof to all $0 < p < 1$ using a diagonalization argument similar to
Lemmas~\ref{lem:satisfypifailpi'} and~\ref{lem:expnotexp} based on $A_{\ell+1}(n,p)$.  By
Lemma~\ref{lem:constrAsatCD}, with high probability $A_{\ell+1}(n,p)$ satisfies \propcd{$\ell$} and
by Lemma~\ref{lem:constrAfailsCD} fails \propcd{$\ell+1$}.
\end{proof} % }}}

\begin{lemma} \label{lem:cdnotdev}
  For $p = \frac{1}{2}$, \propcd{$k-2$} $\not\Rightarrow$ \propdev{$2$}.
\end{lemma}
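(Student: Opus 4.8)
The plan is to run the same diagonalization argument used in Lemmas~\ref{lem:satisfypifailpi'} and~\ref{lem:expnotexp}, but built on the construction $B_{\vec{\pi}}(n,1/2)$ with $\vec{\pi} = (k-1,1)$. Observe that $(k-1,1)$ is exactly the ordered partition $(\ell+1,1,\dots,1)$ with $\ell = k-2$ and a single trailing $1$, so Lemma~\ref{lem:constrBsatisfiesCD} applies (its hypothesis $1 \le \ell < k-1$ holds because $k \ge 3$). Hence for every $\epsilon > 0$, with probability tending to $1$ the hypergraph $B_{(k-1,1)}(n,1/2)$ simultaneously satisfies, for all $(k-2)$-uniform hypergraphs $F$ on its vertex set, the bound $\bigl| |\mathcal{K}_k(F) \cap E(B_{(k-1,1)}(n,1/2))| - \tfrac12 |\mathcal{K}_k(F)| \bigr| \le \epsilon n^k$. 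By Lemma~\ref{lem:Bispdense} it is also $\tfrac12$-dense up to $\epsilon n^k$, and by Lemma~\ref{lem:constrBfailsdev} there is a fixed constant $C > 0$ with $\dev_2\bigl(B_{(k-1,1)}(n,1/2)\bigr) > C n^{k+2}$ with probability tending to $1$.

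First I would intersect these three high-probability events: for each $\epsilon > 0$ and all $n$ past a threshold $N_0(\epsilon)$, a union bound gives, with positive probability, a single $n$-vertex hypergraph that is $\tfrac12$-dense up to $\epsilon n^k$, meets the clique-discrepancy bound above for every $(k-2)$-uniform $F$ up to $\epsilon n^k$, and has $\dev_2 > C n^{k+2}$. Then I would diagonalize verbatim as in Lemma~\ref{lem:expnotexp}: set $\epsilon = 1/q$, choose $n_q$ larger than both $N_0(1/q)$ and $|V(B_{n_{q-1}})|+1$, let $B_{n_q}$ be a hypergraph supplied by the previous step, and fill in the remaining values of $n$ exactly as there. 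The sequence $\{B_{n_q}\}$ then satisfies \propcd{$k-2$}, since for any target $\delta > 0$ all but finitely many terms have clique-discrepancy error at most $(1/q)n_q^k \le \delta n_q^k$ over every $(k-2)$-uniform $F$, and it fails \propdev{$2$} because $\dev_2(B_{n_q}) > C n_q^{k+2}$ for all $q$.

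I do not expect a real obstacle: all the probabilistic work is already packaged in Lemmas~\ref{lem:constrBsatisfiesCD} and~\ref{lem:constrBfailsdev}. The only point worth a moment's care is that Lemma~\ref{lem:constrBsatisfiesCD} already delivers the clique-discrepancy estimate \emph{uniformly over all} $(k-2)$-uniform hypergraphs $F$ at a single $n$ (its proof absorbs the $2^{n^{k-2}}$ choices of $F$ by a Chernoff bound of order $e^{-cn^{k-1}}$), so, in contrast with the expansion non-implications, no auxiliary subgraph-counting characterization is needed and the diagonalization goes through directly.
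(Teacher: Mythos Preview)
Your proposal is correct and follows essentially the same approach as the paper: both use $B_{(k-1,1)}(n,1/2)$, invoke Lemma~\ref{lem:constrBsatisfiesCD} (with $\ell = k-2$) for \propcd{$k-2$} and Lemma~\ref{lem:constrBfailsdev} for the failure of \propdev{$2$}, and then run the diagonalization of Lemmas~\ref{lem:satisfypifailpi'} and~\ref{lem:expnotexp}. Your remark that Lemma~\ref{lem:constrBsatisfiesCD} already gives the clique-discrepancy bound uniformly over all $F$, so no auxiliary counting characterization is needed, is exactly the point.
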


\begin{proof} % {{{
Let $\vec{\pi} = (k-1,1)$.  Use a diagonalization argument similar to
Lemmas~\ref{lem:satisfypifailpi'} and~\ref{lem:expnotexp} based on $B_{\vec{\pi}}(n,1/2)$.  By
Lemma~\ref{lem:constrBsatisfiesCD}, with high probability $B_{\vec{\pi}}(n,1/2)$ satisfies
\propcd{$k-2$} and by Lemma~\ref{lem:constrBfailsdev} fails \propdev{$2$}.
\end{proof} % }}}

\begin{lemma} \label{lem:cdnotexp}
  For $0 < p < 1$, $2 \leq \ell \leq k -1$, and $\pi = k_1 + \dots + k_t$ with $k_i > \ell$ for some
  $i$, we have \propcd{$\ell$} $\not\Rightarrow$ \propexpand{$\pi$}.
\end{lemma}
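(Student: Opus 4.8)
The plan is to reduce the arbitrary partition $\pi$ to the single partition $\pi^{\ast} = (\ell+1) + \underbrace{1 + \dots + 1}_{k-\ell-1}$ and then invoke the construction $B_{(\ell+1,1,\dots,1)}(n,p)$, which has already been analyzed: it satisfies \propcd{$\ell$} by Lemma~\ref{lem:constrBsatisfiesCD} and fails \propexpand{$\pi^{\ast}$} by Lemma~\ref{lem:constrBfailsexp}. First I would note that every proper partition of $k$ has all parts at most $k-1$, so the hypothesis $k_i > \ell$ for some $i$ forces $\ell \le k-2$; hence $\pi^{\ast}$, which has $k-\ell \ge 2$ parts, is a genuine proper partition of $k$. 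Next, writing $k_j = \max\pi \ge \ell+1$, I would check that $\pi^{\ast}$ is a refinement of $\pi$: assign the part $\ell+1$ together with $k_j - \ell - 1$ of the singletons to the index $j$ (their sum is $k_j$), and distribute the remaining $k - \ell - 1 - (k_j - \ell - 1) = k - k_j$ singletons so that each other index $i$ receives $k_i$ of them. Then by Theorem~\ref{thm:mainexpand} (equivalently Lemma~\ref{lem:exptoexp}), \propexpand{$\pi$} $\Rightarrow$ \propexpand{$\pi^{\ast}$}, so it suffices to separate \propcd{$\ell$} from \propexpand{$\pi^{\ast}$}: any hypergraph sequence satisfying \propcd{$\ell$} and failing \propexpand{$\pi^{\ast}$} automatically fails \propexpand{$\pi$}.

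For the separation I would run exactly the probabilistic diagonalization of Lemmas~\ref{lem:satisfypifailpi'} and~\ref{lem:expnotexp}, applied to the distribution $B_{\vec{\pi}^{\ast}}(n,p)$ with $\vec{\pi}^{\ast} = (\ell+1,1,\dots,1)$. For each $q$ I would take $\epsilon = 1/q$ and a rational $p_q$ with $|p_q - p| < 1/q$, and union-bound over the three bad events: (a) $\bigl||E(B)| - p_q\binom{n}{k}\bigr| > \epsilon n^k$, controlled by Lemma~\ref{lem:Bispdense}; (b) the $\epsilon$-approximate form of \propcd{$\ell$} fails for some $\ell$-uniform $F$ on $[n]$, controlled by Lemma~\ref{lem:constrBsatisfiesCD} (which handles all $F$ at once, with probability tending to $1$); and (c) the sets $S_1,\dots,S_t$ from Lemma~\ref{lem:constrBfailsexp} fail to witness a discrepancy exceeding $C\binom{n}{k}$ for the stated constant $C>0$. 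For $n$ large a single draw from $B_{\vec{\pi}^{\ast}}(n,p_q)$ realizes all three; choosing $n_q$ strictly increasing and filling in the intermediate values of $n$ exactly as in Lemma~\ref{lem:expnotexp} yields a sequence $\{B_n\}_{n\to\infty}$. This sequence satisfies \propcd{$\ell$} because $|\mathcal{K}_k(F)| \le n^k$ absorbs the $|p - p_q| \le 1/q$ error, and it fails \propexpand{$\pi^{\ast}$} because the gap $C\binom{n}{k}$ survives replacing $p_q$ by $p$ (up to a constant factor, for $q$ large); by the reduction above it therefore fails \propexpand{$\pi$}.

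The only genuinely non-routine step is the reduction in the first paragraph: one must verify that $(\ell+1,1,\dots,1)$ with $k-\ell-1$ ones refines every proper $\pi$ with $\max\pi > \ell$, which is what permits quoting Lemma~\ref{lem:constrBsatisfiesCD} for a single fixed partition instead of establishing the (true, but only remarked upon) fact that $B_{\vec{\pi}}(n,p)$ satisfies \propcd{$\ell$} for every $\pi$ with $\max\pi > \ell$. Everything downstream of that—the union bound, the rational approximation of $p$, and the construction of the sequence—is the standard diagonalization packaging, structurally identical to the proof of Lemma~\ref{lem:expnotexp}.
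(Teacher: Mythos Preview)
Your proof is correct and follows the same route as the paper: reduce to $\pi^{\ast}=(\ell+1,1,\dots,1)$, observe that $\pi^{\ast}$ refines $\pi$ so failing \propexpand{$\pi^{\ast}$} forces failing \propexpand{$\pi$}, and then run the standard diagonalization on $B_{(\ell+1,1,\dots,1)}(n,p)$ using Lemmas~\ref{lem:constrBsatisfiesCD} and~\ref{lem:constrBfailsexp}. Your explicit check that $\ell\le k-2$ and your description of the refinement map are details the paper leaves implicit, but the argument is otherwise identical.
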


\begin{proof} % {{{
Let $\vec{\pi}' = (\ell+1,1,\dots,1)$.  Use a diagonalization argument similar to
Lemmas~\ref{lem:satisfypifailpi'} and~\ref{lem:expnotexp} based on $B_{\vec{\pi}'}(n,p)$.  By
Lemma~\ref{lem:constrBsatisfiesCD}, with high probability $B_{\vec{\pi}'}(n,p)$ satisfies
\propcd{$\ell$} and by Lemma~\ref{lem:constrBfailsexp} (for $B_{\vec{\pi}'}(n,p)$) fails
\propexpand{$\pi'$}.  Since $\pi'$ is a refinement of $\pi$, this implies that with high probability
$B_{\vec{\pi}'}(n,p)$ fails \propexpand{$\pi$}.
\end{proof} % }}}

\begin{lemma} \label{lem:devnotcd} (Chung~\cite{hqsi-chung90})
  For $p = \frac{1}{2}$ and $2 \leq \ell \leq k -1$, \propdev{$\ell$} $\not\Rightarrow$
  \propcd{$\ell$}.
\end{lemma}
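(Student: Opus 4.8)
The plan is to produce, via the diagonalization scheme of Section~\ref{sec:notimplications}, a sequence of $k$-uniform hypergraphs that satisfies \propdev{$\ell$} but fails \propcd{$\ell$}; the construction to use is $A_\ell(n,1/2)$, and the two facts that make it work are already in hand. Lemma~\ref{lem:constrAsatdev} states that for every $\epsilon>0$, with probability tending to one, $\dev_\ell(A_\ell(n,1/2))\le\epsilon n^{k+\ell}$, and Lemma~\ref{lem:constrAfailsCD} states that with probability tending to one there is an $\ell$-uniform hypergraph $G$ on $V(A_\ell(n,1/2))$ with $\bigl||\mathcal{K}_k(G)\cap A_\ell(n,1/2)|-\tfrac12|\mathcal{K}_k(G)|\bigr|>c\binom{n}{k}$ for an absolute constant $c>0$. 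Note that $A_\ell(n,1/2)$ is defined precisely for $2\le\ell\le k-1$, matching the range of the lemma, and that $p=\tfrac12\in\mathbb{Q}$, as the construction requires.

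First I would mimic the proof of Lemma~\ref{lem:satisfypifailpi'}: for each $q=1,2,\dots$ set $\epsilon=1/q$, and using Lemmas~\ref{lem:constrAsatdev} and~\ref{lem:constrAfailsCD} together with a union bound choose $N_0(1/q)$ so large that for $n\ge N_0(1/q)$, with positive probability a single hypergraph $H_n$ drawn from $A_\ell(n,1/2)$ simultaneously satisfies $\dev_\ell(H_n)\le\epsilon n^{k+\ell}$ and admits an $\ell$-uniform witness $G$ to the failure of \propcd{$\ell$}. Then choose $n_q\ge\max\{N_0(1/q),\,|V(H_{n_{q-1}})|+1\}$ and fix such an $H_{n_q}$. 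The sequence $\{H_{n_q}\}_{q\to\infty}$ has $\dev_\ell(H_{n_q})=o(n_q^{k+\ell})$, i.e.\ \propdev{$\ell$} holds, while the witnesses $G$ show \propcd{$\ell$} fails. As at the end of the proof of Lemma~\ref{lem:expnotexp}, reapplying the construction for $\epsilon=1/(q-1)$ fills in the integers between consecutive $n_q$, so the sequence can be taken to have a hypergraph on every $n$.

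Unlike the generic entry of Table~\ref{tab:main}, here $p=\tfrac12$ is fixed, so none of the rational-approximation bookkeeping of Lemma~\ref{lem:satisfypifailpi'} is needed and the argument is a direct transcription of the proofs of Lemmas~\ref{lem:cdnotcd} and~\ref{lem:cdnotdev} with $A_\ell(n,1/2)$ in place of the other constructions. The only place that required genuine work is Lemma~\ref{lem:constrAsatdev} — checking that in $A_\ell(n,1/2)$ each non-degenerate squashed octahedron induces an even number of hyperedges with probability exactly $\tfrac12$, with enough independence across octahedra for the second moment method to apply — but that is carried out in Section~\ref{sub:constrsatdev}, so I anticipate no remaining obstacle. (Chung's original proof of this non-implication likely used the Paley hypergraph in place of the random hypergraph, much as in Lemma~\ref{lem:cdnotcd}; the argument above is the random-hypergraph version.)
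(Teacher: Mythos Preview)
Your proposal is correct and matches the paper's proof essentially line for line: the paper likewise invokes Lemma~\ref{lem:constrAsatdev} and Lemma~\ref{lem:constrAfailsCD} for $A_\ell(n,1/2)$ and then appeals to the diagonalization argument of Lemmas~\ref{lem:satisfypifailpi'} and~\ref{lem:expnotexp}, noting (as you do) that Chung's original proof used the Paley hypergraph in place of the random one.
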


\begin{proof} % {{{
This was originally proved by Chung~\cite{hqsi-chung90} using a construction similar to
$A_\ell(n,1/2)$ except the random hypergraph was replaced by the Paley hypergraph.
Lemma~\ref{lem:constrAsatdev} shows that with high probability $A_{\ell}(n,1/2)$ satisfies
\propdev{$\ell$} and Lemma~\ref{lem:constrAfailsCD} shows that $A_\ell(n,1/2)$ fails
\propcd{$\ell$}, so a diagonalization argument similar to Lemmas~\ref{lem:satisfypifailpi'}
and~\ref{lem:expnotexp} shows that $A_\ell(n,1/2)$ provides an alternate construction proving that
\propdev{$\ell$} $\not\Rightarrow$ \propcd{$\ell$}.
\end{proof} % }}}

\begin{lemma} \label{lem:devnotdev}
  For $p = \frac{1}{2}$ and $2 \leq \ell \leq k-1$, \propdev{$\ell$} $\not\Rightarrow$
  \propdev{$\ell+1$}.
\end{lemma}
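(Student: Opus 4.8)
The plan is to reuse the construction $A_\ell(n,1/2)$, exactly as in the proofs of the other non-implications in Table~\ref{tab:main}. Lemma~\ref{lem:constrAsatdev} already shows that for every $\epsilon > 0$, with probability tending to one as $n \to \infty$, a hypergraph drawn from $A_\ell(n,1/2)$ satisfies $\dev_\ell(A_\ell(n,1/2)) \leq \epsilon n^{k+\ell}$; and Lemma~\ref{lem:constrAfailsdev} shows that \emph{every} hypergraph in the support of $A_\ell(n,1/2)$ has $\dev_{\ell+1}(A_\ell(n,1/2)) > C n^{k+\ell+1}$ for a fixed constant $C > 0$. So a single $n$-vertex hypergraph drawn from $A_\ell(n,1/2)$ simultaneously has small $\ell$-deviation and large $(\ell+1)$-deviation, and the only work left is to assemble such hypergraphs into one sequence.

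First I would run the standard diagonalization from Lemmas~\ref{lem:satisfypifailpi'} and~\ref{lem:expnotexp}, which is slightly simpler here because $p = \frac{1}{2}$ is already rational, so there is no need to approximate $p$ by rationals. For each integer $q \geq 1$ set $\epsilon_q = 1/q$; by Lemma~\ref{lem:constrAsatdev} choose $N_q$ so that for all $n \geq N_q$ the probability that $\dev_\ell(A_\ell(n,1/2)) > \epsilon_q n^{k+\ell}$ is less than $1$ (indeed it tends to $0$). Then for each $n \geq N_q$ there is, with positive probability, an $n$-vertex $k$-uniform hypergraph $H$ with $\dev_\ell(H) \leq \epsilon_q n^{k+\ell}$; since $\dev_{\ell+1}(H) > C n^{k+\ell+1}$ holds for \emph{every} such $H$ by Lemma~\ref{lem:constrAfailsdev}, this $H$ also has large $(\ell+1)$-deviation. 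Pick one such hypergraph $H_{n_q}$ with $n_q := \max\{N_q,\; |V(H_{n_{q-1}})| + 1\}$, so that the vertex counts strictly increase; as in Lemma~\ref{lem:expnotexp}, the gaps between consecutive $n_q$ can be filled in by applying the same argument with $\epsilon = 1/(q-1)$, giving a hypergraph on every sufficiently large number of vertices.

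It remains to check that the resulting sequence $\mathcal{H} = \{H_n\}$ satisfies \propdev{$\ell$} but not \propdev{$\ell+1$}. For \propdev{$\ell$}: given any target $\delta > 0$, every $H_n$ coming from a stage $q \geq \delta^{-1}$ has $\dev_\ell(H_n) \leq (1/q) n^{k+\ell} \leq \delta n^{k+\ell}$, and only finitely many earlier terms remain, so $\dev_\ell(H_n) = o(n^{k+\ell})$. For the failure of \propdev{$\ell+1$}: every term satisfies $\dev_{\ell+1}(H_n) > C n^{k+\ell+1}$ with $C$ a fixed positive constant, so $\dev_{\ell+1}(H_n)$ is not $o(n^{k+\ell+1})$. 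Hence $\mathcal{H}$ witnesses \propdev{$\ell$} $\not\Rightarrow$ \propdev{$\ell+1$}.

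I do not expect any genuine obstacle: all the combinatorial content — the parity argument showing that every non-degenerate squashed octahedron on $A_\ell(n,1/2)$ induces an even number of edges, and the second-moment estimate giving small $\ell$-deviation — is already isolated in Lemmas~\ref{lem:constrAfailsdev} and~\ref{lem:constrAsatdev}. The only mild point of care is bookkeeping: the ``satisfies'' event is probabilistic while the ``fails'' event is deterministic, so one cannot copy the union-bound argument of Lemma~\ref{lem:satisfypifailpi'} verbatim; but this only makes things easier, since the probabilistic event alone has positive probability for large $n$ and the deterministic failure then comes for free. If a uniform treatment is preferred, one can package both conclusions as a single ``there exists $H$ on $n$ vertices satisfying both inequalities'' statement for $n \geq N_q$ and then diagonalize exactly as in Lemma~\ref{lem:expnotexp}.
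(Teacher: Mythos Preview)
Your proposal is correct and follows exactly the paper's approach: the paper's proof of Lemma~\ref{lem:devnotdev} is simply to invoke Lemmas~\ref{lem:constrAsatdev} and~\ref{lem:constrAfailsdev} for $A_\ell(n,1/2)$ and run the diagonalization of Lemmas~\ref{lem:satisfypifailpi'} and~\ref{lem:expnotexp}. Your observation that the failure of \propdev{$\ell+1$} is actually deterministic (so no union bound is needed on that side) is accurate and only simplifies the bookkeeping.
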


\begin{proof} % {{{
Use a diagonalization argument similar to Lemmas~\ref{lem:satisfypifailpi'} and~\ref{lem:expnotexp}
based on $A_{\ell}(n,1/2)$.  By Lemma~\ref{lem:constrAsatdev}, with high probability
$A_{\ell}(n,1/2)$ satisfies \propdev{$\ell$} and by Lemma~\ref{lem:constrAfailsdev} fails
\propdev{$\ell+1$}.
\end{proof} % }}}

\section{\propcd{$\ell,s$} } % {{{1
\label{sec:cdells}

In this section, we prove Theorem~\ref{thm:cdellsequiv}.  Initially, in \cite{hqsi-chung90}, Chung
claimed that \propcd{$\ell-1$} $\Leftrightarrow$ \propdev{$\ell$} for all $\ell$.  This fact is true
for $\ell = k$ since both are equivalent to \propcount{All}, but the claimed proof that
\propcd{$\ell-1$} $\Rightarrow$ \propdev{$\ell$} for $\ell < k$ was found to contain an error.  In
\cite{hqsi-chung12}, Chung discussed the error, proposed the property \propcd{$\ell,s$}, and claimed
that \propdev{$s$} $\Leftrightarrow$ \propcd{$k-1,s$}.  As our results (and
Lemma~\ref{lem:devnotimpcdells} below) will show, \propdev{$s$} $\not\Rightarrow$ \propcd{$k-1,s$}
and so there is an error in \cite{hqsi-chung12} (the error is in the second to last equality in the
equation at the end of Section~3 in \cite{hqsi-chung12}).  In fact, the following counterexample was
essentially discovered by Chung but our use of the random graph instead of the Paley graph makes
the construction simpler to analyze.  We note here that Chung's definition of \propcd{$\ell,s$}
considered spanning hypergraphs while we consider not necessarily spanning hypergraphs.  The
counterexample below works with either definition.

\begin{lemma} \label{lem:devnotimpcdells}
  For $k = 3$, \propdev{$2$} $\not\Rightarrow$ \propcdh{$2,2$}
\end{lemma}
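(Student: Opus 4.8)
The plan is to use the construction $A_2(n,\tfrac12)$, which when $k=3$ is exactly the $\ell = k-1$ instance of Construction $A$: one picks a uniformly random $2$-colouring $c : \binom{[n]}{2} \to \{0,1\}$ and makes a triple $W$ a hyperedge precisely when the number of pairs inside $W$ receiving colour $1$ is even (equivalently, $0$ or $2$ of them). By Lemma~\ref{lem:constrAsatdev}, for every $\epsilon > 0$ we have $\dev_2(A_2(n,\tfrac12)) \le \epsilon n^{5}$ with probability tending to one, so $A_2(n,\tfrac12)$ is a good candidate for the positive half of the statement. It then remains to exhibit, with high probability, a single graph $G$ on $V(A_2(n,\tfrac12))$ that witnesses the failure of \propcdh{$2,2$}, and to assemble a sequence by the usual diagonalisation over $\epsilon = 1/q$ exactly as in Lemmas~\ref{lem:satisfypifailpi'} and~\ref{lem:expnotexp}.

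For the bad graph, take $G$ to be the (spanning) graph whose edges are the pairs coloured $0$ by $c$, so each pair lies in $G$ independently with probability $\tfrac12$. Fix a triple $T$ and let $k_T \in \{0,1,2,3\}$ count the pairs of $T$ receiving colour $1$. By definition $T \in E(A_2(n,\tfrac12))$ iff $k_T$ is even, while $|E(G[T])| = 3 - k_T$, so $|E(G[T])| \ge 2$ iff $k_T \le 1$. Hence the triples counted on the left-hand side of \propcdh{$2,2$} for this $G$ are exactly those with $k_T = 0$, and those counted on the right-hand side are exactly those with $k_T \in \{0,1\}$. Since $\mathbb{P}[k_T = 0] = \tfrac18$ and $\mathbb{P}[k_T \le 1] = \tfrac12$, and two triples are correlated only when they share a pair (at most $O(n^4)$ such pairs), the second moment method (Lemma~\ref{lem:chebyshev}) gives, with probability tending to one,
\[
  \#\{T \in E(A_2(n,\tfrac12)) : |E(G[T])| \ge 2\} = \tfrac18\binom n3 + o(n^3),
  \qquad
  \#\{T : |E(G[T])| \ge 2\} = \tfrac12\binom n3 + o(n^3).
\]
Thus the left-hand side differs from $\tfrac12$ times the right-hand side by $\bigl(\tfrac12\cdot\tfrac12 - \tfrac18\bigr)\binom n3 = \tfrac18\binom n3 = \Omega(n^3)$, so \propcdh{$2,2$} fails for $G$, and hence for $A_2(n,\tfrac12)$; as noted in the preceding discussion this bad $G$ is spanning, so it also refutes Chung's spanning version of the property.

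Finally, combining the two high-probability statements above shows that for every $\epsilon>0$ there is, for all sufficiently large $n$, a $3$-uniform $n$-vertex hypergraph with $\dev_2 \le \epsilon n^5$ that admits a graph violating \propcdh{$2,2$} by at least $\tfrac18\binom n3$; feeding $\epsilon = 1/q$ into the diagonalisation used in Lemmas~\ref{lem:satisfypifailpi'} and~\ref{lem:expnotexp} produces a single sequence satisfying \propdev{$2$} but failing \propcdh{$2,2$}. The argument is entirely routine given Section~\ref{sec:constructions}; the only points needing care are that \propcdh{$2,2$} quantifies over all graphs $G$, so it suffices to find one bad $G$, and that both second-moment estimates refer to the same underlying colouring $c$, so they hold simultaneously with high probability.
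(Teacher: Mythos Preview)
Your proof is correct and follows essentially the same approach as the paper: both use $A_2(n,\tfrac12)$, invoke Lemma~\ref{lem:constrAsatdev} for \propdev{$2$}, and take the witness graph $G$ to be one colour class of the random $2$-colouring, then compute the distribution of $|E(G[T])|$ to get a constant gap. The only difference is cosmetic---you take $G$ to be the pairs coloured $0$ (obtaining $\tfrac18$ versus $\tfrac14$), whereas the paper takes the pairs coloured $1$ (obtaining $\tfrac38$ versus $\tfrac14$); by the symmetry of the colouring these are equivalent, and you supply somewhat more detail on the diagonalisation than the paper's sketch.
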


\begin{proof}[Proof Sketch] % {{{
Consider $A_2(n,1/2)$.  By Lemma~\ref{lem:constrAsatdev}, with high probability $A_2(n,1/2)$
satisfies \propdev{$2$}.  On the other hand, with high probability $A_2(n,1/2)$ will fail
\propcd{$2,2$} as follows.  Let $G$ be the (spanning) graph consisting of the edges colored one in the
definition of $A_2(n,1/2)$, so that a triple $T \in \binom{V(A_2(n,1/2))}{3}$ is a hyperedge if and only if
$|E(G[T])|$ is even.  The probability that $G[T]$ has no edges is $\frac{1}{8}$, has one edge is
$\frac{3}{8}$, has two edges is $\frac{3}{8}$, and has all three edges is $\frac{1}{8}$.  Thus
w.h.p.\ there are a total of $(1+o(1)) \frac{1}{2} \binom{n}{3}$ triples which induce at least two
edges of $G$.  But of these, only the ones with exactly two edges are hyperedges, so there are
$(1+o(1)) \frac{3}{8} \binom{n}{3}$ hyperedges of $A_2(n,1/2)$ inducing at least two edges of $G$.
But $\frac{3}{8}$ is not one-half of $\frac{1}{2}$, implying that \propcd{$2,2$} does not hold for
$A_2(n,1/2)$.
\end{proof} % }}}

%Let $k = 3$ and $\ell = s =
%2$ and consider the example from Lemma~\ref{lem:devnotimpcdells}. The second equation on the top of
%page 7 is summing over the tuples $(w_1,w_2,w_3) \in V(H)^3$ such that $w_1w_3$ and $w_2w_3$ are
%graph edges.  In this sum, each triangle appears $3! = 6$ times and each induced path of length two
%appears only twice.  The third equation on the top of page 7 is multiplying $3! $ by the sum over
%$3$-sets of vertices which induce at least two graph edges, which is not the same.  Indeed, the
%example from Lemma~\ref{lem:devnotimpcdells} has $\frac{3}{8} \binom{n}{3}$ induced paths of length
%two and $\frac{1}{8} \binom{n}{3}$ triangles, so counting each triangle 6 times and each induced
%path twice as in the second equation on the top of page 7 correctly gives zero.  But of course
%summing each triangle and induced path $3! $ times each as in the third equation on page 7 does not
%give zero, as shown in Lemma~\ref{lem:devnotimpcdells}.

We now turn to proving Theorem~\ref{thm:cdellsequiv}, which states that the properties
\propcd{$\ell,s$} are equivalent for fixed $k$ and $\ell$ as $s$ ranges between $1$ and
$\binom{k}{\ell}$.  The proof occurs in two stages; first we prove that \propcd{$\ell,s+1$}
$\Rightarrow$ \propcd{$\ell,s$} and secondly prove that \propcd{$\ell,1$} $\Rightarrow$
\propcd{$\ell,\binom{k}{\ell}$}.  The former proof is the difficult one, and the main tool used in
the proof is inclusion/exclusion.

\begin{thm} \label{thm:inclusionexclusion}
  (Inclusion/Exclusion) Let $U$ be a finite set and let $f,g : 2^U \rightarrow \mathbb{N}$.
  If for all $A \subseteq U$,
  \begin{align*}
    g(A) = \sum_{B : B \subseteq A} f(B),
  \end{align*}
  then for all $A \subseteq U$,
  \begin{align*}
    f(A) = \sum_{B : B \subseteq A} (-1)^{|A| - |B|} g(B).
  \end{align*}
\end{thm}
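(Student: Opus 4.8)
The plan is to substitute the hypothesis directly into the right-hand side of the desired identity and then interchange the order of summation. Fix $A \subseteq U$. Using $g(B) = \sum_{C \subseteq B} f(C)$ for each $B \subseteq A$, we obtain
\[
  \sum_{B \subseteq A} (-1)^{|A|-|B|} g(B)
  = \sum_{B \subseteq A} (-1)^{|A|-|B|} \sum_{C \subseteq B} f(C)
  = \sum_{C \subseteq A} f(C) \left( \sum_{B\,:\, C \subseteq B \subseteq A} (-1)^{|A|-|B|} \right),
\]
where the last step simply reorganizes the double sum over pairs $(B,C)$ with $C \subseteq B \subseteq A$ by first fixing $C$ (necessarily a subset of $A$) and then summing over all $B$ with $C \subseteq B \subseteq A$.

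The key step is to evaluate the inner sum $\sigma(C) := \sum_{C \subseteq B \subseteq A} (-1)^{|A|-|B|}$ for each fixed $C \subseteq A$. Writing $m = |A \setminus C|$ and parametrizing each such $B$ by $D = B \setminus C \subseteq A \setminus C$, we have $|A|-|B| = m - |D|$, so
\[
  \sigma(C) = \sum_{D \subseteq A \setminus C} (-1)^{m-|D|} = \sum_{j=0}^{m} \binom{m}{j} (-1)^{m-j} = (1-1)^m,
\]
which equals $0$ when $m \geq 1$ (that is, when $C \subsetneq A$) and equals $1$ when $m = 0$ (that is, when $C = A$). Substituting back, the double sum collapses to the single surviving term $f(A)\cdot \sigma(A) = f(A)$, which is exactly the claimed formula.

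The only point requiring mild care is that, although $f$ and $g$ take values in $\mathbb{N}$, the alternating sums above are computed in $\mathbb{Z}$; this is harmless since all sums are finite and every rearrangement is purely formal. I do not expect a genuine obstacle here: the argument is the standard M\"obius inversion on the Boolean lattice $2^U$, and one could alternatively just cite that the M\"obius function of $2^U$ is $\mu(B,A) = (-1)^{|A|-|B|}$, but the self-contained computation above is short enough to include directly.
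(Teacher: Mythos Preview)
Your proof is correct and is the standard substitution-and-swap argument for M\"obius inversion on the Boolean lattice. The paper itself does not prove this theorem at all---it is stated without proof as a well-known tool---so there is no ``paper's own proof'' to compare against; your self-contained argument would serve perfectly well as a proof.
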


\begin{definition}
  Let $G$ be an $\ell$-uniform hypergraph, let $\mathcal{P} = (P_1,\dots,P_k)$ be an ordered partition of $V(G)$
  into $k$ parts, and let $\mathcal{R} \subseteq \binom{[k]}{\ell}$.  Define an $\ell$-uniform
  hypergraph $G_{\mathcal{P},\mathcal{R}}$ as follows.  $V(G_{\mathcal{P},\mathcal{R}}) = V(G)$ and
  \begin{align*}
    E(G_{\mathcal{P},\mathcal{R}}) = \left\{ X \in E(G) : \{i : X \cap P_i \neq \emptyset\} \in
    \mathcal{R} \right\}.
  \end{align*}
\end{definition}

Conceptually, $G_{\mathcal{P}, \mathcal{R}}$ is the subhypergraph of $G$ consisting of those
edges with at most one vertex in each part of the $k$-partition $\mathcal{P}$ where
in addition the intersection pattern of the edge appears in $\mathcal{R}$.  Our proof that
\propcd{$\ell,s+1$} $\Rightarrow$ \propcd{$\ell,s$} works as follows: given some $\ell$-uniform
hypergraph $G$, we modify $G$ so that the $k$-sets inducing at least $s$ edges of $G$ transition to
$k$-sets inducing at least $s+1$ edges in the modification of $G$.  The complexity in the proof is
that the modification of $G$ must be carefully chosen so that there is a strong relationship between
the $k$-sets inducing $s+1$ edges in the modification and the $k$-sets inducing at least $s$ edges
of $G$.  This modification uses $G_{\mathcal{P}, \mathcal{R}}$ as follows: pick some $I \in
\binom{[k]}{\ell}$ with $I \notin \mathcal{R}$ and define $F$ to be the $\ell$-uniform hypergraph
$G_{\mathcal{P}, \mathcal{R}}$ plus the complete $\ell$-partite, $\ell$-uniform hypergraph with
edges whose intersection pattern on $\mathcal{P}$ is given by $I$.

Now consider applying \propcd{$\ell,s+1$} to $F$, which tells us about the $k$-sets inducing at
least $s+1$ edges of $F$.  The $k$-sets which contain exactly one vertex in each part of
$\mathcal{P}$ are well behaved.  Indeed, if $|\mathcal{R}| = s$ and $T$ is a $k$-set with exactly
one vertex in each part of $\mathcal{P}$, then $T$ will induce at least $s$ edges of
$G_{\mathcal{P},\mathcal{R}}$ if and only if $T$ induces exactly $s$ edges of
$G_{\mathcal{P},\mathcal{R}}$ since there are only $s$ intersection patterns in $\mathcal{R}$.  In
this case, $T$ will induce exactly $s+1$ edges of $F$ since $F$ added the complete $\ell$-partite
hypergraph in intersection pattern $I$ and $I \notin \mathcal{R}$.  Applying \propcd{$\ell,s+1$} to
$F$ also tells us about $k$-sets which have more than one vertex in some part of $\mathcal{P}$, but
an inclusion/exclusion argument is used to ignore these $k$-sets.  In summary, we restrict from $G$
to $G_{\mathcal{P}, \mathcal{R}}$ so that we have room to add a complete $\ell$-partite graph of
intersection pattern $I$ without interfering with the edges of $G$, and use inclusion/exclusion argument to
study only the $k$-sets with exactly one vertex in each part since only for these $k$-sets can we
transfer knowledge between $G_{\mathcal{P}, \mathcal{R}}$ and $F$.  The next definition gives a
symbol to these $k$-sets with exactly one vertex in each part which induce exactly $s$ edges of $G$.

\begin{definition}
  Let $G$ be an $\ell$-uniform hypergraph, let $\mathcal{P} = (P_1,\dots,P_k)$ be an ordered partition of $V(G)$
  into $k$ parts, and let $s$ and $k$ be integers where $k > \ell$ and $1 \leq s \leq \binom{k}{\ell}$.  Define
  \begin{align*}
    W(G,\mathcal{P},s) = \left\{ T \in \binom{V(G)}{k} : \forall i, T \cap P_i \neq \emptyset
    \,\,\text{and } e_G(T) = s \right\},
  \end{align*}
  where $e_G(T) = |E(G[T])|$ is the number of edges of $G$ induced by $T$.
\end{definition}

\begin{lemma} \label{lem:cdellspartite}
  Let $k$, $\ell$, and $s$ be integers with $2 \leq \ell < k$ and $1 \leq s < \binom{k}{\ell}$.
  Let $\mathcal{H} = \{H_n\}_{n\rightarrow\infty}$ be a sequence of $k$-uniform hypergraphs with
  $|V(H_n)| = n$ and assume $\mathcal{H}$ satisfies \propcdp{$\ell,s+1$}.  Let $G$ be an $\ell$-uniform
  hypergraph with $V(G) \subseteq V(H_n)$, let $\mathcal{P} = (P_1,\dots,P_k)$ be an ordered partition of $V(G)$ into $k$ parts, and
  let $\mathcal{R} \subseteq \binom{[k]}{\ell}$ where $\left| \mathcal{R} \right| = s$.  Then
  \begin{align*}
    \left| W(G_{\mathcal{P},\mathcal{R}},\mathcal{P},s) \cap E(H_n) \right|
    = p \left| W(G_{\mathcal{P},\mathcal{R}},\mathcal{P},s) \right| + o(n^k).
  \end{align*}
\end{lemma}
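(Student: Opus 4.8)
The plan is to reduce the lemma to a bounded number of applications of \propcdp{$\ell,s+1$} combined with inclusion--exclusion over the parts of $\mathcal{P}$. Write $G' = G_{\mathcal{P},\mathcal{R}}$ and $W = W(G',\mathcal{P},s)$, and recall that, since $|T| = k$ and $\mathcal{P}$ has $k$ parts, a $k$-set $T$ lies in $W$ precisely when $|T\cap P_i| = 1$ for every $i$ (call such a $T$ a \emph{transversal}) and $e_{G'}(T) = s$. Since $s < \binom{k}{\ell}$ we may choose a pattern $I \in \binom{[k]}{\ell}$ with $I \notin \mathcal{R}$. Let $K_I$ be the $\ell$-uniform hypergraph on $V(G)$ whose edges are exactly the $\ell$-sets meeting $P_i$ in one vertex for each $i \in I$ and meeting no other part, and set $F = G' \cup K_I$. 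The edge sets of $G'$ and $K_I$ are disjoint (edges of $G'$ have pattern in $\mathcal{R}$, edges of $K_I$ have pattern $I\notin\mathcal{R}$), so $e_F(T) = e_{G'}(T) + e_{K_I}(T)$ for every $k$-set $T$. For a transversal $T$ we have $e_{K_I}(T) = 1$ and $e_{G'}(T) \le |\mathcal{R}| = s$, hence $e_F(T) \le s+1$ with equality if and only if $e_{G'}(T) = s$; that is, $W = \{\,T : T\text{ is a transversal and } e_F(T) \ge s+1\,\}$.

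Next I would strip off the non-transversal $k$-sets using Theorem~\ref{thm:inclusionexclusion}. For $B \subseteq [k]$ let $f(B)$ (resp.\ $f^*(B)$) be the number of $k$-subsets of $V(G)$ (resp.\ of edges of $H_n$) whose set of nonempty parts is exactly $B$ and which induce at least $s+1$ edges of $F$; since "nonempty part set $= [k]$" is the same as "transversal", $f([k]) = |W|$ and $f^*([k]) = |W \cap E(H_n)|$. For each $S \subseteq [k]$ one has $\sum_{B \subseteq S} f(B) = \#\{T : \text{all parts of } T \text{ lie in } S,\ e_F(T) \ge s+1\}$, and likewise for $f^*$, so by Theorem~\ref{thm:inclusionexclusion} it suffices to prove, for every $S \subseteq [k]$, that the number of $k$-sets contained in $\bigcup_{i\in S}P_i$ which induce $\ge s+1$ edges of $F$ and are edges of $H_n$ equals $p$ times the total number of such $k$-sets, up to $o(n^k)$. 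Summing the resulting $2^k$ identities with the inclusion--exclusion coefficients then gives $f^*([k]) = p f([k]) + o(n^k)$, which is the lemma; the $2^k$ error terms are each $o(n^k)$ with the same implied function, because \propcdp{$\ell,s+1$} is a property of the sequence $\mathcal{H}$. The statement for a fixed $S$ is, up to one subtlety, exactly what \propcdp{$\ell,s+1$} yields when applied to the hypergraph obtained from $F$ by deleting every edge not contained in $\bigcup_{i\in S}P_i$ (for $k$-sets $T\subseteq\bigcup_{i\in S}P_i$ this hypergraph induces the same number of edges as $F$).

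The main obstacle is precisely that subtlety: the left-hand side of \propcdp{$\ell,s+1$} ranges over \emph{all} edges of $H_n$, so restricting $F$ to a union of parts also picks up edges of $H_n$ that are not contained in that union but still induce $\ge s+1$ of the surviving edges, and the contribution of these extra edges must be shown to cancel across the alternating sum (or be absorbed into the $o(n^k)$ error). A natural first move is to reduce to the case $V(G) = V(H_n)$: pad $G$ with the vertices of $V(H_n)\setminus V(G)$ as isolated vertices and, after relabelling, place them all in one part $P_{i^*}$ with $i^*$ lying in some member of $\mathcal{R}$, so that no transversal can use a padding vertex and $W$ is unchanged. One then has to track these extra edges through the $2^k$ restrictions and verify that the signed combination of their contributions telescopes. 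This bookkeeping, rather than any single estimate, is the technical heart of the argument; the remaining ingredients — the structure of $F$, the elementary counting identities, and the inclusion--exclusion of Theorem~\ref{thm:inclusionexclusion} — are routine.
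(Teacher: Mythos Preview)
Your approach is essentially the paper's: the same auxiliary hypergraph $F=G_{\mathcal P,\mathcal R}\cup K_I$, the same identification $W=\{T\text{ transversal}: e_F(T)\ge s+1\}$, and the same M\"obius inversion over subsets $S\subseteq[k]$, reducing everything to one application of \propcdp{$\ell,s+1$} per $S$ (the paper packages these as the functions $f_H,f_{K_n},g_H,g_{K_n}$ and proves the key identity $g_H(S)=p\,g_{K_n}(S)+o(n^k)$ as its Claim~1).

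Where you diverge is in what you call ``the main obstacle.'' In the paper's conventions, the induced subhypergraph $G[T]$ is only defined for $T\subseteq V(G)$, so the left-hand side of \propcdp{$\ell,s+1$} applied to $F'=F\bigl[\bigcup_{i\in S}P_i\bigr]$ already ranges only over edges $T\in E(H_n)$ with $T\subseteq V(F')$. Hence that left-hand side \emph{equals} $g_H(S)$ on the nose, and Claim~1 is a one-line application of the hypothesis --- no padding, no telescoping. Your concern comes from reading the left side as ranging over all of $E(H_n)$, which is not the intended convention here.

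It is also worth noting that the repair you sketch would not work under that alternative reading. For a transversal $T$ with $e_F(T)=s+1$, writing $U=\bigcup(\mathcal R\cup\{I\})$ one has
\[
\bigl[\,e_{F[\cup_{i\in S}P_i]}(T)\ge s+1\,\bigr]=\bigl[\,U\subseteq S\,\bigr],
\]
so the signed sum of the ``extra'' contributions of such a $T$ over $S\subsetneq[k]$ is $\sum_{U\subseteq S\subsetneq[k]}(-1)^{k-|S|}=-[\,U\neq[k]\,]$, which is $-1$ whenever $U\neq[k]$ (e.g.\ $k=5$, $\ell=2$, $\mathcal R=\{\{1,2\}\}$, any $I$). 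Thus the signed combination does not telescope and is not $o(n^k)$; if the obstacle were real it would need a genuinely different fix. Fortunately it isn't, and once you read the definition as the paper does, your argument and the paper's coincide.
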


\begin{proof} % {{{
Throughout this proof, the subscripts $n$ and $p$ are dropped for clarity.  Since $s <
\binom{k}{\ell}$, pick some $I \in \binom{[k]}{\ell}$ where $I \notin \mathcal{R}$.  Define
\begin{align*}
  F = G_{\mathcal{P},\mathcal{R}} \cup \left\{ X \in \binom{V(G)}{\ell} : \{i : X \cap P_i
  \neq \emptyset \} = I \right\}.
\end{align*}
Now define maps $f_{K_n}, f_H,  g_{K_n}, g_H : 2^{[k]} \rightarrow \mathbb{N}$ as follows:
\begin{align*}
  f_{K_n}(A) &= \left| \left\{ 
    T \in \binom{V(G)}{k} : e_F(T) \geq s+1, \{ i : T \cap P_i \neq \emptyset\} = A 
    \right\} \right|, \\
  f_H(A) &= \left| \left\{ 
    T \in E(H) \quad \,\,\,\, : e_F(T) \geq s+1, \{ i : T \cap P_i \neq \emptyset\} = A 
    \right\} \right|, \\
  g_{K_n}(A) &= \left| \left\{ 
    T \in \binom{V(G)}{k} : e_F(T) \geq s+1, \{ i : T \cap P_i \neq \emptyset\} \subseteq A
    \right\} \right|, \\
  g_{H}(A) &= \left| \left\{ 
    T \in E(H) \quad \,\,\,\, : e_F(T) \geq s+1, \{ i : T \cap P_i \neq \emptyset\} \subseteq A
    \right\} \right|.
\end{align*}
Note that in the above definitions, the set $T$ could have more than one vertex in each part of
$\mathcal{P}$.

\newcounter{cdellsctr}
\newtheorem{cdellsclaim}[cdellsctr]{Claim}

\begin{cdellsclaim} \label{c:gispdense}
  For all $B \subseteq [k]$, $g_H(B) = p \, g_{K_n}(B) + o(n^k)$.
\end{cdellsclaim}

\begin{proof} % {{{
This will follow from applying \propcd{$\ell,s+1$} to $F' = F[\cup_{i\in B} P_i]$ as follows.
Consider the sets
\begin{align*}
  \Delta_1
  &= \left\{ T \in \binom{V(F')}{k} : e_{F'}(T) \geq s+1 \right\}, \\
  \Delta_2
  &= \left\{ T \in \binom{V(G)}{k} : e_F(T) \geq s+1, \{ i : T \cap P_i \neq \emptyset\} \subseteq B\right\}.
\end{align*}
Since $F'$ is $F$ restricted to $\cup_{i\in B} P_i$, $\Delta_1 = \Delta_2$.  Next, applying
\propcd{$\ell,s+1$} to $F'$ shows that $|\Delta_1 \cap E(H)| = p|\Delta_1| + o(n^k)$.  Lastly, by
the definitions of $g_H$ and $g_{K_n}$, $g_H(B) = |\Delta_2 \cap E(H)|$ and $g_{K_n}(B) =
|\Delta_2|$.  Since $\Delta_1 = \Delta_2$, the proof is complete.
\end{proof} % }}}

\begin{cdellsclaim} \label{c:gissumf}
  \begin{align*}
    g_{K_n}(A) = \sum_{B : B\subseteq A} f_{K_n}(B) \quad \quad \quad
    g_{H}(A) = \sum_{B : B\subseteq A} f_{H}(B) \quad \quad \quad
  \end{align*}
\end{cdellsclaim}

\begin{proof} % {{{
Let $T \in \binom{V(G)}{k}$ with $e_F(T) \geq s+1$ and $\{i : T \cap
P_i \neq \emptyset \} \subseteq A$.  Define $B = \{ i : T \cap P_i \neq
\emptyset\}$ so that $B \subseteq A$.  Now $T$ will be counted once by $g_{K_n}(A)$ and once by
$f_{K_n}(B)$ but will not be counted by any $f_{K_n}(B')$ with $B' \neq B$.  A similar argument
shows that if $T \in E(H)$, $T$ will be counted once by $g_H(A)$ and once by $f_H(B)$.
\end{proof} % }}}

\begin{cdellsclaim} \label{c:fisgsum}
  \begin{align*}
    f_{K_n}(A) = \sum_{B : B\subseteq A} (-1)^{|A| - |B|} g_{K_n}(B) \quad \quad \quad
    f_{H}(A) = \sum_{B : B\subseteq A} (-1)^{|A| - |B|} g_{H}(B) \quad \quad \quad
  \end{align*}
\end{cdellsclaim}

\begin{proof} % {{{
Apply Claim~\ref{c:gissumf} and Inclusion/Exclusion (Theorem~\ref{thm:inclusionexclusion}) to $f_{K_n},
g_{K_n}$ and $f_H, g_H$.
\end{proof} % }}}

\begin{cdellsclaim} \label{c:fispdense}
  For all $A \subseteq [k]$, $f_H(A) = p \, f_{K_n}(A) + o(n^k)$.
\end{cdellsclaim}

\begin{proof} % {{{
Combine Claims~\ref{c:gispdense} and~\ref{c:fisgsum} to obtain
\begin{align*}
  f_H(A) = \sum_{B : B \subseteq A} (-1)^{|B| - |A|} g_H(B) 
  = p \sum_{B : B \subseteq A} (-1)^{|B| - |A|} g_{K_n}(B) + o(n^k)
  = p \, f_{K_n}(A) + o(n^k).
\end{align*}
\end{proof} % }}}

Claim~\ref{c:fispdense} for $A = [k]$ implies that among the $k$-sets $T$ with exactly one vertex in
each part and inducing at least $s+1$ edges of $F$, a $p$-fraction of them are hyperedges of $H$.  The
remainder of the proof translates this knowledge back to $k$-sets inducing at least $s$ edges of
$G_{\mathcal{P}, \mathcal{R}}$, using that $F$ was built from $G_{\mathcal{P}, \mathcal{R}}$ by
adding the complete $\ell$-partite, $\ell$-uniform hypergraph with intersection pattern $I$.

\begin{cdellsclaim} \label{c:fkisWf}
  \begin{align*}
    f_{K_n}([k]) = \left| W(F,\mathcal{P},s+1) \right|
    \quad \quad \text{and} \quad \quad
    f_{H}([k]) = \left| W(F,\mathcal{P},s+1) \cap E(H) \right|
  \end{align*}
\end{cdellsclaim}

\begin{proof} % {{{
First, we show that $f_{K_n}([k]) \leq \left| W(F,\mathcal{P},s+1) \right|$.  Let $T \in
\binom{V(G)}{k}$ with $e_F(T) \geq s+1$ and $T \cap P_i \neq \emptyset$ for all $i \in [k]$, so that
$T$ is counted by $f_{K_n}([k])$.  Now consider the set $\mathcal{R}' = \{ J : \exists X \in
E(F[T]), \{ i : X \cap P_i \neq \emptyset\} = J\}$.  Since $T$ has exactly one vertex in each
$P_i$, $|\mathcal{R}'| \geq s+1$ and also every $J \in \mathcal{R}'$ has size $\ell$.  By the
definition of $F$, $\mathcal{R}' \subseteq \mathcal{R} \cup \{ I \}$ which when combined with
$|\mathcal{R}| = s$ shows that $\mathcal{R}' = \mathcal{R} \cup \{ I \}$.  In particular,
$|\mathcal{R}'| = s+1$ so $e_F(T) = s+1$, which implies that $T \in W(F,\mathcal{P},s+1)$ and thus
$f_{K_n}([k]) \leq \left| W(F,\mathcal{P},s+1) \right|$.

Next, we prove that $f_{K_n}([k]) \geq \left| W(F,\mathcal{P},s+1) \right|$.  Let $T \in
W(F,\mathcal{P},s+1)$.  Then $e_F(T) = s+1$ and $|T \cap P_i| = 1$ for all $i \in [k]$ so $\{i : T
\cap P_i \neq \emptyset\} = [k]$.  Thus $T$ is counted by $f_{K_n}([k])$ so $f_{K_n}([k]) = \left|
W(F,\mathcal{P},s+1) \right|$.

A similar argument shows that $f_{H}([k]) = \left| W(F,\mathcal{P},s+1) \cap E(H) \right|$, since
the previous two paragraphs can be applied to sets $T$ which are edges of $H$.
\end{proof} % }}}

\begin{cdellsclaim} \label{c:WfisWg}
  \begin{align*}
    \left| W(F,\mathcal{P},s+1) \right|
    =  \left| W(G_{\mathcal{P},\mathcal{R}},\mathcal{P},s) \right|
    \text{and} 
    \left| W(F,\mathcal{P},s+1) \cap E(H) \right|
    =  \left| W(G_{\mathcal{P},\mathcal{R}},\mathcal{P},s) \cap E(H) \right|.
  \end{align*}
\end{cdellsclaim}

\begin{proof} % {{{
Let $T \in \binom{V(G)}{k}$ with $|T \cap P_i| = 1$ for all $i$.  We would like to show that $e_F(T)
= s+1$ if and only if $e_{G_{\mathcal{P},\mathcal{R}} }(T) = s$.  As in the previous proof, define
$\mathcal{R}' = \{ J : \exists X \in E(F[T]), \{ i : X \cap P_i \neq \emptyset\} = J\}$.  Now
$e_F(T) = s+1$ if and only if $\mathcal{R}' = \mathcal{R} \cup \{ I \}$.  Since $F$ is defined as
the edges of $G_{\mathcal{P},\mathcal{R}}$ together with all $\ell$-sets with intersection pattern
$I$, $\mathcal{R}' = \mathcal{R} \cup \{ I \}$ if and only if $e_{G_{\mathcal{P},\mathcal{R}} }(T) =
s$.  This implies that $\left| W(F,\mathcal{P},s+1) \right| =  \left|
W(G_{\mathcal{P},\mathcal{R}},\mathcal{P},s) \right|$.  A similar argument where $T$ is restricted
to an edge of $H$ shows that $\left| W(F,\mathcal{P},s+1) \cap E(H) \right| =  \left|
W(G_{\mathcal{P},\mathcal{R}},\mathcal{P},s) \cap E(H) \right|$.
\end{proof} % }}}

We can now complete the proof of Lemma~\ref{lem:cdellspartite}.  Combining Claims~\ref{c:fkisWf}
and~\ref{c:WfisWg} shows that
\begin{align*}
  f_{K_n}([k]) 
  &=  \left| W(G_{\mathcal{P},\mathcal{R}},\mathcal{P},s) \right| \\
  f_H([k])
  &=  \left| W(G_{\mathcal{P},\mathcal{R}},\mathcal{P},s) \cap E(H) \right|.
\end{align*}
Claim~\ref{c:fispdense} then shows that
\begin{align*}
  \left| W(G_{\mathcal{P},\mathcal{R}},\mathcal{P},s) \cap E(H) \right|
  = p \left| W(G_{\mathcal{P},\mathcal{R}},\mathcal{P},s) \right| + o(n^k),
\end{align*}
completing the proof of the lemma.
\end{proof} % }}}

\begin{lemma} \label{lem:cdellsp1impcdells}
  Let $k$, $\ell$, and $s$ be integers with $2 \leq \ell < k$ and $1 \leq s < \binom{k}{\ell}$.
  Then \propcdp{$\ell, \linebreak[1] s+1$} $\Rightarrow$ \propcdp{$\ell,s$}.
\end{lemma}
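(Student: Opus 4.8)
The plan is to combine Lemma~\ref{lem:cdellspartite} with two rounds of inclusion/exclusion: an averaging over random ordered $k$-partitions of $V(G)$ to pass to a partite problem, and a Bonferroni-type sieve to reduce ``at least $s$ induced edges'' to ``contains a prescribed set of partite edges''. First I would set up the averaging. Drop the subscripts $n,p$; let $\mathcal{H}$ satisfy \propcdp{$\ell,s+1$} and let $G$ be $\ell$-uniform with $V(G)\subseteq V(H)$, $m:=|V(G)|\le n$. For an ordered partition $\mathcal{P}=(P_1,\dots,P_k)$ of $V(G)$ into $k$ (possibly empty) parts, call a $k$-set $\mathcal{P}$-partite if it meets every part exactly once. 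A fixed $k$-set is $\mathcal{P}$-partite for exactly $k!\,k^{m-k}$ of the $k^m$ ordered partitions, and ``$e_G(T)\ge s$'' is independent of $\mathcal{P}$, so both $|\{T:e_G(T)\ge s\}|$ and $|\{T\in E(H):e_G(T)\ge s\}|$ equal $(k!\,k^{m-k})^{-1}\sum_{\mathcal{P}}(\cdots)$ of the corresponding partite counts. Since $k^m/(k!\,k^{m-k})=k^k/k!$ is a constant, it suffices to prove, for each fixed $\mathcal{P}$ and with $o(n^k)$ error uniform in $\mathcal{P}$ and $G$, that the partite count of $\{T\in E(H):e_G(T)\ge s\}$ equals $p$ times the partite count of $\{T:e_G(T)\ge s\}$ plus $o(n^k)$; summing then yields \propcdp{$\ell,s$}.

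Next, fix $\mathcal{P}$. Every edge of $G$ inside a $\mathcal{P}$-partite $T$ meets $\ell$ distinct parts, hence is determined by its ``pattern'' in $\binom{[k]}{\ell}$; writing $\mathcal{R}_T$ for the set of patterns realized, $e_G(T)=|\mathcal{R}_T|$. By a standard Bonferroni identity there are integers $c_s,\dots,c_M$ ($M=\binom{k}{\ell}$), depending only on $s$, with $\mathbf{1}[|A|\ge s]=\sum_{j=s}^M c_j\binom{|A|}{j}$ for every $A\subseteq\binom{[k]}{\ell}$, and $\binom{|\mathcal{R}_T|}{j}$ counts the $j$-subsets of $\mathcal{R}_T$. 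For $\mathcal{U}\subseteq\binom{[k]}{\ell}$ set $W_{\mathcal{U}}:=\{T:\ T\ \mathcal{P}\text{-partite},\ \mathcal{U}\subseteq\mathcal{R}_T\}$; note $W_{\mathcal{U}}=W(G_{\mathcal{P},\mathcal{U}},\mathcal{P},|\mathcal{U}|)$. Summing the identity over all $\mathcal{P}$-partite $T$ (resp.\ those in $E(H)$) writes the partite count of $\{T:e_G(T)\ge s\}$ (resp.\ of $\{T\in E(H):\dots\}$) as the fixed integer combination $\sum_{j=s}^M c_j\sum_{|\mathcal{U}|=j}|W_{\mathcal{U}}|$ (resp.\ with $W_{\mathcal{U}}\cap E(H)$). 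So the whole lemma reduces to the claim: for every $\mathcal{U}$ with $|\mathcal{U}|\ge s$, one has $|W_{\mathcal{U}}\cap E(H)|=p\,|W_{\mathcal{U}}|+o(n^k)$, uniformly in $G,\mathcal{P},\mathcal{U}$.

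I would prove this claim by induction on $|\mathcal{U}|$. When $|\mathcal{U}|=s$ it is exactly Lemma~\ref{lem:cdellspartite} applied with $\mathcal{R}=\mathcal{U}$. For $u:=|\mathcal{U}|>s$, put $F:=G_{\mathcal{P},\mathcal{U}}$; for $\mathcal{P}$-partite $T$ one has $e_F(T)=|\mathcal{R}_T\cap\mathcal{U}|$, so applying the Bonferroni identity with threshold $s+1$ to $\mathcal{R}_T\cap\mathcal{U}\subseteq\mathcal{U}$ and summing expresses the partite count of $\{e_F(T)\ge s+1\}$ (and its $E(H)$-version) as $c'_u|W_{\mathcal{U}}|+\sum_{j=s+1}^{u-1}c'_j\sum_{\mathcal{V}\subseteq\mathcal{U},\,|\mathcal{V}|=j}|W_{\mathcal{V}}|$ (resp.\ with $W\cap E(H)$), where the $c'_j$ depend only on $s$ and $c'_u\ne 0$. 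Since $F$ has all its edges spanning $\ell$ distinct parts, the $g/f$ Möbius-inversion part of the proof of Lemma~\ref{lem:cdellspartite} (which only applies \propcdp{$\ell,s+1$} to the restrictions $F[\bigcup_{i\in B}P_i]$ for $B\subseteq[k]$ and inverts over subsets of $[k]$) applies to this $F$ and shows that the partite count of $\{T\in E(H):e_F(T)\ge s+1\}$ equals $p$ times the partite count of $\{T:e_F(T)\ge s+1\}$ plus $o(n^k)$. Substituting the two expressions into each other, using the induction hypothesis for each $\mathcal{V}$ with $s+1\le|\mathcal{V}|\le u-1$, and dividing by the nonzero constant $c'_u$ isolates $|W_{\mathcal{U}}\cap E(H)|=p\,|W_{\mathcal{U}}|+o(n^k)$, completing the induction. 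Every error produced along the way is a bounded (in terms of $k,\ell,s$ only) integer combination of the single $o(n^k)$ error function guaranteed by \propcdp{$\ell,s+1$}, so all errors are uniform in $G$ and $\mathcal{P}$, which legitimizes the averaging step.

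The routine parts are the two Bonferroni identities and the partition average. The real obstacle is the inductive step: Lemma~\ref{lem:cdellspartite} only controls ``exactly $s$ induced partite edges'' for a pattern set of size exactly $s$, whereas the sieve unavoidably produces pattern sets $\mathcal{U}$ of every size from $s$ up to $\binom{k}{\ell}$, and once $|\mathcal{U}|>s+1$ there is no single auxiliary $\ell$-uniform hypergraph whose ``at least $s+1$ induced edges'' condition coincides with ``realizes every pattern in $\mathcal{U}$''. The only route I see is to absorb the correction coming from the smaller pattern sets by a second inclusion/exclusion and an induction, reusing the partite-isolation ($g/f$) machinery of Lemma~\ref{lem:cdellspartite} at each stage, while carefully checking that all $o(n^k)$ terms stay uniform over the finitely many (but $G$- and $\mathcal{P}$-dependent) auxiliary hypergraphs involved.
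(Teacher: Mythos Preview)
Your argument is correct and takes a genuinely different route from the paper. The paper does not use a Bonferroni sieve or an induction on $|\mathcal{U}|$: it simply splits $\{T:e_G(T)\ge s\}$ into $\{T:e_G(T)=s\}$ and $\{T:e_G(T)\ge s+1\}$, handles the second piece directly by applying \propcdp{$\ell,s+1$} to $G$ itself, and for the first piece asserts the averaging identity
\[
\bigl|\{T:e_G(T)=s\}\bigr|=\frac{1}{k!\,k^{n'-k}}\sum_{\mathcal{P},\mathcal{R}}\bigl|W(G_{\mathcal{P},\mathcal{R}},\mathcal{P},s)\bigr|,
\]
the sum taken over ordered $k$-partitions $\mathcal{P}$ and $s$-element pattern sets $\mathcal{R}\subseteq\binom{[k]}{\ell}$, and then invokes Lemma~\ref{lem:cdellspartite} term by term. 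However, that identity is not correct as written: a $\mathcal{P}$-partite $T$ with $e_G(T)=s'>s$ lies in $W(G_{\mathcal{P},\mathcal{R}},\mathcal{P},s)$ for exactly the $\binom{s'}{s}$ choices of $\mathcal{R}$ contained in its realized pattern set, so the right-hand side actually computes $\sum_T\binom{e_G(T)}{s}$ rather than $|\{T:e_G(T)=s\}|$ (e.g.\ for $k=3$, $\ell=2$, $s=1$ and $G$ a path on three vertices the two sides are $0$ and $2$). Your Bonferroni expansion together with the induction on $|\mathcal{U}|$ is precisely what is needed to absorb this overcount: Lemma~\ref{lem:cdellspartite} alone only gives $|W_{\mathcal{U}}\cap E(H)|=p\,|W_{\mathcal{U}}|+o(n^k)$ for $|\mathcal{U}|=s$, and your reuse of the $g/f$ inclusion--exclusion with $F=G_{\mathcal{P},\mathcal{U}}$ extends this to every $|\mathcal{U}|\ge s$, after which the sieve inverts cleanly. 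So the paper's split is conceptually simpler but, as stated, incomplete; your longer route supplies exactly the missing piece, and your attention to uniformity of the $o(n^k)$ errors (a bounded-in-$k$ number of applications of a single error function) is what makes the final averaging over $\mathcal{P}$ legitimate.
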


\begin{proof} % {{{
Let $\mathcal{H} = \{H_n\}_{n\rightarrow\infty}$ be a sequence of $k$-uniform hypergraphs with
$|V(H_n)| = n$ and assume $\mathcal{H}$ satisfies \propcd{$\ell,s+1$}.  Let $G$ be an
$\ell$-uniform hypergraph with $V(G) \subseteq V(H)$ and let $n' = |V(G)|$.  Then
\begin{align} \label{eq:cdellsovercount}
  \left| \left\{ T \in \binom{V(G)}{k} : e_G(T) = s \right\} \right| = \frac{1}{k!k^{n'-k}}
  \sum_{\mathcal{P},\mathcal{R}} \left| W(G_{\mathcal{P},\mathcal{R}},\mathcal{P},s) \right|.
\end{align}
Indeed, let $T = \{ t_1,\dots,t_k\} \subseteq V(H)$ with
$e_G(T) = s$. The number of times $T$ is counted in the sum is $k! k^{n'-k}$ since $T$ will be
counted on the right hand side of \eqref{eq:cdellsovercount} only if $T \cap P_i \neq \emptyset$ for
each $i$.  There are $k! $ ways of assigning the vertices of $T$ to the parts of $\mathcal{P}$, and
$k^{n'-k}$ ways of assigning the other $n'-k$ vertices of $G$ to parts of $\mathcal{P}$.  Once such
a partition $\mathcal{P}$ is chosen, there is a unique choice for $\mathcal{R}$ since $T$ induces
exactly $s$ edges and $T$ has exactly one vertex in each $P_i$. A similar counting argument
shows that
\begin{align*}
  \left| \left\{ T \in E(H) : e_G(T) = s \right\} \right| = \frac{1}{k!k^{n'-k}}
  \sum_{\mathcal{P},\mathcal{R}} \left| W(G_{\mathcal{P},\mathcal{R}},\mathcal{P},s) \cap E(H)
  \right|.
\end{align*}
Note that the number of terms in the sum is $k! S(n',k) \binom{\binom{k}{\ell}}{s} =
\Theta(k^{n'})$, so applying Lemma~\ref{lem:cdellspartite} implies that
\begin{align}
  \left| \left\{ T \in E(H) : e_G(T) = s \right\} \right| 
  &= \frac{1}{k!k^{n'-k}} \sum_{\mathcal{P},\mathcal{R}}
     \left| W(G_{\mathcal{P},\mathcal{R}},\mathcal{P},s) \cap E(H) \right| \nonumber \\
  &= \frac{p}{k!k^{n'-k}} \sum_{\mathcal{P},\mathcal{R}}
     \left| W(G_{\mathcal{P},\mathcal{R}},\mathcal{P},s) \right|
     + o \left( \frac{n^k k! S(n',k) \binom{\binom{k}{\ell}}{s}}{k!k^{n'-k}} \right) \nonumber \\
  &= p\left| \left\{ T \in \binom{V(G)}{k} :
  e_G(T) = s \right\} \right| + o(n^k). \label{eq:cdellsexactlys}
\end{align}
Applying \propcd{$\ell,s+1$} to $G$ shows that
\begin{align} \label{eq:cdellsatleastsp1}
  \left| \left\{ T \in E(H) : e_G(T) \geq s+1 \right\} \right| = p\left| \left\{ T \in \binom{V(G)}{k} :
  e_G(T) \geq s+1 \right\} \right| + o(n^k).
\end{align}
Combining \eqref{eq:cdellsexactlys} and \eqref{eq:cdellsatleastsp1} shows that
\begin{align*}
  \left| \left\{ T \in E(H) : e_G(T) \geq s \right\} \right| = p\left| \left\{ T \in \binom{V(G)}{k} :
  e_G(T) \geq s \right\} \right| + o(n^k),
\end{align*}
implying that \propcd{$\ell,s$} holds.
\end{proof} % }}}

\begin{lemma} \label{lem:cdequivcomplement}
  \propcdp{$\ell,1$} $\Rightarrow$ \propcdp{$\ell,\binom{k}{\ell}$} 
\end{lemma}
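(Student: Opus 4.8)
The plan is to prove the statement in its equivalent form \propcdp{$\ell,1$} $\Rightarrow$ \propcdp{$\ell$}, using that \propcdp{$\ell,\binom{k}{\ell}$} is the same property as \propcdp{$\ell$} and that $e_G(T)\geq\binom{k}{\ell}$ holds precisely when $T$ is a $k$-clique of $G$. The key idea is a complementation argument: for an $\ell$-uniform hypergraph $G$ on vertex set $V(G)$, let $\overline{G}$ be the complementary $\ell$-uniform hypergraph, with $V(\overline{G})=V(G)$ and $E(\overline{G})=\binom{V(G)}{\ell}\setminus E(G)$. Then a $k$-set $T\subseteq V(G)$ lies in $\mathcal{K}_k(G)$ exactly when $e_{\overline{G}}(T)=0$. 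So the $k$-cliques of $G$ can be counted by taking all $k$-subsets of $V(G)$ and deleting those that induce at least one edge of $\overline{G}$, and the number of $k$-sets hitting an edge of $\overline{G}$ is precisely what \propcdp{$\ell,1$} controls when applied to $\overline{G}$.

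Concretely, I would fix a sequence $\mathcal{H}=\{H_n\}_{n\to\infty}$ satisfying \propcdp{$\ell,1$}, fix an $\ell$-uniform $G$ with $V(G)\subseteq V(H_n)$, and set $n'=|V(G)|$. First, sorting the $k$-subsets of $V(G)$ according to whether $e_{\overline{G}}(T)=0$ or $e_{\overline{G}}(T)\geq 1$ gives $|\mathcal{K}_k(G)|=\binom{n'}{k}-|\{T\in\binom{V(G)}{k}:e_{\overline{G}}(T)\geq 1\}|$; intersecting the same split with $E(H_n)$ (and noting that $e_{\overline{G}}(T)\geq 1$ forces $T\subseteq V(G)$) gives $|\mathcal{K}_k(G)\cap E(H_n)|=|E(H_n[V(G)])|-|\{T\in E(H_n):e_{\overline{G}}(T)\geq 1\}|$. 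Then I would invoke \propcdp{$\ell,1$} twice: applied to $\overline{G}$ it yields $|\{T\in E(H_n):e_{\overline{G}}(T)\geq 1\}|=p\,|\{T\in\binom{V(G)}{k}:e_{\overline{G}}(T)\geq 1\}|+o(n^k)$; applied to the complete $\ell$-uniform hypergraph on $V(G)$, where every $k$-set induces $\binom{k}{\ell}\geq 1$ edges, it yields $|E(H_n[V(G)])|=p\binom{n'}{k}+o(n^k)$. Substituting both into the identity for $|\mathcal{K}_k(G)\cap E(H_n)|$ and factoring out $p$ produces $|\mathcal{K}_k(G)\cap E(H_n)|=p\,|\mathcal{K}_k(G)|+o(n^k)$, which is exactly \propcdp{$\ell$} for $G$.

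I do not expect a genuine obstacle here; the only point needing a little care is that the definition of \propcdp{$\ell,1$} supplies a single error function $o(n^k)$ valid simultaneously for all input hypergraphs, so the two applications above can be taken with the same $o(n^k)$ bound and their difference is again $o(n^k)$, uniformly in $n'$ (and when $n'<k$ all the counts vanish and the conclusion is trivial). Finally, since Lemma~\ref{lem:cdellsp1impcdells} chains to give \propcdp{$\ell,\binom{k}{\ell}$} $\Rightarrow\cdots\Rightarrow$ \propcdp{$\ell,1$}, this lemma closes the cycle of implications and completes the proof of Theorem~\ref{thm:cdellsequiv}.
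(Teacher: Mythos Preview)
Your proposal is correct and follows essentially the same complementation argument as the paper: pass to $\overline{G}$, observe that $\mathcal{K}_k(G)$ is exactly the complement (within $\binom{V(G)}{k}$) of the $k$-sets with $e_{\overline{G}}(T)\geq 1$, apply \propcdp{$\ell,1$} to $\overline{G}$ and to the complete $\ell$-graph on $V(G)$, and subtract. Your treatment is in fact slightly more careful than the paper's in tracking $V(G)\subseteq V(H_n)$ via $n'$ and $E(H_n[V(G)])$, but the idea is identical.
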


\begin{proof} % {{{
Let $G$ be an $\ell$-uniform hypergraph with $V(G) \subseteq V(H)$ and denote by $\bar G$ the
hypergraph with $V(\bar G) = V(G)$ and $E(\bar G) = \binom{V(G)}{\ell} -  E(G)$.  Now define
\begin{align*}
  A := \left\{ T \in \binom{V(G)}{k} : e_{\bar{G}}(T) \geq 1 \right\}.
\end{align*}
By the definition of $\bar{G}$,
\begin{align*}
  A = \left\{ T \in \binom{V(G)}{k} : e_{G}(T) < \binom{k}{\ell} \right\}.
\end{align*}
Thus if we define
\begin{align*}
  B := \left\{ T \in \binom{V(G)}{k} : e_G(T) = \binom{k}{\ell} \right\},
\end{align*}
then $\binom{V(G)}{k} - A = B$.  Intersecting this equation with $E(H)$, we also obtain that $E(H) -
(A \cap E(H)) = B \cap E(H)$. Apply \propcd{$\ell,1$} to $\bar G$ to imply that $|A \cap E(H)| = p |A|
+ o(n^k)$ which implies that
\begin{align*}
  |B \cap E(H)| = \Big(|E(H)| - |A \cap E(H)|\Big) = |E(H)| - p|A| + o(n^k).
\end{align*}
Since \propcd{$\ell,1$} applied to $K_n^{(\ell)}$ shows that $|E(H)| = p\binom{n}{k} + o(n^k)$,
we have
\begin{align*}
  |B \cap E(H)| = |E(H)| - p|A| + o(n^k) = p\binom{n}{k} - p|A| + o(n^k) = p|B| + o(n^k),
\end{align*}
which implies that \propcd{$\ell,\binom{k}{\ell}$} holds for $\mathcal{H}$.
\end{proof} % }}}

Combining Lemmas~\ref{lem:cdellsp1impcdells} and~\ref{lem:cdequivcomplement} completes the proof of
Theorem~\ref{thm:cdellsequiv}.  Note that the proof of Lemma~\ref{lem:cdequivcomplement} extends to
show that \propcd{$\ell,s$} $\Leftrightarrow$ \propcd{$\ell,\binom{k}{\ell}-s+1$} by defining $A$ as
the $k$-sets inducing at least $s$ edges of $\bar G$.  Also note that the proof of
Lemma~\ref{lem:cdequivcomplement}  works even when the definition of \propcd{$\ell,s$} is restricted
to spanning graphs as in Chung's~\cite{hqsi-chung12} original definition, so
Lemma~\ref{lem:cdequivcomplement} provides an alternate contradiction to \cite{hqsi-chung12}.

\medskip \emph{Acknowledgements.} The authors would like to thank the referee for helpful feedback.

\bibliographystyle{abbrv}
\bibliography{refs.bib}

\appendix

\section{Deviation} % {{{1
\label{sec:chungdeviation}

This section contains the proofs of Lemmas~\ref{lem:subdev} and~\ref{lem:devcauchy}.  The ideas
behind these two lemmas are the cornerstone of Chung's~\cite{hqsi-chung90} proofs that
\propdev{$\ell$} $\Rightarrow$ \propdev{$\ell-1$} and \propdev{$\ell$} $\Rightarrow$
\propcd{$\ell-1$}.  Since our proof that \propdev{$2$} $\Rightarrow$ \propexpand{$\pi$} (which
appears in Section~\ref{sub:implicationsdev}) is based on the these same ideas, we factored out
these two lemmas from Chung's~\cite{hqsi-chung90} proofs.  Chung doesn't explicitly state these
lemmas, so for completeness we give proofs in this section.

\begin{proof}[Proof of Lemma~\ref{lem:subdev}] % {{{
Let $P,Q \subseteq V(H)^k$ and assume that $Q$ is complete in coordinate $i$ so that there exists a
$Q' \subseteq V(H)^{k-1}$ with $Q = \{ (x_1,\dots,x_k) : (x_1,\dots,x_{i-1},x_{i+1},\dots,x_k) \in
Q', x_i \in V(H) \}$.  By definition,
\begin{align*}
  \dev_{\ell,P}(H) &= \sum_{\substack{x_1,\dots,x_{k-\ell},y_{1,0},y_{1,1},\dots,y_{\ell,0},y_{\ell,1} \in V(H) \\
      \oct{x_1;\dots;x_{k-\ell};y_{1,0},y_{1,1};\dots;y_{\ell,0},y_{\ell,1}} \subseteq P}}
      \eta_H(x_1;\dots;x_{k-\ell};y_{1,0},y_{1,1};\dots;y_{\ell,0},y_{\ell,1}) \\
      &= \sum_{\substack{\vec{x} \in V(H)^{k-\ell} \\ \vec{y} \in V(H)^{2\ell} \\
      \mathcal{O}[\vec{x};\vec{y}] \subseteq P} } \eta(\vec{x};\vec{y}).
\end{align*}
where for notational convenience we write $\mathcal{O}[\vec{x};\vec{y}]$ for
$\mathcal{O}[x_1;\dots;x_{k-\ell};y_{1,0},y_{1,1};\dots;y_{\ell,0},y_{\ell,1}]$ and similarly for
$\eta$.  Let $j = i - k + \ell$ and rearrange the sum to obtain
\begin{align} \label{eq:subdevexpanded}
  \dev_{\ell,P}(H) =
    \sum_{\vec{x} \in V(H)^{k-\ell}}
    \,\,
    \sum_{\substack{y_{1,0},y_{1,1},\dots,y_{j-1,0},y_{j-1,1} \in V(H) \\
                    y_{j+1,0},y_{j+1,1},\dots,y_{\ell,0},y_{\ell,1} \in V(H)}}
    \,\,
    \sum_{\substack{y_{j,0},y_{j,1} \in V(H) \\ \mathcal{O}[\vec{x};\vec{y}] \subseteq P}}
    \eta(\vec{x};\vec{y}).
\end{align}
Now fix $\vec{x}$ and $y_{1, 0}, y_{1, 1}, \dots, y_{j-1, 0}, y_{j-1, 1}, y_{j+1, 0}, y_{j+1,
1}, \dots, y_{\ell, 0}, y_{\ell, 1}$ and consider the sum over $y_{j,0}, y_{j,1}$ in the above
expression.  Call a vertex $z$ \emph{even} if
\begin{align*}
  |\tilde{\mathcal{O}}[\vec{x};y_{1,0},y_{1,1};\dots;y_{j-1,0},y_{j-1,1};z;y_{j+1,0},y_{j+1,1};\dots;y_{\ell,0},y_{\ell,1}]
   \cap E(H)|
\end{align*}
is even and \emph{odd} otherwise.  In other words, $z$ is even if the squashed octahedron formed
using $z$ in the $i$th part is even.  Define $N = \{ z : \mathcal{O}[\vec{x};y_{1, 0}, y_{1,
1};\dots;y_{j-1, 0}, y_{j-1, 1};z;y_{j+1, 0}, y_{j+1, 1};\dots; \linebreak[1] y_{\ell, 0}, y_{\ell,
1}] \subseteq P\}$. Now expand the sum over $y_{j,0}$ and $y_{j,1}$ by cases depending on if the
vertices are even or odd.
\begin{align}
  \sum_{\substack{y_{j,0},y_{j,1} \in V(H) \\ \mathcal{O}[\vec{x};\vec{y}] \subseteq P}}
    \eta(\vec{x};\vec{y})
    = &\sum_{y_{j,0}, y_{j,1} \in N} \eta(\vec{x};\vec{y}) \nonumber \\
    = &\sum_{\substack{y_{j,0} \, \text{even in } N \\ y_{j,1} \, \text{even in } N}}
        \eta(\vec{x};\vec{y})
    +  \sum_{\substack{y_{j,0} \, \text{even in } N \\ y_{j,1} \, \text{odd in } N}}
        \eta(\vec{x};\vec{y}) \nonumber \\
    + &\sum_{\substack{y_{j,0} \, \text{odd in } N \\ y_{j,1} \, \text{even in } N}}
        \eta(\vec{x};\vec{y})
    +  \sum_{\substack{y_{j,0} \, \text{odd in } N \\ y_{j,1} \, \text{odd in } N}}
        \eta(\vec{x};\vec{y}). \label{eq:subdevoddeven}
\end{align}
If $y_{j,0}$ and $y_{j,1}$ are both even or both odd then $\eta(\vec{x};\vec{y}) = +1$ and if
exactly one of $y_{j,0}, y_{j,1}$ is even then $\eta(\vec{x};\vec{y}) = -1$.  Let $\Gamma_0$ be the
number of even vertices in $N$ and $\Gamma_1$ the number of odd vertices in $N$.  Then continuing
the above equation we have
\begin{align} \label{eq:subdevalwayspositive}
  \sum_{\substack{y_{j,0},y_{j,1} \in V(H) \\ \mathcal{O}[\vec{x};\vec{y}] \subseteq P}}
    \eta(\vec{x};\vec{y}) = \Gamma_0^2 - \Gamma_0 \Gamma_1 - \Gamma_1 \Gamma_0 + \Gamma_1^2
    = (\Gamma_0 - \Gamma_1)^2 \geq 0.
\end{align}
In particular, this implies that the above sum is always non-negative.  Now return to
\eqref{eq:subdevexpanded}.  Since the innermost sum is always non-negative for any choice of
$\vec{x}$ and $y_{1, 0}, y_{1, 1}, \dots, y_{j-1, 0}, \linebreak[1] y_{j-1, 1}, y_{j+1, 0}, y_{j+1,
1}, \dots, y_{\ell, 0}, y_{\ell, 1}$, the middle sum in \eqref{eq:subdevexpanded} can be restricted
to $Q'$ and this restriction cannot make the value of the sum go up.  More precisely,
\begin{align*}
  \dev_{\ell,P}(H) &=
    \sum_{\vec{x} \in V(H)^{k-\ell}}
    \,\,
    \sum_{\substack{y_{1,0},y_{1,1},\dots,y_{j-1,0},y_{j-1,1} \in V(H) \\
                    y_{j+1,0},y_{j+1,1},\dots,y_{\ell,0},y_{\ell,1} \in V(H)}}
    \,\,
    \sum_{\substack{y_{j,0},y_{j,1} \in V(H) \\ \mathcal{O}[\vec{x};\vec{y}] \subseteq P}}
    \eta(\vec{x};\vec{y}) \\
    &\geq
    \sum_{\vec{x} \in V(H)^{k-\ell}}
    \,\,
    \sum_{\substack{y_{1,0},y_{1,1},\dots,y_{j-1,0},y_{j-1,1} \in V(H) \\
                    y_{j+1,0},y_{j+1,1},\dots,y_{\ell,0},y_{\ell,1} \in V(H) \\
                    \mathcal{O}[\vec{x};y_{1,0},y_{1,1};\dots;y_{j-1,0},y_{j-1,1};y_{j+1,0}, y_{j+1,1}; 
                    \dots;y_{\ell,0}, y_{\ell,1}] \subseteq Q'
                    }}
    \,\,
    \sum_{\substack{y_{j,0},y_{j,1} \in V(H) \\ \mathcal{O}[\vec{x};\vec{y}] \subseteq P}}
    \eta(\vec{x};\vec{y}).
\end{align*}
Notice that the octahedron in the middle sum skips the $j$th coordinate of the $y$s which
corresponds to the $i$th coordinate of the octahedron.  This matches with the fact that $Q'
\subseteq V(H)^{k-1}$.  By definition of $Q'$, $\mathcal{O}[\vec{x}; y_{1,0},y_{1,1}; \dots;
y_{j-1,0},y_{j-1,1}; y_{j+1,0}, y_{j+1,1}; \dots; \linebreak[1] y_{\ell,0}, y_{\ell,1}] \subseteq
Q'$ if and only if $\mathcal{O}[\vec{x};\vec{y}] \subseteq Q$.  Thus the above sum simplifies to
\begin{align*}
  \dev_{\ell,P}(H) \geq \sum_{\vec{x} \in V(H)^{k-\ell}} \,\,
  \sum_{\substack{\vec{y} \in V(H)^{2\ell} \\ \mathcal{O}[\vec{x};\vec{y}] \subseteq P \cap Q}}
  \eta(\vec{x};\vec{y})
  = \dev_{\ell,P\cap Q}(H).
\end{align*}
\end{proof} % }}}

Next we prove Lemma~\ref{lem:devcauchy}, which is a consequence of the Cauchy-Schwartz inequality.

\begin{thm} \label{thm:cauchyschwartz}
  (Cauchy-Schwartz Inequality)  If $n$ is a positive integer and $\alpha_i, \beta_i \in \mathbb{R}$ for
  $1 \leq i \leq n$, then
  \begin{align*}
    \left( \sum_{i=1}^n \alpha_i \beta_i \right)^2 
       \leq \left( \sum_{i=1}^n \alpha_i^2 \right)
            \left( \sum_{i=1}^n \beta_i^2 \right).
  \end{align*}
\end{thm}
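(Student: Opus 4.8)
The displayed statement is the classical Cauchy--Schwartz inequality, so the plan is simply to record the standard one-parameter argument and then indicate how it feeds into Lemma~\ref{lem:devcauchy}, which is what the inequality is being set up for. For Cauchy--Schwartz itself, first dispose of the degenerate case $\sum_{i=1}^n \alpha_i^2 = 0$: then every $\alpha_i = 0$, hence $\sum_{i=1}^n \alpha_i\beta_i = 0$ and both sides of the inequality are $0$. Otherwise put $a = \sum_{i=1}^n \alpha_i^2 > 0$, $b = \sum_{i=1}^n \alpha_i\beta_i$, and $c = \sum_{i=1}^n \beta_i^2$, and consider, for $t \in \mathbb{R}$,
\[
  \varphi(t) \;=\; \sum_{i=1}^n (\alpha_i t + \beta_i)^2 \;=\; a t^2 + 2 b t + c .
\]
Being a sum of squares of real numbers, $\varphi(t) \geq 0$ for every $t$; evaluating at the minimizer $t = -b/a$ gives $c - b^2/a \geq 0$, i.e.\ $b^2 \leq a c$, which is exactly the asserted inequality. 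Equivalently, a quadratic with positive leading coefficient that is nonnegative on all of $\mathbb{R}$ has nonpositive discriminant $4b^2 - 4ac \leq 0$.

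A parameter-free alternative is Lagrange's identity,
\[
  \Big(\sum_{i=1}^n \alpha_i^2\Big)\Big(\sum_{i=1}^n \beta_i^2\Big) - \Big(\sum_{i=1}^n \alpha_i\beta_i\Big)^2 \;=\; \sum_{1 \leq i < j \leq n} (\alpha_i\beta_j - \alpha_j\beta_i)^2 ,
\]
obtained by expanding both sides; its right-hand side is manifestly nonnegative, and this form additionally exhibits the equality case (the vectors $(\alpha_i)_i$ and $(\beta_i)_i$ are proportional). I do not anticipate any real obstacle here, as this is a textbook inequality; the only point needing a word of care is the degenerate case $\sum_i \alpha_i^2 = 0$ (and, symmetrically, $\sum_i \beta_i^2 = 0$), which makes the quadratic argument break down and is handled trivially as above.

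The inequality is recorded here for use in Lemma~\ref{lem:devcauchy}. To deduce $\dev_{\ell-1,P}(H_n) = o(n^{k+\ell-1})$ from $\dev_{\ell,P}(H_n) = o(n^{k+\ell})$, one groups the defining sum of $\dev_{\ell-1,P}(H)$ as an outer sum over all of its vertex coordinates except one non-doubled coordinate, of an inner sum over that last coordinate, and applies Cauchy--Schwartz against the all-ones vector to bound the square of this outer sum by (the number of outer terms) times $\sum_{\mathrm{outer}}(\text{inner sum})^2$. Using the product rule $\eta_H(\dots;\{u,v\};\dots) = \eta_H(\dots;\{u\};\dots)\,\eta_H(\dots;\{v\};\dots)$ for the sign of a squashed octahedron with a doubled coordinate, $\sum_{\mathrm{outer}}(\text{inner sum})^2$ is exactly $\dev_{\ell,P}(H)$ but with one extra doubled coordinate, so the hypothesis controls it; taking square roots yields the claim. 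The genuinely delicate bookkeeping there --- not in Cauchy--Schwartz itself --- is to verify that the number of outer terms is the right power of $n$, namely $O(n^{k+\ell-2})$, so that $\sqrt{n^{k+\ell-2}\cdot o(n^{k+\ell})} = o(n^{k+\ell-1})$ as needed.
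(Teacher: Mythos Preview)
Your proof of the Cauchy--Schwartz inequality is correct; the paper does not supply a proof of this standard result, merely stating it for use in Lemma~\ref{lem:devcauchy}. Your sketch of how it is applied there is also accurate and matches the paper's argument (the paper phrases it as starting from $\dev_{\ell,P}$ and bounding it below by $\frac{1}{n^{k+\ell-2}}(\dev_{\ell-1,P})^2$, which is the same Cauchy--Schwartz step you describe, read in the other direction).
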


\begin{proof}[Proof of Lemma~\ref{lem:devcauchy}] % {{{
Let $\mathcal{H}$ be a sequence of hypergraphs where $\dev_{\ell,P}(H_n) = o(n^{k+\ell})$.
Using the same notation as the proof of Lemma~\ref{lem:subdev}, we have
\begin{align*}
  \dev_{\ell,P}(H_n) 
  &= \sum_{\vec{x} \in V(H_n)^{k-\ell}} 
  \,\,
  \sum_{\substack{\vec{y} \in V(H_n)^{2\ell} \\ \mathcal{O}[\vec{x};\vec{y}] \subseteq P}}
  \eta(\vec{x};\vec{y}) \\
  &= \sum_{\vec{x} \in V(H_n)^{k-\ell}} 
  \,\,
  \sum_{y_{2,0},y_{2,1},\dots,y_{\ell,0},y_{\ell,1} \in V(H_n)}
  \,\,
  \sum_{\substack{y_{1,0},y_{1,1} \in V(H_n) \\ \mathcal{O}[\vec{x};\vec{y}] \subseteq P}}
  \eta(\vec{x};\vec{y}). \\
\end{align*}
For $j = 1$, the equations \eqref{eq:subdevoddeven} and \eqref{eq:subdevalwayspositive} show
that
\begin{align*}
  \sum_{\substack{y_{1,0},y_{1,1} \in V(H_n) \\ \mathcal{O}[\vec{x};\vec{y}] \subseteq P}}
  \eta(\vec{x};\vec{y}) = (\Gamma_0 - \Gamma_1)^2,
\end{align*}
where $\Gamma_0$ is the number of even vertices in $N = \{ z :
\mathcal{O}[\vec{x};z;y_{2,0},y_{2,1};\dots;y_{\ell,0},y_{\ell,1}] \subseteq P \}$ and
$\Gamma_1$ is the number of odd vertices in $N$.  But by the definition of an even and odd vertex,
\begin{align*}
  \Gamma_0 - \Gamma_1 = 
  \sum_{z \in N} \eta(\vec{x};z;y_{2,0},y_{2,1};\dots;y_{\ell,0},y_{\ell,1}).
\end{align*}
Thus
\begin{align*}
  \dev_{\ell,P}(H_n) =
  \sum_{\vec{x} \in V(H_n)^{k-\ell}} 
  \,\,
  \sum_{y_{2,0},y_{2,1},\dots,y_{\ell,0},y_{\ell,1} \in V(H_n)}
  \,\,
  \left(  
  \sum_{z \in N} \eta(\vec{x};z;y_{2,0},y_{2,1};\dots;y_{\ell,0},y_{\ell,1})
  \right)^2.
\end{align*}
Now apply Cauchy-Schwartz with $\alpha_i = 1$ and $\beta_i = \sum_z \eta(\cdots)$ to obtain
\begin{align*}
  \dev_{\ell,P}(H_n) \geq
  \frac{1}{n^{k+\ell-2}}
  \left(  
  \sum_{\vec{x} \in V(H_n)^{k-\ell}} 
  \,\,
  \sum_{y_{2,0},y_{2,1},\dots,y_{\ell,0},y_{\ell,1} \in V(H_n)}
  \,\,
  \sum_{z \in N} \eta(\vec{x};z;y_{2,0},y_{2,1};\dots;y_{\ell,0},y_{\ell,1})
  \right)^2.
\end{align*}
The expression inside the square is $\dev_{\ell-1,P}(H_n)$, since the sum is over $\vec{x}$ and $z$
which sums over $k- \ell + 1$ parts of the squashed octahedron with one vertex and a sum over
$\ell-1$ parts of the squashed octahedron with two vertices.  The restriction of $z \in N$
translates to $\mathcal{O}[\vec{x};z;y_{2,0},y_{2,1};\dots;y_{\ell,0},y_{\ell,1}] \subseteq P$,
exactly the restriction in $\dev_{\ell-1,P}(H_n)$.  Thus
\begin{gather*}
  \frac{1}{n^{k+\ell-2}} \left( \dev_{\ell-1,P}(H_n) \right)^2 \leq \dev_{\ell,P}(H_n) =
  o(n^{k+\ell}) \\
  \left( \dev_{\ell-1,P}(H_n) \right)^2 = o(n^{2k + 2\ell - 2}) \\
  \dev_{\ell-1,P}(H_n) = o(n^{k+\ell-1}).
\end{gather*}
\end{proof} % }}}

For completeness, we give the two proofs of Chung~\cite{hqsi-chung90} which were the original
motivation for Lemmas~\ref{lem:subdev} and~\ref{lem:devcauchy}.

\begin{lemma} \label{lem:devtodev}
  \textbf{(Chung~\cite{hqsi-chung90})} For $2 \leq \ell \leq k$, \propdev{$\ell$} $\Rightarrow$
  \propdev{$\ell-1$}.
\end{lemma}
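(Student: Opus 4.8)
The plan is to obtain this statement as an immediate corollary of Lemma~\ref{lem:devcauchy}, applied to the trivial set $P = V(H)^k$. Recall that by definition \propdev{$\ell$} is exactly the assertion that $\dev_{\ell}(H_n) = \dev_{\ell, V(H_n)^k}(H_n) = o(n^{k+\ell})$, and likewise \propdev{$\ell-1$} is the assertion $\dev_{\ell-1}(H_n) = \dev_{\ell-1, V(H_n)^k}(H_n) = o(n^{k+\ell-1})$. So the first step is simply to observe that the set $P = V(H_n)^k$ is (vacuously) a subset of $V(H_n)^k$, so that Lemma~\ref{lem:devcauchy} applies to it, and that the range $2 \le \ell \le k$ is contained in the range $1 \le \ell \le k$ for which that lemma is stated.

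Then I would invoke Lemma~\ref{lem:devcauchy} directly: letting $\mathcal{H} = \{H_n\}_{n\rightarrow\infty}$ be a sequence of $k$-uniform hypergraphs with $|V(H_n)| = n$ satisfying \propdev{$\ell$}, we have $\dev_{\ell, V(H_n)^k}(H_n) = o(n^{k+\ell})$ by definition, and hence by Lemma~\ref{lem:devcauchy} with $P = V(H_n)^k$ we conclude $\dev_{\ell-1, V(H_n)^k}(H_n) = o(n^{k+\ell-1})$, i.e.\ $\mathcal{H}$ satisfies \propdev{$\ell-1$}. This is the entire argument; there are no further calculations to perform.

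The substantive content is therefore not in this lemma at all but in Lemma~\ref{lem:devcauchy}, whose proof is where the real work (and the only genuine obstacle) lives: one rewrites $\dev_{\ell,P}(H_n)$, using the even/odd decomposition of the vertices in a fixed part introduced in the proof of Lemma~\ref{lem:subdev}, as a sum of squared quantities $(\Gamma_0 - \Gamma_1)^2$ over the remaining coordinates, and then applies the Cauchy--Schwarz inequality (Theorem~\ref{thm:cauchyschwartz}) with $\alpha_i \equiv 1$ to pull the squaring outside the sum, losing only a factor of $n^{k+\ell-2}$; the expression inside the resulting square is recognized to be exactly $\dev_{\ell-1,P}(H_n)$. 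Since that lemma is already available, I expect the present statement to be a one-line deduction with no new difficulty.
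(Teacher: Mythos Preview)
Your proposal is correct and matches the paper's approach: the result is an immediate application of Lemma~\ref{lem:devcauchy} with $P = V(H)^k$, exactly as you say (and as the paper already remarks in Section~\ref{sub:implicationsdev}). Note that the one-line proof in the appendix cites Lemma~\ref{lem:subdev} rather than Lemma~\ref{lem:devcauchy}; this appears to be a typo, since Lemma~\ref{lem:subdev} alone does not change $\ell$, whereas Lemma~\ref{lem:devcauchy} is precisely the step-down statement you invoke.
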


\begin{proof} % {{{
Apply Lemma~\ref{lem:subdev} with $P = V(H)^k$.
\end{proof} % }}}

\begin{lemma} \label{lem:devtocd}
  \textbf{(Chung~\cite{hqsi-chung90})} For $2 \leq \ell \leq k$, \propdev{$\ell$} $\Rightarrow$
  \propcdh{$\ell-1$}.
\end{lemma}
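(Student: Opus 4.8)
The plan is to realize \propcdh{$\ell-1$} as a statement about $\dev_{0,P}$ and then reach it from \propdev{$\ell$} by alternating Lemmas~\ref{lem:subdev} and~\ref{lem:devcauchy}, as indicated in Section~\ref{sub:implicationsdev}. Fix a sequence $\mathcal{H}=\{H_n\}_{n\rightarrow\infty}$ satisfying \propdev{$\ell$} and an $(\ell-1)$-uniform hypergraph $G$ with $V(G)\subseteq V(H_n)$. By the reformulation recorded after the definition of $\dev_\ell$ (namely, \propcdh{$\ell-1$} is the assertion that $\dev_{0,P}(H_n)=o(n^k)$ for $P$ the set of $k$-tuples whose underlying vertex set is a $k$-clique of $G$), it suffices to bound $\dev_{0,P}(H_n)$ for this $P$.

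The first step, and the only place real work is needed, is to write $P$ as a bounded intersection of sets each complete in one of the coordinates $k-\ell+1,\dots,k$. For $S\in\binom{[k]}{\ell-1}$ put
\[
  Q_S=\bigl\{(x_1,\dots,x_k)\in V(H_n)^k:\{x_i:i\in S\}\in E(G)\bigr\}.
\]
A $k$-set lies in $\mathcal{K}_k(G)$ precisely when all of its $(\ell-1)$-subsets are edges of $G$, so $P=\bigcap_S Q_S$; when $\ell=2$ the intersection also admits some non-injective tuples, but there are only $O(n^{k-1})$ of these and they contribute $o(n^k)$ to $\dev_{0,\cdot}$, so they are harmless. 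Each $Q_S$ depends only on the coordinates in $S$, hence is complete in coordinate $i$ for every $i\notin S$, and since $|S|=\ell-1<\ell$ the set $S$ cannot contain all of $\{k-\ell+1,\dots,k\}$; thus for each $S$ there is an admissible coordinate $i\in\{k-\ell+1,\dots,k\}$ in which $Q_S$ is complete. This is the crux — the coordinate range allowed by Lemma~\ref{lem:subdev} is wide enough exactly because $\ell-1<\ell$ — and I expect the bookkeeping here to be the main (though modest) obstacle.

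The remaining steps are mechanical. Beginning from $\dev_{\ell,V(H_n)^k}(H_n)=o(n^{k+\ell})$ (which is \propdev{$\ell$}), intersect successively with the sets $Q_S$, one at a time; each intersection is with a set complete in an admissible coordinate, so Lemma~\ref{lem:subdev} gives $\dev_{\ell,P}(H_n)\le\dev_{\ell,V(H_n)^k}(H_n)=o(n^{k+\ell})$. Since $\ell\ge 2$, the quantity $\dev_{\ell,P}(H_n)$ equals a sum of squares $(\Gamma_0-\Gamma_1)^2$, as in \eqref{eq:subdevalwayspositive}, hence is nonnegative, so in fact $\dev_{\ell,P}(H_n)=o(n^{k+\ell})$. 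Finally, apply Lemma~\ref{lem:devcauchy} a total of $\ell$ times to lower the index step by step, obtaining $\dev_{0,P}(H_n)=o(n^k)$; by the reformulation above this is precisely \propcdh{$\ell-1$} applied to $G$, and since $G$ was arbitrary the proof is complete.
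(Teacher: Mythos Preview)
Your proof is correct and follows essentially the same route as the paper: express the clique condition as an intersection of sets complete in coordinates from $\{k-\ell+1,\dots,k\}$, apply Lemma~\ref{lem:subdev} to pass to that intersection, and then apply Lemma~\ref{lem:devcauchy} repeatedly down to $\dev_0$. The only cosmetic difference is that the paper bundles the constraints into $\ell$ sets $P_i$ (one per removable coordinate), whereas you use $\binom{k}{\ell-1}$ sets $Q_S$ (one per $(\ell-1)$-subset of coordinates); your explicit handling of the non-injective tuples when $\ell=2$ is in fact a bit more careful than the paper's.
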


\begin{proof} % {{{
Let $G$ be an $(\ell-1)$-uniform hypergraph.  For $k-\ell+1 \leq i \leq k$, define
\begin{align*}
  P_i = \Bigg\{(x_1,\dots,x_k) \in V(H)^{k} : 
  &\left|\{x_1,\dots,x_{i-1},x_{i+1},\dots,x_k\}\right| = k-1, \\
  &\binom{\{x_1,\dots,x_{i-1},x_{i+1},\dots,x_k\}}{\ell-1} \subseteq E(G) \Bigg\}.
\end{align*}
That is, $P_i$ is the collection of tuples where the vertices besides the $i$th coordinate are
distinct and form a $(k-1)$-clique in $G$.  Note that $P_i$ is complete in coordinate $i$.  We claim
that $\cap P_i$ is the collection of $k$-tuples of distinct vertices which form a $k$-clique in $G$.
Indeed, let $x_1,\dots,x_k$ be distinct vertices forming a $k$-clique of $G$.  Then $(x_1,\dots,x_k)
\in P_i$ for every $i$ since all $(\ell-1)$-subsets of $\{x_1,\dots,x_k\}$ are edges of $G$.  In the
other direction, let $(x_1,\dots,x_k) \in \cap P_i$ and let $R$ be any $(\ell-1)$-subset of
$\{x_1,\dots,x_k\}$.  Since $|R| = \ell - 1$ and $i$ ranges from $k - \ell + 1$ to $k$, there is
some $i$ such that $x_i \notin R$.  But now $(x_1,\dots,x_k) \in P_i$ implies that $R \subseteq
E(G)$ showing that $G[\{x_1,\dots,x_k\}]$ is a clique.  Therefore, Lemma~\ref{lem:subdev} and the
fact that \propdev{$\ell$} holds imply that
\begin{align*}
  \dev_{\ell,\cap P_i}(H_n) \leq \dev_{\ell}(H_n) = o(n^{k+\ell}).
\end{align*}
Now a repeated application of Lemma~\ref{lem:devcauchy} implies that
\begin{align*}
  \dev_{0,\cap P_i}(H_n) = o(n^k).
\end{align*}
Expanding the definition of $\dev_{0,\cap P_i}(H_n)$, we have
\begin{align*}
  \dev_{0,\cap P_i}(H_n) &=
  \sum_{(x_1,\dots,x_k) \in \cap P_i} \eta(x_1;\dots;x_k) \\
  &=k! \sum_{\substack{\{x_1,\dots,x_k\} \subseteq V(H_n) \\
     G[\{x_1,\dots,x_k\}] \,\, \text{is a clique}} }
  \eta(x_1;\dots;x_k) \\
  &= k! \Big( |\mathcal{K}_k(G) \cap E(H)| - \left| \mathcal{K}_k(G) \cap E(\bar H) \right| \Big).
\end{align*}
Thus $\dev_{0,\cap P_i}(H_n) = o(n^k)$ implies that $|\mathcal{K}_k(G) \cap E(H)| = \left|
\mathcal{K}_k(G) \cap E(\bar H) \right| + o(n^k)$ which implies that \propcdh{$\ell-1$} holds for
$\mathcal{H}$.
\end{proof} % }}}

\end{document}